\documentclass[10pt]{smfart}

\RequirePackage[T1]{fontenc}
\RequirePackage{amsfonts,latexsym,amssymb}

\RequirePackage{smfenum}
\RequirePackage[frenchb]{babel}
\addto\extrasfrenchb{\bbl@nonfrenchitemize
\bbl@nonfrenchspacing}

\RequirePackage{mathrsfs}
\let\mathcal\mathscr

\usepackage[matrix,arrow]{xy}
\usepackage{url}
\usepackage{accents}
\usepackage{enumitem}
\usepackage{fge}
\usepackage{bbm}
\newcommand{\moins}{\mathbin{\fgebackslash}}

\theoremstyle{plain}
\numberwithin{equation}{section}
\newtheorem{prop}[equation]{\propname}
\newtheorem{theo}[equation]{\theoname}

\newtheorem{coro}[equation]{\coroname}

\newtheorem{lemm}[equation]{\lemmname}
\theoremstyle{definition}
\theoremstyle{remark}
\newtheorem{defi}[equation]{\definame} 
\newtheorem{rema}[equation]{\remaname}

\newtheorem{ques}[equation]{Question}

\def\paskunas{{Pa\v{s}k\={u}nas}}

\let\cal\mathcal
\let\mathcal\mathscr
\let\goth\mathfrak

\def\pp{\goth p}
\def\Q{{\bf Q}} \def\Z{{\bf Z}}
\def\zp{\Z_p} \def\qp{\Q_p}
\def\C{{\bf C}}
\def\N{{\bf N}}
\def\O{{\cal O}}
\def\G{{\cal G}} 

\def\dual{{\boldsymbol *}}
\def\bmu{{\boldsymbol\mu}}
\def\bPi{{\boldsymbol\Pi}}

\def\Qbar{\overline{\bf Q}}
\def\epsilon{\varepsilon}
\def\epsilon{\varepsilon}
\def\mv{{\cal M}^\varpi}

\def\bcris{{\rm B}_{{\rm cris}}}

\def\piqp{{\bf P}^1}

\def\matrice#1#2#3#4{{\big(\begin{smallmatrix}#1&#2\\ #3&#4\end{smallmatrix}\big)}}

\newcommand{\gl}{{\rm GL}}
\newcommand{\SL}{{\rm SL}}
\newcommand{\colim}{\operatorname{colim}}
\newcommand{\R}{\mathrm {R} }
\newcommand{\gp}{\operatorname{gp} }
\newcommand{\coker}{\operatorname{coker} }
\newcommand{\proet}{\operatorname{pro\acute{e}t}  }
\newcommand{\eet}{\operatorname{\acute{e}t} }
\newcommand{\dlog}{\operatorname{dlog} }
\newcommand{\Spf}{\operatorname{Spf} }
\newcommand{\Hom}{\operatorname{Hom} }
\newcommand{\proeet}{ \operatorname{pro\acute{e}t} }
\newcommand{\sff}{{\mathcal{F}}}
\newcommand{\sg}{{\mathcal{G}}}
\newcommand{\scc}{{\mathcal{C}}}
\newcommand{\so}{{\mathcal O}}
\newcommand{\sm}{{\mathcal{M}}}
\newcommand{\wt}{\widetilde}
\newcommand{\wh}{\widehat}
\setcounter{secnumdepth}{4}
\numberwithin{equation}{section}

\begin{document}
\title[Factorisation de la cohomologie de la tour de Drinfeld]
{Factorisation de la cohomologie \'etale $p$-adique de la tour de Drinfeld}
\author{Pierre Colmez}
\address{CNRS, IMJ-PRG, Sorbonne Universit\'e, 4 place Jussieu, 75005 Paris, France}
\email{pierre.colmez@imj-prg.fr}
\author{Gabriel Dospinescu}
\address{CNRS, UMPA, \'Ecole Normale Sup\'erieure de Lyon, 46 all\'ee d'Italie, 69007 Lyon, France}
\email{gabriel.dospinescu@ens-lyon.fr}
\author{Wies{\l}awa Nizio{\l}}
\address{CNRS, IMJ-PRG, Sorbonne Universit\'e, 4 place Jussieu, 75005 Paris, France}
\email{wieslawa.niziol@imj-prg.fr}
\begin{abstract}
Si $F$ est une extension finie de $\Q_p$, Drinfeld a d\'efini une tour de rev\^etements
de l'espace analytique $\piqp\moins\piqp(F)$ (demi-plan de Drinfeld).
Si $F=\Q_p$,
nous donnons une d\'ecomposition de la cohomologie \'etale $p$-adique g\'eom\'etrique
de cette tour de rev\^etements
analogue \`a la d\'ecomposition d'Emerton de la cohomologie compl\'et\'ee de la tour des courbes modulaires.
Un ingr\'edient crucial est un th\'eor\`eme de finitude pour la cohomologie \'etale arithm\'etique modulo~$p$
dont la preuve utilise le foncteur de Scholze, des ingr\'edients globaux et
un calcul de cycles proches qui permet de prouver que cette cohomologie
modulo~$p$ est de pr\'esentation finie. Ce dernier r\'esultat est valable pour $F$ quelconque; pour $F\neq \Q_p$,
il implique que les repr\'esentations de $\gl_2(F)$ obtenues \`a partir de la cohomologie de la tour de Drinfeld
ne sont pas admissibles contrairement au cas $F=\Q_p$.
\end{abstract}
\begin{altabstract}
For a finite extension $F$ of  ${\mathbf Q}_p$, Drinfeld defined a tower of coverings of 
$\piqp\moins\piqp(F)$ (the Drinfeld half-plane).
  For  $F = {\mathbf Q}_p$, we describe a decomposition of the $p$-adic geometric \'etale cohomology of this tower analogous to Emerton's decomposition of completed cohomology of the tower of modular curves.
 A crucial ingredient  is a finiteness theorem for the arithmetic \'etale cohomology   
modulo $p$ whose proof uses Scholze's functor, global ingredients, and
a computation of nearby cycles which makes it possible to prove
that this cohomology  has finite presentation.
This last result holds for all $F$; for  $F\neq  {\mathbf Q}_p$, it implies that the representations of  ${\rm GL}_2(F)$ obtained from the cohomology of the Drinfeld tower are not admissible contrary to the case $F = {\mathbf Q}_p$.
\end{altabstract}

 \date{\today}
\thanks{Les trois auteurs sont membres du projet ANR-19-CE40-0015-02 COLOSS; ils remercient
le rapporteur pour sa lecture attentive et ses suggestions}
\setcounter{tocdepth}{2}

\maketitle

{\Small
\tableofcontents
}

\section*{Introduction}
Dans~\cite{CDN1}, nous avons calcul\'e la multiplicit\'e d'une repr\'esentation $p$-adique
absolument irr\'eductible du groupe de Galois absolu $\G_{\Q_p}$ de 
$\Q_p$ dans la cohomologie \'etale 
de la tour $({\cal M}_{n,\C_p})_{n\geq 0}$
des rev\^etements du demi-plan de Drinfeld
(pour avoir une action de $\G_{\Q_p}$ et pas seulement du groupe de Weil $W_{\Q_p}$, il faut remplacer
${\cal M}_{n,\C_p}$ par son quotient\footnote{Celui-ci est l'extension des scalaires \`a $\C_p$
d'une vari\'et\'e analytique ${\cal M}_{n,\Q_p}^p$ d\'efinie sur $\Q_p$.
Notons que quotienter par $p$ induit des restrictions sur les repr\'esentations
qui interviennent (cf.~note~\ref{restr}); 
on fait appara\^{\i}tre les autres repr\'esentations en tordant par des caract\`eres
  comme dans \cite[\S\,5.1]{CDN1}.}
${\cal M}_{n,\C_p}^p$ par $\matrice{p}{0}{0}{p}\in G:=\gl_2(\Q_p)$). 
 La r\'eponse fait intervenir la correspondance
de Langlands locale $p$-adique pour $G$, 
ainsi que la correspondance de Jacquet-Langlands locale classique, cf.~th.\,\ref{icdn1.1} ci-dessous.

Dans cet article, nous d\'eterminons, 
pour\footnote{Cette hypoth\`ese est utilis\'ee dans certains arguments globaux et aussi 
pour invoquer directement certains r\'esultats de \cite{Paskext} 
(cf.~notes~\ref{inutile1} et~\ref{inutile2} pour des pr\'ecisions); 
elle est sans doute superflue. Pour $p=3$ l'\'enonc\'e du th.\,\ref{intro10} reste correct sauf, peut-\^etre, pour la contribution des
blocs ${\cal B}$ contenant un twist de la steinberg.}
$p>3$, la structure compl\`ete de cette cohomologie en tant
que $\G_{\Q_p}\times G\times \check G$-module, o\`u $\check G$ est le groupe des unit\'es de l'alg\`ebre
des quaternions non d\'eploy\'ee sur $\Q_p$. La r\'eponse (th.\,\ref{intro10} ci-dessous)
est similaire \`a la description d'Emerton~\cite{eternalpreprint} de la cohomologie
compl\'et\'ee de la tour des courbes modulaires, les alg\`ebres de Hecke dans la description d'Emerton
\'etant remplac\'ees par des anneaux de Kisin~\cite{Kis} param\'etrant les repr\'esentations potentiellement
cristallines, \`a poids de Hodge-Tate $0$ et $1$, et type fix\'e (on fixe en fait la repr\'esentation de Weil-Deligne, pas seulement le type galoisien).

Plus pr\'ecis\'ement, l'objet que nous d\'ecrivons n'est pas la cohomologie
de ${\cal M}_{n,\C_p}^p$ mais celle
de ${\cal M}_{n,\Qbar_p}^p$ (i.e.~la
cohomologie compl\'et\'ee\footnote{Noter que c'est bien le corps des coefficients $K$ qui varie ici, pas le niveau $n$, contrairement \`a la cohomologie compl\'et\'ee des courbes modulaires. Le fait que les cohomologies de ${\cal M}_{n,\Qbar_p}^p$ et de ${\cal M}_{n,\C_p}^p$ ne sont pas les m\^emes est une manifestation de la non quasi-compacit\'e des espaces ${\cal M}_{n,\C_p}^p$.} de la tour des ${\cal M}_{n,K}^p$ pour $[K:\Q_p]<\infty$);
il est possible que
le quotient de la cohomologie de ${\cal M}_{n,\C_p}^p$ par celle de ${\cal M}_{n,\Qbar_p}^p$
soit du bruit sans signification arithm\'etique (en tout cas, nous avons \'echou\'e \`a lui trouver
une interpr\'etation). 

Un r\^ole crucial est jou\'e par un th\'eor\`eme de finitude pour la cohomologie arithm\'etique modulo $p$ de la tour de Drinfeld (th.\,\ref{Intro1}), 
dont la preuve (g\'eom\'etrique et tr\`es diff\'erente des m\'ethodes utilis\'ees dans \cite{CDN1}, 
en particulier elle ne fait pas usage de~\cite{DL}) occupe la majeure partie de l'article. 

\subsection{Les r\'esultats principaux}

 Notre r\'esultat principal (th.\,\ref{factor10}) s'\'enonce comme suit: 

\begin{theo}\label{intro10}
Si $L$ est une extension finie de $\Q_p$ assez grande\footnote{Il faut que les
${\rm JL}(M)$ qui interviennent (il n'y en a qu'un nombre fini) soient d\'efinis sur $L$.}, on dispose d'un isomorphisme de $L[\G_{\Q_p}\times G\times \check G]$-modules topologiques
$$H^1_{\eet}(\sm^p_{n,\Qbar_p},L(1))\simeq 
\oplus_M\big(\widehat{\oplus}_{\cal B} \ \bPi^\dual(\rho_{{\cal B},M})
\otimes \rho_{{\cal B},M}\otimes \check{R}_{{\cal B},M}\big)\otimes_L{\rm JL}(M),$$
o\`u $\check{R}_{{\cal B},M}$ est le $L$-dual continu de $R_{{\cal B},M}$
 et
les produits tensoriels sont
au-dessus de~$R_{{\cal B},M}$.
\end{theo}

Le lecteur trouvera des d\'efinitions pr\'ecises\footnote{\label{restr}
Nous n'expliciterons pas, dans les \'enonc\'es, les restrictions induites par le fait
d'avoir quotient\'e par $\matrice{p}{0}{0}{p}$. Ces restrictions sont les suivantes: 
$p$, vu comme \'el\'ement des centres de $G$ et $\check G$,
 agit trivialement; le d\'eterminant des repr\'esentations de $\G_{\Q_p}$, vu comme
caract\`ere de $\Q_p^\dual$, prend la valeur $1$ en $p$; le d\'eterminant de $\varphi$
agissant sur les $M$ vaut $p$.}
 des objets 
intervenant ci-dessus dans la suite de cette introduction, 
bornons-nous ici \`a mettre en avant quelques points essentiels: 

$\bullet$ La premi\`ere somme porte sur les types $M$ de niveau~$\leq n$. 
Ces types sont en bijection avec les repr\'esentations irr\'eductibles du groupe de Galois de ${\cal M}_{n,\C_p}^p$ par rapport au demi-plan de Drinfeld. Ce groupe est un quotient fini de $\check{G}$. 
On note 
${\rm JL}(M)$ la repr\'esentation irr\'eductible (de dimension finie) de $\check G$ associ\'ee \`a $M$.

$\bullet$ La somme directe compl\'et\'ee porte 
 sur les blocs ${\cal B}$
de la cat\'egorie des $k_L$-repr\'esentations lisses de longueur finie de $G$. Ces blocs sont
en bijection naturelle \cite{Pas1} avec les orbites sous ${\rm Gal}(\overline{\bf F}_p/k_L)$
des $\overline{\bf F}_p$-repr\'esentations $\rho_{\cal B}$
de $\G_{\Q_p}$, semi-simples, de dimension~$2$. La compl\'etion intervenant dans la somme directe est $p$-adique.\footnote{$\bPi^\dual(\rho_{{\cal B},M})
\otimes \rho_{{\cal B},M}\otimes \check{R}_{{\cal B},M}$ poss\`ede un r\'eseau $\G_{\Q_p}\times G\times \check G$-stable naturel, et 
$\widehat{\oplus}_{\cal B} \ \bPi^\dual(\rho_{{\cal B},M})
\otimes \rho_{{\cal B},M}\otimes \check{R}_{{\cal B},M}$ s'obtient en compl\'etant $p$-adiquement la somme directe de ces r\'eseaux et en inversant~$p$.}

$\bullet$ $R_{{\cal B},M}$ est l'anneau de Kisin param\'etrant les repr\'esentations de type $M$ 
et r\'eduction $\rho_{\cal B}$,
et $\rho_{{\cal B},M}$ est la repr\'esentation universelle associ\'ee. L'anneau $R_{{\cal B},M}$ est un quotient de la fibre g\'en\'erique de l'anneau des pseudo-caract\`eres d\'eformant la trace de $\rho_{\cal B}$. 

$\bullet$ Le $G$-module topologique $\bPi^\dual(\rho_{{\cal B},M})$ interpole\footnote{
$(R_{{\cal B},M}/{\goth m}_x)\otimes
\bPi^\dual(\rho_{{\cal B},M})\simeq \bPi(\rho_x)^\dual$.}
 les $\bPi(\rho_x)^\dual$, 
pour $x\in {\rm Spm}(R_{{\cal B},M})$, 
o\`u $V\mapsto\bPi(V)$ est la correspondance
de Langlands locale $p$-adique, $\rho_x$ est la sp\'ecialisation de $\rho_{{\cal B},M}$ en 
$x$ et tous les duaux sont topologiques\footnote{Si $W$ est un $L$-module topologique, nous
notons $W^\dual$ son dual topologique, et si $W$ est un $\O_L$-module, nous notons $W^\vee$
son dual de Pontryagin. Notons que, si $W$ est un $L$-banach, la topologie sur $W^\dual$
est la topologie de la convergence uniforme sur les compacts (pas la topologie faible...).}.

\medskip
Un ingr\'edient crucial de la preuve (en dehors des r\'esultats de \cite{CDN1} et \cite{DL}, qui sont pleinement utilis\'es) 
est le th\'eor\`eme suivant (th.\,\ref{main11}): 

\begin{theo}\label{Intro1}
Si $[K:\Q_p]<\infty$, alors $H^1_{\eet}({\cal M}_{n,K}^p,\bmu_p)$ est la duale d'une
repr\'esentation lisse admissible de $G$, de longueur finie comme $\Z_p[G]$-module.
\end{theo}
Le th\'eor\`eme ci-dessus coupl\'e \`a des arguments globaux permet d'obtenir le r\'esultat suivant
(th.\,\ref{pointwise2} et rem.\,\ref{pointwise2.1}), 
qui est un analogue modulo $p$ du r\'esultat principal de \cite{CDN1}. Si $\bar{\rho}: \mathcal{G}_{\Q_p}\to {\rm GL}_d(k_L)$ est une repr\'esentation continue, on pose: 
   $$\bPi^{\rm geo}_n(\bar{\rho}):={\rm Hom}_{k_L[\mathcal{G}_{\Q_p}]}^{\rm cont}(\bar{\rho}, H^1_{\eet}(\mathcal{M}_{n,\C_p}^{p},k_L))^{\vee}.$$

\begin{theo}\label{main2} Soit
   $\bar{\rho}: \mathcal{G}_{\Q_p}\to {\rm GL}_d(k_L)$ une repr\'esentation continue.
   
   {\rm a)} $\bPi^{\rm geo}_n(\bar{\rho})$ est une $G$-repr\'esentation lisse, de longueur finie.   
   
   {\rm b)} Si $\bar{\rho}$ est absolument irr\'eductible, alors $\bPi^{\rm geo}_n(\bar{\rho})=0$ si 
   $d>2$ et une extension successive de copies de $\bPi(\bar{\rho})$ si $d=2$, o\`u 
   $\bPi(\bar{\rho})$ est la repr\'esentation supersinguli\`ere de $G$ associ\'ee \`a $\bar{\rho}$ par la correspondance de Langlands locale modulo~$p$.
 \end{theo}

On peut se demander ce qui reste vrai si on regarde la tour des rev\^etements du demi-plan
de Drinfeld $\piqp\moins\piqp(F)$, o\`u $F$ est une extension finie de $\Q_p$ distincte de~$\Q_p$. 
Dans ce cas, nous prouvons 
le r\'esultat suivant (th.\,\ref{main10bis} et~\ref{main10}), 
en utilisant un m\'elange d'arguments g\'eom\'etriques et de th\'eorie des repr\'esentations de $\gl_2(F)$, plus pr\'ecis\'ement les r\'esultats de Hu~\cite{YHu}, Schraen~\cite{schraen}, Shotton~\cite{Sho}, 
Vign\'eras~\cite{vigne} et Wu~\cite{Wu}:

\begin{theo} \label{main3}
Soit $F$ une extension finie de $\Q_p$ distincte de
$\Q_p$. 

{\rm (i)} Si $K$ est une extension finie de $F$, 
alors $H^1_{\eet}({\cal M}_{n,K}^p,\bmu_p)^{\vee}$  est une repr\'esentation lisse de pr\'esentation
finie de $\gl_2(F)$. Si $n\geq 1$ et si $K$ est assez grande, 
cette repr\'esentation n'est pas admissible. 

{\rm (ii)}
Si $n\geq 1$, il existe
une repr\'esentation continue
$\bar{\rho}:\G_{F}\to {\rm GL}_2(k_L)$ telle que 
$\bPi^{\rm geo}_n(\bar{\rho})$ ne soit pas admissible.
\end{theo}

Le th\'eor\`eme ci-dessus et sa forme pr\'ecis\'ee du th.\,\ref{main10} (compl\'et\'e par la
rem.\,\ref{copr8}) posent des probl\`emes int\'eressants sur la forme que doit prendre
la correspondance de Langlands
locale $p$-adique pour $\gl_2(F)$, si $F\neq \Q_p$: les probl\`emes
rencontr\'es pour la classification
des repr\'esentations modulo~$p$ de $\gl_2(F)$ se refl\`etent dans la cohomologie $p$-adique de 
la tour de Drinfeld (aucun de ces probl\`emes n'appara\^{\i}t 
en $\ell$-adique, avec $\ell\ne p$).
 Comme nous l'ont fait remarquer Dotto et Emerton, il est probable que les repr\'esentations lisses de pr\'esentation finie $\bPi^{\rm geo}_n(\bar{\rho})$ (avec $\bar{\rho}: \mathcal{G}_F\to \gl_2(k_L)$ une repr\'esentation continue, et $F$ non ramifi\'e) aient un lien avec les repr\'esentations construites par Breuil et \paskunas \,\cite{BP}, ce qui fournirait une interpr\'etation g\'eom\'etrique de ces derni\`eres.
 Nous esp\'erons revenir sur ce probl\`eme dans un travail ult\'erieur.

\Subsection{Finitude de la cohomologie de la tour de Drinfeld}\label{noti1}
D\'ecrivons maintenant nos r\'esultats plus en d\'etail, en commen\c{c}ant par esquisser les preuves
des th.\,\ref{Intro1}, \ref{main2} et \ref{main3}.
\subsubsection{La tour de Drinfeld}\label{noti2}
Soient $F$ une extension finie de $\Q_p$,
$\O_F$ l'anneau de ses entiers et ${\goth m}_F$ l'id\'eal maximal de $\O_F$.
On d\'efinit les groupes:

$\bullet$ $G=\gl_2(F)$, $G_0=\gl_2(\O_F)$ et $G_n=1+{\rm M}_2({\goth m}_F^n)$ si $n\geq 1$.

$\bullet$ $\check G=D^\dual$, o\`u
$D$ est l'alg\`ebre de quaternions non d\'eploy\'ee
de centre~$F$, 
$\check{G}_0=
\O_D^\dual$ o\`u $\O_D$ est l'ordre maximal de $D$, 
$\check{G}_n=1+{\goth m}_D^n$, si $n\geq 1$ et ${\goth m}_D$ est l'id\'eal maximal de $\O_D$.

\vskip.1cm
Si $H=G,\check G,{\rm W}_F$, on dispose d'un morphisme
de groupes naturel $\nu_H:H{\hskip.5mm\to\hskip.5mm}F^\dual,$
o\`u $\nu_G=\det$, $\nu_{\check G}$ est la norme r\'eduite,
et $\nu_{{\rm W}_F}:{\rm W}_F\to {\rm W}_F^{\rm ab} \simeq F^\dual$ 
est fourni par la th\'eorie locale du corps de classes.

\vskip.2cm
Le demi-plan $p$-adique (de Drinfeld)
$\piqp_F\moins\piqp(F)$ admet une structure naturelle d'espace analytique
rigide $\Omega_{{\rm Dr},F}$ sur $F$, et une action de $G$ par homographies,
qui respecte cette structure.
Drinfeld a d\'efini \cite{Drinfeld1}
une tour de rev\^etements ${\cal M}_{n,\breve{F}}$, pour $n\in\N$,
de ce demi-plan, v\'erifiant les propri\'et\'es suivantes:

$\bullet$ ${\cal M}_{n,\breve{F}}$ est d\'efini sur $\breve{F}=\widehat{F^{\rm nr}}$
et muni d'une action de ${\rm W}_F$ compatible avec l'action naturelle sur $\breve{F}$.

$\bullet$ ${\cal M}_{n,\breve{F}}$ est muni d'une action de $G\times \check G$
commutant avec l'action de ${\rm W}_F$,
et les fl\`eches de transition ${\cal M}_{n+1,\breve{F}}\to {\cal M}_{n,\breve{F}}\to
\Omega_{{\rm Dr},F}$ sont ${\rm W}_F\times \check G\times G$-\'equivariantes (l'action de
$\check G$ sur $\Omega_{{\rm Dr},F}$ \'etant l'action triviale).

$\bullet$ ${\cal M}_{0,\breve{F}}=\Z\times\Omega_{{\rm Dr},\breve{F}}$ et,
si $n\geq 1$, alors ${\cal M}_{n,\breve{F}}$ est un rev\^etement galoisien
de ${\cal M}_{0,\breve{F}}$, de groupe de Galois $\check{G}_0/\check{G}_n$.

$\bullet$ L'ensemble $\pi_0({\cal M}_{n,\C_p})$
des composantes connexes de ${\cal M}_{n,\C_p}$ est un espace homog\`ene principal 
sous l'action de $F^\dual/(1+{\goth m}_F^n)$ et $H=G,\check G,{\rm W}_F$ 
agit sur $\pi_0({\cal M}_{n,\C_p})$ \`a travers $\nu_H:H\to F^\dual$.  

\vskip.2cm
Si $\varpi$ est une uniformisante de $F$, on note ${\cal M}_{n,\breve F}^\varpi$
le quotient de ${\cal M}_{n,\breve F}$ par $\matrice{\varpi}{0}{0}{\varpi}\in G$. On pose aussi 
$$G':=G/\matrice{\varpi}{0}{0}{\varpi}^{\mathbf{Z}}.$$
Alors ${\cal M}_{n,\breve F}^\varpi$ est l'extension des scalaires de $F$ \`a $\breve F$
d'un espace analytique ${\cal M}_{n,F}^\varpi$ d\'efini sur $F$ et muni d'une action de $G'$.
Si $K$ est une extension finie de $F$ ou $\C_p$, on note
${\cal M}_{n,K}^\varpi$ l'espace analytique sur $K$
obtenu par extension des scalaires de $F$ \`a $K$.

\subsubsection{R\'esultats de finitude}\label{noti3}
Notre premier r\'esultat (th.\,\ref{smooth}) est valable pour $F$ quelconque:
\begin{theo}\label{finit1}
Si $K$ est une extension finie de $F$ et $k\geq 1$ alors $H^q_{\eet}({\cal M}_{n,K}^\varpi,\O_L/p^k)$ est un 
$\O_L$-module profini, dont 
le dual
de Pontryagin
est une repr\'esentation lisse de~$G$, de pr\'esentation finie.
\end{theo}

 En passant sous silence des contorsions topologiques un peu p\'enibles, 
la preuve de ce r\'esultat utilise trois ingr\'edients:

$\bullet$ l'existence \cite{CDN1} d'un mod\`ele semi-stable
$G$-\'equivariant pour ${\cal M}_{n,K}^\varpi$ ($K$ assez grand),

$\bullet$ la filtration de Bloch-Kato-Hyodo \cite{BK}, \cite{Hy}
sur les cycles proches modulo $p$ pour ramener le calcul de la cohomologie
\'etale \`a celle de faisceaux coh\'erents sur la fibre sp\'eciale,

$\bullet$ les propri\'et\'es de la cat\'egorie des repr\'esentations de pr\'esentation finie de $G'$
(en particulier le fait qu'elle est ab\'elienne et stable par extensions d'apr\`es Shotton~\cite{Sho}).

\begin{rema}
(i) Si on disposait d'une dualit\'e de Poincar\'e, on pourrait prouver
la premi\`ere partie du th\'eor\`eme de mani\`ere plus naturelle: les duaux
de Pontryagin des $H^q_{\eet}({\cal M}_{n,K}^\varpi,\O_L/p^k)$
seraient les groupes de cohomologie \`a support compact, pour lesquels
Berkovich~\cite{BV} a \'etabli la lissit\'e de l'action de $G$. Malheureusement, l'existence d'une telle dualit\'e pour des espaces analytiques comme 
${\cal M}_{n,K}^\varpi$ n'est pas connue.

(ii) Il est probable qu'un tel \'enonc\'e reste valable pour la tour de Drinfeld associ\'ee \` a
${\rm GL}_d(F)$ avec $d>2$, mais deux des ingr\'edients ci-dessus (le premier et le troisi\`eme) ne sont plus disponibles dans ce cadre.

(iii) Il n'est pas du tout clair si $H^1_{\eet}({\cal M}_{n, \C_p}^\varpi, k_L)$ est profini. Les techniques utilis\'ees ci-dessus utilisent de mani\`ere cruciale le fait que l'on travaille sur une extension finie de $\Q_p$, pas sur $\C_p$ (la filtration du th.\,\ref{generalized} utilise pleinement
la finitude de la ramification).
\end{rema}

Un autre r\'esultat valable pour $F$ quelconque est le suivant (th.\,\ref{scholzevariation}):
   
      \begin{theo}\label{finitgl2}
   Si $\pi$ est un $k_L[\gl_2(F)]$-module lisse, admissible, \`a caract\`ere central, alors 
   ${\rm Hom}_{k_L[\gl_2(F)]}^{\rm cont}(\pi^\vee,H^1_{\eet}({\cal M}_{n,\C_p},k_L))$ est de dimension
finie sur $k_L$.
\end{theo}

\begin{rema}\label{finit4}
{\rm (i)} Pour $F=\Q_p$, ce th\'eor\`eme est une version modulo~$p$ du th.\,\ref{icdn1.2} ci-dessous.
Ce dernier se d\'emontre par les m\'ethodes de~\cite{CDN1}, mais il ne semble pas possible
d'en d\'eduire le th.\,\ref{finitgl2} dans ce cas.

{\rm (ii)} Il semble raisonnable de penser que le th.\,\ref{finitgl2} 
s'\'etend \`a la tour de Drinfeld pour ${\rm GL}_d(F)$ sous la forme:
{\it les groupes ${\rm Ext}^i_{k_L[\gl_d(F)]}(\pi^\vee, H^j_{\eet}({\cal M}_{n,\C_p}, k_L))$ 
sont de dimension finie sur $k_L$}. 
Ce r\'esultat (s'il est vrai) serait 
un analogue d'un r\'esultat classique pour la cohomologie $\ell$-adique, avec $\ell\ne p$.

{\rm (iii)} Si, dans l'\'enonc\'e,
on remplace ${\cal M}_{n,\C_p}$ par ${\cal M}_{n, K}$ avec $[K:\Q_p]<\infty$, 
le r\'esultat est une cons\'equence imm\'ediate du th.\,\ref{finit1}, mais
il ne semble pas possible d'en tirer directement le th.\,\ref{finitgl2}. 
\end{rema}

Le r\'esultat suivant (\S\,\ref{lg6}) est sp\'ecifique \`a $F=\Q_p$, et nettement plus difficile \`a d\'emontrer que les deux th\'eor\`emes ci-dessus:

\begin{theo}\label{finit2}
Si $[K:\Q_p]<\infty$, alors 
${\rm Hom}_{k_L[G']}^{\rm cont}(\pi^\vee,H^1_{\eet}({\cal M}_{n,K}^p,k_L))=0$ 
pour presque tout {\rm (i.e., sauf pour un nombre fini)} $k_L[G']$-module $\pi$ lisse, irr\'eductible.
\end{theo}
\begin{rema}\label{finit3}
Ce th\'eor\`eme
implique (et est \'equivalent si on injecte un peu de th\'eorie des repr\'esentations de $G$, cf. le th.\,\ref{LTappli}),
 via l'isomorphisme de Faltings entre les tours de Drinfeld et Lubin-Tate compl\'et\'ees,
 que les vecteurs lisses de\footnote{L'espace ${\rm LT}_{n,K}^p$ est obtenu \`a partir de l'espace de Lubin-Tate de niveau 
 $1+p^nM_2(\Z_p)$ de la m\^eme mani\`ere que $\mathcal{M}_{n,K}^p$ est obtenu \`a partir de 
 $\mathcal{M}_{n, \C_p}$.}  $H^1_{\eet}({\rm LT}_{n,K}^p,k_L)$ 
pour l'action de $\check{G}$ sont de dimension finie (et proviennent des composantes connexes). 
On peut se demander s'il est possible de prouver ce r\'esultat directement. 
S'il y a des vecteurs lisses ne provenant pas des composantes connexes, 
il y a des vecteurs invariants 
par $\check{G}_1\times\matrice{1+p\Z_p}{\Z_p}{p\Z_p}{1+p\Z_p}$, 
et donc dans la cohomologie de ${\rm LT}_{1,K}$, un espace dont les composantes connexes sont
des couronnes ouvertes.
Si on est tr\`es optimiste\footnote{C'est le cas pour les exemples simples que nous avons consid\'er\'es et pour l'exemple ci-dessus...}, 
on peut esp\'erer que, si $H$ est un groupe de Lie compact agissant
sur une couronne ouverte $C_K$ avec action non triviale
sur $\O^+(C_K)/{\goth m}_K$, les vecteurs $H$-invariants de $H^1_{\eet}(C_K,k_L)$ sont tr\`es petits.
\end{rema}

Comme nous n'avons pas r\'eussi \`a faire marcher les strat\'egies \'evoqu\'ees dans les rem.\,\ref{finit4}
et~\ref{finit3}, notre
preuve du th.\,\ref{finit2} emprunte des chemins plus d\'etourn\'es.  On suppose pour la suite de ce paragraphe que $F=\Q_p$. 

$\bullet$ Un ingr\'edient crucial
est une suite spectrale (dont l'existence est d\'emontr\'ee dans le m\^eme degr\'e de g\'en\'eralit\'e que celui de l'article \cite{SLT}) reliant les foncteurs de Scholze~\cite{SLT} $\pi\mapsto S^i(\pi)$
aux groupes qui nous int\'eressent. En injectant aussi le calcul de $H^i(\SL_2(\Q_p),\pi)$
par Fust~\cite{Fust}, cette suite spectrale permet de d\'emontrer
le r\'esultat suivant (cor.\,\ref{cafe52}):

\begin{prop}\label{icafe52}
Soit $\pi$ un $k_L[G]$-module lisse admissible, absolument irr\'eductible. On dispose d'un morphisme naturel\footnote{
$\mathcal{M}_{\infty,\C_p}$ est l'espace perfecto\"{\i}de compl\'et\'e de la tour des
$\mathcal{M}_{n,\C_p}$; il est isomorphe au compl\'et\'e ${\rm LT}_{\infty,\C_p}$ de la tour de Lubin-Tate,
et le foncteur de Scholze exploite le fait que ${\rm LT}_{\infty,\C_p}$ est un rev\^etement de $\piqp_{\C_p}$
de groupe de Galois $G$.}
$$S^1(\pi)
     \to {\rm Hom}_{k_L[G]}^{\rm cont}(\pi^{\vee}, H^1_{\eet}(\mathcal{M}_{\infty,\C_p},
   k_L))$$
    dont les noyau et conoyau sont
de dimension finie sur $k_L$, et qui est un isomorphisme 
   si $\pi$ n'appartient pas \`a un twist 
$\{\chi,{\rm St}\otimes\chi, I(\chi,\chi\epsilon)\}$ du bloc de la Steinberg.
\end{prop}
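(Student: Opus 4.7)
The strategy is to realise the morphism as the edge map of a Grothendieck-type spectral sequence relating Scholze's functor to the completed cohomology of the Drinfeld (equivalently Lubin-Tate) tower. Concretely, since $\mathcal{M}_{\infty,\C_p}\simeq{\rm LT}_{\infty,\C_p}$ is a pro-\'etale $G$-torsor over $\piqp_{\C_p}$, a $\pi^{\vee}$-twisted version of the descent along this torsor, carried out in the generality of \cite{SLT}, produces a convergent spectral sequence
$$E_2^{i,j}={\rm Ext}^i_{k_L[G]}\bigl(\pi^{\vee},H^j_{\eet}(\mathcal{M}_{\infty,\C_p},k_L)\bigr)\Longrightarrow S^{i+j}(\pi),$$
where the Ext groups are computed in the category of continuous $G$-modules (ind-smooth vs.~profinite, as dictated by Pontryagin duality). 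Extracting its five-term exact sequence yields
$$0\to E_2^{1,0}\to S^1(\pi)\xrightarrow{\ \delta\ }{\rm Hom}_{k_L[G]}^{\rm cont}\bigl(\pi^{\vee},H^1_{\eet}(\mathcal{M}_{\infty,\C_p},k_L)\bigr)\to E_2^{2,0},$$
and the edge map $\delta$ is the natural morphism of the statement. The proposition thus reduces to showing that $E_2^{1,0}$ and $E_2^{2,0}$ are finite-dimensional over $k_L$, and vanish when $\pi$ lies outside the Steinberg block.

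The key input for this analysis is the explicit structure of $H^0_{\eet}(\mathcal{M}_{\infty,\C_p},k_L)$. By the description of $\pi_0(\mathcal{M}_{n,\C_p})$ as a principal homogeneous space under $\Q_p^{\dual}/(1+p^n\Z_p)$ on which $G$ acts through $\nu_G=\det$, we identify this $H^0$ with the space of locally constant $k_L$-valued functions on $\Q_p^{\dual}$, with $G$ acting through the determinant; in particular $\SL_2(\Q_p)$ acts trivially, and the module is a (topological) direct sum of characters of $G$ pulled back from $\Q_p^{\dual}$.

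To compute ${\rm Ext}^i_{k_L[G]}(\pi^{\vee},H^0)$ we apply the Hochschild-Serre spectral sequence for $1\to\SL_2(\Q_p)\to G\to\Q_p^{\dual}\to 1$ (modulo the centre): since $\SL_2(\Q_p)$ acts trivially on $H^0$, it collapses to
$${\rm Ext}^p_{\Q_p^{\dual}}\bigl(H^q(\SL_2(\Q_p),\pi^{\vee}),H^0\bigr)\Rightarrow{\rm Ext}^{p+q}_G(\pi^{\vee},H^0).$$
Fust's theorem \cite{Fust} gives that $H^q(\SL_2(\Q_p),\pi^{\vee})$ is finite-dimensional for $\pi$ admissible irreducible, and that it vanishes (in the relevant degrees) precisely when $\pi$ does \emph{not} belong to the Steinberg block. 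The remaining abelian Ext over the commutative group $\Q_p^{\dual}$ preserves finite-dimensionality, giving the required finiteness of $E_2^{1,0}$ and $E_2^{2,0}$. For $\pi$ outside the Steinberg block, both Hochschild-Serre inputs vanish because $H^0$ is built from characters and the block decomposition forbids nonzero ${\rm Ext}^i$ between $\pi^{\vee}$ and characters; this gives the asserted isomorphism.

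The main obstacle is the first step: constructing the spectral sequence rigorously, with the correct topologies on $\pi^{\vee}$ and on the completed cohomology $H^j_{\eet}(\mathcal{M}_{\infty,\C_p},k_L)$, and reconciling them with Scholze's definition of $S^i(\pi)$ through the pro-\'etale $G$-torsor structure. The non quasi-compactness of $\mathcal{M}_{\infty,\C_p}$ and the resulting delicate behaviour of continuous Ext in the infinite-level perfectoid setting make this a genuinely technical point, which is why one invokes the framework developed in \cite{SLT} rather than trying to set it up by hand. Once the spectral sequence is in place, the rest is representation-theoretic bookkeeping relying on Fust's computations and the block decomposition of smooth admissible $k_L[G]$-representations.
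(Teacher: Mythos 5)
Your proposal follows the same strategy as the paper's own proof: a Cartan--Leray descent spectral sequence for the pro-\'etale $G$-torsor $\mathcal{M}_{\infty,\C_p}\to\piqp_{\C_p}$ (established in the paper as th.\,\ref{compare} and cor.\,\ref{SS}), the resulting five-term exact sequence, the identification $H^0_{\eet}(\mathcal{M}_{\infty,\C_p},k_L)\simeq\mathcal{C}^0(\Q_p^\dual,k_L)$ with $G$ acting through $\det$, reduction of the outer $E_2$ terms to $\SL_2(\Q_p)$-cohomology, and Fust's finiteness and vanishing results. Where the paper is more direct is in observing ${\rm Hom}^{\rm cont}_{k_L}(\pi^\vee,H^0)\simeq{\rm Ind}_{\SL_2(\Q_p)}^G(\pi)$ and applying Shapiro's lemma (using the continuous section of $\det$) to get $E_2^{i,0}\simeq H^i(\SL_2(\Q_p),\pi)$ in one step; your Hochschild--Serre detour is equivalent in principle, but the $E_2$ page you write down is not the actual Hochschild--Serre $E_2$ page, and Fust's theorem concerns $H^q(\SL_2(\Q_p),\pi)$ for the smooth admissible module $\pi$, not its Pontryagin dual $\pi^\vee$ --- slips that would wash out in a careful rewrite but should be fixed.
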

L'admissibilit\'e de $S^1(\pi)$ comme repr\'esentation de $\check G$
permet d'en d\'eduire (via les r\'esultats de Fust) le th.\,\ref{finitgl2}.

\smallskip
$\bullet$ Un autre ingr\'edient est une globalisation de la situation qui permet d'utiliser
les r\'esultats d'isotypie de Carayol et Scholze~\cite{SLT} pour analyser les repr\'esentations de $\G_{\Q_p}$
intervenant dans
${\rm Hom}_{k_L[G']}^{\rm cont}(\pi^\vee,H^1_{\eet}({\cal M}_{n,\C_p}^p,k_L))$
et en d\'eduire le th.\,\ref{finit2}.
 Pour pouvoir bien globaliser on utilise des r\'esultats de Gee et Kisin \cite{GK}, et pour \'etudier la cohomologie compl\'et\'ee attach\'ee aux alg\`ebres de quaternions globales (d\'eploy\'ees en toute place finie) issues de la globalisation nous utilisons les 
 travaux de \paskunas \,\cite{Pask} et \paskunas-Tung \cite{PT}.
 Ces arguments globaux permettent de contr\^oler l'action de $\mathcal{G}_{\qp}$ sur $S^1(\pi)$, et en particulier de montrer que pour toute extension finie $K$ de $\qp$ il n'y a qu'un nombre fini de $\pi$ comme dans le corollaire ci-dessus et tels que $S^1(\pi)^{\mathcal{G}_K}\ne 0$.

\medskip
En utilisant le fait (th.\,\ref{pierre1}) qu'une repr\'esentation de type fini de $G$ est de longueur finie
si et seulement si son cosocle est de longueur finie (un r\'esultat qui utilise pleinement la
classification des repr\'esentations lisses irr\'eductibles modulo~$p$ de $G$, classification
non connue si $F\neq \Q_p$, ainsi que les r\'esultats de finitude pour les groupes d'extensions entre deux repr\'esentations lisses irr\'eductibles de $G$), on d\'eduit du th.\,\ref{finit2} le r\'esultat suivant.
\begin{coro}\label{Intro3}
{\rm (i)} Si $[K:\Q_p]<\infty$, le dual de
$H^1_{\eet}({\cal M}_{n,K}^p,k_L)$ est une repr\'esentation lisse de $G$, de longueur finie.

{\rm (ii)} Si $\rho:\G_{\Q_p}\to {\rm GL}_2(k_L)$ est une repr\'esentation continue,
le dual de ${\rm Hom}_{k_L[\G_{\Q_p}]}^{\rm cont}(\rho,H^1_{\eet}({\cal M}_{n,\C_p}^p,k_L))$
est une repr\'esentation lisse de $G$, de longueur finie.
\end{coro}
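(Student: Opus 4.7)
Le plan est de combiner les th.\,\ref{finit1}, \ref{finit2} et \ref{pierre1}, en prouvant (i) d'abord et en d\'eduisant (ii) par un argument de Hochschild-Serre.

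\emph{Pour (i)}, posons $M:=H^1_{\eet}({\cal M}_{n,K}^p,k_L)$ et $\pi:=M^\vee$. D'apr\`es le th.\,\ref{finit1}, $M$ est un $\O_L$-module profini et $\pi$ est une repr\'esentation lisse admissible de $G$, de pr\'esentation finie, donc de type fini. Pour toute repr\'esentation lisse irr\'eductible $\sigma$ de $G$ sur $k_L$ (sur laquelle $\matrice{p}{0}{0}{p}$ agit trivialement), la dualit\'e de Pontryagin entre repr\'esentations lisses admissibles et $\O_L$-modules profinis \`a action continue de $G$ fournit un isomorphisme
$${\rm Hom}_{k_L[G]}(\pi,\sigma)\simeq{\rm Hom}_{k_L[G]}^{\rm cont}(\sigma^\vee,M).$$
Le th.\,\ref{finit2} annule ce dernier espace pour presque toute $\sigma$, tandis que l'admissibilit\'e de $\pi$ en garantit la dimension finie sur $k_L$ pour chaque $\sigma$. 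Le cosocle de $\pi$ est donc de longueur finie, et le th.\,\ref{pierre1} permet de conclure que $\pi$ l'est aussi.

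\emph{Pour (ii)}, soit $K$ une extension finie de $\Q_p$ telle que $\G_K$ agisse trivialement sur $\rho$. En posant $N:=H^1_{\eet}({\cal M}_{n,\C_p}^p,k_L)$, on a
$${\rm Hom}_{k_L[\G_{\Q_p}]}^{\rm cont}(\rho,N)={\rm Hom}_{k_L[{\rm Gal}(K/\Q_p)]}(\rho,N^{\G_K}).$$
La suite spectrale de Hochschild-Serre $H^i(\G_K,H^j_{\eet}({\cal M}_{n,\C_p}^p,k_L))\Rightarrow H^{i+j}_{\eet}({\cal M}_{n,K}^p,k_L)$ fournit, en bas degr\'e, une suite exacte $G$-\'equivariante
$$0\to H^1(\G_K,H^0)\to H^1_{\eet}({\cal M}_{n,K}^p,k_L)\to N^{\G_K}\to H^2(\G_K,H^0),$$
avec $H^0:=H^0_{\eet}({\cal M}_{n,\C_p}^p,k_L)$. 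Puisque $\pi_0({\cal M}_{n,\C_p}^p)$ est un espace homog\`ene principal sous un quotient fini de $\Q_p^\dual$ (apr\`es quotient par $p^{\Z}$), $H^0$ et ses cohomologies galoisiennes sont de dimension finie sur $k_L$, donc de longueur finie comme $G$-modules profinis. Combin\'ee \`a (i), la suite exacte ci-dessus montre que $N^{\G_K}$ est de longueur finie comme $G$-module profini. Comme $\rho$ est de $k_L$-dimension finie, ${\rm Hom}_{{\rm Gal}(K/\Q_p)}(\rho,N^{\G_K})$ s'identifie \`a un sous-module profini d'une somme directe finie de copies de $N^{\G_K}$, donc \`a un $G$-module profini de longueur finie, et (ii) en r\'esulte par dualit\'e.

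\emph{Obstacle principal.} Le c\oe ur de la preuve est l'\'etape cosocle $\Rightarrow$ longueur finie dans (i): le th.\,\ref{finit2} ne contr\^ole que le cosocle de $\pi$, et c'est le th.\,\ref{pierre1}, qui repose sur la classification fine des repr\'esentations lisses irr\'eductibles modulo $p$ de $\gl_2(\Q_p)$ ainsi que sur des r\'esultats de finitude pour leurs groupes d'extensions, qui permet de passer de l\`a \`a la repr\'esentation enti\`ere. Une fois (i) \'etabli, (ii) est formel, le seul ingr\'edient suppl\'ementaire \'etant la finitude de $\pi_0({\cal M}_{n,\C_p}^p)$ sp\'ecifique au cas $F=\Q_p$.
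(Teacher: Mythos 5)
Your proposal follows exactly the paper's strategy: part (i) is th.\,\ref{main11}, proved by combining th.\,\ref{finit1} (finite presentation, hence finite type), a control of the cosocle, and th.\,\ref{pierre1}; part (ii) is the Hochschild--Serre argument of cor.\,\ref{pointwise1} and th.\,\ref{pointwise2}, reducing to a finite extension $K$ over which $\rho$ trivializes and using the finiteness of $H^0_{\eet}({\cal M}_{n,\C_p}^p,k_L)$.

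There is, however, a circularity in your justification of part~(i). You claim that th.\,\ref{finit1} gives that $\pi=M^\vee$ is \emph{admissible}, and you invoke this admissibility to conclude that $\dim_{k_L}{\rm Hom}_{k_L[G]}(\pi,\sigma)<\infty$ for every irreducible $\sigma$. But th.\,\ref{finit1} only gives smoothness and finite presentation, not admissibility: indeed for $F\neq\Q_p$ the same theorem holds while $H^1_{\eet}({\cal M}_{n,K}^\varpi,k_L)^\vee$ is not admissible (th.\,\ref{main10bis}), and for $F=\Q_p$ the admissibility of $\pi$ is precisely a consequence of the finite-length statement you are proving. The correct justification of the finite-dimensionality of ${\rm Hom}_{k_L[G]}(\pi,\sigma)$ rests on the fact that $\pi$ is of \emph{finite type} (a genuine consequence of th.\,\ref{finit1}): if $v_1,\dots,v_r$ generate $\pi$ and $K_i$ stabilizes $v_i$, then $f\mapsto(f(v_i))_i$ embeds ${\rm Hom}_{k_L[G]}(\pi,\sigma)$ into $\bigoplus_i\sigma^{K_i}$, which is finite-dimensional because every smooth irreducible $k_L$-representation $\sigma$ of $\gl_2(\Q_p)$ is admissible (this is where the $F=\Q_p$ classification is used). (The paper reaches the same conclusion by a more global route, via the Scholze functor and cor.\,\ref{cafe52}, but the direct finite-type argument suffices.) With this correction your proof of (i) is complete, and your argument for (ii) is sound as written.
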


\begin{rema}\label{Intro4}
(i) Dans le cas $F=\Q_p$, une $k_L$-repr\'esentation de longueur finie de $G$ est automatiquement admissible
et de pr\'esentation finie.

(ii) Dans le cas $F\neq \Q_p$, une $\overline{k}_L$-repr\'esentation de pr\'esentation finie de $G$
est admissible si et seulement si elle est de longueur finie, mais les $k_L$-repr\'esentations
irr\'eductibles supersinguli\`eres de $G$ ne sont pas forc\'ement admissibles 
et ne sont pas de pr\'esentation finie \cite{schraen}, \cite{Wu}; 
il existe m\^eme \cite{Le} de telles repr\'esentations qui deviennent de longueur infinie quand on \'etend
les scalaires \`a $\overline{k}_L$.
\end{rema}

\Subsection{Multiplicit\'es g\'eom\'etriques de repr\'esentations de $\G_{\Q_p}$ et $G$}\label{noti4}
Passons \`a la preuve du th.\,\ref{intro10}.
\subsubsection{Types et objets associ\'es}\label{noti5}
Soit $F$ une extension finie de $\Q_p$.
Soit $M$ un $L$-$(\varphi,N,\G_F)$-module (i.e. un $L\otimes \Q_p^{\rm nr}$-module
muni d'actions d'un frobenius semi-lin\'eaire $\varphi$, d'un op\'erateur
$N$ tel que $N\varphi=p\varphi N$ et d'une action semi-lin\'eaire
lisse de $\G_F$), de rang $2$. On associe\footnote{
${\rm WD}(M)$ est obtenue \`a partir de $M$ par la recette de Fontaine~\cite{FonAst},
${\rm LL}(M)$ \`a partir de ${\rm WD}(M)$ par
la correspondance de Langlands locale et ${\rm JL}(M)$ \`a partir de
${\rm LL}(M)$ par
la correspondance de Jacquet-Langlands locale (en particulier, ${\rm JL}(M)=0$
si ${\rm LL}(M)$ est une s\'erie principale, i.e.~si $M$ n'est ni supercuspidal ni sp\'ecial). 

Les caract\`eres centraux de ${\rm LL}(M)$ et ${\rm JL}(M)$ sont \'egaux et co\"{\i}ncident
avec $\det{\rm WD}(M)\cdot|\ |$ (vu comme caract\`ere
de $W_{F}^{\rm ab}\simeq F^{\dual}$, le frobenius arithm\'etique s'envoyant sur $p$).}
\`a $M$:

\quad $\bullet$ une $L$-repr\'esentation ${\rm WD}(M)$ de ${\rm WD}_{F}$,
de dimension~$2$,

\quad $\bullet$ une $L$-repr\'esentation lisse irr\'eductible ${\rm LL}(M)$ de $G$,

\quad $\bullet$ une $L$-repr\'esentation lisse irr\'eductible (de dimension finie) ${\rm JL}(M)$ de~$\check G$.

\vskip.1cm
Remarquons que, si ${\rm WD}(M)$ est irr\'eductible, les pentes de $\varphi$ sont toutes
\'egales \`a un m\^eme nombre rationnel appel\'e {\it la pente de $M$}.
On dit
que $M$ est:

\quad $\bullet$ {\it supercuspidal},
si ${\rm WD}(M)$ est irr\'eductible et de pente~$\frac{1}{2}$,

\quad $\bullet$ {\it sp\'ecial} si\footnote{\label{special}
C'est
donc un tordu ${\rm Sp}\otimes\eta$, o\`u $\eta$ est un caract\`ere lisse de
$F^\dual$, du module ${\rm Sp}$ d\'efini par
$${\rm Sp}=\Q_p^{\rm nr}e_1\oplus \Q_p^{\rm nr}e_2,\quad
\varphi(e_1)=e_1,\ \varphi(e_2)=pe_2,\hskip.2cm Ne_1=0,\ Ne_2=e_1.$$} $N\neq 0$ et si les pentes de $\varphi$
sont $0$ et $1$.
 
\quad $\bullet$ {\it de niveau~$\leq n$} si ${\rm JL}(M)$ se factorise \`a travers $\check G/\check G_n$.

\vskip.2cm
Supposons maintenant que $F=\Q_p$.
Si $M$ est supercuspidal, 
on d\'efinit le $L$-module de rang~$2$
 $$M_{\rm dR}=(\Qbar_p\otimes_{\Q_p^{\rm nr}}M)^{\G_{\Q_p}}.$$
Si ${\cal L}$ est une $L$-droite de $M_{\rm dR}$, on d\'efinit
la repr\'esentation $V_{M,{\cal L}}$ de $\G_{\Q_p}$ par 
$$V_{M,{\cal L}}={\rm Ker}\big((\bcris^+\otimes_{\Q_p^{\rm nr}} M)^{\varphi=p}\to
{\C_p}\otimes_{\Q_p} (M_{\rm dR}/{\cal L})\big).$$
Il r\'esulte de~\cite{CF} que $V_{M,{\cal L}}$
est une $L$-repr\'esentation de dimension~$2$, potentiellement semi-stable
 \`a poids $0$ et $1$, dont le $D_{\rm pst}$ est $M$, et toute telle repr\'esentation
est de la forme $V_{M,{\cal L}}$.
On note $\Pi_{M,{\cal L}}$ la repr\'esentation de $G$ associ\'ee \`a $V_{M,{\cal L}}$
par la correspondance de Langlands locale $p$-adique: 
si $\Pi\mapsto{\bf V}(\Pi)$ est le foncteur r\'ealisant la correspondance de
Langlands locale $p$-adique~\cite{gl2,CDP},
on a 
$${\bf V}(\Pi_{M,{\cal L}})\simeq V_{M,{\cal L}}$$

On dit qu'une $L$-repr\'esentation $V$ 
de $\G_{\Q_p}$ {\it est de niveau~$\leq n$} si $V\simeq V_{M,{\cal L}}$ pour un $M$
 supercuspidal de niveau~$\leq n$ et une droite ${\cal L}$ de $M_{\rm dR}$ 
(en particulier, $V$ est de dimension~$2$).

\subsubsection{Multiplicit\'es individuelles}\label{noti6}
Le th.\,\ref{icdn1.1} ci-dessous est l'un des r\'esultats principaux de~\cite{CDN1}. 
   \begin{theo}\label{icdn1.1}
Si $V\in {\rm Rep}_L\G_{\Q_p}$ est absolument irr\'eductible, de dimension~$2$, alors
    $${\rm Hom}_{{\rm W}_{\Q_p}}(V, H^1_{\rm et}(\mathcal{M}_{n, \C_p}, L(1)))\simeq
\begin{cases}\Pi_{M,{\cal L}}^\dual\otimes {\rm JL}(M) 
&{\text{si $V=V_{M,{\cal L}}$, et $M$ de niveau~$\leq n$}}\\
0 &{\text{si $V$ n'est pas de niveau~$\leq n$.}}\end{cases}$$
   \end{theo}
\begin{rema}\label{icdn1.3}
On y prouve aussi que ${\rm Hom}_{{\rm W}_{\Q_p}}(V, H^1_{\rm et}(\mathcal{M}_{n, \C_p}, L(1)))=0$
si $V$ est absolument irr\'eductible, de dimension~$\geq 3$ (ce r\'esultat
est un des plus d\'elicats de~\cite{CDN1}; voir le~th.\,\ref{aju3} pour une
preuve nettement plus limpide).
En r\'esum\'e, $H^1_{\rm et}(\mathcal{M}_{n, \C_p}, L(1))$ contient exactement les repr\'esentations
de $\G_{\Q_p}$ que l'on souhaite y voir, avec la multiplicit\'e id\'eale pour
une r\'ealisation g\'eom\'etrique de la correspondance de Langlands locale $p$-adique
pour $\gl_2(\Q_p)$.
\end{rema}
Les techniques utilis\'ees dans la preuve du th.\,\ref{icdn1.1} permettent aussi de prouver le r\'esultat
suivant (th.\,\ref{cdn1.2}) qui joue un grand r\^ole dans nos r\'esultats en famille.
   \begin{theo}\label{icdn1.2}
Si $V\in {\rm Rep}_L\G_{\Q_p}$ est absolument irr\'eductible, de dimension~$2$, alors
    $${\rm Hom}_G(\Pi(V)^\dual, H^1_{\rm et}(\mathcal{M}_{n, \C_p}, L(1)))\simeq 
\begin{cases}V\otimes {\rm JL}(M) &{\text{si $V=V_{M,{\cal L}}$, et $M$ de niveau~$\leq n$}}\\
0 &{\text{si $V$ n'est pas de niveau~$\leq n$.}}\end{cases}$$
   \end{theo}

\subsubsection{Anneaux de Kisin}\label{noti7}
Soit $M$ un $(\varphi,N,\G_{\Q_p})$-module sp\'ecial ou supercuspidal.
Soit $\delta_M$ le caract\`ere central de ${\rm LL}(M)$.
On note ${\rm Rep}^{\delta_M}\,G$ la cat\'egorie des $\O_L[G]$-modules lisses, de longueur finie
et dont le caract\`ere central est $\delta_M$.

Il y a une bijection ${\cal B}\leftrightarrow\rho_{\cal B}$ entre blocs de
${\rm Rep}^{\delta_M}\,G$ et orbites sous ${\rm Gal}(\overline{\bf F}_p/k_L)$ de 
$\overline{\bf F}_p$-repr\'esentations semi-simples de dimension $2$ de $\G_{\Q_p}$, de d\'eterminant
$\delta_M\epsilon$, o\`u $\epsilon$ d\'esigne le caract\`ere cyclotomique.
On note $R_{\cal B}^{{\rm ps},\delta_M}$ l'anneau des d\'eformations universelles
de d\'eterminant $\delta_M\epsilon$
du pseudo-caract\`ere ${\rm Tr}\circ\rho_{\cal B}$.

Si ${\cal B}$ est un bloc, on d\'efinit:
\begin{align*}
&P_{\cal B}=\oplus_{\pi\in{\cal B}} P_{\pi};&&{\text{$P_{\pi}$, enveloppe projective
de $\pi^\vee$}};\\
&E_{\cal B}={\rm End}_GP_{\cal B};&&{\text{$Z_{\cal B}$, centre de $E_{\cal B}$}}.
\end{align*}
On dispose du r\'esultat fondamental suivant ({\og th\'eor\`eme $R=T$\fg} local de \paskunas~\cite{Paskext} -- 
et~\cite{PT} pour les cas exceptionnels).
\begin{theo}\label{Intro6} {\rm (\paskunas, \paskunas-Tung; \cite{Paskext,PT})}
Il existe une fl\`eche naturelle $R_{\cal B}^{{\rm ps},\delta_M}\to Z_{\cal B}$, 
et cette fl\`eche induit
un isomorphisme
$$R_{\cal B}^{{\rm ps},\delta_M}[\tfrac{1}{p}]\simeq Z_{\cal B}[\tfrac{1}{p}]$$
\end{theo}

\noindent $\bullet$ 
Si $M={\rm Sp}\otimes\eta$ est sp\'ecial (note~\ref{special}), 
on pose $R_{M,{\cal B}}=L$.
On pose aussi
$\rho_{{\cal B},M}=0$ et $\bPi^\dual(\rho_{{\cal B},M})=0$, sauf si
${\cal B}$ est le bloc de
${\rm St}\otimes\bar\eta$, o\`u $\bar\eta$ est la r\'eduction modulo~$p$ de
$\eta$, auquel cas 
on pose
$\rho_{{\cal B},M}=\eta$ 
(vu comme caract\`ere de $\G_{\Q_p}$
via la th\'eorie locale du corps de classes), 
et $\bPi^\dual(\rho_{{\cal B},M})=({\rm St}^{\rm cont}\otimes\eta)^\dual$.

\vskip.1cm
\noindent $\bullet$ Si $M$ est supercuspidal,
on note:

$\diamond$ $R_{{\cal B},M}$ le quotient de $R_{\cal B}^{{\rm ps},\delta_M}[\tfrac{1}{p}]$ param\'etrant
les repr\'esentations de type $M$ (i.e., potentiellement semi-stables, \`a poids $0$ et $1$, dont le $D_{\rm pst}$
est isomorphe \`a $M$), 

$\diamond$ $\rho_{{\cal B},M}$ la $R_{{\cal B},M}$-repr\'esentation de $\G_{\Q_p}$
de dimension~$2$ interpolant les repr\'esentations de type $M$.

(L'existence
de $R_{{\cal B},M}$ et $\rho_{{\cal B},M}$ est due \`a Kisin~\cite{Kis} dans la plupart des cas; voir \S\ref{fami1} pour des compl\'ements.
Notons que, via l'isomorphisme du th.\,\ref{Intro6}, 
$R_{{\cal B},M}$ est aussi un quotient de $Z_{\cal B}[\frac{1}{p}]$.)

$\diamond$ $\bPi^\dual(\rho_{{\cal B},M})$ la repr\'esentation de $G$ interpolant les $\bPi(\rho_x)^\dual$
pour $x\in{\rm Spm}(R_{{\cal B},M})$: si $\rho^\diamond_{{\cal B},M}$ est la
$R_{{\cal B},M}$-duale de $\rho_{{\cal B},M}$, on a (cf.~\cite[\S\S\,II.2.4 et II.3.1]{gl2})
$$\bPi^\dual(\rho_{{\cal B},M})=D^\natural(\rho^\diamond_{{\cal B},M}\otimes\epsilon)\boxtimes\piqp$$ 

Dans les deux cas, 
$$\check{\bf V}(\bPi^\dual(\rho_{{\cal B},M}))\simeq\rho_{{\cal B},M}$$

\subsubsection{R\'esultats en famille}\label{noti8}
On pose
$$H^1_{\eet}({\cal M}^p_{n,\Qbar_p},L(1)):=L\otimes_{\O_L}\big(
{\varprojlim}_k\big({\varinjlim}_{[K:\Q_p]<\infty}
H^1_{\eet}({\cal M}^p_{n,K},(\O_L/p^k)(1))\big)\big).$$
C'est un sous-objet de $H^1_{\eet}({\cal M}^p_{n,\C_p},L(1))$, 
stable par $G$, $\check G$ et $\G_{\Q_p}$,
nettement plus petit que le module initial.

On d\'eduit du th\'eor\`eme de finitude (th.\,\ref{Intro1}) et de la th\'eorie de Gabriel~\cite{gaby}, 
une d\'ecomposition en blocs
des $H^1_{\eet}({\cal M}^p_{n,K},(\O_L/p^n)(1))$. Par passage \`a la limite, cela fournit
une d\'ecomposition
$$H^1_{\eet}({\cal M}^p_{n,\Qbar_p},L(1))\simeq 
\oplus_M\big(\widehat\oplus_{\cal B}(P_{\cal B}\otimes_{E_{\cal B}}
{\mathbbm m}_{{\cal B},M})\big)\otimes_L{\rm JL}(M)$$
o\`u:
$${\mathbbm m}_{{\cal B},M}={\rm Hom}_{L[G\times\check G]}(P_{\cal B}\otimes{\rm JL}(M),
H^1_{\eet}({\cal M}^p_{n,\Qbar_p},L(1)))$$

Le th.\,\ref{intro10} est alors une cons\'equence directe du r\'esultat suivant (th.\,\ref{factor5}):
\begin{theo}\label{Intro7} 
{\rm (i)} 
Le dual de ${\mathbbm m}_{{\cal B},M}$ est un $Z_{\cal B}[\frac{1}{p}]$-module de type fini
sur lequel $Z_{\cal B}[\frac{1}{p}]$ agit par son quotient $R_{{\cal B},M}$.

{\rm (ii)}  Supposons\footnote{Si $p=3$, la preuve s'\'etend aux blocs ne contenant pas un twist
de la steinberg.}
 que $p>3$. On a un isomorphisme de $E_{\cal B}^{\delta_M}[\G_{\Q_p}]$-modules
$${\mathbbm m}_{{\cal B},M}\simeq {\rm Hom}_{L[G]}(P_{\cal B},\bPi^\dual(\rho_{{\cal B},M}))\otimes\rho_{{\cal B},M}\otimes\check R_{{\cal B},M},$$
o\`u les produits tensoriels sont au-dessus de $R_{{\cal B},M}$.
\end{theo}
Le premier \'enonc\'e du (i) r\'esulte formellement du (i) du th.\,\ref{finit2}, le second
\'enonc\'e est une cons\'equence du th.\,\ref{icdn1.2}. La d\'ecomposition du (ii) 
est une interpolation de la d\'ecomposition en tout point ferm\'e de ${\rm Spec}\,R_{{\cal B},M}$
fournie par le th.\,\ref{icdn1.2} 
(comme $R_{{\cal B},M}$ est un produit d'anneaux principaux, cette identification
en tout point implique une identification globale).

 \section{Repr\'esentations de $\gl_2(F)$}
Soient $F$ une extension finie de $\Q_p$, $\O_F$ l'anneau de ses entiers, $k_F$
son corps r\'esiduel et $\varpi$ une uniformisante.
On note:

$\bullet$ $G$ le groupe $\gl_2(F)$,

$\bullet$ $Z=\big\{\matrice{a}{0}{0}{a},\ a\in F^\dual\big\}$ le centre de $G$,

$\bullet$ $K=\gl_2(\O_F)$ le sous-groupe compact maximal standard de $G$,

$\bullet$ $B=\matrice{F^\dual}{F}{0}{F^\dual}$ le borel sup\'erieur,

$\bullet$ $G'$ le groupe quotient $G/\matrice{\varpi}{0}{0}{\varpi}^\Z$.
\subsection{G\'en\'eralit\'es}
Soient $L$ une extension finie de $\Q_p$, $\O_L$ l'anneau de ses entiers, ${\goth m}_L$
l'id\'eal maximal de $\O_L$, et $k_L=\O_L/{\goth m}_L$ son corps r\'esiduel.
\subsubsection{Modules lisses de torsion}
Soit $H$ un groupe de Lie $p$-adique (dans les applications, $H$ sera $G$ ou $G'$).
Un $\O_L[H]$-module $\pi$ est dit:

\quad $\bullet$ {\it de torsion} si tout vecteur $v\in \pi$ est tu\'e par $p^N$, pour $N$ assez grand;

\quad $\bullet$ {\it lisse} si $\pi$ est de torsion et si le stabilisateur dans $H$ de tout $v\in\pi$ est ouvert.

On note ${\rm Rep}^{\rm lisse}\,H$ la cat\'egorie des $\O_L[H]$-modules lisses.
On remarquera que les objets irr\'eductibles de ${\rm Rep}^{\rm lisse}\,H$ 
sont tu\'es par ${\goth m}_L$
et donc sont des $k_L[H]$-modules. Il n'est pas clair si une extension (dans la cat\'egorie des 
$\O_L[H]$-modules abstraits) de $\O_L[H]$-modules lisses est encore un $\O_L[H]$-module lisse. Le r\'esultat beaucoup plus faible suivant nous sera cependant tr\`es utile par la suite.

\begin{lemm}\label{extension}
Soient $U$ un sous-groupe ouvert de $H$ et
$0\to A\to B\to C\to 0$ une suite exacte de $\O_L[U]$-modules.
On suppose que 
 $p^a$ tue $A$, et que $U$ agit trivialement sur $A$ et sur $C$. Alors 
 $H$ poss\`ede un sous-groupe ouvert agissant trivialement sur $B$.
\end{lemm}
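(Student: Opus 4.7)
The plan is to reduce the problem to analyzing a single group homomorphism from $U$ to an abelian $p^a$-torsion group, and then to exploit the structure of $p$-adic Lie groups (specifically the existence of a uniform pro-$p$ open subgroup) to show this homomorphism has open kernel. Since $U$ acts trivially on $C$, for any $u \in U$ and $b \in B$ the image of $ub-b$ in $C$ is zero, hence $ub-b \in A$. Since $U$ also acts trivially on $A$, the element $ub-b$ depends only on the image $c=\pi(b) \in C$ of $b$, not on the choice of lift. Therefore the formula $\varphi(u)(c) := ub - b$ defines a map
\[
\varphi : U \longrightarrow \Hom_{\O_L}(C,A).
\]

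The next step is to verify that $\varphi$ is a group homomorphism. For $u,v \in U$ and $b \in B$ with $\pi(b)=c$, we compute
\[
\varphi(uv)(c) = uvb - b = u(vb-b) + (ub - b) = (vb-b) + (ub - b) = \varphi(v)(c) + \varphi(u)(c),
\]
where the third equality uses that $vb - b \in A$ and $U$ acts trivially on $A$. Thus $\varphi$ is additive, and an element $u \in U$ acts trivially on $B$ if and only if $\varphi(u) = 0$. It now suffices to show that $\ker\varphi$ contains an open subgroup of $H$. Observe that the target $\Hom_{\O_L}(C,A)$ is annihilated by $p^a$ since $A$ is.

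To produce such an open subgroup, choose inside $U$ a uniform pro-$p$ open subgroup $V$ of $H$ (which exists because $H$ is a $p$-adic Lie group). For a uniform pro-$p$ group, the subset $V^{p^a} = \{ v^{p^a} : v \in V \}$ is an open (normal) subgroup of $V$. For any $v = w^{p^a} \in V^{p^a}$ we have
\[
\varphi(v) = \varphi(w^{p^a}) = p^a \varphi(w) = 0,
\]
the last equality because $\Hom_{\O_L}(C,A)$ is $p^a$-torsion. Hence $V^{p^a} \subset \ker \varphi$, and $V^{p^a}$ is an open subgroup of $H$ acting trivially on $B$. There is no real obstacle here; the only subtlety worth highlighting is the use of the uniform pro-$p$ structure, which is needed to guarantee that $V^{p^a}$ is an actual subgroup (not just a subset) and is open --- this is what makes the argument work without any continuity hypothesis on the $U$-action on $B$.
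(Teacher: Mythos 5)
Your proof is correct and takes essentially the same route as the paper: both pass to a uniform pro-$p$ open subgroup and use that $p^a$-th powers form an open subgroup landing in the kernel of the action. The paper's computation $(h-1)^2=0$ on $B$, hence $h^{p^a}=1+p^a(h-1)$, is the same observation you package as the homomorphism $\varphi\colon U\to\Hom_{\O_L}(C,A)$.
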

\begin{proof} Quitte \`a remplacer $U$ par un sous-groupe ouvert, on peut supposer que $U$ est un pro-$p$ groupe uniforme, et alors $U_n=\{h^{p^n}|\,\, h\in U\}$ est un sous-groupe ouvert de $U$ pour tout $n\geq 1$. Nous allons montrer que 
$U_{a}$ agit trivialement sur $B$, ce qui permettra de conclure. 
Notons que 
$(h-1)^2\cdot v=0$ pour tout $h\in U$ et $v\in B$. 
Il s'ensuit que $h^{p^a}\cdot v=v+p^a(h-1)\cdot v$ pour 
$h\in U$ et $v\in B$, et comme $(h-1)\cdot v$ est tu\'e par $p^a$
puisqu'il appartient \`a $A$, on a $h^{p^a}\cdot v=v$, d'o\`u le r\'esultat.
\end{proof}

 Soit $K$ un sous-groupe ouvert compact de $H$. Le $\O_L[H]$-module 
$$\O_L\langle H\rangle:=\O_L[H]\otimes_{\O_L[K]} \O_L[[K]]$$
 est muni \cite[prop.\,3.2]{Sho} de l'unique structure de 
   $\O_L$-alg\`ebre    
   telle que les morphismes naturels $\O_L[H]\to \O_L\langle H\rangle $ et $\O_L[[K]]\to \O_L\langle H\rangle$ soient des morphismes de $\O_L$-alg\`ebres, et cette structure 
   est ind\'ependante (\`a isomorphisme canonique pr\`es) du choix du sous-groupe ouvert compact $K$ de $H$ utilis\'e pour la d\'efinir. De plus, par \cite[lemme 3.5]{Sho} la structure de $\O_L[H]$-module de tout objet 
   de ${\rm Rep}^{\rm lisse}\,H$ s'\'etend de mani\`ere unique en une structure de 
   $\O_L\langle H\rangle$-module.

\subsubsection{Admissibilit\'e, finitude} \label{Shotton}
On dit que $\pi\in{\rm Rep}^{\rm lisse}\,H$ est:

$\bullet$ {\it admissible} si $\pi^K[{\goth m}_L^j]$ est de longueur finie sur $\O_L$, 
pour tout sous-groupe ouvert compact $K$ de $H$ et tout $j\geq 1$.

$\bullet$ {\it localement admissible} si $\O_L[H]\cdot v\subset \pi$ est admissible pour tout 
$v\in \pi$, ou, de mani\`ere \'equivalente, si $\pi$ peut s'\'ecrire comme une limite inductive de repr\'esentations lisses admissibles. 

$\bullet$ {\it de longueur finie} si $\pi$ est de longueur finie comme $\O_L[H]$-module.

$\bullet$ {\it de type fini} si $\pi$ est de type fini comme $\O_L[H]$-module.
Les conditions suivantes sont \'equivalentes~\cite[lemmas\,2.4, 3.6]{Sho}: 

\quad (i) $\pi$ est de type fini;

\quad (ii)  $\pi$ est quotient d'une induite compacte
   c-${\rm Ind}_K^H(\sigma)$ pour une $\O_L$-repr\'e\-sentation 
lisse $\sigma$, de longueur finie,
d'un sous-groupe ouvert compact $K$ de~$H$;

\quad (iii) $\pi$ est un $\O_L\langle H\rangle$-module de type fini.

$\bullet$ {\it de pr\'esentation finie}
s'il existe une suite exacte de $\O_L[H]$-modules 
$$\text{c-}{\rm Ind}_{K_1}^H(\sigma_1)\to \text{c-}{\rm Ind}_{K_2}^H (\sigma_2)\to \pi\to 0$$
o\`u $K_1$, $K_2$ sont des sous-groupes ouverts compacts de $H$ et 
$\sigma_i$ est une $\O_L$-repr\'esentation lisse de $K_i$, 
de longueur finie sur $\O_L$.
Cela n'est pas \'equivalent \`a ce que $\pi$ soit un $\O_L[H]$-module de pr\'esentation finie, mais
plut\^ot \cite[prop.\,3.8]{Sho} \`a ce que $\pi$ soit un $\O_L\langle H\rangle$-module de pr\'esentation finie. 

On note 
$${\rm Rep}^{\rm adm}\,H,\quad {\rm Rep}^{\rm ladm}\,H,\quad {\rm Rep}^{\rm \ell f}\,H,\quad {\rm Rep}^{\rm tf}\,H,\quad 
{\rm Rep}^{\rm pf}\,H$$ les sous-cat\'egories pleines de ${\rm Rep}^{\rm lisse}\,H$
des repr\'esentations admissibles, localement admissibles, de longueur finie, de type fini et de pr\'esentation finie,
    respectivement. Si $\delta: H\to \O_L^{\dual}$ est un caract\`ere lisse, on rajoute 
    $\delta$ en exposant pour indiquer la sous-cat\'egorie pleine des repr\'esentations de caract\`ere central 
    $\delta$.

     Le r\'esultat suivant d\'ecoule directement de la discussion ci-dessus et des propri\'et\'es usuelles des modules de type fini et de pr\'esentation finie sur un anneau, mais il peut aussi \^etre d\'emontr\'e directement (et pour des groupes plus g\'en\'eraux que les groupes de Lie $p$-adiques), voir \cite[lemmes 2.6, 2.7]{Sho}: 

    \begin{prop}{\rm (Shotton)}\label{Shot25}
    Soit $0\to \pi_1\to \pi\to \pi_2\to 0$ une suite exacte dans ${\rm Rep}^{\rm lisse}\,H$.

{\rm a)} Si $\pi_1, \pi_2$ sont de type fini {\rm(}resp. de pr\'esentation finie{\rm)}, 
	il en est de m\^eme de $\pi$.

{\rm b)} Si $\pi$ est de pr\'esentation finie et $\pi_1$ est de type fini, $\pi_2$ est de pr\'esentation finie. 
\end{prop}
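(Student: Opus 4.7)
The plan is to reduce both assertions to standard module-theoretic facts by invoking the equivalences recalled just above the proposition: a lisse $\O_L[H]$-module is de type fini (resp.\ de pr\'esentation finie) in the sense of the paper if and only if it is of the same kind as a module over the ring $A := \O_L\langle H\rangle$. Once the statements are reformulated over $A$, they become instances of the elementary facts that finite generation and finite presentation of modules are preserved by extensions (part (a)), and that in a short exact sequence of $A$-modules the quotient is finitely presented as soon as the middle term is finitely presented and the sub is finitely generated (part (b)). The proofs are then standard diagram chases that I would carry out exactly as follows.

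For the finite generation half of (a), I would lift a finite generating set of $\pi_2$ to $\pi$ via the surjection $\pi\twoheadrightarrow\pi_2$ and concatenate it with a finite generating set of $\pi_1$. For the finite presentation half, I would fix presentations $A^{a_i}\to A^{b_i}\to \pi_i\to 0$ for $i=1,2$; using the projectivity of $A^{b_2}$, I would lift $A^{b_2}\twoheadrightarrow\pi_2$ to a morphism $A^{b_2}\to\pi$, and combine it with $A^{b_1}\to\pi_1\hookrightarrow\pi$ to form a surjection $A^{b_1+b_2}\twoheadrightarrow\pi$. The snake lemma applied to the evident commutative diagram with exact rows $0\to A^{b_1}\to A^{b_1+b_2}\to A^{b_2}\to 0$ and $0\to\pi_1\to\pi\to\pi_2\to 0$ then identifies the kernel of $A^{b_1+b_2}\twoheadrightarrow\pi$ with an extension of $\ker(A^{b_2}\to\pi_2)$ by $\ker(A^{b_1}\to\pi_1)$. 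Each of these kernels is a quotient of $A^{a_i}$, hence the whole kernel is generated by $a_1+a_2$ elements, giving the desired presentation.

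For (b), I would start from a presentation $A^c\to A^d\to\pi\to 0$ and compose $A^d\twoheadrightarrow\pi$ with $\pi\twoheadrightarrow\pi_2$, producing a surjection $A^d\twoheadrightarrow\pi_2$ whose kernel I denote $K$. The snake lemma applied to the obvious commutative square yields a short exact sequence $0\to \mathrm{im}(A^c\to A^d)\to K\to \pi_1\to 0$. The first term is finitely generated as the image of $A^c$, and $\pi_1$ is finitely generated by hypothesis, so $K$ is finitely generated; choosing any surjection $A^e\twoheadrightarrow K$ then produces the exact sequence $A^e\to A^d\to\pi_2\to 0$ that presents $\pi_2$.

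There is no serious obstacle in the argument. The only point requiring care is to work systematically in the category of $A$-modules rather than with abstract $\O_L[H]$-modules, since an extension of lisse modules need not a priori remain lisse (compare Lemma~\ref{extension}), and the notion of pr\'esentation finie used in the paper is precisely the one obtained after replacing $\O_L[H]$ by $\O_L\langle H\rangle$. Once this translation is made, the proof is essentially formal, as in \cite[lemmes 2.6, 2.7]{Sho}.
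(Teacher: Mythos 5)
Your proof is correct and follows exactly the route the paper itself indicates: translate via the equivalences of \S\,\ref{Shotton} into statements about finitely generated and finitely presented modules over $\O_L\langle H\rangle$, then apply the standard diagram-chase arguments (the paper merely cites this reduction and \cite[lemmes 2.6, 2.7]{Sho} rather than writing out the details you supply). No gap: since the exact sequence is given in ${\rm Rep}^{\rm lisse}\,H$, all three terms carry canonical $\O_L\langle H\rangle$-module structures and the maps are automatically $\O_L\langle H\rangle$-linear, so your systematic passage to $A$-modules is legitimate.
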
 

Un des r\'esultats principaux de l'article \cite{Sho} de Shotton est le suivant:
\begin{prop}\label{Shot27}
\cite[cor.\,4.4]{Sho}
Si $H$ est un groupe de Lie $p$-adique qui est un produit amalgam\'e de deux sous-groupes ouverts compacts, 
   alors $\O_L\langle H\rangle $ est un anneau coh\'erent, et donc ${\rm Rep}^{\rm pf}\,H$ 
   est une sous-cat\'egorie ab\'elienne de ${\rm Rep}^{\rm lisse}\,H$ 
{\rm (i.e. le noyau et le conoyau d'un morphisme entre 
des repr\'esentations de pr\'esentation finie sont encore de pr\'esentation finie)}.
\end{prop}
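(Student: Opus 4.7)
My plan is to prove this via Bass--Serre theory for the amalgamated product decomposition $H = K_1 *_K K_2$, with $K := K_1 \cap K_2$, combined with noetherianity of Iwasawa algebras of compact $p$-adic Lie groups. Set $R := \O_L\langle H\rangle$ and $R_i := \O_L[[K_i]]$ for $i = 0, 1, 2$, with the convention $K_0 := K$. By Lazard's theorem each $R_i$ is noetherian, hence coherent. A choice of coset representatives of $H/K_i$ exhibits $R$ as a topologically free right $R_i$-module, so that $R$ is flat as a right $R_i$-module.

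The geometric input is the Bass--Serre tree $T$ for the amalgam, on which $H$ acts with one edge orbit (stabilizer $K$) and two vertex orbits (stabilizers $K_1, K_2$). Completing the augmented cellular chain complex of $T$ produces an exact sequence of $(R,R)$-bimodules
$$0 \to R \otimes_{R_0} R \to (R \otimes_{R_1} R) \oplus (R \otimes_{R_2} R) \to R \to 0,$$
exactness following from contractibility of $T$ and flatness of $R$ over each $R_i$. Tensoring on the right with any left $R$-module $M$ yields the Mayer--Vietoris exact sequence
$$0 \to R \otimes_{R_0} M \to (R \otimes_{R_1} M) \oplus (R \otimes_{R_2} M) \to M \to 0.$$

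From this sequence I would deduce coherence of $R$ by adapting the classical coherence theorem for amalgamated products of rings over a noetherian subring. Given a finitely generated left ideal $I \subseteq R$, one writes the generators in Bass--Serre normal form with respect to the amalgam, filters $R$ by word length, and then uses the Mayer--Vietoris sequence together with noetherianity of each $R_i$ to assemble a finite presentation of $I$ over $R$. Once coherence of $R$ is proved, the assertion that ${\rm Rep}^{\rm pf}\,H$ is abelian is formal, being a consequence of the general principle that kernels, cokernels, and extensions of finitely presented modules over a coherent ring are finitely presented.

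The main obstacle is adapting the classical discrete coherence theorem to the Iwasawa-completed setting: being finitely generated over $R$ does not imply being finitely generated over any $R_i$, and the completion interacts non-trivially with the amalgam structure. The required filtration by Bass--Serre word length, combined with a limit argument controlled by noetherianity of the $R_i$, is the technical heart of Shotton's proof, and is where the hypothesis that $T$ has only two vertex orbits (i.e., $H$ is an amalgam of just two compact subgroups, not a more general graph of groups) is essential.
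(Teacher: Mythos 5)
The proposition is quoted from Shotton and the paper gives no argument of its own beyond the citation of \cite[cor.\,4.4]{Sho}; the substance there is a coherence theorem of {\AA}berg for amalgamated coproducts of rings. Your outline follows exactly that architecture: $\O_L\langle H\rangle$ is the ring amalgam $\O_L[[K_1]]*_{\O_L[[K]]}\O_L[[K_2]]$ with $K=K_1\cap K_2$, the Iwasawa algebra $\O_L[[K]]$ is noetherian, each $\O_L[[K_i]]$ is free of finite rank over it (since $K$ is open, hence of indice fini, in $K_i$), $\O_L\langle H\rangle$ is genuinely free (not merely ``topologically free'') as a one-sided module over each $\O_L[[K_i]]$, and the Mayer--Vietoris bimodule sequence you write is the standard one attached to such a coproduct (equivalently to the Bass--Serre tree). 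The final implication, coherence of $\O_L\langle H\rangle$ $\Rightarrow$ ${\rm Rep}^{\rm pf}\,H$ abelian, is indeed formal, but note that it uses the identification of finitely presented smooth representations with finitely presented $\O_L\langle H\rangle$-modules recalled in \S\,\ref{Shotton} (\cite[prop.\,3.8]{Sho}), which you invoke only implicitly.

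The genuine gap is that the decisive step --- coherence of the completed amalgam --- is announced rather than proved. Saying that one ``writes the generators in Bass--Serre normal form, filters $R$ by word length, and assembles a finite presentation'' is a restatement of the goal, not an argument: nothing in the sketch explains why the module of relations of a finitely generated left ideal of $R$ is finitely generated, how a normal-form/word-length filtration is to be formulated for elements of $\O_L\langle H\rangle$ (finite sums $\sum h_j\mu_j$ with $\mu_j\in\O_L[[K]]$, where the completed coefficients interact with the amalgam), or where noetherianity of the $\O_L[[K_i]]$ is used to terminate the process. You yourself flag this as the technical heart, and it is precisely the content of the theorem being proved. In the cited source this step is not redone by hand: it is {\AA}berg's theorem (coherence of $R_1*_{R_0}R_2$ when $R_0$ is noetherian and $R_1,R_2$ are coherent and flat as $R_0$-modules), applied to the Iwasawa algebras. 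So either quote that amalgamation theorem explicitly, in which case your reduction is complete and matches Shotton's proof, or actually carry out the filtration argument; as written, the proposal is a correct plan with its central point missing.
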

(C'est imm\'ediat pour le conoyau, mais pas du tout clair pour le noyau!)

\begin{rema}
(i) $\SL_2(F)$ et 
$\{g\in \gl_2(F),\  \det g\in\O_F^\dual\}$ 
sont des produits amalgam\'es de deux sous-groupes ouverts compacts, 
     d'apr\`es le th\'eor\`eme d'Ihara; le r\'esultat ci-dessus s'applique donc \`a ces groupes.

     (ii)  Si 
     $H$ est un sous-groupe ouvert de $G'$, d'indice fini, alors une 
     repr\'esentation lisse de $G'$ est de type fini (resp. de pr\'esentation finie) si et seulement si 
     sa restriction \`a $H$ l'est~\cite[lemme\,2.8]{Sho}. 
     \end{rema}

     Nous allons utiliser cette remarque pour le groupe 
     $H=\{g\in \gl_2(F),\  \det g\in\O_F^\dual\}$ vu comme sous-groupe ouvert d'indice fini de 
     $G'=\gl_2(F)/\varpi^{\mathbb{Z}}$.

     \subsection{Repr\'esentations irr\'eductibles de $\gl_2(F)$}
Les objets irr\'eductibles de ${\rm Rep}^{\rm lisse}\,G$ sont tu\'es par ${\goth m}_L$,
et donc sont des $k_L$-modules.
     \subsubsection{Repr\'esentations absolument irr\'eductibles}
     Appelons
{\em poids de Serre} un $KZ$-module lisse irr\'eductible sur $\overline{\bf F}_p$, le
centre $Z$ agissant par un caract\`ere (la restriction \`a $K$
		se factorise alors
		par le quotient $ \gl_2(k_F)$).

Si $\sigma$ est un poids de Serre, on note $$I(\sigma)=\text{c-}{\rm Ind}_{KZ}^G(\sigma)$$
l'induite compacte de $\sigma$. 
On dispose d'un isomorphisme \cite[prop.\,8]{BL} de $\overline{\bf F}_p$-alg\`ebres
\begin{equation}\label{hecke}
{\rm End}_G (I(\sigma))\simeq \overline{\bf F}_p[T],
	\end{equation}
	pour un certain op\'erateur de Hecke $T$. De plus, 
	$I(\sigma)$ est un module libre sur $\overline{\bf F}_p[T]$ d'apr\`es \cite[th.\,19]{BL}. 

	\smallskip
	Le th\'eor\`eme de classification de Barthel-Livn\'e \cite{BL0,BL} 
	montre que les repr\'esentations lisses irr\'eductibles de 
	$G$ sur $\overline{\mathbf{F}}_p$, avec un caract\`ere central,
	sont les suivantes:

	$\bullet$ les $\chi\circ\det$ (not\'ees simplement $\chi$), 
	avec $\chi: G\to \overline{\mathbf{F}}_p^\dual$ un caract\`ere lisse.

	$\bullet$ les ${\rm Ind}_B^G(\chi_1\otimes \chi_2)$, 
	o\`u $\chi_1,\chi_2$ sont des caract\`eres lisses distincts de $F^\dual$. 

	$\bullet$ les ${\rm St}\otimes \chi$, 
	o\`u ${\rm St}$ est la steinberg, quotient de 
	${\rm Ind}_B^G(1\otimes 1)$ par la repr\'esentation triviale. 

	$\bullet$ les {\it supersinguli\`eres}, i.e.~les
	quotients irr\'eductibles des $I(\sigma)/(T)$, avec $\sigma$ un poids de Serre.    
	\smallskip

	Les repr\'esentations irr\'eductibles non supersinguli\`eres sont dites {\it ordinaires};
les repr\'esentations ordinaires sont donc les composantes de Jordan-H\"older des repr\'esentations
de la s\'erie principale.

\subsubsection{Questions de rationalit\'e}
Le lemme suivant de {\paskunas} (\cite[lemma 5.1]{Paskext}) nous sera tr\`es utile.
\begin{lemm}\label{51}
Soit $H$ un groupe, $L/K$ une extension de corps et soient $V,W$ de $K[H]$-modules, $V$ \'etant de type fini. Alors 
$${\rm Hom}_{K[H]}(V, W)\otimes_K L\simeq {\rm Hom}_{L[H]}(V\otimes_K L, W\otimes_K L).$$
\end{lemm}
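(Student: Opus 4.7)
L'id\'ee est de construire une transformation naturelle en $V$
$$\varphi_V : \Hom_{K[H]}(V, W)\otimes_K L \to \Hom_{L[H]}(V\otimes_K L, W\otimes_K L),\quad f\otimes \lambda\mapsto \big(v\otimes \mu\mapsto f(v)\otimes \lambda\mu\big),$$
et de montrer que $\varphi_V$ est un isomorphisme lorsque $V$ est de type fini. On v\'erifie sans peine que $\varphi_V$ est bien d\'efinie, $L$-lin\'eaire et $H$-\'equivariante. Remarquons tout de suite que $\varphi_V$ est injectif sans aucune hypoth\`ese sur $V$: en fixant une base $(\lambda_i)_{i\in I}$ de $L$ sur $K$, on \'ecrit tout \'el\'ement de la source comme $\sum_i f_i\otimes \lambda_i$ (somme finie), et la d\'ecomposition $W\otimes_K L=\bigoplus_i W\otimes \lambda_i$ force $f_i(v)=0$ pour tout $i$ et tout $v\in V$ d\`es que l'image par $\varphi_V$ annule les \'el\'ements de la forme $v\otimes 1$.

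Pour $V=K[H]$, l'\'evaluation en $1$ identifie les deux membres \`a $W\otimes_K L$ et $\varphi_{K[H]}$ devient l'identit\'e; par additivit\'e en $V$, $\varphi_{K[H]^n}$ est un isomorphisme pour tout entier $n$. C'est ici qu'intervient l'hypoth\`ese \og $V$ de type fini \fg: on fixe une surjection $K[H]^n\to V$ de noyau $V'$. La platitude de $L$ sur $K$ rend $\otimes_K L$ exact, et l'exactitude \`a gauche de $\Hom$ fournit un diagramme commutatif \`a lignes exactes
$$\begin{array}{ccccc}
\Hom_{K[H]}(V,W)\otimes L & \hookrightarrow & \Hom_{K[H]}(K[H]^n,W)\otimes L & \to & \Hom_{K[H]}(V',W)\otimes L \\
\downarrow \varphi_V & & \downarrow \varphi_{K[H]^n} & & \downarrow \varphi_{V'} \\
\Hom_{L[H]}(V\otimes L,W\otimes L) & \hookrightarrow & \Hom_{L[H]}(L[H]^n,W\otimes L) & \to & \Hom_{L[H]}(V'\otimes L,W\otimes L).
\end{array}$$
La fl\`eche verticale centrale est un isomorphisme (cas libre) et celle de droite est injective (cas g\'en\'eral); une chasse au diagramme standard en d\'eduit que $\varphi_V$ est un isomorphisme, ce qui est exactement l'\'enonc\'e souhait\'e.

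L'argument est purement formel et ne pr\'esente pas de v\'eritable obstacle. L'hypoth\`ese \og $V$ de type fini \fg\ n'intervient qu'\`a la derni\`ere \'etape pour forcer $n$ fini dans la pr\'esentation libre, ce qui permet au $\Hom$ de commuter \`a la somme directe correspondante; sans elle, $\varphi_V$ reste injectif mais peut ne pas \^etre surjectif. Notons au passage que rien dans la preuve n'utilise v\'eritablement la structure de groupe de $H$: seul compte le fait que $K[H]$ est une $K$-alg\`ebre et que $L[H]=K[H]\otimes_K L$.
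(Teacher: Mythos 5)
Votre preuve est correcte. Notez d'abord que le pr\'esent article ne red\'emontre pas ce lemme: il le cite tel quel d'apr\`es \paskunas\ (\cite[lemma 5.1]{Paskext}), il n'y a donc pas de preuve interne \`a comparer. Votre argument --- surjection $K[H]^n\to V$ de noyau $V'$, exactitude \`a gauche de $\Hom$, platitude (en fait libert\'e) de $L$ sur $K$, puis chasse au diagramme utilisant que $\varphi_{K[H]^n}$ est un isomorphisme et que $\varphi_{V'}$ est injectif --- est complet, et vous identifiez bien le point cl\'e qui fait que \og de type fini \fg\ suffit (l\`a o\`u le slogan usuel de changement de base demanderait \og de pr\'esentation finie \fg): l'injectivit\'e de $\varphi$ vaut sans hypoth\`ese parce qu'on tensorise sur $K$ par le $K$-module libre $L$, et non sur l'anneau $K[H]$. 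Une route l\'eg\`erement plus directe, qui est essentiellement celle de \paskunas: par adjonction ${\rm Hom}_{L[H]}(V\otimes_KL, W\otimes_KL)\simeq {\rm Hom}_{K[H]}(V, W\otimes_KL)$, puis, en choisissant une $K$-base $(\lambda_i)_{i\in I}$ de $L$, on a $W\otimes_KL\simeq \oplus_{i\in I}W$ comme $K[H]$-module, et ${\rm Hom}_{K[H]}(V,-)$ commute aux sommes directes d\`es que $V$ est de type fini (l'image de chaque g\'en\'erateur tombe dans une sous-somme finie), d'o\`u $\oplus_i{\rm Hom}_{K[H]}(V,W)\simeq {\rm Hom}_{K[H]}(V,W)\otimes_KL$. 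Les deux arguments ont le m\^eme contenu; le v\^otre est un peu plus long mais tout aussi valable, et votre remarque finale est exacte: seul intervient le fait que $K[H]$ est une $K$-alg\`ebre et que $L[H]=K[H]\otimes_KL$.
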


\begin{lemm}\label{irred}
Soit $\pi\in {\rm Rep}^{\rm adm}\,G$, irr\'eductible. Si $k={\rm End}_{k_L[G]}\pi$,
     alors:

{\rm (i)} $k$ est une extension finie de $k_L$.

{\rm (ii)} $\pi$ est absolument irr\'eductible vue comme $k[G]$-module.

{\rm (iii)} $k\otimes_{k_L}\pi=\oplus_{\sigma\in {\rm Hom}(k,k)}k\otimes_{k,\sigma}\pi$,
et les $k\otimes_{k,\sigma}\pi$ sont deux \`a deux non isomorphes comme $k[G]$-modules
mais sont toutes isomorphes comme $k_L[G]$-modules.
\end{lemm}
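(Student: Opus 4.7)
The plan is to treat the three claims in the stated order, using admissibility for (i), Jacobson density for (ii), and the Galois decomposition of $k\otimes_{k_L}k$ for (iii).

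For (i), Schur's lemma makes $k$ a division $k_L$-algebra. Fix a nonzero $v\in\pi$ and a compact open subgroup $K_0$ of $G$ fixing $v$; the evaluation map $k\to \pi^{K_0}$, $\phi\mapsto\phi(v)$, is $k_L$-linear and injective, the latter because $\pi=k_L[G]\cdot v$ by irreducibility, so any $\phi\in k$ is determined by $\phi(v)$. Admissibility yields $\dim_{k_L}\pi^{K_0}<\infty$, whence $k$ is a finite-dimensional division algebra over the finite field $k_L$; Wedderburn's little theorem then forces commutativity, so $k$ is a finite field extension of $k_L$.

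For (ii), the inclusion ${\rm End}_{k[G]}(\pi)\subset{\rm End}_{k_L[G]}(\pi)=k$ is automatic, and conversely $k\subset {\rm End}_{k[G]}(\pi)$ since $k$ is commutative (any element of $k$ commutes with the $k$-action); hence ${\rm End}_{k[G]}(\pi)=k$, and in particular $\pi$ is irreducible as $k[G]$-module. To get absolute irreducibility, let $k'/k$ be any field extension and suppose for contradiction that $W\subsetneq \pi\otimes_k k'$ is a nonzero $k'[G]$-subrepresentation. Take $0\neq w\in W$; by expanding in a $k$-basis $f_1,\dots,f_m\in\pi$ of the $k$-span of the $\pi$-coefficients of $w$, one may write $w=\sum_{j=1}^m f_j\otimes d_j$ with $f_j$ linearly independent over $k$, $d_j\in k'$, and (after relabelling) $d_1\neq 0$. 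By the Jacobson density theorem, applied to the simple $k[G]$-module $\pi$ with commutant $k$, for any prescribed $u\in\pi$ there is $g\in k[G]$ with $gf_1=u$ and $gf_j=0$ for $j>1$; hence $gw=u\otimes d_1\in W$, so $\pi\otimes d_1\subset W$. As $W$ is a $k'$-subspace, $W\supset k'\cdot(\pi\otimes d_1)=\pi\otimes_k k'$, contradicting the properness of $W$.

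For (iii), note that ${\rm Hom}(k,k)={\rm Gal}(k/k_L)$ (finite extension of finite fields). One has a canonical $k_L$-algebra isomorphism
$$k\otimes_{k_L}k\simeq \prod_{\sigma\in{\rm Gal}(k/k_L)}k,\qquad a\otimes b\mapsto (a\sigma(b))_\sigma,$$
with orthogonal idempotents $(e_\sigma)$. The two commuting $k$-actions on $k\otimes_{k_L}\pi$ (left multiplication and the original $k$-structure on $\pi$) give it a $(k\otimes_{k_L}k)$-module structure commuting with $G$, and applying the idempotents yields the $k[G]$-decomposition $k\otimes_{k_L}\pi=\bigoplus_\sigma \pi_\sigma$ with $\pi_\sigma:=e_\sigma(k\otimes_{k_L}\pi)=k\otimes_{k,\sigma}\pi$. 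As $k_L[G]$-module $k\otimes_{k_L}\pi\simeq \pi^{[k:k_L]}$, and since the decomposition has exactly $[k:k_L]$ nonzero summands, each $\pi_\sigma$ must be isomorphic to $\pi$ as $k_L[G]$-module; hence the $\pi_\sigma$'s are all $k_L[G]$-isomorphic (and $k_L[G]$-irreducible, a fortiori $k[G]$-irreducible). To see the $\pi_\sigma$'s are pairwise non-isomorphic as $k[G]$-modules: any $k[G]$-isomorphism $\Phi\colon\pi_\sigma\to\pi_\tau$ underlies a $k_L[G]$-endomorphism of $\pi$, hence by (i) is multiplication by some nonzero $\lambda\in k$; compatibility of $\Phi$ with the ($\sigma$- versus $\tau$-twisted) $k$-actions on source and target then gives $\lambda\sigma(c)=\tau(c)\lambda$ for all $c\in k$, and commutativity of $k$ together with $\lambda\neq 0$ forces $\sigma=\tau$.

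The main obstacle is the absolute irreducibility in (ii): knowing the commutant remains a field after base change does not by itself preclude non-trivial $k'[G]$-subrepresentations, and Jacobson density is what supplies enough elements of $k[G]$ to force any such subrepresentation to fill out $\pi\otimes_k k'$.
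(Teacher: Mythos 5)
Your proof is correct, and its overall skeleton matches the paper for (i) and (iii): part (i) is the same Schur-plus-admissibility argument (embed $k$ into the finite-dimensional space $\pi^{K_0}$ via $\phi\mapsto\phi(v)$; the paper leaves Wedderburn's theorem implicit where you invoke it explicitly), and in (iii) the paper uses the same decomposition coming from $k\otimes_{k_L}k\simeq\prod_{\sigma}k$, deducing the pairwise non-isomorphism from the fact that ${\rm End}_{k[G]}(k\otimes_{k_L}\pi)\simeq k\otimes_{k_L}k$ (via \paskunas's base-change lemma, lemme~\ref{51}) is a product of fields, whereas you compute directly with the scalar $\lambda\in k^{\times}$ and the relation $\lambda\sigma(c)=\tau(c)\lambda$ --- both are fine, and your counting argument for the $k_L[G]$-isomorphism class of each summand replaces the paper's ``\'evident''. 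The genuine divergence is in (ii). The paper reduces a hypothetical failure of absolute irreducibility to a finite extension $k'/k$, uses admissibility to extract an irreducible $k'[G]$-submodule $\tau\subset k'\otimes_k\pi$, forms $X=\sum_{h\in{\rm Gal}(k'/k)}h\cdot\tau$, applies Galois descent to get $X^{\Gamma}=\pi$, concludes that $k'\otimes_k\pi$ is semisimple, and finishes with ${\rm End}_{k'[G]}(k'\otimes_k\pi)=k'$ (again lemme~\ref{51}). Your Burnside/Jacobson-density argument replaces all of this: once ${\rm End}_{k[G]}(\pi)=k$ is established, density over the commutant shows any nonzero $k'[G]$-submodule of $\pi\otimes_k k'$ is everything, for an arbitrary extension $k'$, with no reduction to finite extensions, no Galois descent, no appeal to lemme~\ref{51}, and no further use of admissibility (which in your version enters only through (i), consistently with the counterexample of Le cited after the lemma, where ${\rm End}_G\pi$ fails to be finite over $k_L$). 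The paper's route has the merit of reusing tools that recur throughout the article and of exhibiting semisimplicity of $k'\otimes_k\pi$ along the way; yours is more self-contained and marginally more general.
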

\begin{proof}
$k$ est une alg\`ebre \`a division, de dimension finie 
sur le corps fini $k_L$ (car $\pi$ est irr\'eductible et admissible, donc si l'on fixe $v\in \pi^{K_1}$ non nul, la fl\`eche 
		$f\mapsto f(v)$ induit une injection de $k$
		dans le $k_L$-espace de dimension finie $\pi^{K_1}$).
Il s'ensuit que $k$ est une extension finie de $k_L$, ce qui prouve le (i).

Si $k'$ est une extension de $k$ et si $\pi':=k'\otimes_k\pi$, alors
 d'apr\`es le lemme~\ref{51}, 
 $${\rm End}_{k'[G]}(\pi')= k'\otimes_k{\rm End}_{k[G]}(\pi)=k'$$
Si $\pi$ n'est pas absolument irr\'eductible vue comme $k[G]$-module,
on peut trouver une extension finie $k'$ de $k$ et un sous
$k'[G]$-module propre $\tau\subset \pi'$. 
Comme $\pi'$ est admissible, on peut supposer que $\tau$ est 
irr\'eductible\footnote{Sinon on fabrique une suite d\'ecroissante stricte de $G$-modules dans
        $\tau$, contredisant son admissibilit\'e.}. Le groupe $\Gamma={\rm Gal}(k'/k)$ agit sur
        $\pi'$ et $X:=\sum_{h\in \Gamma} h\cdot\tau$ est $G\times \Gamma$-stable dans $\pi'$. 
Par descente galoisienne $X^{\Gamma}$ est non nul et $G$-stable dans $\pi$,
donc $X^{\Gamma}=\pi$ (car $\pi$ est irr\'eductible
comme $k_L[G]$-module et donc, a fortiori, comme $k[G]$-module), 
ce qui exhibe $\pi'$ comme un quotient d'une somme directe 
finie $\oplus_{h\in \Gamma} h\cdot\tau$ de repr\'esentations irr\'eductibles. Donc
        $\pi'$ est semi-simple et comme ses endomorphismes sont $k'$, elle
est irr\'eductible contrairement \`a l'hypoth\`ese. Ceci prouve le (ii).

Enfin la d\'ecomposition 
$k\otimes_{k_L}k=\prod_{\sigma\in {\rm Hom}(k,k)} k$ fournit celle du (iii); le fait que les
$k\otimes_{k,\sigma}\pi$ sont deux \`a deux non isomorphes comme $k[G]$-modules
r\'esulte de ce que ${\rm End}_{k[G]}(k\otimes_{k_L}\pi)=k\otimes_{k_L}k$ (d'apr\`es
le lemme~\ref{51}) est un produit de corps; qu'elles soient toutes isomorphes comme $k_L[G]$-modules
est \'evident.
\end{proof}
\begin{rema}
L'irr\'eductibilit\'e sans l'admissibilit\'e
n'est pas suffisante pour impliquer les conclusions du lemme~\ref{irred}:
Le~\cite[th.\,1.2]{Le} a construit une ${\bf F}_{p^3}$-repr\'esentation 
supersinguli\`ere $\pi$ de $\gl_2(\Q_{p^3})$, telle que ${\rm End}_{G}\,\pi=\overline{\bf F}_{p}$,
ce qui implique que ${\bf F}_{p^{3n}}\otimes\pi$ a
$n$ composantes irr\'eductibles, pour tout $n$, et aucune n'est absolument irr\'eductible.
\end{rema} 

\subsubsection{Le cas $F=\Q_p$} \label{hecke2}
La classification des repr\'esentations irr\'eductibles
 est nettement mieux comprise dans le cas $F=\Q_p$:

$\bullet$  Breuil \cite{Breuil} a montr\'e que
les $I(\sigma)/(T)$ sont irr\'eductibles et donc les supersinguli\`eres sont
les $I(\sigma)/(T)$ (il a aussi d\'etermin\'e dans quels cas $I(\sigma_1)/(T)\simeq I(\sigma_2)/(T)$).

$\bullet$
Berger \cite{Berger} a montr\'e que l'existence d'un caract\`ere central 
est en fait une cons\'equence de l'irr\'eductibilit\'e.

$\bullet$ Il est facile de v\'erifier que les repr\'esentations
 ordinaires sont admissibles et de pr\'esentation finie (cela est vrai m\^eme pour $F\ne \Q_p$).
La preuve de Breuil permet de v\'erifier que les supersinguli\`eres le sont aussi
(on obtient des r\'esultats plus fins en combinant les th.\,IV.2.1 et~IV.4.7 de \cite{gl2}).
Il en r\'esulte
que tout $\pi\in{\rm Rep}^{\rm lisse}\,G$ irr\'eductible est admissible et de pr\'esentation finie, et donc (en utilisant la stabilit\'e 
de ces deux propri\'et\'es par extensions) que 
$${\rm Rep}^{\rm\ell f}\,G\subset {\rm Rep}^{\rm adm}\,G\cap {\rm Rep}^{\rm pf} \, G$$
(Voir le th.\,\ref{ord-ter} pour une inclusion dans l'autre sens.)

$\bullet$ Il r\'esulte de la classification de Barthel-Livn\'e et Breuil
des $\overline{\bf F}_p$-repr\'esentations irr\'eductibles de $G$ que, si $\pi$ est une telle repr\'esentation,
il existe un corps fini $k$ tel que $\pi$ soit obtenue par extension des scalaires d'une repr\'esentation
d\'efinie sur $k$. Il s'ensuit que l'orbite de $\pi$ sous l'action de ${\rm Gal}(\overline{\bf F}_p/k_L)$
est finie\footnote{
Pour d\'efinir cette action, on choisit une base $(e_i)_{i\in I}$ de $\pi$ sur $\overline{\bf F}_p$;
si $U_g\in\gl_I(\overline{\bf F}_p)$ est la matrice de $g\in G$ dans cette base, on a $U_{gh}=U_gU_h$
et donc, si $\sigma\in {\rm Gal}(\overline{\bf F}_p/k_L)$, on a $\sigma(U_{gh})=\sigma(U_g)\sigma(U_h)$;
il s'ensuit que $\sigma(U_g)$ est la matrice de $g$ sur une repr\'esentation $\pi^\sigma$ de $G$;
cette repr\'esentation ne d\'epend pas du choix de la base car changer de base change $U_g$ en $M^{-1}U_gM$
et $\sigma(U_g)$ en $\sigma(M)^{-1}\sigma(U_g)\sigma(M)$.}.  Si $H$ est le fixateur
de $\pi$ dans ${\rm Gal}(\overline{\bf F}_p/k_L)$, et si $k=\overline{\bf F}_p^H$, alors $\pi$ est l'extension
des scalaires d'une repr\'esentation $\pi_k$ d\'efinie sur $k$, telle que ${\rm End}_G\pi_k=k$.
Les $\pi_k^\sigma$, vues comme $k_L$-repr\'esentations de $G$, sont alors toutes isomorphes, 
et on note $\pi_L$ la classe d'isomorphisme de $K_L$-repr\'esentations ainsi d\'efinie.

L'application $\pi\mapsto\pi_L$ induit
une bijection naturelle
entre les orbites
sous ${\rm Gal}(\overline{\bf F}_p/k_L)$ de l'ensemble des $\overline{\bf F}_p$-repr\'esentations
irr\'eductibles de $G$ et les $k_L$-repr\'esentations irr\'eductibles de $G$, la bijection
r\'eciproque envoyant $\pi$ sur l'ensemble des composantes de Jordan-H\"older
de $\overline{\bf F}_p\otimes_{k_L}\pi$ (que celles-ci forment une orbite sous
${\rm Gal}(\overline{\bf F}_p/k_L)$ se d\'eduit du (iii) du lemme~\ref{irred}).

\subsection{Repr\'esentations de $\gl_2(\Q_p)$ et repr\'esentations de $\G_{\Q_p}$}\label{qc3}
On suppose $F=\Q_p$ dans ce paragraphe.
Si $\delta:\Q_p^\dual\to \O_L^\dual$ est un caract\`ere continu, on note ${\rm Rep}^\delta\,G$
la sous-cat\'egorie de ${\rm Rep}^{\rm\ell f}\,G$ des objets de caract\`ere central~$\delta$.
\subsubsection{La correspondance de Langlands locale $p$-adique.}
Rappelons que l'on dispose d'un foncteur covariant exact $\Pi\mapsto{\bf V}(\Pi)$
de ${\rm Rep}^{\rm \ell f}\,G$ (plus pr\'ecis\'ement de la sous-cat\'egorie des objets \`a caract\`ere central) dans la cat\'egorie des $\O_L$-repr\'esentations continues de $\G_{\Q_p}$,
de longueur finie sur $\O_L$. 

On \'etend ce foncteur par limite projective et tensorisation
par $L$ aux $L$-repr\'esentations de Banach unitaires de $G$, r\'esiduellement de longueur finie 
(i.e. dont la r\'eduction modulo~${\goth m}_L$ est de longueur finie; 
une telle repr\'esentation est automatiquement admissible et tout $L$-banach admissible, 
de longueur finie est r\'esiduellement de longueur finie -- un des r\'esultats principaux 
de \cite{CDP}) et avec un caract\`ere central. 
Si $\Pi$ est une telle $L$-repr\'esentation,
${\bf V}(\Pi)$ est une $L$-repr\'esentation de $\G_{\Q_p}$, de dimension finie.

R\'eciproquement, si $V$ est une $L$-repr\'esentation de $\G_{\Q_p}$, de dimension~$2$,
il existe \cite{gl2,CDP} une plus grande repr\'esentation $\bPi(V)$ de $G$, unitaire, r\'esiduellement de longueur finie  telle que
${\bf V}(\bPi(V))=V$ et $\bPi(V)^{\SL_2(\Q_p)}=0$ (cette derni\`ere condition
est n\'ecessaire pour assurer l'unicit\'e car ${\bf V}$ tue $\Pi^{\SL_2(\Q_p)}$).
La correspondance $V\mapsto\bPi(V)$ est la correspondance de Langlands locale $p$-adique.

La repr\'esentation ${\bf V}(\Pi)$ n'est pas toujours de dimension~$2$, mais est
toujours {\og de type $\gl_2$\fg} (cf.~th.\,\ref{pasku6} pour un \'enonc\'e pr\'ecis: l'expression
$g+\delta\epsilon(g)g^{-1}$ qui appara\^{\i}t dans
l'\'enonc\'e de ce th\'eor\`eme est la trace de $g$ si $g$ agit sur un module de rang $2$ et le d\'eterminant
de $g$ est $\delta\epsilon(g)$).

\subsubsection{Les repr\'esentations $I(\chi_1,\chi_2)$}\label{BL11}
On voit les caract\`eres continus de $\Q_p^\dual$
comme des caract\`eres de $\mathcal{G}_{\qp}$ par la th\'eorie locale du corps de classes
\footnote{Normalis\'ee de telle sorte que les uniformisantes correspondent aux frobenius g\'eom\'etriques.} et aussi
comme des caract\`eres de $G$ en composant avec le d\'eterminant. 
\vskip1mm
Soit $\epsilon$ la r\'eduction modulo $p$ du caract\`ere 
$x\to x|x|$ de $\qp^{\dual}$; vu comme caract\`ere de~$\sg_{\Q_p}$, c'est la r\'eduction
du caract\`ere cyclotomique.
On dit qu'un couple de caract\`eres $\chi_1, \chi_2: \qp^{\dual}\to \overline{\bf F}_p^{\dual}$ 
continus
(et donc lisses) est {\it g\'en\'erique} si $\chi_1\chi_2^{-1}\ne 1,\epsilon^{\pm 1}$.
\vskip1mm
Soit $(\chi_1,\chi_2)$ g\'en\'erique, et soient
$I(\chi_1, \chi_2)$ et $I(\chi_2,\chi_1)$ les
repr\'esentations absolument irr\'eductibles de $G$
   $$I(\chi_1, \chi_2):={\rm Ind}_B^G(\chi_2\otimes \chi_1\epsilon^{-1}),
\quad I(\chi_2,\chi_1)={\rm Ind}_B^G(\chi_1\otimes \chi_2\epsilon^{-1}).$$
Leur caract\`ere central est $\chi_1\chi_2\epsilon^{-1}$, et il existe une unique extension non triviale
$$0\to I(\chi_1,\chi_2)\to\Pi_{\chi_1,\chi_2}\to I(\chi_2,\chi_1)\to 0$$
Les repr\'esentations $I(\chi_1,\chi_2)$, 
$I(\chi_2,\chi_1)$ et $\Pi_{\chi_1,\chi_2}$ 
ont des mod\`eles sur $k_L(\chi_1,\chi_2)$ qui, comme dans le \no\ref{hecke2},
 peuvent \^etre consid\'er\'ees comme des
repr\'esentations sur $k_L$.
On a alors
$${\bf V}(I(\chi_1,\chi_2))=\chi_1,\quad {\bf V}(I(\chi_2,\chi_1))=\chi_2,
\quad
{\bf V}(\Pi_{\chi_1,\chi_2})=\bar{r}_{\chi_1,\chi_2}$$
o\`u $\chi_1$ et $\chi_2$ sont les $k_L(\chi_1,\chi_2)$-repr\'esentations
de $\G_{\Q_p}$ de dimension~$1$ associ\'ees \`a $\chi_1$ et $\chi_2$, vues comme
des $k_L$-repr\'esentations (de dimension $[k_L(\chi_1,\chi_2):k_L]$) et
$\bar{r}_{\chi_1,\chi_2}$ l'unique $k_L(\chi_1,\chi_2)$-repr\'esentation 
de $\mathcal{G}_{\qp}$ extension nontriviale de $\chi_2$ par $\chi_1$ (vue comme une
$k_L$-repr\'esentation)
   $$0\to \chi_1\to \bar{r}_{\chi_1,\chi_2}\to \chi_2\to 0$$

\begin{rema}\label{BL113}
Soit $\delta: Z\to \overline{\mathbf{F}}_p^{\dual}$ un caract\`ere lisse. 
La classification de Barthel-Livn\'e et Breuil montre 
qu'il n'y a qu'un nombre fini de $\pi\in{\rm Rep}^\delta\,G$, irr\'eductibles,
qui ne sont pas 
de la forme $I(\chi_1,\chi_2)$, avec $(\chi_1,\chi_2)$ g\'en\'erique.
\end{rema}

\subsubsection{G\'en\'eralit\'es sur les blocs}\label{qc4}
On met une relation d'\'equivalence sur les repr\'esentations irr\'eductibles
d\'efinie par
$\pi\sim\pi'$ si et seulement si il existe une suite $\pi=\pi_0,\pi_1,\dots,\pi_r=\pi'$ telle que
$\pi_{i+1}=\pi_i$ ou ${\rm Ext}^1(\pi_i,\pi_{i+1})\neq 0$ ou ${\rm Ext}^1(\pi_{i+1},\pi_i)\neq 0$
(ces conditions ne sont pas exclusives).
Une classe d'\'equivalence est {\it un bloc}. 
Si ${\cal B}$ est un bloc, on note ${\rm Rep}_{\cal B}\,G$ la sous-cat\'egorie de ${\rm Rep}^{\rm \ell f}\, G$
des objets dont toutes les composantes de Jordan-H\"older appartiennent \`a ${\cal B}$.

\begin{prop}\label{pasku0}
{\rm (\paskunas)}
Les blocs de ${\rm Rep}_{\overline{\bf F}_p}\,G$ sont les:

$\bullet$ $\{\pi\}$, o\`u $\pi$ est supersinguli\`ere.

$\bullet$ $\{I(\chi_1,\chi_2),I(\chi_2,\chi_1)\}$, o\`u
$\chi_1\chi_2^{-1}\neq \epsilon^{\pm}, 1$.

$\bullet$ $\{I(\chi,\chi)\}$, si $p\neq 2$.
{\rm(}Si $p=2$, ce bloc devient $\{\chi, {\rm St}\otimes\chi\}${\rm .)}

$\bullet$ $\{\chi, {\rm St}\otimes\chi, I(\chi,\chi\epsilon)\}$, si $p\neq 3$.
{\rm(}Si $p=3$, ce bloc devient
$\{\chi, {\rm St}\otimes\chi, \chi\epsilon, {\rm St}\otimes\chi\epsilon\}${\rm .)}
\end{prop}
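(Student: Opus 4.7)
The plan is to determine the blocks by computing $\text{Ext}^1_G(\pi_1,\pi_2)$ for all pairs of absolutely irreducible smooth $\overline{\mathbf{F}}_p$-representations of $G$ with a common central character, and then reading off the equivalence classes of the connectedness relation defining a block.

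The first step is to \textbf{exhibit the asserted gluings}. Twisting the tautological exact sequence $0\to 1\to {\rm Ind}_B^G(1\otimes 1)\to {\rm St}\to 0$ by a smooth character $\chi$ produces a non-split extension of $\chi$ by ${\rm St}\otimes\chi$, linking the two. Examining the Jordan--H\"older content of the slightly larger principal series ${\rm Ind}_B^G(\chi\otimes\chi\epsilon^{\pm 1})$ attaches $I(\chi,\chi\epsilon)$ to the same class, provided $\chi\neq\chi\epsilon$ (i.e.\ $p\neq 3$). For generic $(\chi_1,\chi_2)$, the existence of a non-split extension between $I(\chi_1,\chi_2)$ and $I(\chi_2,\chi_1)$ is seen by transporting, via the Colmez functor ${\bf V}$, the unique non-trivial self-extension of $\chi_1\oplus\chi_2$ on the Galois side; the same formalism produces the block $\{I(\chi,\chi)\}$ when $\chi_1=\chi_2$ and forces it to be a singleton when $p\neq 2$.

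The second step is to \textbf{verify vanishing of all other extensions}, i.e.\ between irreducibles in distinct claimed blocks. For the non-supersingular part, one combines a Frobenius reciprocity computation of $\text{Ext}^1_G(I(\chi_1,\chi_2),\pi)$ in terms of Borel-level extensions with a direct analysis of the Jordan--H\"older filtration of ${\rm Ind}_B^G$, ruling out links between blocks attached to distinct semisimple pairs $\{\chi_1,\chi_2\}$. For supersingulars, the key fact is Breuil's isomorphism $\pi\simeq I(\sigma)/(T)$ together with Barthel--Livn\'e's freeness of $I(\sigma)$ over $\overline{\mathbf{F}}_p[T]$: a non-split extension $0\to\pi\to E\to\tau\to 0$ with $\tau$ irreducible lifts to a map from some $I(\sigma')$ compatible with $T$, and freeness forces $\tau$ itself to be a $T=0$ quotient of $I(\sigma')$, hence supersingular, and then $\tau\simeq\pi$ by Breuil's criterion distinguishing the $I(\sigma)/(T)$.

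A cleaner (more technical) route, which is essentially that taken by \paskunas, is to invoke the exactness of ${\bf V}$: since the semisimplifications of ${\bf V}(\pi_i)$ must coincide for $\pi_1,\pi_2$ in the same block, the problem reduces to the (well-understood) block decomposition of semisimple two-dimensional $\overline{\mathbf{F}}_p$-representations of $\mathcal{G}_{\Q_p}$, where the four listed cases correspond exactly to: irreducible $\bar\rho$, direct sum of two distinct characters with generic ratio, a scalar-valued split sum, and the critical case of a sum $\chi\oplus\chi\epsilon$ admitting a non-trivial self-extension lifting to either side. The small primes $p=2,3$ require separate bookkeeping because of the degenerations $\chi=\chi\epsilon$ and $\epsilon=1$, which merge or split some of the Jordan--H\"older patterns above. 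The main obstacle throughout is the extension-vanishing for supersingulars: justifying that they sit in their own blocks is what truly uses the absolute irreducibility of the associated $\mathcal{G}_{\Q_p}$-representation (equivalently, the freeness of $I(\sigma)$ over $\overline{\mathbf{F}}_p[T]$) and prevents unexpected links with ordinary representations.
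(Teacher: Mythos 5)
The paper states this proposition without proof, attributing it to \paskunas\ (the reference is \cite{Pas1}, with the crucial Ext computations carried out in \cite{Pas0}); so there is no ``paper's proof'' to compare against, and I will review your sketch on its own merits.

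Your two-step plan (exhibit the asserted gluings, then rule out all other extensions) is the right shape, and the principal-series part of step~1 is fine. But there are genuine gaps.

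First, the supersingular isolation argument does not work as stated. You propose to take a non-split extension $0\to\pi\to E\to\tau\to 0$ with $\pi$ supersingular, lift the surjection $I(\sigma')\twoheadrightarrow\tau$ to a map $I(\sigma')\to E$, and invoke freeness over $\overline{\mathbf F}_p[T]$. The lift is not automatic: $I(\sigma')$ is not projective in ${\rm Rep}^{\rm lisse}\,G$, and what one would need is precisely $\operatorname{Ext}^1_G(I(\sigma'),\pi)=0$, i.e.\ $\operatorname{Ext}^1_{KZ}(\sigma',\pi)=0$, which fails in general (supersingulars do contain Serre weights). Moreover ``compatible with $T$'' is not meaningful here: $T\in\operatorname{End}_G(I(\sigma'))$, but $E$ is not a module over that Hecke algebra, so there is no $T$-action on the image of $I(\sigma')$ in $E$ to compare. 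This isolation statement is exactly the hard content of \paskunas's paper \cite{Pas0}, proved there by explicit computation of $\operatorname{Ext}^1_G$ groups via diagrams over the tree; it cannot be obtained by a two-line freeness argument, and indeed the absolute irreducibility of ${\bf V}(\pi)$ is an output, not an input, of those computations.

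Second, the small-prime bookkeeping is misattributed. You write ``provided $\chi\neq\chi\epsilon$ (i.e.\ $p\neq 3$).'' In fact $\chi=\chi\epsilon\iff\epsilon=1\iff p=2$. The degeneration responsible for the altered Steinberg block at $p=3$ is $\epsilon^2=1$ (so $\epsilon=\epsilon^{-1}$), which makes the pair $(\chi,\chi\epsilon)$ non-generic — $I(\chi,\chi\epsilon)$ as defined for generic pairs ceases to exist and the Jordan--H\"older factors reorganize into $\{\chi,\,{\rm St}\otimes\chi,\,\chi\epsilon,\,{\rm St}\otimes\chi\epsilon\}$. Similarly the $p=2$ degeneration of $\{I(\chi,\chi)\}$ comes from $\epsilon=1$ making the Borel induction reducible.

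Third, the ``cleaner route'' via the Colmez functor is circular. Exactness of ${\bf V}$ gives you a map from blocks to semisimple two-dimensional mod $p$ Galois representations, but it does not let you read off the block decomposition from the Galois side: ${\bf V}$ kills ${\rm SL}_2(\Q_p)$-invariants and collapses some extensions, so the fact that ${\bf V}(\pi_1)^{\rm ss}\simeq{\bf V}(\pi_2)^{\rm ss}$ forces $\pi_1,\pi_2$ into the same block is something one must prove, not assume. The bijection in rem.\,\ref{pasku3}(i) is a consequence of the block classification established by the Ext computations, not a tool to derive it.
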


\begin{rema}\label{pasku00}
On peut utiliser le lemme~\ref{51} et le \no\ref{hecke2} 
pour en d\'eduire une description
des blocs de ${\rm Rep}^{\rm\ell f}\,G$: ceux-ci sont en bijection naturelle avec les orbites
des blocs de ${\rm Rep}_{\overline{\bf F}_p}\,G$ sous l'action de
${\rm Gal}(\overline{\bf F}_p/k_L)$.
\end{rema}

Si $\pi\in {\rm Rep}^{\rm \ell f}\,G$ 
est irr\'eductible, on note $P_\pi$ le dual de Pontryagin d'une enveloppe injective de 
$\pi$ dans ${\rm Rep}^{\rm ladm} G$.
Alors $P_\pi$ est un $\O_L[G]$-module compact sans $p$-torsion, limite projective de $\O_L[G]$-modules compacts et
de longueur finie.
Si ${\cal B}$ est un bloc, on pose
$$P_{\cal B}:=\oplus_{\pi\in {\cal B}} P_\pi\quad
E_{\cal B}:={\rm End}_G(P_{\cal B}),\quad Z_{\cal B}:={\text{centre de $E_{\cal B}$}}.$$
La th\'eorie g\'en\'erale de Gabriel \cite{gaby} fournit le r\'esultat suivant:
\begin{prop}\label{pasku2}
{\rm (i)} Les foncteurs
$$\Pi\mapsto {\mathbbm m}_{\cal B}(\Pi):={\rm Hom}_G(P_{\cal B},\Pi^\vee)
\quad{\rm et}\quad
{\mathbbm m}\mapsto ({\mathbbm m}\otimes_{E_{\cal B}}P_{\cal B})^\vee$$
 sont inverses l'un de l'autre
et fournissent une \'equivalence de cat\'egories entre ${\rm Rep}_{\cal B}\,G$ et
la cat\'egorie des modules de longueur finie sur $E_{\cal B}$.

{\rm (ii)}
Si $\Pi\in {\rm Rep}^{\rm\ell f}\,G$, alors
$$\Pi\simeq \oplus_{\cal B}\big({\mathbbm m}_{\cal B}(\Pi)\otimes_{E_{\cal B}}P_{\cal B})^\vee.$$
\end{prop}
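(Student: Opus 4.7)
The plan is to deduce the proposition from Gabriel's general theory of locally finite abelian categories applied after passage to Pontryagin duals, which converts the problem into a standard Morita/projective-generator statement.

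First I would pass to the dual picture. The functor $\Pi\mapsto\Pi^\vee$ is an anti-equivalence between ${\rm Rep}^{\rm ladm}\,G$ and a category $\mathfrak{C}$ of compact $\O_L[[G]]$-modules; it sends ${\rm Rep}^{\rm\ell f}\,G$ to the full subcategory $\mathfrak{C}^{\rm art}$ of artinian objects and injective hulls to projective covers. In particular, by construction $P_\pi$ is a projective cover in $\mathfrak{C}$ of the simple object $\pi^\vee$, and every simple object of $\mathfrak{C}^{\rm art}$ is of this form. The definition of the blocks via non-vanishing of $\mathrm{Ext}^1$ implies that $\mathrm{Ext}^1_G(\pi,\pi')=0$ whenever $\pi,\pi'$ lie in different blocks, which by a standard d\'evissage on Jordan--H\"older factors gives $\mathrm{Ext}^i_G(\Pi,\Pi')=0$ for all $i\geq 0$ when $\Pi\in{\rm Rep}_{\cal B}\,G$ and $\Pi'\in{\rm Rep}_{{\cal B}'}\,G$ with ${\cal B}\neq {\cal B}'$; therefore $\mathfrak{C}^{\rm art}$ decomposes as an orthogonal product $\prod_{\cal B}\mathfrak{C}^{\rm art}_{\cal B}$ of subcategories, where $\mathfrak{C}^{\rm art}_{\cal B}$ consists of the objects whose simple subquotients are the $\pi^\vee$ for $\pi\in{\cal B}$.

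Next, I would verify that $P_{\cal B}=\oplus_{\pi\in{\cal B}}P_\pi$ is a projective generator of $\mathfrak{C}^{\rm art}_{\cal B}$: it is projective as a direct sum of projectives, and the simples $\pi^\vee$, $\pi\in{\cal B}$, are quotients of $P_\pi\hookrightarrow P_{\cal B}$, so every artinian object in the block admits a surjection from a finite direct sum of copies of $P_{\cal B}$ (this uses that the $P_\pi$ are compact, hence the finite-length objects have finite composition series which can be lifted one step at a time using projectivity). Now I invoke the standard equivalence (Gabriel): for a projective generator $P$ of a locally finite abelian category, the functor $X\mapsto\mathrm{Hom}(P,X)$ identifies the subcategory of finite-length objects with the category of finite-length (right) $\mathrm{End}(P)$-modules, with quasi-inverse $\mathbbm m\mapsto\mathbbm m\otimes_{\mathrm{End}(P)}P$. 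Applied to $P_{\cal B}\in\mathfrak{C}^{\rm art}_{\cal B}$ and translated back via Pontryagin duality, this yields exactly assertion~(i), with the functor $\Pi\mapsto\mathrm{Hom}_G(P_{\cal B},\Pi^\vee)$ as claimed and inverse $\mathbbm m\mapsto(\mathbbm m\otimes_{E_{\cal B}}P_{\cal B})^\vee$.

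Finally, (ii) is the statement of the orthogonal block decomposition of $\mathfrak{C}^{\rm art}$ applied to $\Pi^\vee$ for $\Pi\in{\rm Rep}^{\rm\ell f}\,G$: write $\Pi^\vee=\oplus_{\cal B}(\Pi^\vee)_{\cal B}$, apply the equivalence of (i) to each summand $(\Pi^\vee)_{\cal B}$, and dualize back. The main technical point to verify is the orthogonality of the blocks (hence the product decomposition of $\mathfrak{C}^{\rm art}$): the equivalence relation defining blocks is set up precisely so that $\mathrm{Ext}^1$ vanishes across blocks on simples, and the extension to all finite-length objects is a formal induction on length, so this step, while routine, is the one where the definition of ``bloc'' is used in an essential way. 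Everything else is a direct application of Gabriel's machinery, so no additional input from the specific structure of $G$ or of $\O_L$ is needed at this point.
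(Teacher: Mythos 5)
Your proposal is correct and is the standard unwinding of Gabriel's machinery, which the paper invokes from \cite{gaby} without giving a proof; so this is essentially the same approach. One small logical inversion to note: d\'evissage on Jordan--H\"older factors from the $\mathrm{Ext}^1$-vanishing hypothesis directly yields only the vanishing of $\mathrm{Hom}$ and $\mathrm{Ext}^1$ across blocks -- which already suffices for the orthogonal decomposition of $\mathfrak{C}^{\rm art}$ -- whereas the vanishing of $\mathrm{Ext}^i$ for $i\geq 2$ should be derived \emph{from} the decomposition (injective resolutions then stay within a single block factor) rather than taken as an intermediate step of the d\'evissage; since you only use the decomposition itself, the argument is unaffected.
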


\begin{rema}\label{pasku1}
{\rm (i)} On a les m\^emes r\'esultats pour ${\rm Rep}^\delta\,G$; les blocs sont les m\^emes que
pour ${\rm Rep}^{\rm\ell f}\,G$ en rajoutant la condition que le caract\`ere central soit $\delta$
(ce qui se traduit par $\chi_1\chi_2\epsilon^{-1}=\delta$ pour $I(\chi_1,\chi_2)$, et par
$\chi^2=\delta$ pour le bloc contenant~$\chi$).
Si ${\cal B}$ est un bloc, on d\'efinit
$P_{\cal B}^\delta$, $E_{\cal B}^\delta$, $Z_{\cal B}^\delta$, comme ci-dessus, mais
en prenant les enveloppes projectives dans la cat\'egorie des repr\'esentations
de caract\`ere central $\delta^{-1}$ (dualiser change le caract\`ere central en son inverse).

{\rm (ii)}
Si $\pi\in {\rm Rep}^\delta_{\cal B}\, G$, alors $Z_{\cal B}^\delta$ 
agit sur $\pi$ ainsi que, par fonctorialit\'e,
sur ${\bf V}(\pi)$.
\end{rema}

\begin{rema}\label{ratio12}
Si ${\cal B}$ est un bloc de ${\rm Rep}^{\rm\ell f}\,G$ et si $\pi\in{\cal B}$, alors ${\rm End}_{k_L[G]}\pi$ ne d\'epend pas de $\pi$; notons le $k({\cal B})$ et
notons $L({\cal B})$ l'extension non ramifi\'ee de $L$ de corps r\'esiduel $k({\cal B})$.
Les composantes irr\'eductibles des $k({\cal B})\otimes_k\pi$, pour $\pi\in{\cal B}$, 
se r\'epartissent dans des types ${\cal B}^\sigma$ de 
${\rm Rep}^{\rm\ell f}_{\O_{L({\cal B})}}\,G$ formant une orbite sous l'action
de ${\rm Gal}(k({\cal B})/k_L)$ et les ${\cal B}^\sigma$ sont form\'es
de repr\'esentations absolument irr\'eductibles. 
L'oubli de l'action de $\O_{L({\cal B})}$ induit une
\'equivalence entre ${\rm Rep}_{\O_{L({\cal B})},{\cal B}^\sigma}\,G$
et ${\rm Rep}_{\cal B}\,G$. On en d\'eduit, pour tout $\sigma$, des isomorphismes:
$$E_{\cal B}\simeq E_{\O_{L({\cal B})},{\cal B}^\sigma},\quad
Z_{\cal B}\simeq Z_{\O_{L({\cal B})},{\cal B}^\sigma}$$
et de m\^eme pour $E_{\cal B}^\delta,Z_{\cal B}^\delta$.
\end{rema}

\Subsubsection{Blocs et repr\'esentations galoisiennes modulo~$p$}\label{qc10}
\begin{rema}\label{pasku3}
(i)
Il y a \cite{Pas1} une bijection ${\cal B}\leftrightarrow\rho_{\cal B}$ entre $\overline{\bf F}_p$-blocs et 
$\overline{\bf F}_p$-repr\'esentations de $G_{\Q_p}$, semi-simples, de dimension $2$.
Cette bijection envoie un bloc ${\cal B}$ sur la repr\'esentation semi-simple dont
l'ensemble des sous-repr\'esentations irr\'eductibles est $\{{\bf V}(\pi),\ \pi\in{\cal B}\}$.
Donc:

$\bullet$ $\rho_{\cal B}$ est irr\'eductible, de dimension~$2$, si ${\cal B}=\{\pi\}$, avec $\pi$ supersinguli\`ere.

$\bullet$ $\rho_{\cal B}=\chi_1\oplus\chi_2$, 
si ${\cal B}=\{I(\chi_1,\chi_2),I(\chi_2,\chi_1)\}$ et $\chi_1 \chi_2^{-1}\neq 1, \epsilon^{\pm 1}$.

$\bullet$ $\rho_{\cal B}=\chi\oplus\chi$, si ${\cal B}=\{I(\chi,\chi)\}$. 

$\bullet$ $\rho_{\cal B}=\chi\oplus\chi\epsilon$, 
si ${\cal B}=\{\chi,{\rm St}\otimes\chi,I(\chi,\chi\epsilon)\}$.

\noindent Dans tous les cas, $(\det\rho_{\cal B})\epsilon^{-1}$ est \'egal au caract\`ere central des
\'el\'ements de ${\cal B}$.

(ii) On d\'eduit de ce r\'esultat une bijection naturelle entre les blocs de
${\rm Rep}^{\rm\ell f}\,G$ (resp.~${\rm Rep}^\delta\,G$)
et les orbites sous l'action de ${\rm Gal}(\overline{\bf F}_p/k_L)$
des $\overline{\bf F}_p$-repr\'esentations de~$\G_{\Q_p}$,
semi-simples, de dimension $2$ (resp.~et dont le d\'eterminant est $\delta\epsilon$).

Plus pr\'ecis\'ement, si ${\cal B}$ est un bloc de ${\rm Rep}^{\rm\ell f}\,G$ et si $\pi\in{\cal B}$, alors ${\rm End}_{k_L[G]}\pi$ ne d\'epend pas de $\pi$; notons le $k({\cal B})$.
Alors ${\rm Tr}\rho_{\cal B}$ est \`a valeurs dans $k({\cal B})$ et donc
$\rho_{\cal B}$ est d\'efinie sur $k({\cal B})$. Les $\rho_{\cal B}^\sigma$, pour
$\sigma\in{\rm Gal}(k({\cal B})/k_L)$, sont deux \`a deux non isomorphes comme
$k({\cal B})$-repr\'esentations de $\G_{\Q_p}$ mais sont toutes isomorphes comme
$k_L$-repr\'esentations (de dimension~$[k({\cal B}):k_L]$).
\end{rema}

Soit ${\cal B}$ un bloc de ${\rm Rep}^\delta\,G$. Notons
$R_{\cal B}^{{\rm ps},\delta}$ l'anneau des d\'eformations universelles
du pseudo-caract\`ere ${\rm Tr}\circ\rho_{\cal B}$ (vu comme pseudo-caract\`ere de dimension~$2$ 
\`a valeurs dans~$k({\cal B})$, cf.~(ii) de la rem.\,\ref{pasku3}), 
de d\'eterminant $\delta\epsilon$, et 
$$T_{\cal B}^\delta:\G_{\Q_p}\to R_{\cal B}^{{\rm ps},\delta}$$
 le pseudo-caract\`ere universel.
On a alors le {\og th\'eor\`eme $R=T$\fg} local suivant\footnote{\paskunas-Tung demandent
que ${\cal B}$ soit constitu\'e de repr\'esentations absolument irr\'eductibles, le cas g\'en\'eral
s'en d\'eduit en utilisant les rem.\,\ref{ratio12} et~\ref{pasku3}.} 
(cf.~(ii) de la rem.\,\ref{pasku1} pour l'action
de $Z_{\cal B}^\delta$).
\begin{theo}\label{pasku6}
{\rm (\paskunas~\cite{Paskext}, {\paskunas}-Tung~\cite{PT})}
Il existe un unique morphisme
$$\iota_{\cal B}^\delta:R_{\cal B}^{{\rm ps},\delta}\to Z_{\cal B}^\delta$$
 tel que, pour tout $\pi\in {\rm Rep}^\delta_{\cal B} G$ et tout $g\in{\rm Gal}_{\Q_p}$,
on ait
$$\iota_{\cal B}^\delta(T_{\cal B}^\delta(g))=g+\delta\epsilon(g)g^{-1}
\quad{\text{dans ${\rm End}({\bf V}(\pi))$}}.$$
De plus:

$\bullet$ Si $p\geq 5$, $\iota_{\cal B}^\delta$ est un isomorphisme.

$\bullet$ Dans le cas g\'en\'eral $L\otimes_{\O_L}\iota_{\cal B}^\delta$ est un isomorphisme,
$Z_{\cal B}^\delta=R_{\cal B}^{{\rm ps},\delta}/\O_L{\text{-{\rm torsion}}}$ si $p=3$ et le conoyau
de $R_{\cal B}^{{\rm ps},\delta}/\O_L{\text{-{\rm torsion}}}\to Z_{\cal B}^\delta$ est tu\'e par $2$ si $p=2$.
\end{theo}

\subsection{Crit\`eres de finitude}
On suppose $F=\Q_p$ dans ce paragraphe.
Le th.\,\ref{pierre1} ci-dessous
joue un r\^ole important dans la preuve du cor.\,\ref{Intro3}.

\subsubsection{Quotients de $I(\sigma)$}
   Si $\sigma$ est un poids de Serre,
la repr\'esentation $I(\sigma)$ n'est pas de longueur finie
    mais on dispose du r\'esultat de rigidit\'e suivant, sp\'ecifique \`a notre groupe $G$ et d\'ecoulant des r\'esultats de Barthel-Livn\'e et Breuil.
   
   \begin{prop}\label{BL}
    Soit $\sigma$ un poids de Serre et soit $\pi$ un quotient nontrivial de $I(\sigma)$. Alors 
   $\pi$ est de longueur finie.
   \end{prop}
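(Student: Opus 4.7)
Set $M := \ker(I(\sigma) \twoheadrightarrow \pi)$, which is a nonzero $G$-stable subrepresentation of $I(\sigma)$ by hypothesis. The plan is to produce a nonzero polynomial $P(T) \in \overline{\mathbf{F}}_p[T]$ such that $P(T) \cdot I(\sigma) \subseteq M$. Granted such a $P(T)$, $\pi$ is a quotient of $I(\sigma)/P(T)I(\sigma)$, so it suffices to show the latter is of finite length.

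Assume $P(T)$ is constructed, and decompose $P(T) = \prod_{i=1}^d(T - \lambda_i)$ over $\overline{\mathbf{F}}_p$. By Barthel-Livn\'e's freeness theorem (\cite[th.\,19]{BL}), $I(\sigma)$ is a free $\overline{\mathbf{F}}_p[T]$-module, so the successive quotients $(T-\lambda_1)\cdots(T-\lambda_j)I(\sigma)/(T-\lambda_1)\cdots(T-\lambda_{j+1})I(\sigma)$ are isomorphic, as $G$-representations, to $I(\sigma)/(T-\lambda_{j+1})I(\sigma)$. Since $F=\mathbf{Q}_p$, Breuil's theorem (\cite{Breuil}, recalled in \S\ref{hecke2}) shows that $I(\sigma)/(T)I(\sigma)$ is irreducible (supersingular), and the Barthel-Livn\'e classification shows that $I(\sigma)/(T-\lambda)I(\sigma)$ for $\lambda \neq 0$ has length at most $2$ with ordinary constituents. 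Hence each graded piece has finite length, so $I(\sigma)/P(T)I(\sigma)$ does too, and \emph{a fortiori} so does its quotient $\pi$.

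The heart of the proof is the construction of $P(T)$. Fix the canonical image $v_\sigma$ of a generator of $\sigma$ inside $I(\sigma)$, and pick any $0 \neq m \in M$. Since $I(\sigma) = \text{c-Ind}_{KZ}^G\sigma$, the element $m$ is compactly supported on the Bruhat-Tits tree $\mathcal{T}$ of $\mathrm{PGL}_2(\mathbf{Q}_p)$, and the Hecke operator $T$ acts as a sum over neighbors on $\mathcal{T}$. Using the transitivity of the $G$-action on vertices of $\mathcal{T}$, one translates $m$ so that its support meets the base vertex; iterated applications of $T$ then allow one to extract, by a telescoping argument matching tree-combinatorics of $T$ against $G$-translates, a nonzero element of the form $P(T)v_\sigma$ lying in the $G$-submodule generated by $m$ (which is contained in $M$). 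Once $P(T)v_\sigma \in M$ is established, the $G$-equivariance of $T$ together with the fact that $v_\sigma$ generates $I(\sigma)$ as a $G$-module yields
$$P(T) \cdot I(\sigma) \;=\; P(T) \cdot \overline{\mathbf{F}}_p[G]\,v_\sigma \;=\; \overline{\mathbf{F}}_p[G] \cdot \bigl(P(T)v_\sigma\bigr) \;\subseteq\; M,$$
as desired.

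The main obstacle is the combinatorial extraction of $P(T)v_\sigma$ from the $G$-orbit of $m$, which requires a careful analysis of how $T$ interacts with the $G$-translation structure on the tree. Should this route prove unwieldy, an alternative representation-theoretic approach would go via the cosocle: every irreducible quotient $\tau$ of $\pi$ satisfies $\tau \twoheadleftarrow I(\sigma)/(T-\lambda_\tau)I(\sigma)$ for a unique $\lambda_\tau \in \overline{\mathbf{F}}_p$; admissibility of irreducibles (\S\ref{hecke2}) makes $\mathrm{Hom}_G(I(\sigma),\tau)$ a finite-dimensional $\overline{\mathbf{F}}_p[T]$-module, and exploiting that $\pi$ is finitely generated as a $G$-module (being a quotient of the cyclic $I(\sigma)$) one bounds the set of eigenvalues $\lambda_\tau$ that can occur in the cosocle, providing the polynomial $P(T)$ by another route.
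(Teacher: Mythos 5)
Your overall strategy is the same as the paper's: find a nonzero polynomial $P(T)$ with $P(T)\cdot I(\sigma)\subseteq M$ and conclude by Barthel--Livn\'e/Breuil. The second half of your argument (the finite-length analysis of $I(\sigma)/P(T)I(\sigma)$ via freeness, Breuil's irreducibility, and the ordinary case) is correct and in fact more explicit than the paper's statement of it.

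However, the heart of the argument --- producing $P(T)$ from a nonzero $m\in M$ --- is left as a sketch of a tree-combinatorial extraction that you yourself flag as ``the main obstacle.'' That sketch is not a proof, and I do not see how it goes through as written: translating $m$ so that its support hits the base vertex and then applying $T$ repeatedly produces elements whose supports spread out across the tree, with no mechanism forcing the contributions at vertices other than the base to cancel into something of the form $P(T)v_\sigma$. The claim that the $G$-submodule generated by an arbitrary $m$ contains an element of this shape is a nontrivial structural fact about $I(\sigma)$, not something that falls out of transitivity and ``telescoping.'' The paper's route is cleaner: since $I_1$ is pro-$p$ and $\pi'=M\ne 0$, one has $M^{I_1}\ne 0$, and prop.~18 of \cite{BL} gives that $M^{I_1}$ has \emph{finite codimension} in $I(\sigma)^{I_1}$. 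The Hecke operator $T$ acts on the finite-dimensional quotient $I(\sigma)^{I_1}/M^{I_1}$, so Cayley--Hamilton furnishes $P$ with $P(T)f\in M$ for the canonical $I_1$-fixed generator $f$; since $f$ generates $I(\sigma)$ over $G$ and $T$ is $G$-equivariant, $P(T)I(\sigma)\subseteq M$. This finite-codimension input is exactly what your tree argument would have to reprove.

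Your proposed cosocle alternative has the same gap in a different guise: admissibility of each irreducible quotient $\tau$ bounds $\dim\mathrm{Hom}_G(\pi,\tau)$, but you give no mechanism for bounding the \emph{set} of Hecke eigenvalues $\lambda_\tau$ appearing in the cosocle, and any such bound must use $M\ne 0$ (the cosocle of $I(\sigma)$ itself is infinite), which you never invoke in that paragraph. So as written, both routes to $P(T)$ are gaps; the rest of the proof is sound once $P(T)$ is in hand.
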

   
   \begin{proof} Ce r\'esultat est "standard", mais 
   nous allons en donner la preuve pour le confort du lecteur. Soit $\pi'=\ker(I(\sigma)\to \pi)$ et soit $I_1$ le pro-$p$ Iwahori de~$G$. La prop.\,18 de \cite{BL} montre que 
   $(\pi')^{I_1}$ est de co-dimension finie dans $I(\sigma)^{I_1}$. 
De plus ${\rm End}_{k_L[G]}(I(\sigma))\simeq k_L[T]$ (cf.~relation~(\ref{hecke})). 
Soit $f\in I(\sigma)$ une fonction de support $KZ$, telle que $f(1)$ engendre la droite $\sigma^{I_1}$. Puisque $\dim I(\sigma)^{I_1}/(\pi')^{I_1}<\infty$, il existe un polyn\^ome non nul $P$ tel que $P(T)f\in \pi'$. Mais alors la surjection $I(\sigma)\to \pi$ se factorise par 
   $I(\sigma)/(P(T))$, et cette derni\`ere repr\'esentation est de longueur finie d'apr\`es les r\'esultats de Barthel-Livn\'e et Breuil. 
Voir aussi le cor.\,2.1.4 de \cite{DEG} pour un autre argument, utilisant la classification 
(due \`a Barthel-Livn\'e) des morphismes $G$-\'equivariants $I(\sigma)\to I(\sigma')$, 
pour $\sigma, \sigma'$ poids de Serre. 
   \end{proof}
   
   \begin{rema}\label{em-vi}
Emerton conjecture \cite[conj. 2.3.7]{Ordparts1}
 que la proposition admet la version suivante pour $\gl_2(F)$ (avec $F/\qp$ une extension finie arbitraire): un quotient nontrivial {\it admissible} de $I(\sigma)$
est de longueur finie. 
Plus g\'en\'eralement, il conjecture que, pour un groupe r\'eductif $p$-adique quelconque, une repr\'esentation admissible et de type fini est de longueur finie.
   \end{rema}

\subsubsection{Finitude et finitude du cosocle}
Soit $G':=G/p^{\mathbf{Z}}$, $p$ \'etant vu comme l'\'el\'ement $\matrice{p}{0}{0}{p}$ de $Z$. 
On identifie $K$ \`a un sous-groupe de 
      $G'$. Si $\sigma$ est un poids de Serre, on pose
 $$I_{G'}(\sigma):={\rm c\text{-}Ind}_{K}^{G'}(\sigma).$$ 
Si l'on munit 
$\sigma$ de sa structure de $KZ$-module obtenue en faisant agir $p$ trivialement, alors $I_{G'}(\sigma)$ s'identifie \`a 
$I(\sigma)$.

\begin{prop}\label{filtration}
Les $k_L[G']$-modules lisses de type fini 
sont pr\'ecis\'ement       
les $k_L[G']$-modules lisses $\pi$ tels qu'il existe une filtration 
$$0=\pi_0\subset...\subset \pi_r=\pi$$
et des surjections $G'$-\'equivariantes $I(\sigma_i)\to \pi_i/\pi_{i-1}$ pour $1\leq i\leq r$ et pour certains poids de Serre $\sigma_i$.
\end{prop}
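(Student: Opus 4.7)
La stratégie est de prouver les deux directions séparément. Dans le sens facile, un $\pi$ admettant une telle filtration est de type fini par itération de la prop.\,\ref{Shot25}\,a), une fois que l'on observe que chaque $I(\sigma_i)=\text{c-}{\rm Ind}_K^{G'}(\sigma_i)$ est de type fini (étant engendré comme $G'$-module par le sous-$K$-module de dimension finie $\sigma_i$), et donc que chaque $\pi_i/\pi_{i-1}$, quotient de $I(\sigma_i)$, l'est aussi. Une récurrence immédiate sur $r$ conclut.

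Pour le sens réciproque, voici la marche à suivre. Étant donné $\pi$ lisse de type fini, je commencerais par prendre un système fini de générateurs, puis par former le sous-$K$-module $V\subset \pi$ qu'ils engendrent: $V$ est de dimension finie par lissité de $\pi$, stable par $K$, et engendre encore $\pi$ comme $G'$-module. Comme $\matrice{p}{0}{0}{p}$ agit trivialement dans $G'$ et que $K\cap \matrice{p}{0}{0}{p}^{\Z}=\{1\}$, l'inclusion $K\hookrightarrow G'$ se factorise à travers $KZ$; on peut alors voir $V$ comme un $KZ$-module sur lequel $\matrice{p}{0}{0}{p}$ agit trivialement, et l'on dispose d'une surjection $G'$-équivariante $\text{c-}{\rm Ind}_{KZ}^{G'}(V)\twoheadrightarrow \pi$.

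La dernière étape consiste à filtrer $V$ lui-même. Comme $V$ est un $KZ$-module lisse de dimension finie, il admet une suite de composition $0=V_0\subsetneq V_1\subsetneq\cdots\subsetneq V_r=V$ dont les quotients successifs $\sigma_i:=V_i/V_{i-1}$ sont irréductibles; le centre $Z$ y agit par un caractère et $\matrice{p}{0}{0}{p}$ trivialement, si bien que chaque $\sigma_i$ est (à extension des scalaires près) un poids de Serre. L'exactitude de l'induction compacte $\text{c-}{\rm Ind}_{KZ}^{G'}$ fournit alors une filtration de $\text{c-}{\rm Ind}_{KZ}^{G'}(V)$ de quotients successifs isomorphes aux $I(\sigma_i)$; il suffit enfin de poser $\pi_i$ égal à l'image de $\text{c-}{\rm Ind}_{KZ}^{G'}(V_i)$ dans $\pi$ pour obtenir la filtration voulue, chaque $\pi_i/\pi_{i-1}$ étant bien un quotient de $I(\sigma_i)$.

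Je n'anticipe pas d'obstacle sérieux. Le seul point à surveiller est l'identification entre $K$ et l'image de $KZ$ dans $G'$, qui permet d'invoquer la notion de poids de Serre telle qu'elle est définie dans l'article (et qui explique pourquoi l'énoncé utilise précisément $I(\sigma_i)$ plutôt que l'induite compacte depuis $K$ d'un $K$-module quelconque). Une fois cette identification posée, tout le reste est une application formelle de la caractérisation des représentations de type fini comme quotients d'induites compactes (cf.~\S\ref{Shotton}), de l'exactitude de $\text{c-}{\rm Ind}$, et de la prop.\,\ref{Shot25}\,a).
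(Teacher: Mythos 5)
Votre preuve est correcte et suit essentiellement le même chemin que celle de l'article : sens facile par finitude de type de $I(\sigma)$ et stabilité par extensions (prop.\,\ref{Shot25}), sens réciproque en engendrant un sous-$K$-module de dimension finie $W$ par les générateurs, en filtrant $W$ par des quotients irréductibles et en prenant les images des ${\rm c\text{-}Ind}_{K}^{G'}(W_i)$ dans $\pi$. La remarque sur l'identification de $K$ avec l'image de $KZ$ dans $G'$ (l'action triviale de $p$) correspond exactement à la convention faite dans l'article juste avant l'énoncé, donc rien à redire.
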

      
\begin{proof}
 Une implication d\'ecoule du fait que $I(\sigma_i)$ (et donc tout quotient de $I(\sigma_i)$) est de type fini et 
 la stabilit\'e des 
 repr\'esentations de type fini 
par extensions.
       
R\'eciproquement,
supposons que $\pi$ est une repr\'esentation lisse de type fini de $G'$.
 Soit $v_1,\dots,v_n$ une famille de vecteurs qui engendre $\pi$ en tant que $k_L[G']$-module.
 Soit $W$ le $k_L[K]$ sous-module de $\pi$ engendr\'e par $v_1,\dots,v_n$. Alors 
$\dim_{k_L}(W)<\infty$ et l'on dispose d'une surjection naturelle ${\rm c\text{-}Ind}_{K}^{G'}(W)\to \pi$.
 Prenons une filtration 
$0=W_0\subset\dots\subset W_r=W$ avec $\sigma_i:=W_i/W_{i-1}$ irr\'eductible, et notons 
$\pi_i$ l'image de ${\rm c\text{-}Ind}_{K}^{G'}(W_i)$ dans $\pi$ par la surjection ci-dessus. Alors 
$\pi_i/\pi_{i-1}$ est un quotient de 
${\rm c\text{-}Ind}_{K}^{G'}(W_i)/{\rm c\text{-}Ind}_{K}^{G'}(W_{i-1})\simeq {\rm c\text{-}Ind}_{K}^{G'} (\sigma_i)$, ce que l'on voulait.
\end{proof}

 Le r\'esultat ci-dessus reste valable, avec la m\^eme preuve, pour $F\ne \Q_p$, mais la proposition ci-dessous est tr\`es sp\'ecifique au cas $F=\Q_p$.
      
     \begin{prop}
     Tout $k_L[G]$-module lisse de type fini de $G'$ est de pr\'esentation finie.
     \end{prop}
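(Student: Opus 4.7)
Le plan est de combiner la filtration fournie par la prop.\,\ref{filtration} avec le r\'esultat de rigidit\'e de la prop.\,\ref{BL} et les propri\'et\'es de stabilit\'e par extensions des repr\'esentations de pr\'esentation finie (prop.\,\ref{Shot25}), l'ingr\'edient sp\'ecifique \`a $F=\Q_p$ \'etant l'inclusion ${\rm Rep}^{\ell f}\,G\subset {\rm Rep}^{\rm pf}\,G$ rappel\'ee au \S\,\ref{hecke2}.

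Premi\`erement, soit $\pi$ un $k_L[G']$-module lisse de type fini. D'apr\`es la prop.\,\ref{filtration}, on dispose d'une filtration finie $0=\pi_0\subset\pi_1\subset\cdots\subset\pi_r=\pi$ et, pour chaque $i$, d'une surjection $G'$-\'equivariante $I(\sigma_i)\twoheadrightarrow \pi_i/\pi_{i-1}$ pour un certain poids de Serre $\sigma_i$ (o\`u $I(\sigma_i)$ est vu comme $G'$-module via l'identification $I_{G'}(\sigma_i)\simeq I(\sigma_i)$).

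Deuxi\`emement, comme $F=\Q_p$, la prop.\,\ref{BL} (cons\'equence de la classification de Barthel-Livn\'e et des r\'esultats de Breuil) assure que chaque quotient $\pi_i/\pi_{i-1}$, \'etant un quotient de $I(\sigma_i)$, est soit nul soit de longueur finie. On applique alors l'inclusion ${\rm Rep}^{\ell f}\,G\subset {\rm Rep}^{\rm pf}\,G$ rappel\'ee au \S\,\ref{hecke2} (qui repose sur la finitude de pr\'esentation des repr\'esentations irr\'eductibles ordinaires et supersinguli\`eres de $\gl_2(\Q_p)$, via Breuil et \cite[th.\,IV.2.1, IV.4.7]{gl2}): chaque $\pi_i/\pi_{i-1}$ est un $k_L[G']$-module lisse de pr\'esentation finie. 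Ici on utilise (cf.~la remarque apr\`es la prop.\,\ref{Shot27}) que la notion de pr\'esentation finie est insensible au passage entre $G$ et $G'$, et entre $G'$ et son sous-groupe ouvert d'indice fini $\{g\in\gl_2(\Q_p),\ \det g\in\Z_p^\dual\}$.

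Troisi\`emement, on conclut par r\'ecurrence sur $r$ en appliquant la prop.\,\ref{Shot25}\,a) \`a la suite exacte
$$0\to \pi_{i-1}\to \pi_i\to \pi_i/\pi_{i-1}\to 0:$$
si $\pi_{i-1}$ et $\pi_i/\pi_{i-1}$ sont de pr\'esentation finie, alors $\pi_i$ aussi. L'\'etape d\'elicate, et l'unique endroit o\`u l'hypoth\`ese $F=\Q_p$ est essentielle, est le passage {\og longueur finie $\Rightarrow$ pr\'esentation finie\fg} pour les composantes irr\'eductibles supersinguli\`eres: c'est l\`a que la classification compl\`ete de Breuil des supersinguli\`eres de $\gl_2(\Q_p)$ intervient, et c'est pr\'ecis\'ement ce qui tombe en d\'efaut pour $F\neq\Q_p$ (cf.~la rem.\,\ref{Intro4}\,(ii) et les r\'esultats de Schraen et Wu cit\'es dans l'introduction).
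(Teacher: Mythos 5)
Votre strat\'egie est celle du texte (filtration de la prop.\,\ref{filtration}, stabilit\'e par extensions via la prop.\,\ref{Shot25}, et l'implication {\og longueur finie $\Rightarrow$ pr\'esentation finie\fg} du \no\ref{hecke2}), mais votre deuxi\`eme \'etape repose sur une lecture erron\'ee de la prop.\,\ref{BL}: celle-ci ne dit pas que tout quotient non nul de $I(\sigma_i)$ est de longueur finie, mais seulement tout quotient {\it nontrivial}, c'est-\`a-dire de noyau non nul (c'est ce qu'exige sa preuve, o\`u l'on a besoin d'un \'el\'ement non nul du noyau pour produire le polyn\^ome $P$ avec $P(T)f\in\pi'$, et c'est ainsi qu'elle est utilis\'ee dans la preuve du th.\,\ref{pierre1}). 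Le cas o\`u la surjection $I_{G'}(\sigma_i)\to\pi_i/\pi_{i-1}$ est un isomorphisme \'echappe donc \`a votre dichotomie {\og soit nul soit de longueur finie\fg}, et dans ce cas $\pi_i/\pi_{i-1}\simeq I_{G'}(\sigma_i)$ n'est {\it pas} de longueur finie: $I(\sigma_i)$ est libre sur $k_L[T]={\rm End}_G(I(\sigma_i))$ et poss\`ede une infinit\'e de quotients irr\'eductibles deux \`a deux non isomorphes, son cosocle est de longueur infinie. Votre argument {\og longueur finie $\Rightarrow$ pr\'esentation finie\fg} ne couvre donc pas ce cas.

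La r\'eparation est imm\'ediate, et c'est exactement ce que fait le texte: ou bien la surjection n'est pas un isomorphisme, et la prop.\,\ref{BL} donne la longueur finie, d'o\`u la pr\'esentation finie par le \no\ref{hecke2}; ou bien $\pi_i/\pi_{i-1}\simeq I_{G'}(\sigma_i)$, qui est de pr\'esentation finie de fa\c{c}on \'evidente (c'est une induite compacte d'une repr\'esentation de dimension finie d'un sous-groupe ouvert compact). Une fois ce cas ajout\'e, votre r\'ecurrence via la prop.\,\ref{Shot25} conclut bien; le reste de votre r\'edaction (y compris la remarque, inutile ici, sur l'insensibilit\'e de la pr\'esentation finie au passage de $G$ \`a $G'$ ou \`a un sous-groupe ouvert d'indice fini) est correct.
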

     
     \begin{proof}
     Soit $0=\pi_n\subset \pi_{n-1}\subset...\subset \pi_0=\pi$ une filtration de $\pi$ telle qu'il existe des poids de Serre $\sigma_i$ et des surjections $I_{G'}(\sigma_i)\to \pi_{i-1}/\pi_i$. Comme les repr\'esentations de pr\'esentation finie sont stables par extensions, il suffit de voir que chaque $\pi_{i-1}/\pi_i$ est de pr\'esentation finie. Or $\pi_{i-1}/\pi_i$ est soit de longueur finie 
(prop.\,\ref{BL}), auquel cas on peut conclure par le \no\ref{hecke2}, ou 
     isomorphe \`a $I_{G'}(\sigma_i)$, qui est clairement de pr\'esentation finie.
     \end{proof}

\begin{theo}\label{pierre1}
Soit $\pi$ un $k_L[G']$-module lisse de type fini, dont le 
cosocle est admissible {\rm (ou, de mani\`ere \'equivalente, de longueur finie)}. 
Alors $\pi$ est de longueur finie.
\end{theo}

\begin{proof}
Soit $0=\pi_n\subset \pi_{n-1}\subset...\subset \pi_0=\pi$ une filtration de $\pi$ telle qu'il existe des poids de Serre $\sigma_i$ et des surjections $I_{G'}(\sigma_i)\to \pi_{i-1}/\pi_i$. Nous allons montrer que $\pi_{i-1}/\pi_i$ est de longueur finie pour tout $i$, ce qui permettra de conclure. 
   
Puisque le cosocle de $I_{G'}(\sigma_1)$ est de longueur infinie (cons\'equence de \cite{BL}) et celui de $\pi$ est de longueur finie, la surjection 
$I_{G'}(\sigma_1)\to \pi_0/\pi_1$ n'est pas un isomorphisme, et la prop.\,\ref{BL} montre que $\pi_0/\pi_1$ est de longueur finie. Il suffit de montrer que le cosocle de $\pi_1$ est de longueur finie, car on peut alors conclure par r\'ecurrence sur la longueur de la filtration. 
   
   On veut donc montrer que ${\rm Hom}_{G'}(\pi_1, \pi')={\rm Hom}_{G}(\pi_1, \pi')$ est de dimension finie sur $k_L$ pour tout $k_L[G']$-module lisse irr\'eductible $\pi'$, et nul pour presque tout $\pi'$ (i.e. tous sauf un nombre fini). 
   La suite exacte 
   $$0\to \pi_1\to \pi_0\to \pi_0/\pi_1\to 0$$
   en induit une autre 
   $$0\to {\rm Hom}_G(\pi_0/\pi_1, \pi')\to {\rm Hom}_G(\pi_0, \pi')\to {\rm Hom}_G(\pi_1, \pi')\to 
   {\rm Ext}^1_G(\pi_0/\pi_1, \pi').$$
   Puisque $\pi_0/\pi_1$ est de longueur finie, les lemmes~\ref{Pas} et~\ref{cc} 
ci-dessous  montrent que les termes extr\^emes de la suite exacte ci-dessus sont de dimension finie, et nuls pour presque tout $\pi'$. Comme il en est de m\^eme du terme ${\rm Hom}_G(\pi_0, \pi')$ 
(puisque le cosocle de $\pi_0=\pi$ est de longueur finie), cela permet de conclure.
  \end{proof}

  Le r\'esultat suivant est une adaptation directe d'un r\'esultat analogue dans \cite{Berger}:

\begin{lemm}\label{cc}
Tout $k_L[G']$-module lisse irr\'eductible $\pi$ poss\`ede un caract\`ere central.
\end{lemm}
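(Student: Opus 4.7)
Le plan est d'exploiter la compacit\'e du centre de $G'$: contrairement \`a $Z(G)=\Q_p^\dual$, le centre $Z(G')=\Q_p^\dual/p^{\mathbf Z}\simeq \Z_p^\dual$ est profini, ce qui rend l'adaptation de l'argument de Berger beaucoup plus directe.

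Je commencerais par montrer que l'action de $Z(G')$ sur $\pi$ se factorise par un quotient fini. Pour cela, on prend $v\in\pi$ non nul; par lissit\'e, son stabilisateur $H$ dans $Z(G')$ est ouvert, donc d'indice fini puisque $Z(G')$ est profini. Comme $H$ est central dans $G'$, il fixe aussi tous les $g\cdot v$ pour $g\in G'$, donc agit trivialement sur $\pi=k_L[G']\cdot v$ tout entier. L'action de $Z(G')$ se factorise alors par le quotient ab\'elien fini $A:=Z(G')/H$.

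Ensuite, je d\'ecomposerais $A=A_p\times A_{p'}$ en son $p$-Sylow et sa partie d'ordre premier \`a~$p$. Pour $z\in A_p$ d'ordre $p^k$, on a $(z-1)^{p^k}=z^{p^k}-1=0$ dans ${\rm End}_{k_L}(\pi)$, identit\'e valable car $k_L$ est de caract\'eristique~$p$. L'endomorphisme $z-1$ est donc nilpotent; son noyau est non nul et $G'$-stable puisque $z$ commute \`a $G'$, donc est \'egal \`a $\pi$ par irr\'eductibilit\'e. Ainsi $A_p$ agit trivialement sur~$\pi$.

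L'\'etape principale sera le traitement de $A_{p'}$: ce groupe engendre dans ${\rm End}_{k_L[G']}(\pi)$ une sous-alg\`ebre commutative $R$, quotient de l'alg\`ebre de groupe semi-simple $k_L[A_{p'}]$ (Maschke, car $|A_{p'}|$ est inversible dans $k_L$). Donc $R$ est un produit de corps. Mais tout idempotent non-trivial de $R$ fournirait une d\'ecomposition de $\pi$ en somme directe non-triviale de sous-$G'$-modules, contredisant l'irr\'eductibilit\'e. Donc $R$ est un corps (extension finie de $k_L$), et l'action $Z(G')\to R^\dual$ fournit le caract\`ere central cherch\'e.
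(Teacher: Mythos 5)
Votre proposition suit le m\^eme sch\'ema g\'en\'eral que la preuve du papier, mais en le d\'eveloppant davantage: r\'eduire l'action du centre \`a un quotient ab\'elien fini $A$, traiter la $p$-partie par nilpotence, puis la partie premi\`ere \`a $p$ par semi-simplicit\'e (Maschke). Le papier est plus direct: il observe que $1+p\Z_p$ est le pro-$p$ sous-groupe de $Z(G')\simeq\Z_p^\dual$ et agit donc trivialement, puis que l'action se factorise par $\mathbf F_p^\dual$ et utilise la factorisation explicite $x^{p-1}-1=\prod_{i=1}^{p-1}(x-i)$ sur $\mathbf F_p\subset k_L$, qui rend imm\'ediat que le centre agit par un scalaire de $\mathbf F_p^\dual\subset k_L^\dual$.

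Il y a cependant une lacune dans votre derni\`ere \'etape. Vous concluez, via Maschke et l'irr\'eductibilit\'e, que l'image $R$ de $k_L[A_{p'}]$ dans ${\rm End}_{k_L[G']}(\pi)$ est un corps, puis affirmez que \emph{l'action $Z(G')\to R^\dual$ fournit le caract\`ere central cherch\'e}. Or un caract\`ere central doit \^etre \`a valeurs dans $k_L^\dual$: si $R$ est une extension propre de $k_L$, le centre n'agit pas par des scalaires de $k_L$ et $\pi$ n'a pas de caract\`ere central au sens voulu. Tel que vous l'avez \'ecrit, rien n'exclut cette possibilit\'e; votre argument n'utilise jamais la structure pr\'ecise de $Z(G')$, seulement sa compacit\'e. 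Il manque l'observation que $A_{p'}$ est un quotient de la partie premi\`ere \`a $p$ de $\Z_p^\dual$, c'est-\`a-dire de $\mu_{p-1}\simeq\mathbf F_p^\dual$, donc cyclique d'ordre $d\mid p-1$; comme $\mathbf F_p\subset k_L$ contient toutes les racines $(p-1)$-i\`emes de l'unit\'e, $k_L[A_{p'}]\simeq k_L^{\,d}$ et son quotient $R$, qui est un corps, est donc \'egal \`a $k_L$. C'est exactement ce que la factorisation $x^{p-1}-1=\prod_{i=1}^{p-1}(x-i)$ du papier encode de fa\c con explicite, et que votre argument plus abstrait a laiss\'e de c\^ot\'e. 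Une fois cette observation ajout\'ee, votre preuve est compl\`ete.
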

\begin{proof}
Comme $\pi$ est lisse et irr\'eductible, le pro-$p$ sous-groupe central $1+p\zp$ de $G'$ agit trivialement sur 
$\pi$, donc l'action du centre de $G'$ se factorise par $\mathbf{F}_p^\dual$.
Si $x$ est un g\'en\'erateur de ce dernier groupe, alors $0=x^{p-1}-1=\prod_{i=1}^{p-1}(x-i)$ 
en tant qu'endomorphismes de $\pi$, donc par irr\'eductibilit\'e il existe $i$ tel que $x=i$ sur $\pi$.
Cela permet de conclure.
\end{proof}

  \begin{lemm}\label{Pas}
    Soit $\pi$ un $k_L[G]$-module lisse, de longueur finie, avec un caract\`ere central.
 Alors ${\rm Ext}^1_G(\pi, \pi')$ est de dimension finie sur $k_L$ pour tout $k_L[G]$-module 
lisse irr\'eductible $\pi'$, avec un caract\`ere central, et nul pour presque tout $\pi'$. 
  \end{lemm}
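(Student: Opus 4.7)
La strat\'egie se d\'ecompose en quatre \'etapes, la derni\`ere \'etant le point principal.

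Tout d'abord, nous r\'eduisons au cas o\`u $\pi$ est irr\'eductible par d\'evissage sur les composantes de Jordan-H\"older de $\pi$, en utilisant la suite exacte longue des ${\rm Ext}$ associ\'ee \`a une suite exacte courte $0\to \pi_1\to \pi\to \pi_2\to 0$. Notons $\delta$ le caract\`ere central de $\pi$.

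Ensuite, nous montrons que si le caract\`ere central $\delta'$ de $\pi'$ est distinct de $\delta$, alors ${\rm Ext}^1_G(\pi,\pi')=0$. En effet, pour $z\in Z$ tel que $\delta(z)\ne \delta'(z)$, le polyn\^ome scind\'e \`a racines simples $(X-\delta(z))(X-\delta'(z))$ annule l'action de $z$ sur toute extension $E$ de $\pi$ par $\pi'$, et la d\'ecomposition spectrale qui en r\'esulte fournit un scindage $G$-\'equivariant de cette extension. Il suffit donc de traiter les $\pi'$ ayant le m\^eme caract\`ere central $\delta$ que $\pi$.

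Apr\`es cette r\'eduction, on invoque la d\'ecomposition en blocs de ${\rm Rep}^\delta\,G$ (prop.\,\ref{pasku0} et rem.\,\ref{pasku1}(i)) qui annule ${\rm Ext}^1_G(\pi,\pi')$ lorsque $\pi'$ n'appartient pas au bloc ${\cal B}$ contenant $\pi$. Crucialement, la prop.\,\ref{pasku0} combin\'ee avec la rem.\,\ref{pasku00} montre que chaque bloc de ${\rm Rep}^\delta\,G$ ne contient qu'un nombre fini d'objets irr\'eductibles (au plus trois orbites, \`a twist galoisien pr\`es). Ceci fournit imm\'ediatement la nullit\'e de ${\rm Ext}^1_G(\pi,\pi')$ pour presque tout $\pi'$.

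Il restera alors \`a \'etablir la dimension finie sur $k_L$ de ${\rm Ext}^1_G(\pi,\pi')$ lorsque $\pi$ et $\pi'$ appartiennent \`a un m\^eme bloc ${\cal B}$. Via l'\'equivalence de la prop.\,\ref{pasku2} entre ${\rm Rep}^\delta_{\cal B}\,G$ et la cat\'egorie des $E_{\cal B}^\delta$-modules de longueur finie, on se ram\`ene \`a un calcul d'${\rm Ext}^1$ entre modules simples sur $E_{\cal B}^\delta$. Le th\'eor\`eme $R=T$ de \paskunas{} et \paskunas-Tung (th.\,\ref{pasku6}) identifie $Z_{\cal B}^\delta$, modulo $\O_L$-torsion, \`a l'anneau noeth\'erien local complet $R_{\cal B}^{{\rm ps},\delta}$; combin\'e au fait que $E_{\cal B}^\delta$ est un $Z_{\cal B}^\delta$-module de type fini (r\'esultat de \paskunas), ceci rendra $E_{\cal B}^\delta$ noeth\'erien et de type fini sur son centre, ce qui entra\^{\i}nera que les groupes d'extensions entre deux modules simples sont de longueur finie sur $Z_{\cal B}^\delta$ et, \emph{in fine}, de dimension finie sur $k_L$. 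C'est cette derni\`ere \'etape qui constitue l'obstacle principal, car elle repose de mani\`ere essentielle sur toute la machinerie de \paskunas{} concernant la structure fine des blocs.
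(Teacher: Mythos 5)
Your proposal follows the same overall outline as the paper (reduction to $\pi$ irreducible, separation by central character, block decomposition, finiteness within a block), with two deviations: an elementary spectral argument for the case $\delta\ne\delta'$ instead of citing \cite[prop.\,8.1]{Pas0}, and an attempt to derive the finiteness within a block from the $R=T$ theorem rather than citing the classical ${\rm Ext}^1$ computations of \cite{Pas0,Pas1}. The spectral argument is correct and in fact slightly more self-contained than the paper's citation, and the vanishing for almost all $\pi'$ via the finiteness of each block is fine.

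There is however one genuine gap. After fixing $\delta'=\delta$, you pass to the equivalence of ${\rm Rep}^\delta_{\cal B}\,G$ with finite-length $E_{\cal B}^\delta$-modules. This computes $\operatorname{Ext}^1_{G,\delta}(\pi,\pi')$, the extension group \emph{inside the fixed central character category}, not $\operatorname{Ext}^1_G(\pi,\pi')$. The two differ: an extension $E$ of $\pi$ by $\pi'$ need not admit a central character even if $\pi,\pi'$ both do. Indeed, for any central $z$, the map $z-\delta(z)$ kills $\pi'\subset E$ and the quotient $\pi$, so factors through a map $\pi\to\pi'$; this obstruction is nonzero exactly when $\pi\cong\pi'$, in which case it contributes a term $\operatorname{Hom}(Z,k_L)$. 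The paper handles this via the short exact sequence $0\to {\rm Ext}^1_{G,\delta}(\pi_i,\pi'_j)\to {\rm Ext}^1_{G}(\pi_i,\pi'_j)\to X_{ij}\to 0$ of \cite[prop.\,8.1]{Pas0}, and explicitly notes that $X_{ij}=\operatorname{Hom}(Z,\overline{\mathbf{F}}_p)$ is finite-dimensional. Your argument must also account for this correction term; as written, it only bounds $\operatorname{Ext}^1_{G,\delta}$.

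As for the final step, the paper simply cites \cite{Pas0,Pas1}. Your alternative (module-finiteness of $E_{\cal B}^\delta$ over the noetherian local ring $Z_{\cal B}^\delta$, hence finite length of ${\rm Ext}^1$ between simples) is plausible but, as you acknowledge, remains a sketch; and the input you need (that $E_{\cal B}^\delta$ is a finite $Z_{\cal B}^\delta$-module) is itself a substantial structural theorem of \paskunas{} whose proof is of the same depth as the ${\rm Ext}^1$ computations you are trying to bypass, so the gain over a direct citation is limited.
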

  
  \begin{proof} Soit $\delta$ (resp. $\delta'$) le caract\`ere central de $\pi$ (resp. $\pi'$). On peut supposer que $\pi$ est irr\'eductible. Soit $\overline{\pi}=\overline{\mathbf{F}}_p\otimes_{k_L} \pi$ et $\overline{\pi'}=\overline{\mathbf{F}}_p\otimes_{k_L} \pi'$.  \'Ecrivons $\overline{\pi}=\oplus_{i=1}^n \pi_i$ et $\overline{\pi'}=\oplus_{j=1}^m \pi'_j$, les 
  repr\'esentations $\pi_i$ et $\pi'_j$ \'etant irr\'eductibles 
(cf.~(ii) et (iii) du lemme\,\ref{irred}). 
  
  Le lemme~\ref{51} montre que la fl\`eche $${\rm Ext}^1_G(\pi,\pi')\otimes_{k_L} \overline{\mathbf{F}}_p\to 
  {\rm Ext}^1_G(\overline{\pi}, \overline{\pi'})\simeq \bigoplus_{1\leq i\leq n, 1\leq j\leq m} {\rm Ext}^1_G(\pi_i, \pi'_j)$$ est injective, il suffit donc de s'assurer que ce dernier espace est de dimension finie, et nul pour presque tout $\pi'$. 
  
       La prop.\,8.1 de \cite{Pas0} montre que ${\rm Ext}^1_G(\pi_i, \pi'_j)=0$ 
pour tout $i,j$ si $\delta\neq\delta'$, et si 
  $\delta=\delta'$, on a une suite exacte 
  $$0\to {\rm Ext}^1_{G,\delta}(\pi_i, \pi'_j)\to {\rm Ext}^1_G(\pi_i, \pi'_j)\to X_{ij}\to 0,$$
  avec $X_{ij}=0$ si $\pi_i$ n'est pas isomorphe \`a $\pi'_j$ 
et $X_{ij}={\rm Hom}(Z, \overline{\mathbf{F}}_p)$ dans le cas contraire.
Notons que si $\pi_i\simeq \pi'_j$,
 alors ${\rm Hom}_G(\overline{\pi}, \overline{\pi'})\ne 0$ et le lemme~\ref{51} montre que 
  ${\rm Hom}_G(\pi,\pi')\ne 0$, donc $\pi'\simeq \pi$. Comme ${\rm Hom}(Z, \overline{\mathbf{F}}_p)$ est de dimension finie, il suffit donc de montrer que 
les ${\rm Ext}^1_{G,\delta}(\pi_i, \pi'_j)$ sont de dimension finie, et nuls pour presque tout~$\pi'$. 
  Cela est classique (cf. \cite{Pas0,Pas1}). 
      \end{proof}

\subsubsection{Finitude des vecteurs lisses de $\pi^\vee$}      
      Pour les applications \`a la cohomologie de la tour de Lubin-Tate nous aurons aussi besoin du r\'esultat suivant: 
      
 \begin{theo}\label{LTappli}
 Soit $\pi$ un $k_L[G']$-module lisse de type fini. Alors 
 $\pi$ est de longueur finie si et seulement si l'espace des vecteurs lisses 
 de $\pi^{\vee}$ est de dimension finie sur $k_L$.
 \end{theo} 
 
 \begin{proof} Supposons que $\pi$ est de longueur finie et montrons que 
 $\dim (\pi^{\vee})^{\rm lisse}<\infty$. Par d\'evissage on peut supposer que 
 $\pi$ est irr\'eductible, auquel cas le r\'esultat est bien connu (on a m\^eme 
 $(\pi^{\vee})^{\rm lisse}=0$ sauf si $\pi$ est de dimension finie, 
cf.~\cite[prop.\,3.9]{Kohldual}). 
 
  L'autre implication est plus d\'elicate. Nous allons commencer par montrer:
  
   \begin{lemm}
  Si $\pi$ est un $k_L[G']$-module lisse de type fini et de longueur infinie, alors il existe un poids de Serre
  $\sigma$ tel que ${\rm Hom}_{G}(\pi, I_{G'}(\sigma))\ne 0$.
  \end{lemm}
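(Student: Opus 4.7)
My plan is to proceed by induction on the length $r$ of a filtration of $\pi$ provided by Proposition~\ref{filtration}. For the base case $r=1$, $\pi$ is a quotient of some $I_{G'}(\sigma_1)$; the infinite length of $\pi$ combined with Proposition~\ref{BL} forces this surjection to be an isomorphism, and the identity map furnishes a nonzero element of ${\rm Hom}_{G'}(\pi, I_{G'}(\sigma_1))$.

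For the inductive step with filtration $0=\pi_0\subset\pi_1\subset\cdots\subset\pi_r=\pi$, I would distinguish two cases. If $\pi/\pi_1$ has infinite length, the induction hypothesis applied to $\pi/\pi_1$ (which is smooth, finitely generated, and admits a filtration of length $r-1$) produces a nonzero map $\pi/\pi_1\to I_{G'}(\sigma)$ for some Serre weight $\sigma$; composing with the projection $\pi\twoheadrightarrow\pi/\pi_1$ gives the desired map. Otherwise $\pi_1$ has infinite length; being a quotient of $I_{G'}(\sigma_1)$, Proposition~\ref{BL} forces $\pi_1=I_{G'}(\sigma_1)$, and $\pi$ then fits into a short exact sequence
$$0\to I_{G'}(\sigma_1)\to\pi\to\tau\to 0$$
with $\tau$ of finite length.

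The heart of the argument is this last case. Applying ${\rm Hom}_{G'}(-,I_{G'}(\sigma_1))$ to the sequence yields a long exact sequence
$$0\to{\rm Hom}_{G'}(\tau,I_{G'}(\sigma_1))\to{\rm Hom}_{G'}(\pi,I_{G'}(\sigma_1))\to{\rm End}_{G'}(I_{G'}(\sigma_1))\xrightarrow{\delta}{\rm Ext}^1_{G'}(\tau,I_{G'}(\sigma_1)),$$
and by Barthel-Livn\'e we have ${\rm End}_{G'}(I_{G'}(\sigma_1))\simeq k_L[T]$, an infinite-dimensional $k_L$-vector space. The connecting map $\delta$ sends $P(T)\mapsto P(T)\cdot[\xi]$, where $[\xi]$ is the extension class and the $k_L[T]$-action on ${\rm Ext}^1$ is induced by the action of $T$ on $I_{G'}(\sigma_1)$. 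It suffices to show that $[\xi]$ is $k_L[T]$-torsion, for then $\ker\delta$ is a nonzero ideal and every element of it lifts to a nonzero element of ${\rm Hom}_{G'}(\pi,I_{G'}(\sigma_1))$.

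The main obstacle is precisely establishing this $k_L[T]$-torsion. To attack it I would exploit the short exact sequence $0\to I_{G'}(\sigma_1)\xrightarrow{T}I_{G'}(\sigma_1)\to\pi_{\sigma_1}\to 0$, where $\pi_{\sigma_1}$ is the associated supersingular, and its induced long exact sequence for ${\rm Ext}^\bullet_{G'}(\tau,-)$. Using the finiteness of ${\rm Hom}_{G'}(\tau,\pi_{\sigma_1})$ and ${\rm Ext}^1_{G'}(\tau,\pi_{\sigma_1})$ (Lemma~\ref{Pas}) combined with a devissage of the finite-length $\tau$ into its irreducible constituents, one should control both the kernel and cokernel of $T$ acting on ${\rm Ext}^1_{G'}(\tau,I_{G'}(\sigma_1))$. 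The Noetherianity of $\O_L[[G']]$ applied to $\pi^\vee$ — which bounds the size of the $k_L[T]$-submodule generated by $[\xi]$ — should then force the annihilator ideal of $[\xi]$ to be nonzero, completing the argument.
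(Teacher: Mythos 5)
Votre architecture globale est correcte jusqu'au point crucial: le cas de base, le cas o\`u $\pi/\pi_1$ est de longueur infinie, la r\'eduction (via la prop.\,\ref{BL}) \`a une suite exacte $0\to I_{G'}(\sigma_1)\to\pi\to\tau\to 0$ avec $\tau$ de longueur finie, et l'observation qu'un \'el\'ement non nul de $\ker\delta$ se rel\`eve en un morphisme non nul $\pi\to I_{G'}(\sigma_1)$ (car sa restriction \`a $I_{G'}(\sigma_1)$ est un endomorphisme non nul) sont tous corrects. Le probl\`eme est la derni\`ere \'etape, celle que vous identifiez vous-m\^eme comme le c{\oe}ur de l'argument: montrer que $[\xi]$ est de $k_L[T]$-torsion. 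Ce que la suite $0\to I_{G'}(\sigma_1)\xrightarrow{T} I_{G'}(\sigma_1)\to\pi_{\sigma_1}\to 0$ et le lemme~\ref{Pas} vous donnent, c'est que $T$ agit sur ${\rm Ext}^1(\tau, I_{G'}(\sigma_1))$ avec noyau et conoyau de dimension finie. Mais cela n'entra\^{\i}ne nullement qu'une classe donn\'ee soit de torsion: le $k_L[T]$-module $k_L[T]$ lui-m\^eme a un $T$ injectif de conoyau de dimension $1$ et est sans torsion. Quant \`a l'appel \`a la {\og noeth\'erianit\'e de $\O_L[[G']]$\fg}, il n'est pas disponible: $G'$ n'est pas compact, l'alg\`ebre pertinente est $\O_L\langle G'\rangle$, qui est seulement coh\'erente (prop.\,\ref{Shot27}, \cite{Sho}), et la finitude de $\pi$ (ou de $\pi^\vee$) comme module sur cette alg\`ebre ne dit rien sur le sous-$k_L[T]$-module cyclique de ${\rm Ext}^1(\tau, I_{G'}(\sigma_1))$ engendr\'e par $[\xi]$. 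Votre preuve a donc un trou pr\'ecis \`a l'endroit d\'ecisif.

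L'ingr\'edient manquant est exactement celui que le texte importe de Dotto--Emerton--Gee \cite{DEG}: pour $\tau$ de longueur finie, ${\rm Ext}^1(\tau, I_{G'}(\sigma))$ est de dimension finie sur $k_L$ (avec le d\'evissage par caract\`ere central expliqu\'e en note). Avec ce r\'esultat, $\delta: k_L[T]\to {\rm Ext}^1(\tau, I_{G'}(\sigma_1))$ part d'un espace de dimension infinie vers un espace de dimension finie, donc $\ker\delta\neq 0$, et votre argument se termine; il devient alors une variante l\'eg\`ere de la preuve du texte, qui, au lieu de prolonger un endomorphisme de $I_{G'}(\sigma_1)$, prolonge (pour la m\^eme raison de finitude, combin\'ee \`a la libert\'e de $I_{G'}(\sigma)$ sur $k_L[T]$) un morphisme non nul $\pi_1\to I_{G'}(\sigma)$ fourni par l'hypoth\`ese de r\'ecurrence appliqu\'ee au sous-module. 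Si vous tenez \`a \'eviter \cite{DEG}, il vous faut une preuve ind\'ependante de la torsion de $[\xi]$, et la suite exacte en $T$ seule ne la fournit pas.
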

  
  \begin{proof}
  Soit $0=\pi_n\subset \pi_{n-1}\subset...\subset \pi_0=\pi$ une filtration de $\pi$ telle qu'il existe des poids de Serre $\sigma_i$ et des surjections $I_{G'}(\sigma_i)\to \pi_{i-1}/\pi_i$. Si 
 la surjection $I_{G'}(\sigma_1)\to \pi/\pi_1$ est un isomorphisme, on peut conclure, supposons donc que ce n'est pas le cas, donc (prop.\,\ref{BL}) $\pi/\pi_1$ est de longueur finie. Alors 
 $\pi_1$ est de type fini et de longueur infinie, donc par r\'ecurrence sur la longueur de la filtration on dispose d'un poids de Serre $\sigma$ tel que ${\rm Hom}_G(\pi_1, I_{G'}(\sigma))\ne 0$. En appliquant le foncteur 
 ${\rm Hom}_{G}(-, I_{G'}(\sigma))$ \`a la suite exacte $0\to \pi_1\to \pi\to \pi/\pi_1\to 0$ on obtient une suite exacte 
 $${\rm Hom}_G(\pi, I_{G'}(\sigma))\to {\rm Hom}_G(\pi_1, I_{G'}(\sigma))\to {\rm Ext}^1(\pi/\pi_1, I_{G'}(\sigma)).$$
Comme $\pi/\pi_1$ est de longueur finie, un r\'esultat de Dotto-Emerton-Gee \cite{DEG} montre que
${\rm Ext}^1(\pi/\pi_1, I_{G'}(\sigma))$ est de dimension finie.\footnote{Dans \cite{DEG} on travaille avec un caract\`ere central fix\'e, mais on peut s'y ramener 
comme suit (nous remercions le rapporteur pour cette observation et pour l'argument ci-dessous): par d\'evissage il suffit de voir que $\dim {\rm Ext}^1(\tau, I_{G'}(\sigma))<\infty$ pour 
$\tau$ irr\'eductible, donc \`a caract\`ere central (lemme \ref{cc}), disons $\zeta$. On peut supposer que 
$\zeta$ est aussi le caract\`ere central de $I_{G'}(\sigma)$, sinon ${\rm Ext}^1(\tau, I_{G'}(\sigma))=0$. Il suffit alors d'observer que toute extension $0\to I_{G'}(\sigma)\to E\to \tau\to 0$ a aussi $\zeta$ pour caract\`ere central. En effet, si $z\in Z({\rm GL}_2(\qp))$ alors $z-\zeta(z)\in {\rm End}(E)$ se factorise par un morphisme 
$\tau\to I_{G'}(\sigma)$, qui est forc\'ement nul par rigidit\'e (prop.\,\ref{BL}).}
 Supposons que ${\rm Hom}_G(\pi, I_{G'}(\sigma))$, donc ${\rm Hom}_G(\pi_1, I_{G'}(\sigma))$ est de dimension finie sur $k_L$. D'autre part ${\rm Hom}_G(\pi_1, I_{G'}(\sigma))$ est un 
 $k_L[T]={\rm End}_{G}(I_{G'}(\sigma))$-module, donc il existe $P\in k_L[T]$ non nul tel qui tue 
 ${\rm Hom}_G(\pi_1, I_{G'}(\sigma))$ mais, comme $I_{G'}(\sigma)$ est libre sur $k_L[T]$, 
la multiplication par $P(T)$ est injective sur 
  ${\rm Hom}_G(\pi_1, I_{G'}(\sigma))$, une contradiction. 
  \end{proof}

Revenons \`a la preuve du th.\,\ref{LTappli}.
  Supposons que $(\pi^{\vee})^{\rm lisse}$ est de dimension finie sur $k_L$ et montrons que 
  $\pi$ est de longueur finie. Si ce n'est pas le cas, le lemme fournit un poids de Serre 
  $\sigma$ et un morphisme $G$-\'equivariant continu non nul $f: I_{G'}(\sigma)^{\vee}\to \pi^{\vee}$. Comme 
  $(\pi^{\vee})^{\rm lisse}$ est de dimension finie sur $k_L$, il est ferm\'e dans $\pi^{\vee}$, donc 
  $f^{-1}((\pi^{\vee})^{\rm lisse})$ est ferm\'e dans $I_{G'}(\sigma)^{\vee}$. D'autre part, 
   $f^{-1}((\pi^{\vee})^{\rm lisse})$ contient $(I_{G'}(\sigma)^{\vee})^{\rm lisse}$, qui est dense dans 
   $I_{G'}(\sigma)^{\vee}$, donc $f^{-1}((\pi^{\vee})^{\rm lisse})= I_{G'}(\sigma)^{\vee}$; ainsi  
   l'image de $f$ est incluse dans $(\pi^{\vee})^{\rm lisse}$
et donc est de dimension finie, ce qui fournit  
une sous-repr\'esentation $({\rm Im}(f))^{\vee}$
de dimension finie de $I_{G'}(\sigma)$, non nulle puisque $f\neq 0$. Cela est impossible. 
  \end{proof}

\subsection{Repr\'esentations admissibles de pr\'esentation finie de ${\rm GL}_2(F)$}
Soit 
$$\mathcal{C}={\rm Rep}^{\rm adm}\,G\cap {\rm Rep}^{\rm pf}\,G$$
la sous-cat\'egorie de ${\rm Rep}^{\rm lisse}\,G$ 
des repr\'esentations admissibles et de pr\'esentation finie. 
Cette cat\'egorie a \'et\'e \'etudi\'ee par Hu~\cite{YHu} et Vign\'eras~\cite{vigne}.
\begin{theo}\label{ord-ter}
{\rm (i)} $\mathcal{C}$ est une sous-cat\'egorie de ${\rm Rep}^{\rm \ell f}\,G$.

{\rm (ii)} ${\cal C}$ est stable par sous-quotients et extensions.
\end{theo}
\begin{proof}
 Le point (i) d\'ecoule directement de la proposition $5.6\, (i)$ de \cite{vigne}. La stabilit\'e de 
 ${\cal C}$ par extensions d\'ecoule de la proposition \ref{Shot25} (une extension lisse de repr\'esentations lisses admissibles est trivialement admissible). Soit $V\in {\cal C}$ et soit $V_1$ un sous-objet de $V$. Par la proposition $5.5\, (i)$ de \cite{vigne} (noter que l'hypoth\`ese est satisfaite gr\^ace \`a la proposition $5.6$ de loc.cit.)
 on a $V_1\in \mathcal{C}$. Si $V_2$ est un quotient de $V_1$, alors $V_2$ est admissible (tout quotient d'une repr\'esentation lisse admissible est encore lisse admissible) et $V_2$ est de pr\'esentation finie par la proposition \ref{Shot25} (noter que le noyau de la surjection $V_1\to V_2$ est dans $\mathcal{C}$, en particulier de type fini, d'apr\`es ce que l'on vient de faire). 
\end{proof}

Si $F=\Q_p$, 
les repr\'esentations irr\'eductibles de $G$ sont admissibles et de pr\'esentation finie,
 donc 
$${\cal C}={\rm Rep}^{\rm \ell f}\,G,\quad{\text{si $F=\Q_p$}}.$$ 
Ce n'est plus du tout le cas pour 
$F\ne \Q_p$, comme le montre le r\'esultat suivant:

\begin{theo}\label{SchWu} {\rm (Schraen~\cite{schraen}, Wu~\cite{Wu})} \label{em-vi2}
Si $F\neq\Q_p$,
les repr\'esentations supersinguli\`eres de $\gl_2(F)$ sur $\overline{\bf F}_p$ 
ne sont pas de pr\'esentation finie.
\end{theo}

\begin{rema}\label{em-vi22}
Le~\cite{Le} a construit des $k_L$-repr\'esentations de $\gl_2(\Q_{p^3})$
qui sont lisses,
irr\'eductibles, mais pas admissibles.
\end{rema}

Nous allons d\'ecrire ${\cal C}$ dans le cas $F\neq\Q_p$ via le foncteur {\og partie ordinaire\fg}
d'Emerton.  
\begin{center}
{\it Nous supposons $F\neq\Q_p$ dans le reste de ce paragraphe.} 
\end{center}

\subsubsection{Composantes de Jordan-H\"older}
Si $k$ est une extension finie de $k_L$, on note 
$\mathcal{C}_k^{\rm ord}$ la cat\'egorie des $k[G]$-modules lisses, de longueur finie, dont tous les sous-quotients irr\'eductibles sont de la forme $\chi\circ \det, {\rm St}\otimes \chi\circ \det$ ou ${\rm Ind}_B^G (\chi_1\otimes \chi_2)$, avec $\chi, \chi_1,\chi_2: F^{\dual}\to k^{\dual}$ des caract\`eres lisses.
Ainsi $\mathcal{C}_{k_L}^{\rm ord}$ est une sous-cat\'egorie de $\mathcal{C}$.

\begin{lemm}\label{ord}
Pour tout $\pi\in \mathcal{C}$ tu\'e par ${\goth m}_L$ il existe une extension finie 
$k$ de $k_L$ telle que $\pi\otimes_{k_L} k\in \mathcal{C}_k^{\rm ord}$.
\end{lemm}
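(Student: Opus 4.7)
\emph{Plan.} The strategy is to reduce to absolutely irreducible Jordan--H\"older factors after a finite scalar extension and then to exclude the supersingular case via Theorem \ref{SchWu}. First, since $\pi \in \mathcal{C}$, Theorem \ref{ord-ter}(i) ensures that $\pi$ has finite length as a $k_L[G]$-module. Let $\pi_1,\dots,\pi_r$ be its Jordan--H\"older factors; each $\pi_i$ still belongs to $\mathcal{C}$ by Theorem \ref{ord-ter}(ii), and Lemma \ref{irred}(i) shows that $k_i := {\rm End}_{k_L[G]}(\pi_i)$ is a finite extension of $k_L$. Choose a finite extension $k/k_L$ into which every $k_i$ embeds. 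By Lemma \ref{irred}(iii), each $\pi_i \otimes_{k_L} k$ then decomposes as a finite direct sum of pairwise non-isomorphic absolutely irreducible $k[G]$-modules.

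Next, I would verify that every absolutely irreducible constituent $\tau$ of $\pi \otimes_{k_L} k$ is itself admissible and of finite presentation over $k[G]$, so that the analogue of $\mathcal{C}$ over $k$ contains $\tau$. Admissibility is clearly preserved by scalar extension and by passing to direct summands. Finite presentation is preserved by scalar extension: applying $- \otimes_{k_L} k$ to a presentation of $\pi$ by compact inductions produces one for $\pi \otimes_{k_L} k$. To extract $\tau$ as a direct summand, one observes that $\tau$ is admissible and irreducible, hence has a central character (Lemma \ref{cc}-type argument), so that after fixing central characters one may view everything inside a category of representations of $G'$ (or of the open subgroup $H = \{g \in G : \det g \in \O_F^\dual\}$, which is of finite index in $G'$ and is an amalgamated product of two open compact subgroups). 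The remark following Proposition \ref{Shot27}, together with its assertion that ${\rm Rep}^{\rm pf}\,H$ is abelian, shows that direct summands of a representation of finite presentation over $H$ are of finite presentation; the kernel of the projection $\pi \otimes_{k_L} k \twoheadrightarrow \tau$, being another direct summand, has finite length and is therefore of finite type, so Proposition \ref{Shot25}(b) applies and yields finite presentation of $\tau$.

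Finally, by the Barthel--Livn\'e/Breuil classification, an absolutely irreducible smooth $k[G]$-representation is either a character $\chi\circ\det$, a twist of the Steinberg ${\rm St}\otimes\chi$, an irreducible principal series ${\rm Ind}_B^G(\chi_1\otimes\chi_2)$, or a supersingular representation. Since we are in the case $F\neq\Q_p$ and since $\tau$ is of finite presentation, Theorem \ref{SchWu} of Schraen--Wu rules out the supersingular possibility. Hence every Jordan--H\"older factor of $\pi \otimes_{k_L} k$ is of one of the three types entering the definition of $\mathcal{C}_k^{\rm ord}$, which proves $\pi \otimes_{k_L} k \in \mathcal{C}_k^{\rm ord}$. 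The main technical obstacle is the second paragraph: the careful bookkeeping around central characters and the passage from $G$ to a finite-index open subgroup of $G'$ needed to invoke Shotton's abelianness, after which Schraen--Wu closes the argument in one stroke.
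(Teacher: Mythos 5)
Your strategy coincides with the paper's: reduce to the absolutely irreducible constituents $\tau$ obtained after a finite scalar extension $k/k_L$, establish that each $\tau$ is finitely presented, invoke Schraen--Wu to exclude supersingularity, and fall back on the Barthel--Livn\'e classification. Your more explicit treatment of why a direct summand of a finitely presented representation is again finitely presented (using Shotton's abelianness of $\mathrm{Rep}^{\mathrm{pf}}$ after passing to a finite-index open subgroup of $G'$) usefully fleshes out what the paper dispatches with a bare ``Notons que chaque $\pi_i$ est aussi de pr\'esentation finie.''

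There is, however, a gap at the very end. Both Theorem~\ref{SchWu} and the Barthel--Livn\'e classification as cited here are statements about representations over $\overline{\mathbf{F}}_p$, not over the finite field $k$. To rule out supersingularity one must first pass to $\tau\otimes_k\overline{\mathbf{F}}_p$, note that it is still irreducible (since $\tau$ is absolutely irreducible) and still finitely presented (again by scalar extension of a presentation). The classification then only yields that $\tau\otimes_k\overline{\mathbf{F}}_p$ is a character, a Steinberg twist or a principal series over $\overline{\mathbf{F}}_p$; one must still descend to conclude that $\tau$ itself (possibly after further enlarging $k$) is of one of these forms with characters valued in $k^\dual$, which is required by the definition of $\mathcal{C}_k^{\mathrm{ord}}$. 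The paper does this by choosing a finite extension $k'/k$ on which the identified $\overline{\mathbf{F}}_p$-representation has a model $\sigma$, and invoking Lemma~\ref{51} to deduce $\tau\otimes_k k'\simeq\sigma$; one then replaces $k$ by $k'$. Your argument omits this descent step, without which the conclusion does not follow. (A minor slip: in your second paragraph $\tau$ is a direct summand of $\pi_i\otimes_{k_L}k$ rather than of $\pi\otimes_{k_L}k$; since $\pi_i\in\mathcal{C}$ by Theorem~\ref{ord-ter}(ii), this is immediately repaired.)
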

\begin{proof} 
On peut supposer que $\pi$ est irr\'eductible. Prenons une extension $k$ de $k_L$ et des 
$\pi_i$ comme dans le (iii) du lemme \ref{irred}.
Notons que chaque $\pi_i$ est aussi de pr\'esentation finie, donc $\pi_i\otimes_k \overline{\mathbf{F}}_p$ aussi. De plus, par construction
les endomorphismes de $\pi_i\otimes_k \overline{\mathbf{F}}_p$
sont scalaires, donc cette repr\`esentation poss\`ede un caract\`ere central. 
Le th.\,\ref{SchWu} montre alors que $\pi_i\otimes_k \overline{\mathbf{F}}_p$ n'est pas supersinguli\`ere,
 et la classification de Barthel-Livn\'e \cite{BL0} permet de conclure que $\pi_i\otimes_k \overline{\mathbf{F}}_p$ est un caract\`ere lisse,
 une tordue de la Steinberg ou une s\'erie principale.
 Dans tous les cas on trouve une extension finie 
$k'$ de $k$ et une repr\'esentation $\sigma$ d\'efinie sur $k'$, qui est un caract\`ere, une tordue de la Steinberg ou une s\'erie principale et telle que $\pi_i\otimes_k \overline{\mathbf{F}}_p\simeq \sigma\otimes_{k'} \overline{\mathbf{F}}_p$.
 Le lemme \ref{51} montre alors que $\pi_i\otimes_{k} k'$ est isomorphe \`a $\sigma$, ce qui permet de conclure.   
\end{proof}

\subsubsection{Le foncteur {\og partie ordinaire\fg}}
Soit $\bar{B}$ le Borel inf\'erieur de $G$ et soit $A$ le tore diagonal de $G$.
Le foncteur $U\mapsto {\rm Ind}_{\bar{B}}^G(U)$ de
${\rm Rep}(A)$ dans ${\rm Rep}(G)$ est
exact et pr\'eserve l'admissibilit\'e \cite[prop.\,4.1.5, 4.1.7]{Ordparts1}, donc induit un foncteur 
$${\rm Ind}_{\bar{B}}^G: {\rm Rep}(A)^{\rm adm}\to {\rm Rep}(G)^{\rm adm}$$
Emerton a montr\'e \cite[th. 4.4.6]{Ordparts1} que ce dernier foncteur poss\`ede un adjoint \`a droite, le {\it{foncteur partie ordinaire}}
$${\rm Ord}={\rm Ord}_B: {\rm Rep}(G)^{\rm adm}\to {\rm Rep}(A)^{\rm adm}$$
Ainsi le passage \`a la partie ordinaire induit une 
identification 
$${\rm Hom}_G({\rm Ind}_{\bar{B}}^G(U), V)\simeq {\rm Hom}_{A}(U, {\rm Ord}(V))$$
pour des repr\'esentations lisses admissibles $U$ et 
$V$ de $A$, respectivement $G$. En particulier l'identit\'e de 
${\rm Ord}(V)$ induit un morphisme canonique 
$$\iota_V: {\rm Ind}_{\bar{B}}^G( {\rm Ord}(V))\to V,$$
tel que ${\rm Ord}(\ker \iota_V)=0$ et ${\rm Ord}(V^{\rm ord})={\rm Ord}(V)$ si l'on note 
$V^{\rm ord}$ l'image de $\iota_V$. On se propose de montrer (prop.\,\ref{finito1}) que,
 si $V\in \mathcal{C}$, alors 
$\iota_V$ est presqu'un isomorphisme, i.e. son noyau et son conoyau sont de type fini sur $\mathcal{O}_L$. 
L'argument qui suit est inspir\'e des lemmes 5.4 et 5.11 de \cite{HuHa}, et demande quelques pr\'eliminaires. 

\begin{prop}\label{calcul ord}
{\rm    a)} Si $U\in {\rm Rep}^{\rm adm}(A)$ alors 
$${\rm Ord}({\rm Ind}_{\bar{B}}^G(U))\simeq U,\,\, R^1{\rm Ord}({\rm Ind}_{\bar{B}}^G(U))=0.$$

{\rm     b)} On a ${\rm Ord}({\rm St}\otimes \chi\circ \det)\simeq \chi\otimes \chi$ et 
$ R^1{\rm Ord}({\rm St}\otimes \chi\circ \det)=0$.

{\rm     c)} Si $\pi=\chi\circ \det$ alors ${\rm Ord}(\pi)=R^1{\rm Ord}(\pi)=0$.
\end{prop}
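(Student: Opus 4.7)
I would prove the three parts in order, with (a) supplying the main input for (b) and (c).

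Part (a) is essentially the content of Emerton's adjunction theorem already quoted above: since ${\rm Ord}$ is right adjoint to the exact functor ${\rm Ind}_{\bar B}^G$, there is a natural unit morphism $U \to {\rm Ord}({\rm Ind}_{\bar B}^G(U))$, and Emerton establishes in \cite{Ordparts1} by a direct computation (using the Iwasawa decomposition $G = \bar B \cdot K$ and the explicit model of ${\rm Ind}_{\bar B}^G(U)$ as functions on $G$) that this unit is always an isomorphism. For the derived vanishing $R^1{\rm Ord}({\rm Ind}_{\bar B}^G(U)) = 0$, I would use Emerton's result that ${\rm Ind}_{\bar B}^G$ sends injective objects of ${\rm Rep}(A)^{\rm adm}$ to ${\rm Ord}$-acyclic ones: picking an injective resolution $U \to I^\bullet$ and applying the exact functor ${\rm Ind}_{\bar B}^G$ yields an acyclic resolution whose ${\rm Ord}$ is computed, by the first assertion, to be $I^\bullet$ itself, so the higher cohomology vanishes.

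For part (c), the first assertion ${\rm Ord}(\chi \circ \det) = 0$ follows directly from adjunction: for every admissible $V \in {\rm Rep}(A)$ one has ${\rm Hom}_A(V, {\rm Ord}(\chi \circ \det)) = {\rm Hom}_G({\rm Ind}_{\bar B}^G(V), \chi \circ \det) = 0$, the last vanishing because the irreducible finite-dimensional representation $\chi \circ \det$ is never a quotient of any principal series of $G$ (in the length-two reducible Jordan-H\"older filtration of ${\rm Ind}_{\bar B}^G(\chi \otimes \chi)$ the one-dimensional constituent always sits as a subrepresentation, not as a quotient). The second assertion $R^1{\rm Ord}(\chi \circ \det) = 0$ is more delicate and I would deduce it from part (b), once ${\rm Ord}({\rm St} \otimes \chi)$ has been computed directly.

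For part (b), I would apply the long exact sequence of $R^\bullet {\rm Ord}$ to the standard short exact sequence $0 \to \chi \circ \det \to {\rm Ind}_{\bar B}^G(\chi \otimes \chi) \to {\rm St} \otimes \chi \to 0$ arising from the classical reducibility of the principal series when the two inducing characters coincide. Combined with (a) for the middle term and with ${\rm Ord}(\chi \circ \det) = 0$, this yields an exact sequence $0 \to \chi \otimes \chi \to {\rm Ord}({\rm St} \otimes \chi) \to R^1{\rm Ord}(\chi \circ \det) \to 0$ as well as $R^1{\rm Ord}({\rm St} \otimes \chi) = 0$.

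To close the loop, I would compute ${\rm Ord}({\rm St} \otimes \chi) \simeq \chi \otimes \chi$ directly: realize ${\rm St} \otimes \chi$ as the cokernel of the inclusion $\chi \circ \det \hookrightarrow C^\infty(B \backslash G) \otimes \chi$, take $N_0$-invariants via the Iwasawa decomposition, and identify the finite-slope eigenspace of the Hecke operator attached to a strictly dominant element of $A$. This last identification is the only step not formal from the adjunction machinery and is the main obstacle; once completed, the middle arrow of the long exact sequence above becomes an isomorphism, whence $R^1{\rm Ord}(\chi \circ \det) = 0$, finishing both (b) and (c).
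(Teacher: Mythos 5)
Your part (a), second assertion, is where the proposal breaks down. You derive $R^1{\rm Ord}({\rm Ind}_{\bar B}^G(U))=0$ from an alleged ``result of Emerton that ${\rm Ind}_{\bar B}^G$ sends injective objects of ${\rm Rep}^{\rm adm}(A)$ to ${\rm Ord}$-acyclic ones''. No such statement appears in \cite{Ordparts1} or \cite{Ordparts2}, and it is not a formal consequence of the adjunction: the formal reason for a functor to preserve injectives (hence, a fortiori, the natural route to your acyclicity claim) is the exactness of its left adjoint, and the left adjoint of ${\rm Ind}_{\bar B}^G$ on smooth representations is the $\bar N$-coinvariants functor, which is \emph{not} exact in characteristic $p$ --- this failure is precisely the reason Emerton works with ${\rm Ord}$ rather than with the Jacquet functor. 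Note also that your mechanism, if available, would give $R^i{\rm Ord}({\rm Ind}_{\bar B}^G U)=0$ for \emph{all} $i>0$, a much stronger statement than what is needed, and the behaviour of the higher ordinary parts of parabolic inductions is a genuinely delicate matter (it is the subject of Emerton's conjecture 3.7.2 and of Hauseux's work). In the paper this step is exactly the nontrivial imported input: ${\rm Ord}\circ{\rm Ind}_{\bar B}^G\simeq{\rm id}$ is quoted from \cite[lemma\,2.3.4, prop.\,4.3.4]{Ordparts1} and the vanishing $R^1{\rm Ord}({\rm Ind}_{\bar B}^G(U))=0$ from \cite[cor.\,4.2.4]{Hauseux}. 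As written, your key input is asserted rather than proved or correctly referenced, so the argument for (a) has a genuine gap.

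Part (b) is also not established: you reduce everything to the direct computation ${\rm Ord}({\rm St}\otimes\chi\circ\det)\simeq\chi\otimes\chi$, which you yourself call ``the main obstacle'' and only sketch, and your claim $R^1{\rm Ord}({\rm St}\otimes\chi)=0$ uses, besides $R^1{\rm Ord}({\rm Ind})=0$, the vanishing of $R^2{\rm Ord}(\chi\circ\det)$ (or of the relevant connecting map), which you never address and which is not automatic for the honest derived functor from ${\rm cd}_p(N_0)=1$. The paper simply quotes both halves of (b) from \cite[th.\,4.2.12]{Ordparts2}; you should either do the same or actually carry out the Hecke/finite-slope computation. By contrast, your part (c) is fine and even a little more direct than the paper's: you get ${\rm Ord}(\chi\circ\det)=0$ from adjunction together with the fact that $\chi\circ\det$ occurs as a subobject, not a quotient, of ${\rm Ind}_{\bar B}^G(\chi\otimes\chi)$ (a small d\'evissage is needed to pass from characters to arbitrary admissible $U$), whereas the paper deduces all of (c) from (a), (b) and the evident exact sequence $0\to\chi\circ\det\to{\rm Ind}_{\bar B}^G(\chi\otimes\chi)\to{\rm St}\otimes\chi\to 0$; but this does not compensate for the missing inputs in (a) and (b).
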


\begin{proof}
a) Le premier est le contenu de \cite[lemma 2.3.4, prop.\,4.3.4]{Ordparts1}, 
le second est d\'emontr\'e dans \cite[cor.\,4.2.4]{Hauseux}.

b) Cela d\'ecoule de \cite[th. 4.2.12]{Ordparts2}.

c) Il suffit d'utiliser a) et b) et la suite exacte \'evidente.
\end{proof}

\Subsubsection{Description de ${\cal C}$}
\begin{lemm}\label{exact}
Le foncteur ${\rm Ord}$ est exact sur $\mathcal{C}$ et 
${\rm Ord}(\pi)$ est un $\mathcal{O}_L$-module de type fini pour $\pi\in \mathcal{C}$.        
\end{lemm}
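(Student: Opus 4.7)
Le plan consiste à procéder par dévissage successif pour ramener l'énoncé aux composantes irréductibles de $\mathcal{C}_k^{\rm ord}$, où la prop.\,\ref{calcul ord} donne l'information voulue. D'abord, comme $\mathcal{C}\subset{\rm Rep}^{\rm\ell f}\,G$ par le th.\,\ref{ord-ter}, tout $\pi\in\mathcal{C}$ est de longueur finie sur $\O_L[G]$, donc annulé par une puissance ${\goth m}_L^N$. La filtration ${\goth m}_L^{\bullet}\pi$ a pour quotients successifs des objets de $\mathcal{C}$ (stabilité par sous-quotients, th.\,\ref{ord-ter}) annulés par ${\goth m}_L$. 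Via la suite exacte longue associée à $R^{\bullet}{\rm Ord}$, il suffira d'établir (i) $R^1{\rm Ord}(\pi)=0$ (qui, combiné à l'exactitude à gauche de ${\rm Ord}$, fournit l'exactitude voulue) et (ii) la finitude de ${\rm Ord}(\pi)$ sur $\O_L$, pour les $\pi\in\mathcal{C}$ tués par ${\goth m}_L$.

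Pour un tel $\pi$, j'invoquerai ensuite le lemme\,\ref{ord} pour obtenir une extension finie $k$ de $k_L$ telle que $\pi_k:=\pi\otimes_{k_L}k$ appartienne à $\mathcal{C}_k^{\rm ord}$. Comme $k/k_L$ est finie séparable, l'extension des scalaires $-\otimes_{k_L}k$ est exacte et fidèlement plate, et commute avec ${\rm Ind}_{\bar B}^G$ et, par adjonction, avec ${\rm Ord}$ et $R^1{\rm Ord}$. Il suffira donc de démontrer $R^1{\rm Ord}(\pi_k)=0$ et la finitude de ${\rm Ord}(\pi_k)$ sur $k$.

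Par définition de $\mathcal{C}_k^{\rm ord}$, les sous-quotients irréductibles de $\pi_k$ sont d'un des trois types $\chi\circ\det$, ${\rm St}\otimes\chi\circ\det$, ou ${\rm Ind}_B^G(\chi_1\otimes\chi_2)$ (cette dernière s'écrit comme ${\rm Ind}_{\bar B}^G$ d'un caractère de $A$ modulo conjugaison par l'élément de Weyl). La prop.\,\ref{calcul ord} traite exactement ces trois cas: $R^1{\rm Ord}$ s'annule sur chacun d'eux, et ${\rm Ord}$ y vaut soit $0$, soit un caractère de $A$, donc un espace de dimension $1$ sur $k$. Par récurrence sur la longueur de Jordan-Hölder de $\pi_k$, appliquant à chaque cran la suite exacte longue, on obtient $R^1{\rm Ord}(\pi_k)=0$ et ${\rm Ord}(\pi_k)$ de dimension finie sur $k$, d'où les deux assertions du lemme par descente.

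Le seul point demandant un peu de soin est la compatibilité de ${\rm Ord}$ avec l'extension des scalaires et l'identification ${\rm Ind}_B^G\simeq{\rm Ind}_{\bar B}^G$ après action de l'élément de Weyl; ces deux vérifications sont routinières à partir des définitions et des propriétés d'adjonction rappelées avant la prop.\,\ref{calcul ord}, ce qui n'est pas vraiment une obstruction sérieuse. La vraie structure de la preuve est ainsi portée par la classification de Barthel-Livné-Schraen-Wu utilisée dans le lemme\,\ref{ord}, couplée aux calculs explicites de ${\rm Ord}$ et $R^1{\rm Ord}$ sur les briques élémentaires.
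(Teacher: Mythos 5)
Votre démonstration est correcte et reprend essentiellement la stratégie du texte : dévissage pour se ramener à des objets tués par ${\goth m}_L$ (le texte se ramène directement aux irréductibles, ce qui revient au même), extension des scalaires à une extension finie $k$ de $k_L$ fournie par le lemme~\ref{ord} et compatibilité de ${\rm Ord}$ et de ses dérivés avec cette extension, puis annulation de $R^1{\rm Ord}$ et finitude de ${\rm Ord}$ sur les trois briques élémentaires de $\mathcal{C}_k^{\rm ord}$ via la prop.~\ref{calcul ord}. La seule divergence mineure est dans la preuve de la finitude : vous l'obtenez directement par dévissage une fois l'exactitude acquise, alors que le texte passe par l'admissibilité de ${\rm Ord}(\pi)$ comme $\mathcal{O}_L[A]$-module et un argument de type Nakayama sur ${\rm Ord}(\pi)\otimes_{\mathcal{O}_L}k_L$; les deux variantes sont valides et de difficulté comparable.
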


\begin{proof} 
Il suffit de montrer que ${\rm R^1Ord}$ est nul sur $\mathcal{C}$, et par d\'evissage il suffit de voir que ${\rm R^1Ord}(\pi)=0$ pour un objet irr\'eductible $\pi$ de $\mathcal{C}$.
 Un tel $\pi$ est tu\'e par 
${\goth m}_L$, et il existe une extension finie $k$ de $k_L$ telle que $\pi\otimes_k k_L\in \mathcal{C}_k^{\rm ord}$ (lemme~\ref{ord}).
 Comme le foncteur ${\rm Ind}_{\bar B}^G$
est compatible avec les extensions finies du corps des coefficients, il en est de m\^eme du foncteur ${\rm Ord}$
et de ses foncteurs d\'eriv\'es.
 Il suffit donc de voir que ${\rm R^1Ord}$ est nul sur $\mathcal{C}_k^{\rm ord}$, et encore par d\'evissage on se ram\`ene \`a l'annulation de ${\rm R^1Ord}$ sur $\chi\circ \det, {\rm St}\otimes \chi\circ \det$ ou ${\rm Ind}_B^G (\chi_1\otimes \chi_2)$, qui d\'ecoule de la prop.\,\ref{calcul ord}.

Pour le deuxi\`eme point, on sait que ${\rm Ord}(\pi)$ est un $\mathcal{O}_L[A]$-module de torsion, lisse, admissible, et que ${\rm Ord}(\pi)\otimes_{\mathcal{O}_L} k_L={\rm Ord}(\pi\otimes_{\mathcal{O}_L} k_L)$ d'apr\`es ce que l'on vient de d\'emontrer.
 Il suffit de montrer que ${\rm Ord}(\pi\otimes_{\mathcal{O}_L} k_L)$ est de dimension finie sur $k_L$.
 Cela se fait encore par d\'evissage
en utilisant le lemme \ref{ord} et la compatibilit\'e du foncteur partie ordinaire avec le changement du corps des coefficients, et le calcul de 
${\rm Ord}$ sur les objets irr\'eductibles
 de $\mathcal{C}_k^{\rm ord}$ fourni par la prop.\,\ref{calcul ord}.  
\end{proof}

\begin{lemm}\label{finito}
Soit $\pi\in \mathcal{C}$ tel que ${\rm Ord}(\pi)=0$. Alors 
$\pi$ est un $\mathcal{O}_L$-module de type fini.
\end{lemm}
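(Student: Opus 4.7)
Le plan est une r\'ecurrence sur la puissance de ${\goth m}_L$ annulant $\pi$, qui ram\`ene au cas o\`u $\pi$ est un $k_L$-espace vectoriel; on utilise ensuite le lemme~\ref{ord} pour se placer dans $\mathcal{C}_k^{\rm ord}$ apr\`es extension des scalaires, et on exploite la prop.~\ref{calcul ord} pour contraindre les gradu\'es de Jordan-H\"older \`a \^etre des caract\`eres.

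\smallskip

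Comme $\pi\in\mathcal{C}\subset{\rm Rep}^{\rm\ell f}\,G$ (th.~\ref{ord-ter}\,(i)), il existe $n\geq 1$ tel que ${\goth m}_L^n\pi=0$; on proc\`ede par r\'ecurrence sur $n$. Pour $n\geq 2$, la suite exacte
$$0\to{\goth m}_L\pi\to\pi\to\pi/{\goth m}_L\pi\to 0$$
a ses deux termes extr\^emes dans $\mathcal{C}$ (stabilit\'e par sous-quotients, th.~\ref{ord-ter}\,(ii)), et l'exactitude de ${\rm Ord}$ sur $\mathcal{C}$ (lemme~\ref{exact}) entra\^{\i}ne ${\rm Ord}({\goth m}_L\pi)={\rm Ord}(\pi/{\goth m}_L\pi)=0$. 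L'hypoth\`ese de r\'ecurrence r\`egle ${\goth m}_L\pi$, et il suffit de traiter $\pi/{\goth m}_L\pi$, i.e.~le cas ${\goth m}_L\pi=0$.

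\smallskip

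Supposons donc $\pi$ tu\'e par ${\goth m}_L$. Le lemme~\ref{ord} fournit une extension finie $k/k_L$ telle que $\pi':=\pi\otimes_{k_L}k\in\mathcal{C}_k^{\rm ord}$. Comme le foncteur ${\rm Ord}$ commute au changement de base et reste exact, on a ${\rm Ord}(\pi')=0$, et chaque gradu\'e d'une filtration de Jordan-H\"older de $\pi'$ est un irr\'eductible de $\mathcal{C}_k^{\rm ord}$ de partie ordinaire nulle. Or la prop.~\ref{calcul ord} montre que ${\rm Ord}$ est non nul sur les tordues de la Steinberg (point (b)) et sur les s\'eries principales irr\'eductibles (point (a), utilis\'e via l'isomorphisme d'entrelacement ${\rm Ind}_B^G(\chi_1\otimes\chi_2)\simeq{\rm Ind}_{\bar{B}}^G(\chi_2\otimes\chi_1)$ pour se ramener au Borel inf\'erieur); seuls les caract\`eres $\chi\circ\det$ ont partie ordinaire nulle (point (c)). Tous les gradu\'es sont donc des caract\`eres, en particulier de dimension~$1$ sur $k$, et la finitude de la filtration entra\^{\i}ne $\dim_k\pi'<\infty$, d'o\`u $\dim_{k_L}\pi<\infty$, et \emph{a fortiori} $\pi$ est un $\mathcal{O}_L$-module de type fini.

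\smallskip

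Le point le plus d\'elicat est la non-annulation de ${\rm Ord}$ sur les s\'eries principales ${\rm Ind}_B^G(\chi_1\otimes\chi_2)$, qui n'est pas directement couverte par le (a) de la prop.~\ref{calcul ord} (\'enonc\'ee pour le Borel oppos\'e $\bar{B}$) mais s'en d\'eduit par l'isomorphisme d'entrelacement classique entre principales irr\'eductibles induites depuis $B$ et $\bar{B}$; tout le reste est formel une fois dispos\'e de l'exactitude de ${\rm Ord}$ et de la r\'eduction fournie par le lemme~\ref{ord}.
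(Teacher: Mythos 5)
Votre preuve est correcte et suit essentiellement la m\^eme d\'emarche que celle du texte : exactitude de ${\rm Ord}$ sur $\mathcal{C}$, r\'eduction \`a $\mathcal{C}_k^{\rm ord}$ via le lemme~\ref{ord}, puis la prop.~\ref{calcul ord} pour constater que seuls les caract\`eres ont une partie ordinaire nulle ; la seule diff\'erence est d'organisation (vous d\'evissez d'abord modulo ${\goth m}_L$ et appliquez le lemme~\ref{ord} \`a $\pi$ tout entier, l\`a o\`u le texte raisonne par r\'ecurrence sur la longueur en passant aux sous-objets irr\'eductibles, automatiquement tu\'es par ${\goth m}_L$). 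Le point $B$ contre $\bar{B}$ que vous explicitez est bienvenu, mais il s'agit simplement de la conjugaison par l'\'el\'ement de Weyl (qui identifie ${\rm Ind}_B^G(\chi_1\otimes\chi_2)$ \`a ${\rm Ind}_{\bar{B}}^G(\chi_2\otimes\chi_1)$) plut\^ot que d'un v\'eritable op\'erateur d'entrelacement, et le texte l'utilise implicitement de la m\^eme fa\c{c}on.
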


\begin{proof}
Soit $\pi'$ un sous-objet irr\'eductible de $\pi$, alors ${\rm Ord}(\pi')=0$ et donc
${\rm Ord}(\pi'\otimes_{k_L} k)=0$ pour toute extension finie $k$ de $k_L$. Comme 
${\rm Ord}$ est exact sur $\mathcal{C}_k^{\rm ord}$ et prend des valeurs non nulles sur 
les tordus de la Steinberg et les s\'eries principales, le lemme \ref{ord} montre que 
$\pi'$ est de dimension finie sur $k_L$. Comme 
${\rm Ord}$ est exact sur $\mathcal{C}$, on a ${\rm Ord}(\pi/\pi')=0$ et une r\'ecurrence sur la longueur montre que tous les sous-quotients irr\'eductibles de $\pi$ sont de dimension finie sur $k_L$, ce qui permet de conclure.   
\end{proof}

\begin{prop}\label{finito1}
Pour tout $\pi\in \mathcal{C}$ le noyau et le conoyau de la fl\`eche
naturelle $\iota_V: {\rm Ind}_{\bar{B}}^G({\rm Ord}(\pi))\to \pi$ sont de type fini sur 
$\mathcal{O}_L$.          
\end{prop}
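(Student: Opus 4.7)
Let me denote $\iota_{\pi}\colon {\rm Ind}_{\bar B}^G({\rm Ord}(\pi))\to \pi$, set $\pi^{\rm ord}:={\rm Im}(\iota_{\pi})$, and let $K:=\ker\iota_{\pi}$, $C:={\rm coker}\,\iota_{\pi}$. The plan is to verify that both $K$ and $C$ lie in $\mathcal C$ and have vanishing ${\rm Ord}$, so that lemma~\ref{finito} yields the claim. The main thing to check carefully is that ${\rm Ind}_{\bar B}^G({\rm Ord}(\pi))$ belongs to $\mathcal C$; everything else then flows from the exactness properties of ${\rm Ord}$ on $\mathcal C$ and the standard facts about the adjunction.

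First I would argue that ${\rm Ind}_{\bar B}^G({\rm Ord}(\pi))\in\mathcal C$. By lemma~\ref{exact}, ${\rm Ord}(\pi)$ is a smooth admissible $\mathcal O_L$-module of finite type that is $p$-power torsion, hence a successive extension (as a smooth $A$-representation) of finite-dimensional $k_L$-representations of $A$. Since ${\rm Ind}_{\bar B}^G$ is exact and preserves admissibility, ${\rm Ind}_{\bar B}^G({\rm Ord}(\pi))$ is a successive extension of principal series type representations over $k_L$, which are admissible and of finite presentation (\S\ref{hecke2}, used already in $\mathcal C_{k_L}^{\rm ord}$); stability of $\mathcal C$ under extensions (th.~\ref{ord-ter}) then puts ${\rm Ind}_{\bar B}^G({\rm Ord}(\pi))$ in $\mathcal C$. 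Since $\mathcal C$ is stable under subquotients, the pieces $K\subset{\rm Ind}_{\bar B}^G({\rm Ord}(\pi))$, $\pi^{\rm ord}\subset\pi$ and $C=\pi/\pi^{\rm ord}$ are all in $\mathcal C$.

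Next I apply the exact functor ${\rm Ord}$ (exactness on $\mathcal C$ from lemma~\ref{exact}) to the two short exact sequences
$$0\to K\to {\rm Ind}_{\bar B}^G({\rm Ord}(\pi))\to \pi^{\rm ord}\to 0,\qquad 0\to \pi^{\rm ord}\to \pi\to C\to 0.$$
The composition ${\rm Ord}(\pi)\xrightarrow{\sim}{\rm Ord}({\rm Ind}_{\bar B}^G({\rm Ord}(\pi)))\xrightarrow{{\rm Ord}(\iota_{\pi})}{\rm Ord}(\pi)$ equals the identity by the general triangle identity for the adjunction $({\rm Ind}_{\bar B}^G,{\rm Ord})$, where the first arrow is the isomorphism of prop.~\ref{calcul ord}(a). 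Hence ${\rm Ord}(\iota_{\pi})$ is itself an isomorphism, and since ${\rm Ord}(\pi^{\rm ord})={\rm Ord}(\pi)$ (already noted in the excerpt) the map ${\rm Ord}({\rm Ind}_{\bar B}^G({\rm Ord}(\pi)))\to{\rm Ord}(\pi^{\rm ord})$ in the first sequence is an isomorphism, forcing ${\rm Ord}(K)=0$. From the second sequence we similarly read ${\rm Ord}(C)=0$.

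Finally, lemma~\ref{finito} applied to $K\in\mathcal C$ and $C\in\mathcal C$, both with trivial ordinary part, gives that $K$ and $C$ are of finite type over $\mathcal O_L$, which is the desired conclusion. The only step that is not entirely formal is the verification that ${\rm Ind}_{\bar B}^G({\rm Ord}(\pi))$ lies in $\mathcal C$; everything else is a direct manipulation of the adjunction together with the exactness of ${\rm Ord}$ established in lemma~\ref{exact}.
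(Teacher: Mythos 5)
Your proposal is correct and follows essentially the same route as the paper: both show that the kernel and cokernel lie in $\mathcal{C}$ and have vanishing ${\rm Ord}$ (the paper citing the adjunction fact ${\rm Ord}(\ker\iota_V)=0$ directly, you rederiving it via the triangle identity), and both reduce membership of the kernel in $\mathcal{C}$ to the fact that ${\rm Ind}_{\bar B}^G({\rm Ord}(\pi))\in\mathcal{C}$ because ${\rm Ord}(\pi)$ is of finite type over $\mathcal{O}_L$, before concluding with lemme~\ref{finito}. Your write-up merely makes explicit some details the paper leaves as "facile".
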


\begin{proof}
Soit $\pi^{\rm ord}$ l'image de $\iota_V$, alors par construction 
${\rm Ord}(\pi^{\rm ord})={\rm Ord}(\pi)$ et $\pi/\pi^{\rm ord}, \pi^{\rm ord}\in \mathcal{C}$
donc par le lemme \ref{exact} 
on a ${\rm Ord}(\pi/\pi^{\rm ord})=0$ et donc le conoyau de $\iota_V$ est de type fini sur 
$\mathcal{O}_L$ par le lemme \ref{finito}.
Soit $X=\ker(\iota_V)$. Alors ${\rm Ord}(X)=0$, donc par le lemme \ref{finito} il suffit de montrer que 
$X\in \mathcal{C}$. Comme $X$ est un sous-objet de ${\rm Ind}_{\bar{B}}^G ({\rm Ord}(\pi))$, il suffit de voir que ce dernier est dans $\mathcal{C}$, ce qui d\'ecoule facilement du fait que 
${\rm Ord}(\pi)$ est de type fini sur $\mathcal{O}_L$ (lemme \ref{exact}).
\end{proof}

\begin{coro}\label{finito2}
Soit $\Theta$ un r\'eseau dans un $L$-Banach unitaire admissible $\Pi$ de $G$ tel que 
$\Theta/\varpi_L\Theta\in \mathcal{C}$. Il existe alors une repr\'esentation $\sigma$ de $A$, de type fini 
sur $\mathcal{O}_L$, et un morphisme 
${\rm Ind}_{\bar{B}}^G(\sigma)\to \Theta$
dont le noyau et le conoyau sont de type fini sur $\mathcal{O}_L$.
\end{coro}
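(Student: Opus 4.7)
The plan is to apply Proposition \ref{finito1} to each torsion quotient $\Theta/\varpi_L^n\Theta$ and then pass to the inverse limit over $n$. A first step is to check that $\Theta/\varpi_L^n \Theta \in \mathcal{C}$ for every $n \geq 1$, which I would prove by induction on $n$. Since $\Theta$ is a lattice, hence $\mathcal{O}_L$-torsion-free, multiplication by $\varpi_L^{n-1}$ induces an isomorphism $\Theta/\varpi_L\Theta \simeq \varpi_L^{n-1}\Theta/\varpi_L^n\Theta$; combined with the short exact sequence
$$0 \to \varpi_L^{n-1}\Theta/\varpi_L^n \Theta \to \Theta/\varpi_L^n \Theta \to \Theta/\varpi_L^{n-1} \Theta \to 0$$
and the stability of $\mathcal{C}$ under extensions (Theorem \ref{ord-ter}(ii)), this gives the inductive step.

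I then set $\sigma_n := {\rm Ord}(\Theta/\varpi_L^n\Theta)$. By Lemma \ref{exact} (exactness of ${\rm Ord}$ on $\mathcal{C}$ and finite generation of its values over $\mathcal{O}_L$), applied recursively to the above exact sequence, each $\sigma_n$ is a finitely generated $\mathcal{O}_L/\varpi_L^n$-module, the transition maps $\sigma_{n+1}\twoheadrightarrow\sigma_n$ are surjective, and $\sigma_n/\varpi_L\sigma_n \simeq \sigma_1$ is a fixed finite-dimensional $k_L$-vector space. Topological Nakayama then yields $\sigma := \varprojlim_n \sigma_n$, a finitely generated $\mathcal{O}_L$-module endowed with its natural lisse $A$-action; this is my candidate $\sigma$. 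Proposition \ref{finito1} provides compatible maps $\iota_n : {\rm Ind}_{\bar B}^G(\sigma_n) \to \Theta/\varpi_L^n\Theta$ with kernels $K_n$ and cokernels $C_n$ finitely generated over $\mathcal{O}_L$. Using the commutation of smooth parabolic induction with surjective inverse limits of admissible $A$-representations, passing to the limit produces the morphism $\iota : {\rm Ind}_{\bar B}^G(\sigma) \to \Theta$.

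The main difficulty is to prove that $\ker(\iota)$ and $\mathrm{coker}(\iota)$ remain finitely generated over $\mathcal{O}_L$. Right-exactness of $-\otimes_{\mathcal{O}_L} k_L$ gives $C_n/\varpi_L C_n \simeq C_1$ for all $n$, so the pro-system $(C_n)$ has a uniformly bounded number of generators modulo $\varpi_L$; combined with the Mittag-Leffler condition (automatic from finite length of each $C_n$ over $\mathcal{O}_L/\varpi_L^n$), this identifies $\mathrm{coker}(\iota) = \varprojlim_n C_n$, which by topological Nakayama is finitely generated over $\mathcal{O}_L$, with rank bounded by $\dim_{k_L} C_1$. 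The kernel is handled analogously: the snake lemma applied to the tower bounds $\ker(\iota)/\varpi_L \ker(\iota)$ in terms of $K_1$, and Mittag-Leffler for $(K_n)$ gives $\ker(\iota) = \varprojlim_n K_n$, again finitely generated over $\mathcal{O}_L$. The steps requiring the most care are the commutation of ${\rm Ind}_{\bar B}^G$ with the inverse limit and the verification of the Mittag-Leffler conditions; these are where the admissibility of $\Pi$ is crucially used.
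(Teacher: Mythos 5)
Your proposal is correct and follows essentially the same route as the paper, whose very terse proof consists precisely in applying la prop.\,\ref{finito1} aux $\pi_n=\Theta/\varpi_L^n\Theta$ and passing to the limit, using the exactness of ${\rm Ord}$ on $\mathcal{C}$ to get ${\rm Ord}(\pi_{n+1})\otimes_{\mathcal{O}_L/\varpi_L^{n+1}}\mathcal{O}_L/\varpi_L^n\simeq{\rm Ord}(\pi_n)$ together with the analogous compatibility for ${\rm Ind}_{\bar B}^G$. You merely spell out the limit bookkeeping (stability of $\mathcal{C}$ to see $\Theta/\varpi_L^n\Theta\in\mathcal{C}$, Mittag--Leffler, topological Nakayama, and torsion-freeness of $\Theta$ for the kernel) that the paper leaves implicit.
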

\begin{proof}
Il suffit d'appliquer le r\'esultat ci-dessus avec $\pi_n=\Theta/\varpi_L^n\Theta$, en remarquant que 
${\rm Ord}(\pi_{n+1})\otimes_{\mathcal{O}_L/\varpi_L^{n+1}} \mathcal{O}_L/\varpi_L^n={\rm Ord}(\pi_n)$ par exactitude de 
${\rm Ord}$ sur~$\mathcal{C}$, et un r\'esultat semblable pour l'induite parabolique. 
\end{proof}

 \section{Copr\'esentation finie}\label{copr1}
Soit $F$ une extension finie de $\Q_p$.  
Le but de ce chapitre est de d\'emontrer le r\'esultat suivant:   
\begin{theo}\label{smooth} 
Soient $K$ une extension finie de $F$ et $k\geq 1$, $q\geq 0$. 
Le $G^{\prime}$-module $H^q_{\eet}(\mathcal{M}_{n,K}^{\varpi}, \O_L/p^k)$ est le dual de Pontryagin d'une repr\'esentation lisse, de 
pr\'esentation finie de $G^{\prime}$, en particulier c'est un $\O_L$-module profini.
\end{theo}
\begin{rema}\label{copr2}
Si $F=\Q_p$, nous montrons au chap. \ref{lg1} que $H^1_{\eet}(\mathcal{M}_{n,K}^{\varpi}, k_L)^{\vee}$
est de longueur finie et donc admissible. Par contraste, si $F\neq \Q_p$, 
le th.\,\ref{smooth} implique que $H^1_{\eet}(\mathcal{M}_{n,K}^{\varpi}, k_L)^{\vee}$ 
n'est pas admissible si $K$ est assez grand.
(cf.~th.\,\ref{main10bis}). 
\end{rema}
   
   \begin{rema}\label{copr3}
    Il est fort probable qu'en dimension arbitraire on dispose encore du r\'esultat suivant: $H^i_{\eet}(\mathcal{M}_{n,K}^{\varpi}, k_L)^{\vee}$ est un $G^{\prime}$-module lisse, de type fini. 
    \end{rema}
      \subsection{Pr\'eliminaires}\label{copr4}
      La preuve du th\'eor\`eme est purement g\'eom\'etrique et demande quelques pr\'eliminaires.  
Dans beaucoup des \'enonc\'es qui suivent, il est plus naturel de travailler avec $\Z_p/p^k$ plut\^ot que
$\O_L/p^k$; on obtient le r\'esultat pour $\O_L/p^k$ en appliquant $\otimes_{\Z_p}\O_L$.
    \subsubsection{La filtration de Bloch--Kato-Hyodo sur les cycles proches}  \label{copr5}
Nous allons \'etendre aux sch\'emas formels la filtration de
Bloch-Kato-Hyodo sur les faisceaux de cycles proches pour les sch\'emas et l'identification
du gradu\'e; 
que ce soit possible est {\og bien connu\fg} des experts mais
nous n'avons pas trouv\'e de r\'ef\'erence o\`u ceci est fait.
   
  Soient $K$ une extension finie de $\qp$, $\varpi$ une uniformisante, et $k$ le corps r\'esiduel de~$\so_K$.
  Soit $X$ un sch\'ema formel semistable sur $\so_K$ ou le changement de base d'un tel sch\'ema
sur l'anneau des entiers d'un sous-corps de $K$. 
On munit $X$ de la 
log-structure induite par la fibre sp\'eciale.
Soit $M$ le faisceau en mono\"{\i}des sur $X$ d\'efinissant la log-structure, $M^{\gp}$ 
le faisceau en groupes associ\'e. 
Cette log-structure est canonique, dans la terminologie de Berkovich \cite[2.3]{BerL}, 
i.e., $M(U)=\{x\in\so_X(U)| x_K\in\so^{\dual}_{X_K}(U_K)\}$, cf.~\cite[Theorem 2.3.1]{BerL}.
Il s'ensuit que  $M^{\gp}(U)=\so(U_K)^\dual$.

 Soit $i$ l'inclusion de la fibre sp\'eciale $X_0$ dans $X$.
Soit $\R^i\Psi\mathbf{Z}/p^n(j)$ le faisceau sur $X_{0,\eet}$ associ\'e au
pr\'efaisceau ${U}_0\to H^i_{\eet}({U}_K,\mathbf{Z}/p^n(j))$, o\`u $U$ d\'ecrit les
sch\'emas formels, \'etales au-dessus de $X$.
C'est le $i$-i\`eme foncteur d\'eriv\'e du foncteur $\Psi$ des cycles proches
de~\cite[Prop. 4.1]{BV} (o\`u il est not\'e $\Theta$) mais
nous n'utiliserons pas $\Psi$ lui-m\^eme.
 
Pour $U$ comme ci-dessus, la suite exacte de Kummer fournit une fl\`eche
   $$
   i^\dual M^{\rm gp}(U_0)=M^{\rm gp}(U)=\so(U_K)^\dual\stackrel{\partial}{\to} \mathbf{Z}/p^n(1)(U_K)[1],
   $$
qui se faisceautise en une application ``symbole''
   $$
   i^\dual M^{\rm gp}\to \R^1\Psi\mathbf{Z}/p^n(1).
   $$
 En utilisant le cup-produit, cela induit une fl\`eche symbole 
 $$i^\dual(M^{\rm gp})^{\otimes q}\to \R^q\Psi\mathbf{Z}/p^n(q).$$ 

      Soit $(X_2, M_2)$ la r\'eduction modulo $p^2$ de $(X,M)$. 
   On munit $(M_2^{\rm gp})^{\otimes q}$ d'une filtration 
   $$...\subset U^2\subset V^1\subset U^1\subset V^0\subset U^0=(M_2^{\rm gp})^{\otimes q},$$
   d\'efinie comme suit. Si $q=0$ on pose 
   $$U^{m+1}=V^m=0 \,\, \text{si}\,\, m\geq 0;$$
si $q=1$ on pose 
$$V^0=(1+\varpi \mathcal{O}_{X_2}) \varpi^{\mathbf{Z}}, U^m=1+\varpi^m \mathcal{O}_{X_2}, V^m=U^{m+1}, \, m\geq 1;$$
et, enfin, si $q\geq 2$ on note $U^m$ l'image de $U^m(M_2^{\rm gp})\otimes (M_2^{\rm gp})^{\otimes (q-1)}$ et 
$V^m$ l'image de $$U^m(M_2^{\rm gp})\otimes (M_2^{\rm gp})^{\otimes (q-2)}\otimes \varpi^{\mathbf{Z}}+
U^{m+1}.$$ En utilisant la fl\`eche symbole, cela permet de d\'efinir une filtration sur $ \R^q\Psi \mathbf{Z}/p(q)$. Si $a_i\in i^\dual(M^{\rm gp}_2)$, on note $\{a_1,...,a_q\}$ l'image de 
$a_1\otimes...\otimes a_q\in (i^\dual(M^{\rm gp}_2))^{\otimes q}$ dans  $\R^q\Psi \mathbf{Z}/p(q)$
par l'application symbole.

    Soit $Y:=X_0$ avec la log-structure induite,
et soit $\Omega^{q}_{Y/k}$ le faisceau des $q$-diff\'erentielles logarithmiques sur $Y$
 relativement \`a la log-structure sur $k$ induite par $1\mapsto 0$. Posons
    $$B^q_{Y/k}=d \Omega^{q-1}_{Y/k}\subset \Omega^q_{Y/k},\,\, Z^{q}_{Y/k}=\ker(d: \Omega^q_{Y/k}\to \Omega^{q+1}_{Y/k}).$$
    Enfin, notons $\Omega^q_{Y/k,\rm log}$ le sous-faisceau ab\'elien de $\Omega^q_{Y/k}$ engendr\'e par 
    les $\wedge_{i=1}^q d\log a_i$ pour $a_i\in M_Y$. 
    
Le th\'eor\`eme suivant est un simple corollaire de l'analogue alg\'ebrique prouv\'e
par Bloch--Kato-Hyodo dans \cite{BK}, \cite{Hy} (voir aussi \cite[Th. 3.3.1]{Ts})
\begin{theo} \label{generalized}
    Si $e$ est l'indice de ramification absolu de $K$, les gradu\'es de la filtration sur 
    $ \R^q\Psi \mathbf{Z}/p(q)$ sont d\'ecrits comme suit: 
    
   \begin{enumerate}[label={\rm(\arabic*)}]
   
   \item On a des isomorphismes 
   $$U^0/V^0\simeq \Omega^q_{Y/k, \rm log}, \quad V^0/U^1\simeq \Omega^{q-1}_{Y/k, \rm log},$$
   induits respectivement par 
$$\{a_1,...,a_q\}\mapsto \wedge_{i=1}^q \dlog \overline{a_i},\quad
   \{a_1,...,a_{q-1},\varpi\}\mapsto \wedge_{i=1}^{q-1} \dlog \overline{a_i}$$
   
   \item Si $m\in ]0, pe/(p-1)[$ est premier \`a $p$, on a des isomorphismes 
   $$U^m/V^m\simeq \Omega^{q-1}_{Y/k}/B_{Y/k}^{q-1}, \quad V^m/U^{m+1}\simeq \Omega_{Y/k}^{q-2}/Z_{Y/k}^{q-2}$$
   induits respectivement par 
$$\{1{+}\varpi^m x, a_1,...,a_{q-1}\}\mapsto \bar{x}\wedge_{i=1}^{q-1} \dlog \overline{a_i},\quad
   \{1{+}\varpi^m x, a_1,...,a_{q-2}, \varpi\}\mapsto \bar{x}\wedge_{i=1}^{q-2} \dlog \overline{a_i}$$ 
   
   \item Si $m\in ]0, pe/(p-1)[$ est un multiple de $p$, on a des isomorphismes 
   $$U^m/V^m\simeq \Omega^{q-1}_{Y/k}/Z^{q-1}_{Y/k}, \quad V^m/U^{m+1}\simeq \Omega_{Y/k}^{q-2}/Z_{Y/k}^{q-2}$$
   induits respectivement par 
$$\{1{+}\varpi^m x, a_1,...,a_{q-1}\}\mapsto \bar{x}\wedge_{i=1}^{q-1} \dlog \overline{a_i},
  \ \   \{1{+}\varpi^m x, a_1,...,a_{q-2}, \varpi\}\mapsto \bar{x}\wedge_{i=1}^{q-2} \dlog \overline{a_i}$$ 
    
    \item Si $m\geq pe/(p-1)$ alors $U^m=0$.
   \end{enumerate}
    
    \end{theo}
    \begin{proof}
   Commen\c{c}ons par l'isomorphisme $U^0/V^0\simeq \Omega^q_{Y/k,\log}$. 
        Consid\'erons les fl\`eches:
     $$
     \xymatrix{
     U^0_M/V^0_M\ar@{->>}[r]
       & U^0_X/V^0_X\ar[r]^{f_X} & \Omega^q_{Y/k,\log},
     }
     $$
o\`u nous avons ajout\'e des indices pour indiquer o\`u se trouvent les filtrations.
Il s'agit de prouver que $f_X$ est un isomorphisme.
     
Le probl\`eme est local et nous pouvons supposer que $X$ est alg\'ebrisable,
i.e., que $X$ est la compl\'etion $X=\wh{T}$ d'un sch\'ema semistable $T$ over $\so_K$. 
Par \cite[Th. 5.1]{BV}, la compl\'etion induit un isomorphisme naturel:
     \begin{equation}
     \label{alg}
     i^\dual\R^qj_* \mathbf{Z}/p(q) \stackrel{\sim}{\to }\R^q\Psi \mathbf{Z}/p(q), \quad j: T_K\hookrightarrow T.
     \end{equation}
Le faisceau des cycles proches alg\'ebrique
$ i^\dual\R^qj_* \mathbf{Z}/p(q) $ poss\`ede une  filtration $U^{\bullet}_T,V^{\bullet}_T$ 
analogue \`a celle de son analogue formel et on a un diagramme commutatif: 
       $$
     \xymatrix@R=2mm{ &   U^0_X/V^0_X\ar[rd]^{f_X}\\
     U^0_M/V^0_M\ar@{->>}[rd] \ar@{->>}[ru]  & & \Omega^q_{Y/k,\log}\\
      & U^0_T/V^0_T\ar[ru]^{f_T}_{\sim}\ar@{->>}[uu]
     }
     $$
La fl\`eche $f_T$ est  un isomorphisme d'apr\`es le th\'eor\`eme de Bloch--Kato--Hyodo.  
Une chasse au diagramme prouve que $f_X$ est un isomorphisme. 
Cela signifie aussi que la fl\`eche verticale est un isomorphisme qui,  combin\'e
avec l'isomorphisme $U^0_T\stackrel{\sim}{\to} U^0_X$ de la formule (\ref{alg}), montre
que la fl\`eche naturelle $V^0_T\to V^0_X$ est aussi un isomorphisme.  
En r\'esum\'e, nous avons \'etabli les isomorphismes
     $$
     U^0_X/V^0_X\simeq \Omega^q_{Y/k,\log}, \quad V^0_T\stackrel{\sim}{\to} V^0_X. 
     $$
     
Maintenant, partant du dernier isomorphisme, on prouve de mani\`ere analogue les
isomorphismes 
     \begin{align*}
     & V^0_X/U^1_X\simeq  \Omega^{q-1}_{Y/k,\log}, \quad U^1_T\stackrel{\sim}{\to }U^1_X, \\
      & U^1_X/V^1_X\simeq \Omega^{q-1}_{Y/k}/B^{q-1}_{Y/k},\quad V^1_T\stackrel{\sim}{\to }V^1_X,
      \end{align*}
      etc. Puisque, pour $m\geq pe/(p-1)$, on a $U_T^m=0$, cela suffit \`a prouver le th\'eor\`eme.
    \end{proof}
\begin{rema}\label{qequals1} En particulier, pour $q=1$, on  obtient les r\'esultats suivants pour $\R^1\Psi\Z/p(1)$.
\begin{enumerate}
\item
On construit une extension
$$0\to \mathbf{F}_p\to U^0/U^1\to \Omega^{1}_{Y/k, \rm log}\to 0,$$
\`a partir des isomorphismes
$$U^0/V^0\simeq \Omega^{1}_{Y/k, \rm log},\ 
a\mapsto \dlog \bar{a}, \quad V^0/U^1\simeq \mathbf{F}_p, \ \varpi^n \mapsto n\ ({\rm mod}\, p).$$
\item 
        De plus $U^m=0$ si $m\geq pe/(p-1)$, et pour $m\in ]0, pe/(p-1)[$ 
        $$U^m/U^{m+1}\simeq \begin{cases}  \mathcal{O}_Y,  & m \notin p\mathbf{Z},\\ 
        \mathcal{O}_Y/Z^0_{Y/k}, & m\in p\mathbf{Z},
        \end{cases}$$
        l'isomorphisme \'etant induit par $1+\varpi^m a\mapsto \bar{a}$.
        \item 
Comme $Y$ est log-lisse et de type Cartier, on a un isomorphisme 
$$\mathcal{O}_Y\simeq Z^0_{Y/k}, \,\, a\mapsto a^p.$$
\end{enumerate}
\end{rema}

  \subsubsection{Mod\`eles semistables}\label{app1}
Si $K$ est une extension finie de $\Q_p$, un sch\'ema formel sur $\so_K$ est dit
 {\em semistable} si, localement pour la topologie de Zariski,  
il admet une fl\`eche \'etale vers le spectre formel
    $$\Spf(\so_K\{X_1,\ldots,X_n\}/(X_1\cdots X_r-\varpi_K)), \quad 1\leq r\leq n,
    $$ 
o\`u  $\varpi_K$ est une uniformisante de $K$.
  
  Les courbes $\mathcal{M}_n^{\varpi}$ ne sont pas quasi-compactes, donc l'existence d'un mod\`ele semi-stable d\'efini sur une extension finie de $\qp$ n'est pas automatique. 
  Cependant, on dispose du r\'esultat suivant, 
essentiellement prouv\'e dans l'appendice de \cite{CDN1}. 
Il est valable pour $\gl_2(F)$, mais pas pour $\gl_n(F)$.

      \begin{prop}\label{appendix}
      L'espace $\mathcal{M}_n^{\varpi}$ poss\`ede un mod\`ele formel semi-stable 
$G\times \check{G}\times \mathcal{G}_F$-\'equivariant, d\'efini sur une extension finie de $F$. Les composantes irr\'eductibles de la fibre sp\'eciale de ce mod\`ele forment un nombre fini d'orbites sous l'action de $G$, 
et chacune de ces composantes irr\'eductibles est fix\'ee
par un sous-groupe ouvert de~$G$.  
      \end{prop}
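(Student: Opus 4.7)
The plan is to build the model in three steps. First, I would recall the Deligne formal model $\widehat{\Omega}_{{\rm Dr},F}$ of the Drinfeld half-plane, which is a $G$-equivariant semi-stable formal scheme over $\so_F$ whose special fiber is a tree of $\piqp_{k_F}$'s indexed by the vertices of the Bruhat--Tits tree of $\mathrm{PGL}_2(F)$, with nodal intersections encoding the edges. The $G$-action permutes the components transitively, and the stabilizer of a given component is the conjugate of $Z\cdot \gl_2(\so_F)$ corresponding to that vertex, which is open in $G$. Quotienting by $\matrice{\varpi}{0}{0}{\varpi}^{\mathbb{Z}}$ yields a $G'$-equivariant semi-stable formal model of $\Omega_{{\rm Dr},F}^{\varpi}$ over $\so_F$, with exactly two $G'$-orbits of components.

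Next, I would construct the model of $\sm_n^{\varpi}$ by normalizing $\widehat{\Omega}_{{\rm Dr},F}^{\varpi}$ inside the finite \'etale covering $\sm_n^{\varpi}\to \Omega_{{\rm Dr},F}^{\varpi}$. This normalization inherits an action of $G\times \check G\times \sg_F$ by functoriality, so the only serious task is semi-stability after a finite base change $F'/F$. This is a local question on each affine chart of $\widehat{\Omega}_{{\rm Dr},F}$: via Drinfeld's moduli interpretation, such a chart is the base of a deformation of a formal $\so_D$-module, and the $n$-th cover is the associated Drinfeld level structure, which is described by an explicit Lubin--Tate type construction. The appendix of \cite{CDN1} performs exactly this semi-stability analysis for $F=\qp$, adjoining appropriate roots of $\varpi$ and of Lubin--Tate periods; the same computation, with $\zp$ replaced by $\so_F$ everywhere, handles the general case. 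One chooses $F'$ large enough to contain all such roots.

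Finally, for the combinatorial statement, I would use that $G'\backslash\Omega_{{\rm Dr},F}^{\varpi}$ has only finitely many components (two orbits), and that above each component the fiber of $\sm_n^{\varpi}\to\Omega_{{\rm Dr},F}^{\varpi}$ is governed by the finite torsor under $\check G_0/\check G_n$; hence each $G$-orbit in the set of components of $\widehat{\Omega}_{{\rm Dr},F}^{\varpi}$ is covered by finitely many $G$-orbits of components upstairs, giving the finiteness. For the stabilizer property, a component $C$ of the special fiber of our model maps to a fixed $\piqp$-component of $\widehat{\Omega}_{{\rm Dr},F}^{\varpi}$, so its stabilizer is contained in the (open) stabilizer of that $\piqp$; since the latter acts on a finite set of lifts of this $\piqp$, the stabilizer of $C$ has finite index in an open subgroup and is therefore open.

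The main obstacle is the semi-stability verification in step two: although morally reduced to explicit Lubin--Tate computations, one must keep track of the $G\times\check G\times\sg_F$-equivariance and identify the precise finite extension $F'/F$ required. This is the content of the appendix of \cite{CDN1} for $F=\qp$, and I would simply point out that the local formul{\ae} there extend verbatim to arbitrary $F$ after the substitution $\zp\rightsquigarrow\so_F$.
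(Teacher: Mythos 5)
Your steps 1 and 3 are in the spirit of the paper (the paper likewise reduces the orbit-finiteness and the open-stabilizer statements to the combinatorics of components together with the profiniteness of $\gl_2(\so_F)$ and the discreteness of the set of components), but your step 2 — the heart of the matter — contains a genuine gap. The appendix of \cite{CDN1} does \emph{not} prove semi-stability by an explicit chart-by-chart Lubin--Tate computation on the normalization of the Deligne model; no such explicit description of semistable models of the level-$n$ coverings is available for general $n$ (explicit (semi)stable models are known only in very low level), and the normalization of the Deligne model in $\mathcal{M}_n^{\varpi}$ is not semistable in general, so "adjoining roots of $\varpi$ and of Lubin--Tate periods" is not a step you can carry out or cite. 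What the appendix of \cite{CDN1}, and the present proof, actually do is circumvent any explicit computation: one chooses a cocompact $\Gamma\subset G/\varpi^{\Z}$ acting freely on the Bruhat--Tits tree, forms the proper smooth curve $X_n=\Gamma\backslash\mathcal{M}_n^{\varpi}$, invokes the stable reduction theorem to get a stable model $X_{n,E}^{+}$ over a finite extension $E$, passes to the minimal semistable resolution $X_{n,E}^{\circ}$, and pulls back along the cartesian square $\mathcal{M}_{n,E}^{\varpi}\to X_{n,E}$ to obtain the semistable model of $\mathcal{M}_n^{\varpi}$; equivariance comes for free from the canonicity (uniqueness) of the stable and minimal semistable models, whereas in your route one would additionally have to check that all the blow-ups needed to repair the normalization can be performed $G\times\check G\times\mathcal{G}_F$-equivariantly.

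Note also that this quotient construction is what drives the second half of the statement in the paper: the triangulation of $(\mathcal{M}_{n,E}^{\varpi})^{+}$ is identified with the set of irreducible components of its special fiber and its quotient by $\Gamma$ is the (finite) triangulation of $X_{n,E}^{+}$, giving finitely many $\Gamma$-orbits, hence finitely many $G$-orbits, of components; your torsor-over-the-Deligne-model argument would be fine for the normalization but needs extra care for the exceptional components created by the semistabilization, precisely the part your proposal leaves unconstructed. Finally, be aware that the statement requires each component to be \emph{fixed} (the stabilizer acting through a finite quotient, hence trivially on an open subgroup) and not merely stabilized setwise; the paper gets this from the discreteness of the special fiber and profiniteness of $\gl_2(\so_F)$, and your argument should be adjusted accordingly.
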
 
      
      \begin{proof}
Le premier point est \'etabli dans la prop.\,A.1 de \cite{CDN1}. 

  Pour le second point, consid\'erons les diagrammes commutatifs $G$-\'equivariants
 suivants de morphismes de courbes $E$-analytiques et de sch\'emas formels, 
respectivement, qui apparaissent dans la preuve de \cite[prop.\,A.1]{CDN1}.
  $$
  \xymatrix@R=.6cm{
  \mv_{n,E}\ar[r]^{f_1}\ar[d]^{p_2} & \Omega_{\rm Dr,E}\ar[d]^{p_1}\\
  X_{n,E}\ar[r]^{f_2} & X_E,
  }\hskip2cm
  \xymatrix@R=.5cm{ (\mv_{n,E})^{\circ}\ar[r]\ar[d]^-{p^{\circ}_2} & (\mv_{n,E})^{+} \ar[r]^{f_1^+}\ar[d]^-{p^+_2}& \Omega_{\rm Dr,E}^{+}\ar[d]^-{p_1}\\
  X_{n,E}^{\circ}\ar[r] & X_{n,E}^{+}\ar[r]^{f_2^+} & X_E^{+}}
  $$
\begin{itemize}
\item $\Omega_{\rm Dr}$ est le demi-plan de Drinfeld $\mathbb{P}^1\moins\mathbb{P}^1(F)$ sur $F$, 
$\Omega_{\rm Dr}^{+}$ est son mod\`ele semistable standard;
\item $\Gamma\subset G/\varpi^{\Z}$ est un sous-groupe cocompact assez petit pour
que l'action de $\Gamma$ sur l'arbre de Bruhat-Tits soit libre;
\item $X:=\Gamma\backslash\Omega_{\rm Dr}$ est une courbe propre et lisse; 
$X^+:=\Gamma\backslash\Omega^+_{\rm Dr}$ -- le mod\`ele stable (et aussi le mod\`ele semistable minimal) de $X$;
\item $X_n:=\Gamma\backslash\mathcal{M}_n^{\varpi}$;  $E$ est une extension finie de $F$ 
telle que $X_n$ poss\`ede un mod\`ele stable $X_{n,E}^+$;
\item $X_{n,E}^{\circ}$ est le mod\`ele minimal semistable de $X_{n,E}$ obtenu \`a partir de
 $X_{n,E}^+$ en commen\c{c}ant par \'eclater les points singuliers puis en \'eclatant les points d'auto-intersection.
\end{itemize}
  
Les diagrammes ci-dessus sont cart\'esiens et
$(\mv_{n,E})^{\circ}$ est un mod\`ele semistable de $\mv_{n,E}$. De plus, la
triangulation $S(X_{n,E}^{+})$ associ\'ee est le quotient de la triangulation $S((\mv_{n,E})^{+} )$ 
par $\Gamma$. Comme
    $S(X_{n,E}^{+})$ et $S((\mv_{n,E})^{+} )$ sont toutes les deux
les ensembles des pr\'eimages (par sp\'ecialisation), dans $X_{n,E}$ et $\mv_{n,E}$ respectivement, 
des points g\'en\'eriques des composantes irr\'eductibles de leurs fibres sp\'eciales respectives,
la fibre sp\'eciale de $(\mv_{n,E})^{+} $ a un nombre fini de $\Gamma$-orbites (et donc de $G$-orbites) de
composantes irreductibles. 
Il en est donc de m\^eme, pour le mod\`ele
semistable $(\mv_{n,E})^{\circ} $ (essentiellement par sa d\'efinition).  

De plus, la fibre sp\'eciale est munie de la topologie discr\`ete; il en r\'esulte que l'orbite
sous l'action de $\gl_2(\O_F)$ d'une composante irr\'eductible est finie (car $\gl_2(\O_F)$ est
profini) et donc que le stabilisateur d'une composante est d'indice fini et ouvert
(car ferm\'e) et agit \`a travers un quotient fini
sur la composante (qui est donc fix\'ee par un sous-groupe ouvert).
Ceci permet de conclure.
\end{proof}

\subsubsection{Faisceaux cohomologiquement profinis de copr\'esentation finie}\label{copr6}

 Ce num\'ero contient quelques pr\'eliminaires de nature topologique, qui seront utilis\'es dans la preuve du th\'eor\`eme principal de ce chapitre. 

     Rappelons quelques sorites sur les pro-objets d'une cat\'egorie, en suivant \cite{Porter}. Soit 
     $\mathcal{A}$ une cat\'egorie ab\'elienne. La cat\'egorie ${\rm Pro}(\mathcal{A})$ des syst\`emes projectifs (ou prosyst\`emes) d'objets de $\mathcal{A}$ est aussi ab\'elienne. Soit $\mathcal{A}_1$ une sous-cat\'egorie \'epaisse de $\mathcal{A}$, i.e. pour toute suite exacte $0\to A\to B\to C\to 0$ dans 
     $\mathcal{A}$, $B$ est un objet de $\mathcal{A}_1$ si et seulement si $A$ et $C$ sont des objets de $\mathcal{A}_1$. Notons 
     $E(\mathcal{A}_1)$ la sous-cat\'egorie pleine de ${\rm Pro}(\mathcal{A})$ form\'ee des prosyst\`emes isomorphes dans ${\rm Pro}(\mathcal{A})$ \`a un prosyst\`eme d'objets de $\mathcal{A}_1$. Alors $E(\mathcal{A}_1)$ est aussi une sous-cat\'egorie \'epaisse de la cat\'egorie ${\rm Pro}(\mathcal{A})$ (prop.\,2.9 de \cite{Porter}).   

    Nous allons travailler avec la cat\'egorie 
$\mathcal{A}={\rm Mod}_{\O_L}$ des $\O_L$-modules (discrets) et 
avec sa sous-cat\'egorie \'epaisse 
    $\mathcal{A}_1={\rm Mod}_{\O_L}^{\rm \ell f}$ des $\O_L$-modules de longueur finie. 
    Les objets de la sous-cat\'egorie \'epaisse $E({\rm Mod}_{\O_L}^{\rm \ell f})$ de ${\rm Pro}({\rm Mod}_{\O_L})$ seront appel\'es des {\rm prosyst\`emes profinis}. 
     Le crit\`ere de Mittag-Leffler combin\'e avec la prop.\,2.3 de \cite{Porter} montrent que le foncteur 
   limite projective de $E({\rm Mod}_{\O_L}^{\rm \ell f})$ dans la cat\'egorie des $\O_L$-modules topologiques profinis est exact. 
   
   Soit maintenant $Y$ un $\O_L$-sch\'ema et soit $\{U_s\}_{s\geq 1}$ un recouvrement croissant de $Y$ par des ouverts quasi-compacts. Soit  
      ${\rm Mod}_{\O_L}(Y_{\eet})$ la cat\'egorie des faisceaux de $\O_L$-modules sur $Y_{\eet}$. 
On dispose de foncteurs
   $${\rm SP}^i: {\rm Mod}_{\O_L}(Y_{\eet})\to {\rm Pro}({\rm Mod}_{\O_L}), 
\quad {\rm SP}^i(\mathcal{F}):=\{H^i_{\eet}(U_s, \mathcal{F}|_{U_s})\}_s.$$
   On dit que $\mathcal{F}\in {\rm Mod}_{\O_L}(Y_{\eet})$ est 
  {\em cohomologiquement profini} si 
  ${\rm SP}^i(\mathcal{F})\in E({\rm Mod}_{\O_L}^{\rm \ell f})$ pour tout $i\geq 0$. La discussion ci-dessus montre que si 
  $\mathcal{F}$ est cohomologiquement profini, alors 
  $H^i_{\eet}(Y, \mathcal{F})$ est isomorphe \`a $\varprojlim_{s} H^i_{\eet}(U_s, \mathcal{F}|_{U_s})$, et donc est un 
  $\O_L$-module profini, pour tout $i\geq 0$. En effet, dans la suite exacte de Milnor
  $$0\to {\rm R^1}\varprojlim_{s} H^{i-1}_{\eet}(U_s, \mathcal{F}|_{U_s})\to H^i_{\eet}(Y, \mathcal{F})\to \varprojlim_{s} H^i_{\eet}(U_s, \mathcal{F}|_{U_s})\to 0$$
  le terme \`a gauche est nul par le crit\`ere de Mittag-Leffler.

   Soit $G'$ un groupe de Lie $p$-adique pour lequel la cat\'egorie des $\O_L$-repr\'esentations lisses de pr\'esentation finie est ab\'elienne. 
   On suppose que $G'$ agit sur $Y$ de telle sorte que chaque $U_s$ est fix\'e par un sous-groupe ouvert de $G'$. 
   Soit 
      ${\rm Mod}_{\O_L}^{G'}(Y_{\eet})$ la cat\'egorie des faisceaux $G'$-\'equivariants de $\O_L$-modules sur 
      $Y_{\eet}$. On dit que $\mathcal{F}\in {\rm Mod}^{G'}_{\O_L}(Y_{\eet})$
  est 
 {\em $G'$-continu} si pour tous $i\geq 0$ et $s\geq 1$
 le $\O_L$-module $H^i_{\eet}(U_s, \mathcal{F}):=H^i_{\eet}(U_s, \mathcal{F}|_{U_s})$ est tu\'e par une puissance de $p$ et fix\'e par un sous-groupe ouvert compact de $G'$.
    Si $\mathcal{F}$ est cohomologiquement profini et 
 $G'$-continu, l'action de $G'$ sur le $\O_L$-module profini $H^i_{\eet}(Y,\mathcal{F})$ 
est continue pour tout $i\geq 0$, et donc 
 $H^i_{\eet}(Y,\mathcal{F})^{\vee}$ (dual de Pontryagin) est une repr\'esentation lisse de $G'$ sur $\O_L$. 
On dit que 
$\mathcal{F}$ est {\it cohomologiquement profini
de copr\'esentation finie} (abr\'eg\'e en {\it cpcf}) si $\mathcal{F}$ est cohomologiquement profini, 
$G'$-continu, et si les $H^i_{\eet}(Y, \mathcal{F})^{\vee}$, $i\geq 0$,
sont des repr\'esentations lisses de pr\'esentation finie de $G'$. 
      
 \begin{lemm}\label{two-out-of-three}
    Soit $0\to \sff_1\to \sff\to \sff_2\to 0$ une suite exacte dans ${\rm Mod}_{\O_L}^{G'}(Y_{\eet})$.

{\rm (i)}
Si deux des trois faisceaux sont cohomologiquement profinis, il en est de m\^eme du troisi\`eme.

{\rm (ii)}
Si deux des trois faisceaux sont $G'$-continus, il en est de m\^eme du troisi\`eme.

{\rm (iii)} Si les trois faisceaux sont cohomologiquement profinis et $G'$-continus, et si 
deux des trois faisceaux sont cpcf, il en est de m\^eme du troisi\`eme.
 \end{lemm}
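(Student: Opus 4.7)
La strat\'egie est de tirer parti de la suite exacte longue en cohomologie induite, sur chaque $U_s$, par la suite exacte courte de faisceaux $0\to\sff_1\to\sff\to\sff_2\to 0$. Cette suite exacte longue est fonctorielle en~$s$, donc vit dans ${\rm Pro}({\rm Mod}_{\O_L})$; elle fournit aussi, par passage \`a la limite projective (exacte sur les prosyst\`emes profinis gr\^ace au crit\`ere de Mittag-Leffler), une suite exacte longue de $\O_L$-modules profinis avec action de $G'$ au niveau de $Y$ tout entier. Les trois assertions r\'esulteront alors d'un m\^eme d\'evissage dans cette suite exacte longue, en combinaison avec une propri\'et\'e {\og deux parmi trois\fg} pour chacune des trois conditions examin\'ees.

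Pour (i), j'extrairais de la suite exacte longue la suite exacte courte
$$0\to\coker({\rm SP}^{i-1}(\sff_2)\to{\rm SP}^i(\sff_1))\to {\rm SP}^i(\sff)\to\ker({\rm SP}^i(\sff_2)\to{\rm SP}^{i+1}(\sff_1))\to 0$$
(ainsi que ses variantes obtenues en permutant les r\^oles de $\sff, \sff_1, \sff_2$). L'\'epaisseur de $E({\rm Mod}_{\O_L}^{\rm\ell f})$ dans ${\rm Pro}({\rm Mod}_{\O_L})$, rappel\'ee plus haut via la prop.\,2.9 de \cite{Porter}, garantit la stabilit\'e par noyaux, conoyaux et extensions, et permet de conclure dans chaque configuration d'hypoth\`eses.

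Pour (ii), le m\^eme d\'evissage appliqu\'e aux $\O_L[G']$-modules $H^i_{\eet}(U_s, -)$ ram\`ene le probl\`eme \`a v\'erifier la propri\'et\'e {\og deux parmi trois\fg} pour la conjonction {\og \^etre tu\'e par une puissance de $p$ et fix\'e par un sous-groupe ouvert de~$G'$\fg}. La $p$-torsion born\'ee se propage trivialement par sous-objets, quotients et extensions (quitte \`a augmenter l'exposant). Pour la stabilisation par un sous-groupe ouvert, sous-objets et quotients ne posent pas de probl\`eme; pour les extensions, on est pr\'ecis\'ement dans la situation du lemme~\ref{extension}, dont l'hypoth\`ese cruciale de $p$-torsion born\'ee sur le sous-objet est automatiquement satisfaite.

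Pour (iii), partant de l'hypoth\`ese que les trois faisceaux sont cohomologiquement profinis et $G'$-continus, je dualiserais la suite exacte longue en cohomologie de $Y$. La dualit\'e de Pontryagin \'etant un foncteur exact entre $\O_L$-modules profinis \`a action continue de $G'$ et $\O_L$-modules lisses, on obtient une suite exacte longue dans ${\rm Rep}^{\rm lisse}\, G'$. Le m\^eme d\'evissage exprime chaque $H^i_{\eet}(Y,-)^\vee$ comme extension d'un sous-objet et d'un quotient de repr\'esentations de pr\'esentation finie; le caract\`ere ab\'elien de ${\rm Rep}^{\rm pf}\, G'$ (prop.\,\ref{Shot27}, valable gr\^ace \`a l'hypoth\`ese faite sur $G'$) puis la stabilit\'e par extensions (prop.\,\ref{Shot25}) fournissent la conclusion. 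L'\'etape la plus d\'elicate -- mais essentiellement formelle -- consiste \`a articuler correctement ces arguments simultan\'ement en chaque degr\'e~$i$; aucune difficult\'e conceptuelle nouvelle n'est \`a pr\'evoir, tous les outils \'etant d\'ej\`a en place.
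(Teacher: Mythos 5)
Votre preuve est correcte et suit essentiellement la m\^eme d\'emarche que celle du papier: pour (i) le d\'evissage noyau/conoyau dans la suite exacte longue de ${\rm Pro}({\rm Mod}_{\O_L})$ combin\'e \`a l'\'epaisseur de $E({\rm Mod}^{\rm \ell f}_{\O_L})$, pour (ii) le m\^eme d\'evissage au niveau des $H^i_{\eet}(U_s,-)$ et le lemme~\ref{extension} pour le cas d'extension, et pour (iii) la dualisation de la suite exacte longue en cohomologie sur $Y$ suivie de la stabilit\'e de la pr\'esentation finie par noyaux, conoyaux et extensions (prop.\,\ref{Shot27} et~\ref{Shot25}) -- ce qui est pr\'ecis\'ement le contenu du lemme~\ref{extpf} qu'invoque le papier. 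Vous explicitez simplement un peu plus la d\'ecomposition que le papier laisse implicite via l'\'epaisseur et le lemme~\ref{extpf}.
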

   
   \begin{proof} 
Le (i) r\'esulte de la suite exacte longue dans ${\rm Pro}({\rm Mod}_{\O_L})$
$$\cdots\to {\rm SP}^{i-1}(\mathcal{F}_1)\to {\rm SP}^{i-1}(\mathcal{F})\to {\rm SP}^{i-1}(\mathcal{F}_2)\to {\rm SP}^{i}(\mathcal{F}_1)\to {\rm SP}^{i}(\mathcal{F})\to 
{\rm SP}^{i}(\mathcal{F}_2)\to\cdots$$
et du fait que $E({\rm Mod}^{\rm \ell f}_{\O_L})$ est \'epaisse dans ${\rm Pro}({\rm Mod}_{\O_L})$.

Passons au (ii), et supposons que $\mathcal{F}_1$ et $\mathcal{F}_2$ sont $G'$-continus (la preuve des deux autres cas est identique). Fixons 
$s\geq 1$. Par hypoth\`ese il existe un sous-groupe ouvert $K$ de $G'$ qui agit trivialement sur $U_s$ et sur 
$H^i_{\eet}(U_s, \mathcal{F}_1)$ et $H^i_{\eet}(U_s, \mathcal{F}_2)$. On peut supposer que $K$ est pro-$p$ et uniforme. Soit 
$N\geq 1$ un entier tel que $p^N$ tue $H^i_{\eet}(U_s, \mathcal{F}_1)$ et $H^i_{\eet}(U_s, \mathcal{F}_2)$. 
La suite longue de cohomologie et le lemme \ref{extension} montrent qu'un sous-groupe ouvert de 
$K$ (donc de $G'$) agit trivialement sur $H^i_{\eet}(U_s, \mathcal{F})$, donc 
$\mathcal{F}$ est $G'$-continu.
 
Pour le (iii), supposons que  $\sff_1, \sff_2$ sont cpcf,
et montrons qu'il en est de m\^eme de~$\sff$
(la preuve des autres cas est identique). On a une suite exacte 
   $$ H^{i-1}_{\eet}(Y, \sff_2)\to H^i_{\eet}(Y, \sff_1)\to H^i_{\eet}(Y,\sff)\to H^i_{\eet}(Y, \sff_2)\to H^{i+1}_{\eet}(Y, \sff_1).$$
   Par (i) et (ii) $\mathcal{F}$ est cohomologiquement profini et $G'$-continu, on a donc 
une suite exacte 
$$ H^{i+1}_{\eet}(Y, \sff_1)^{\vee}\to H^i_{\eet}(Y, \sff_2)^{\vee}\to  H^i_{\eet}(Y,\sff)^{\vee}\to H^i_{\eet}(Y, \sff_1)^{\vee}\to  H^{i-1}_{\eet}(Y, \sff_2)^{\vee}$$
de $\O_L[G']$-modules lisses. 
Par hypoth\`ese tous les termes sauf celui du milieu sont de pr\'esentation finie. 
On conclut gr\^ace au lemme suivant.
\end{proof}

\begin{lemm}\label{extpf}
Si 
$A\to B\to C\to D\to E$ est une suite exacte de $\O_L[G']$-modules lisses, 
avec $A,B,D,E$ de pr\'esentation finie, alors 
$C$ l'est aussi.
\end{lemm}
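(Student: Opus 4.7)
The plan is to reduce the $5$-term exact sequence to two short exact sequences and then apply the two key stability results for finitely presented smooth representations: the abelianness of ${\rm Rep}^{\rm pf}\,G'$ (a consequence of Shotton's coherence theorem, prop.\,\ref{Shot27}) and the closure under extensions (prop.\,\ref{Shot25}(a)). Shotton's coherence result is stated for $p$-adic Lie groups that are amalgamated products of two open compact subgroups, which applies directly to $H=\{g\in \gl_2(F):\det g\in\O_F^\dual\}$ via Ihara's theorem. Since $H$ is an open subgroup of finite index in $G'$, and since the preceding remark transfers finite presentation back and forth between $G'$ and $H$, kernels and cokernels of morphisms between objects of ${\rm Rep}^{\rm pf}\,G'$ remain in ${\rm Rep}^{\rm pf}\,G'$.

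Denoting the four maps in $A\to B\to C\to D\to E$ by $f_1,f_2,f_3,f_4$, I would set $X:={\rm Im}\,f_2$ and $Y:={\rm Im}\,f_3$. Exactness at $B$ gives $X={\rm coker}(A\to B)$, and exactness at $D$ gives $Y=\ker(D\to E)$. Since $A,B,D,E$ all lie in ${\rm Rep}^{\rm pf}\,G'$, the abelianness argument of the previous paragraph shows that both $X$ and $Y$ belong to ${\rm Rep}^{\rm pf}\,G'$.

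Exactness at $C$ then yields a short exact sequence
$$0\to X\to C\to Y\to 0$$
in ${\rm Rep}^{\rm lisse}\,G'$ whose outer terms are of pr\'esentation finie. Invoking the closure of ${\rm Rep}^{\rm pf}\,G'$ under extensions (prop.\,\ref{Shot25}(a)) gives $C\in{\rm Rep}^{\rm pf}\,G'$, as desired.

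There is no genuine obstacle in this argument: everything is formal once one has Shotton's coherence theorem in hand. The only subtle point worth double-checking is that the amalgamated product structure (hence prop.\,\ref{Shot27}) is available for a finite-index open subgroup of $G'$, so that the kernel/cokernel stability propagates to $G'$ itself via the restriction-of-scalars criterion; this is already implicit in the discussion following prop.\,\ref{Shot27}.
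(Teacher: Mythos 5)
Your argument is correct and is essentially the paper's own proof: it reduces the five‑term exact sequence to the short exact sequence $0\to\operatorname{coker}(A\to B)\to C\to\ker(D\to E)\to 0$, observes (via §\ref{Shotton}, combining prop.\,\ref{Shot25} for cokernels and the coherence statement prop.\,\ref{Shot27} for kernels) that both outer terms are of finite presentation, and concludes by closure under extensions. The additional remarks you make about transferring coherence from the amalgamated‑product subgroup $H$ to $G'$ are the same justifications that the paper points to implicitly by citing §\ref{Shotton}.
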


\begin{proof}
Soit $\delta: D\to E$ et $\alpha: A\to B$, alors on a une suite exacte 
$$0\to \coker(\alpha)\to C\to \ker(\delta)\to 0.$$
   D'apr\`es le \S\,\ref{Shotton},
 $\coker(\alpha)$ et $\ker(\delta)$ sont lisses de pr\'esentation finie.  Comme les repr\'esentations lisses de pr\'esentation finie sont stables par extension, $C$ est lisse de pr\'esentation finie, ce qui permet de conclure.
\end{proof}

\subsection{Preuve du Th\'eor\`eme~\ref{smooth}}\label{smoothie} 
Passons \`a la preuve du Th\'eor\`eme~\ref{smooth}.

\vskip2mm
\noindent
$\bullet$ {\it Pr\'eliminaires}.---
  Soit $X'$ un mod\`ele formel semi-stable de $\mathcal{M}_n^{\varpi}$, 
d\'efini sur une extension finie $F'$ de $F$ 
(un tel mod\`ele existe d'apr\`es le \S\,\ref{app1}). Soit $K'$ une extension finie de $\Q_p$
contenant $F'$, $K$ et les racines $p$-i\`emes de l'unit\'e, et telle que $K'/K$ soit galoisienne. Soit $k'$ le corps r\'esiduel de $K'$. 
Soient $X:=X'\otimes_{\so_{F^{\prime}}} \mathcal{O}_{K'}$ et $Y$ sa fibre sp\'eciale. 
 Il existe une suite de sous-sch\'emas ferm\'es
(resp.~ouverts)
$Y_s, s\in\N,$ (resp. $U_s, s\in \N$) de $Y$ telle que:

(i) $Y_s$ est une r\'eunion finie de composantes irr\'eductibles,

(ii) $Y_s\subset U_s\subset Y_{s+1}$ et leur r\'eunion est $Y,$

(iii) les tubes  $\{U_{s,\eta}:=]U_{s}[_{X}\},s\in\N,$ forment un recouvrement Stein de $X_{K'}$.

\noindent
(Il suffit de prendre  pour $Y_0$ une composante irr\'eductible, de d\'efinir $Y_{s+1}$
comme la r\'eunion de $Y_s$ et des composantes irr\'eductibles d'intersection non vide avec $Y_s$, et
de prendre pour $U_{s-1}$ le compl\'ementaire dans $Y_s$ des composantes de $Y_{s+1}$ non incluses dans $Y_s$.)

On voit $X$ comme un log-sch\'ema formel muni de la log-structure venant de $Y$.
On munit $Y, U_s, Y_s$ de la log-structure induite.

\vskip2mm
\noindent $\bullet$
{\it \'Etape 1}.---   Montrons que le faisceau $\Omega^t_{Y/k'}$ est cpcf pour tout $t\geq 0$. 
Il est  
$G'$-continu, car il existe un sous-groupe ouvert de $G'$ qui fixe $U_s$, et ce
sous-groupe agit trivialement sur $H^i_{\eet}(U_s, \Omega^t_{Y/k'}|_{U_s})$ pour tout $i$. 
Il est cohomologiquement profini car le prosyst\`eme
 $\{H^i_{\eet}(U_s,  \Omega^t_{Y/k'}|U_s)\}_s$ est isomorphe au prosyst\`eme des 
$H^i(Y_s, \Omega^t_{Y/k'}\otimes_{\so_Y}\so_{Y_s})$ et ces groupes sont finis
car $Y_s$ est propre et le faisceau $\Omega^t_{Y/k'}\otimes_{\so_Y}\so_{Y_s}$ 
est localement libre de rang fini.

    Il nous reste \`a montrer que le $\O_L[G']$-module 
$H^s_{\eet}(Y, \Omega^t_{Y/k'})^{\vee}$ 
(lisse, d'apr\`es ce que l'on vient de d\'emontrer) 
est de pr\'esentation finie pour tout $s$.  
Soit $\{C_j\}_{j\in J}$ l'ensemble des composantes irr\'eductibles de $Y$. Par Mayer-Vietoris ferm\'e,
on a une suite exacte, avec $C_{i,j}:=C_i\cap C_j$ 
     \begin{equation}
     \label{maps1}
\xymatrix@R=3mm@C=4mm{
       \prod_{j}H^{s-1}_{\eet}(C_{j},\Omega^t_{C_{j}})\ar[r]&
 \prod_{i<j}H^{s-1}_{\eet}(C_{i,j},\Omega^t_{C_{i,j}})\ar[d]\\
& H^{s}_{\eet}(Y,\Omega^t_{Y/k'})\ar[d]\\ 
& \prod_jH^{s}_{\eet}(C_j,\Omega^t_{C_j})\ar[r]&  \prod_{i<j}H^{s}_{\eet}(C_{i,j},\Omega^t_{C_{i,j}})}
     \end{equation}
o\`u l'on a pos\'e
 $\Omega^t_{C_{j}}:=\Omega^t_{Y/k'}\otimes\so_{C_j}$ et $\Omega^t_{C_{i,j}}:=\Omega^t_{Y/k'}\otimes\so_{C_{i,j}}$. 

Il suffit de montrer que les $G^{\prime}$-modules 
$\prod_{j}H^*_{\eet}(C_{j},\Omega^t_{C_{j}})$ and $\prod_{i<j}H^*_{\eet}(C_{i,j},\Omega^t_{C_{i,j}})$
sont isomorphes \`a  $I(W)^{\vee}\simeq {\rm Ind}_{H}^{G^{\prime}}(W^{\vee})$ 
pour un sous-groupe ouvert compact $H$ de $G'$ et une repr\'esentation de dimension finie $W$ de $H$. En effet, si c'est le cas la suite exacte ci-dessus est automatiquement stricte (tous les objets intervenant sont profinis), on peut donc la dualiser et conclure en utilisant le lemme \ref{extpf}.

 Dans le cas de $\prod_{j}H^*_{\eet}(C_{j},\Omega^t_{C_{j}})$, chaque sch\'ema $C_j$ 
est propre et lisse et les faisceaux 
$\Omega^t_{C_j}$ sont des $\so_{C_j}$-modules
localement libres de rang fini, donc les $H^*_{\eet}(C_j, \Omega^t_{C_j})$ 
sont de dimension finie sur $k'$. Comme les 
$C_j$ ne forment qu'un nombre fini d'orbites sous l'action de $G^{\prime}$ (on choisit un syst\`eme $J$
de repr\'esentants), 
et comme le stabilisateur $K_j$ de chaque $C_j$ est ouvert dans $G^{\prime}$ (prop.\,\ref{appendix}) 
et donc est conjugu\'e \`a un sous-groupe d'indice fini dans $\gl_2(\O_F)$, on peut supposer, en rempla\c{c}ant $C_j$
par un translat\'e, que $K_j$ est d'indice fini dans $\gl_2(\O_F)$.
Alors 
$$\prod_{j\in I} H^*_{\eet}(C_j, \Omega^t_{C_j})\simeq {\rm Ind}_{\gl_2(\O_F)}^{G^{\prime}}W,\quad
{\text{o\`u }} W=\bigoplus_{j\in J}{\rm Ind}_{K_j}^{\gl_2(\O_F)}H^*_{\eet}(C_j, \Omega^t_{C_j})$$
est de dimension finie sur $k'$.

Pour traiter le cas de $\prod_{i<j}H^*_{\eet}(C_{i,j},\Omega^t_{C_{i,j}})$, on peut raisonner
de m\^eme en rempla\c{c}ant $\gl_2(\O_F)$ par le sous-groupe d'Iwahori $I$ (qui est le sous-groupe de $\gl_2(\O_F)$ stabilisant
l'ar\^ete \'evidente en niveau $0$).

\vskip2mm
\noindent $\bullet$
{\it \'Etape 2}.--- Montrons que le faisceau $\R^j\Psi \mathbf{F}_p$, ainsi que ceux intervenant dans la filtration de Bloch-Kato-Hyodo sont cpcf. 

Puisque $K'$ contient les racines $p$-i\`emes de l'unit\'e, on a 
$\mathbf{F}_p(j)\simeq \mathbf{F}_p$. 
D'apr\`es le th.\,\ref{generalized}, 
 le faisceau \'etale  $\mathcal{F}^j:=\R^j\Psi \mathbf{F}_p(j)$ sur $Y$ 
poss\`ede une filtration finie $$0=\mathcal{F}_d\subset...\subset \mathcal{F}_1\subset \mathcal{F}_0=\mathcal{F}^j,$$
      dont les quotients successifs $\mathcal{F}_r/\mathcal{F}_{r+1}$ sont de la 
forme\footnote{Dans le cas que nous consid\'erons, $Y$ est une courbe, et seuls $m=0,1$ interviennent.}
      $$\Omega^m_{Y/k',\log}, \quad \Omega^m_{Y/k'}/B^m_{Y/k},\quad \Omega^m_{Y/k'}/Z^m_{Y/k'}.
      $$
   
     Consid\'erons les suites exactes de faisceaux
 \begin{align*}
 & 0\to \Omega^m_{Y/k',{\rm log}}\to Z^m_{Y/k'}\stackrel{C-1}{\longrightarrow}\Omega^m_{Y/k'}\to 0,\\
 &   0\to B^m_{Y/k'}\to Z^m_{Y/k'}\stackrel{C}{\to} \Omega^m_{Y/k'}\to 0,\\
& 0\to Z^m_{Y/k'}\to \Omega^m_{Y/k'}\stackrel{d}{\to} B^{m+1}_{Y/k'}\to 0.
\end{align*}
Tous les faisceaux intervenant dans ces suites exactes sont nuls pour $m$ assez grand (en fait pour $m>1$ puisque $Y$ est une courbe). 
En utilisant cette observation, l'\'etape $1$ et le lemme \ref{two-out-of-three}, on montre par r\'ecurrence descendante sur $m$ que 
$B^m_{Y/k'}, Z^m_{Y/k'}, \Omega^m_{Y/k',{\rm log}}$ sont cpcf, et deux nouvelles applications du lemme \ref{two-out-of-three} montrent que 
$\Omega^m_{Y/k'}/B^m_{Y/k}$, $\Omega^m_{Y/k'}/Z^m_{Y/k'}$ et enfin chaque $\mathcal{F}_k$ est aussi cpcf. En particulier $\R^j\Psi \mathbf{F}_p$ est cpcf.

\vskip2mm
\noindent $\bullet$
{\it \'Etape 3}.--- Montrons que le prosyst\`eme $\{H^{q}_{\eet}(U_{s,\eta}, \Z/p^k)\}_s$ est profini pour tous $q$ et $k$, et que l'action de $G'$ y est continue, i.e. chaque $H^{q}_{\eet}(U_{s,\eta}, \Z/p^k)$ est fix\'e par un sous-groupe ouvert compact de $G'$. On a une suite exacte de prosyst\`emes
$$\{H^{q}_{\eet}(U_{s,\eta}, \Z/p^{k-1})\}_s\to \{H^{q}_{\eet}(U_{s,\eta}, \Z/p^k)\}_s\to \{H^{q}_{\eet}(U_{s,\eta}, \Z/p)\}_s$$
Si le r\'esultat est connu pour les deux termes extr\^emes, il s'obtient pour le terme au milieu en utilisant le fait que les prosyst\`emes profinis forment une sous-cat\'egorie \'epaisse de 
${\rm Pro}({\rm Mod}_{\O_L})$ et le lemme \ref{extension}. On peut donc supposer que $k=1$.

D'apr\`es~\cite[cor.\,4.2]{BV}, on dispose, pour tout $s$, d'une suite spectrale 
$$E_2^{i,j}=H^i_{\eet}(U_s, \R^j\Psi \mathbf{F}_p)\Longrightarrow H^{i+j}_{\eet}(U_{s,\eta}, \mathbf{F}_p).$$
Comme les prosyst\`emes $\{H^i_{\eet}(U_s, \R^j\Psi \mathbf{F}_p)\}_s$ (pour $i,j\geq 0$) sont profinis (\'etape $2$), 
il en r\'esulte que le prosyst\`eme
 $\{H^{q}_{\eet}(U_{s,\eta}, \mathbf{F}_p)\}_s$ est finiment filtr\'e par des prosyst\`emes 
 dont les gradu\'es associ\'es
sont dans $E({\rm Mod}^{\rm \ell f}_{\O_L})$, et donc est profini car 
$E({\rm Mod}^{\rm \ell f}_{\O_L})$ est une sous-cat\'egorie \'epaisse de 
${\rm Pro}({\rm Mod}_{\O_L})$. Le m\^eme argument coupl\'e au lemme \ref{extension} fournit la continuit\'e de l'action de $G'$.

\vskip2mm
\noindent $\bullet$
{\it \'Etape 4}.--- Montrons que $H^{q}_{\eet}(\mathcal{M}_{n,K'}^{\varpi}, \Z/p^k)$ est profini, dual d'une repr\'esentation lisse de pr\'esentation finie de 
$G'$. Le caract\`ere profini et la continuit\'e de l'action de $G'$ sur $H^{q}_{\eet}(\mathcal{M}_{n,K'}^{\varpi}, \Z/p^k)$ d\'ecoulent directement de l'\'etape $3$, qui montre que 
$$H^{q}_{\eet}(\mathcal{M}_{n,K'}^{\varpi}, \Z/p^k)\simeq \varprojlim_{s} H^{q}_{\eet}(U_{s,\eta}, \Z/p^k).$$
En particulier $H^{q}_{\eet}(\mathcal{M}_{n,K'}^{\varpi}, \Z/p^k)^{\vee}$ est bien un $\O_L[G']$-module lisse. Il reste \`a voir qu'il est de pr\'esentation finie. Les suites exactes
$$\xymatrix@R=3mm@C=3mm{
H^{q-1}_{\eet}(\mathcal{M}_{n,K'}^{\varpi}, \Z/p)\ar[d]\\
 H^{q}_{\eet}(\mathcal{M}_{n,K'}^{\varpi}, \Z/p^{k-1})\ar[r]
&H^{q}_{\eet}(\mathcal{M}_{n,K'}^{\varpi}, \Z/p^{k})\ar[r]
& H^{q}_{\eet}(\mathcal{M}_{n,K'}^{\varpi}, \Z/p)\ar[d]\\
&& H^{q+1}_{\eet}(\mathcal{M}_{n,K'}^{\varpi}, \Z/p^{k-1})}$$
r\'eduisent le probl\`eme au cas $k=1$ (cela utilise implicitement le fait que tous les objets sont profinis, comme on vient de le voir, donc on peut dualiser ces suites exactes et ensuite appliquer le lemme \ref{extpf}). Mais ce cas d\'ecoule du fait que les $\R^j\Psi \mathbf{F}_p$  sont cpcf (\'etape $2$), de la suite spectrale 

$$E_2^{i,j}=H^i_{\eet}(Y, \R^j\Psi \mathbf{F}_p)\Longrightarrow H^{i+j}_{\eet}(\mathcal{M}_{n,K'}^{\varpi}, \mathbf{F}_p)$$
et d'un raisonnement comme dans l'\'etape $3$ (remplacer l'usage du lemme \ref{extension} par celui de la prop.\,\ref{Shot25}).

\begin{rema}
Les arguments ci-dessus prouvent en fait le r\'esultat suivant:
\begin{quote}
{\it Si $X$ est un sch\'ema formel semistable sur $\so_K$ dont la fibre sp\'eciale a
une stratification comme celle de la preuve du th.\,\ref{smooth}, alors les
$H^q_{\eet}(X_K,{\mathbf F}_p)$, pour $q\geq 0$,  sont profinis.} 
\end{quote}
\end{rema}

\vskip2mm
\noindent $\bullet$
{\it \'Etape 5}.---Nous venons de d\'emontrer l'existence d'une extension finie galoisienne $K'$ de 
   $K$ telle que $H^i_{\eet}(\mathcal{M}_{n,K'}^{\varpi},\O_L/p^k)$, $i,k\geq 0$, 
est co-lisse, de copr\'esentation finie. Il reste \`a expliquer comment descendre \`a $K$.
       
    Soit $H:={\rm Gal}(K^{\prime}/K)$ et soit $\{V_s\}_{s}$ un recouvrement Stein 
    de $X:=\mathcal{M}_{n,K}^{\varpi}$, donc les $V'_s:=V_s\otimes_K K'$ fournissent un recouvrement Stein de $\mathcal{M}_{n,K'}^{\varpi}$. On a des suites spectrales de
     Hochschild-Serre \cite[prop.\,2.6.12]{Hu}
  $$
  E^{i,j}_2=H^i(H, H^j_{\eet}(V'_s,\O_L/p^k))\Rightarrow H^{i+j}_{\eet}(V_s,\O_L/p^k).
  $$
Comme les prosyst\`emes $\{H^j_{\eet}(V'_s,\O_L/p^k)\}_s$ sont profinis (\'etape $3$)
et $H$ est fini, le complexe calculant la cohomologie de $H$ est form\'e de prosyst\`emes profinis.
Donc les prosyst\`emes 
$\{H^i(H, H^j_{\eet}(V'_s,\O_L/p^k))\}_s$ sont profinis. 
A partir de l\`a, on peut conclure comme dans l'\'etape 1 que les prosyst\`emes
$\{H^{q}_{\eet}(V_s,\O_L/p^k)\}_s$, pour $q\geq 0$, sont profinis. De plus, nous avons vu que l'action de 
$G'$ sur $\{H^j_{\eet}(V'_s,\O_L/p^k)\}_s$ est continue, donc il en est de m\^eme de celle sur $E^{i,j}_2$, et le lemme 
\ref{extension} coupl\'e \`a la suite spectrale ci-dessus permettent de d\'eduire la continuit\'e de l'action de 
$G'$ sur $\{H^{q}_{\eet}(V_s,\O_L/p^k)\}_s$. 
  
On a donc une suite spectrale
   $$
     E^{i,j}_2=H^i(H, H^j_{\eet}(X_{K'},\O_L/p^k))\Rightarrow H^{i+j}_{\eet}(X,\O_L/p^k)
   $$
avec des objets profinis sur toutes les pages et aboutissement profini. De plus, tous les objets sont munis d'une action continue de 
$G'$. 
Dualiser ne pose donc pas de probl\`eme donc on peut argumenter comme ci-dessus: en partant du fait
 que les $H^j_{\eet}(X_{K'},\O_L/p^k)$ sont co-lisses de copr\'esentation finie, on en d\'eduit 
 que l'aboutissement $H^{q}_{\eet}(X,\O_L/p^k)$ est aussi co-lisse, de copr\'esentation finie,
comme on le voulait.

  \begin{coro}\label{pointwise1}
   Pour toute repr\'esentation continue $\bar{\rho}: \mathcal{G}_{F}\to {\rm GL}_d(k_L)$ le 
   $\gl_2(F)$-module ${\rm Hom}_{k_L[\mathcal{G}_{F}]}(\bar{\rho}, H^1_{\eet}(\mathcal{M}_{n,\C_p}^{\varpi},k_L))^{\vee}$ est lisse, de pr\'esentation finie.
 \end{coro}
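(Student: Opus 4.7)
Le plan est d'obtenir le r\'esultat par descente galoisienne finie \`a partir du Th\'eor\`eme~\ref{smooth}. Comme $\bar{\rho}$ est continue \`a valeurs dans le groupe fini ${\rm GL}_d(k_L)$, son noyau est ouvert dans $\mathcal{G}_F$, donc $\bar{\rho}$ se factorise par $H := {\rm Gal}(K/F)$ pour une extension finie galoisienne $K/F$. Posons $V := H^1_{\eet}(\mathcal{M}_{n,\C_p}^{\varpi}, k_L)$. On r\'e\'ecrit
$${\rm Hom}_{k_L[\mathcal{G}_F]}(\bar{\rho}, V) = {\rm Hom}_{k_L[H]}(\bar{\rho}, V^{\mathcal{G}_K}),$$
et en dualisant, ce module est isomorphe aux $H$-coinvariants de $\bar{\rho}\otimes_{k_L}(V^{\mathcal{G}_K})^{\vee}$. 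Puisque ${\rm Rep}^{\rm pf}\,G'$ est une cat\'egorie ab\'elienne (prop.~\ref{Shot27}, appliqu\'ee \`a un sous-groupe ouvert d'indice fini de $G'$), stable par produits tensoriels avec un $k_L$-espace de dimension finie et par quotients par des sous-objets de type fini (prop.~\ref{Shot25}), il suffit donc de v\'erifier que $(V^{\mathcal{G}_K})^{\vee}$ est une $G'$-repr\'esentation de pr\'esentation finie.

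Pour cela, j'emploierais la suite spectrale de Hochschild-Serre continue associ\'ee \`a l'action de $\mathcal{G}_K$ sur $\mathcal{M}_{n,\C_p}^{\varpi}$, obtenue en passant \`a la limite sur les extensions finies galoisiennes $L/K$ dans les suites spectrales de \cite[prop.\,2.6.12]{Hu}, exactement comme dans l'\'etape 5 de la preuve du Th\'eor\`eme~\ref{smooth}. Sa suite exacte de bas degr\'e
$$H^1_{\eet}(\mathcal{M}_{n,K}^{\varpi}, k_L) \to V^{\mathcal{G}_K} \to H^2_{\rm cts}(\mathcal{G}_K, H^0_{\eet}(\mathcal{M}_{n,\C_p}^{\varpi}, k_L))$$
fournit l'encadrement voulu. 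Par le Th\'eor\`eme~\ref{smooth}, le dual de $H^1_{\eet}(\mathcal{M}_{n,K}^{\varpi}, k_L)$ est de pr\'esentation finie. Pour le second terme, $H^0_{\eet}(\mathcal{M}_{n,\C_p}^{\varpi}, k_L)$ est le $k_L$-module des fonctions sur $\pi_0(\mathcal{M}_{n,\C_p}^{\varpi})$ -- un torseur sous un quotient de $F^{\dual}$ --, dont le dual en tant que $G'$-module est une induite \`a partir d'un caract\`ere d'un sous-groupe ouvert d'indice fini de $G'$, donc admissible et de pr\'esentation finie; la cohomologie continue de $\mathcal{G}_K$ \`a coefficients dans ce module pr\'eserve cette propri\'et\'e, car les $H^i_{\rm cts}(\mathcal{G}_K, k_L)$ sont de dimension finie sur $k_L$ et $H^i_{\rm cts}(\mathcal{G}_K, \cdot)$ commute \`a l'action de $G'$. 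L'exactitude de la suite ci-dessus coupl\'ee \`a la stabilit\'e de ${\rm Rep}^{\rm pf}\,G'$ donne la conclusion.

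Le principal obstacle technique sera la justification rigoureuse du passage \`a la limite dans la suite spectrale continue de Hochschild-Serre pour le groupe profini $\mathcal{G}_K$, dans ce cadre o\`u $\mathcal{M}_{n,\C_p}^{\varpi}$ n'est pas quasi-compact et o\`u $V$ lui-m\^eme n'est pas n\'ecessairement profini (cf.~rem.~\ref{copr2} et la discussion apr\`es le th.~\ref{finit1}); le contr\^ole des prosyst\`emes profinis d\'evelopp\'e au \S\ref{copr6}, ainsi que la finitude \'etablie au Th\'eor\`eme~\ref{smooth}, devraient permettre de conclure, \`a condition de v\'erifier soigneusement la compatibilit\'e et l'annulation des ${\rm R}^1\varprojlim$ ad\'equats.
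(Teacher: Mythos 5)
Votre démarche est correcte et coïncide pour l'essentiel avec celle du texte: réduction à $H={\rm Gal}(K/F)$ fini, encadrement de $H^1_{\eet}(\mathcal{M}_{n,\C_p}^{\varpi},k_L)^{\mathcal{G}_K}$ par $H^1_{\eet}(\mathcal{M}_{n,K}^{\varpi},k_L)$ (de copr\'esentation finie par le th.\,\ref{smooth}) et des termes de cohomologie galoisienne \`a coefficients dans $H^0$ qui sont de dimension finie, puis dualisation et stabilit\'e de ${\rm Rep}^{\rm pf}$. La seule diff\'erence est technique: le texte obtient la suite de Hochschild-Serre continue en appliquant \cite[cor.\,3.7.5]{dJvdp} aux rev\^etements $V_{s,\C_p}/V_s$ d'un recouvrement Stein puis en passant \`a la limite sur $s$ (la profinitude des termes assurant la stricte exactitude et donc la dualisation), plut\^ot que par une limite sur les extensions finies $L/K$, ce qui r\`egle pr\'ecis\'ement l'obstacle que vous signalez.
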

 
 \begin{proof}
  Soit $F'$ une extension galoisienne finie de $F$ telle que $\bar{\rho}$ se factorise par $H:={\rm Gal}(F'/F)$. 
  Alors 
  $${\rm Hom}_{k_L[\mathcal{G}_{F}]}(\bar{\rho}, H^1_{\eet}(\mathcal{M}_{n,\C_p}^{\varpi},k_L))^{\vee}\simeq{\rm Hom}_H(\bar{\rho}, H^1_{\eet}(\mathcal{M}_{n,\C_p}^{\varpi},k_L)^{\mathcal{G}_{F'}}).$$
  Si l'on montre que $H^1_{\eet}(\mathcal{M}_{n,\C_p}^{\varpi},k_L)^{\mathcal{G}_{F'}}\simeq\pi^{\vee}$ pour une repr\'esentation lisse de pr\'esentation finie $\pi$, alors 
  $${\rm Hom}_{H} (\bar{\rho}, \pi^{\vee})\simeq(\bar{\rho}^{\vee}\otimes_{k_L} \pi^{\vee})^H\simeq((\bar{\rho}\otimes_{k_L} \pi)_H)^{\vee},$$
  et $(\bar{\rho}\otimes_{k_L}\pi)_H$ est lisse, de pr\'esentation finie 
d'apr\`es la prop.\,\ref{Shot27} (car $\bar{\rho}$ est de dimension finie et 
  $H$ est fini), ce qui permet de conclure. 
  
  Pour montrer l'existence d'une repr\'esentation $\pi$ comme ci-dessus,
prenons un recouvrement Stein $\{V_s\}$ de 
  ${\mathcal M}^{\varpi}_{n,F^{\prime}}$. 
D'apr\`es \cite[cor.\, 3.7.5]{dJvdp}, on dispose de suites spectrales de
Hochschild-Serre pour les rev\^etements $V_{s,\C_p}/V_s$, 
ce qui fournit des suites exactes
  $$\xymatrix@R=3mm@C=5mm{
  0\ar[r]& H^1(\mathcal{G}_{F'}, H^0(V_{s,\C_p},k_L))\ar[r]& H^1(V_s,k_L)\ar[r]& 
  H^1(V_{s,\C_p},k_L)^{\mathcal{G}_{F'}} \ar[d]\\
&&& H^2(\mathcal{G}_{F'}, H^0(V_{s,\C_p},k_L))}$$
Un passage \`a la limite sur $s$ fournit la suite exacte
$$\xymatrix@R=3mm@C=5mm{
  0\ar[r]& H^1(\mathcal{G}_{F'}, H^0(\mathcal{M}_{n,\C_p}^{\varpi},k_L))\ar[r]& H^1(\mathcal{M}_{n, F'}^{\varpi},k_L)\ar[r]& 
  H^1(\mathcal{M}_{n,\C_p}^{\varpi},k_L)^{\mathcal{G}_{F'}} \ar[d]\\
&&& H^2(\mathcal{G}_{F'}, H^0(\mathcal{M}_{n,\C_p}^{\varpi},k_L))}$$
  Notons que $H^0(\mathcal{M}_{n,\C_p}^{\varpi},k_L)$ est de dimension finie, donc les termes extr\`emes de cette suite exacte sont de dimension finie.
 Un raisonnement comme dans la preuve du th\'eor\`eme ci-dessus montre que tous les termes sont profinis et munis d'une action continue de $G'$.
 Comme tous les termes \`a l'exception de $  H^1(\mathcal{M}_{n,\C_p}^{\varpi},k_L)^{\mathcal{G}_{F'}}$ sont des duaux de repr\'esentations lisses de pr\'esentation finie, il en est de m\^eme de $  H^1(\mathcal{M}_{n,\C_p}^{\varpi},k_L)^{\mathcal{G}_{F'}}$, ce qui permet de conclure.
 \end{proof}

\subsection{Non admissibilit\'e pour $F\ne \Q_p$}\label{copr7}
On suppose $F\neq\Q_p$ dans ce qui suit.  
On a alors
le r\'esultat suivant qui contraste avec la situation dans le cas $F=\Q_p$.
   \begin{theo}\label{main10bis}
    Si $F\ne \Q_p$ et $n>0$, alors 
    $H^1_{\eet}({\cal M}_{n,K}^{\varpi}, k_L)^{\vee}$ n'est pas admissible si $K$ est
une extension finie, assez grande, de $F$.   
   \end{theo}

Nous allons prouver le r\'esultat plus pr\'ecis du th.~\ref{main10} ci-dessous (le th.\,\ref{main10bis}
est une cons\'equence de la non-admissibilit\'e
de $\Pi_M(V^+_1)$ dans les notations du th.\,\ref{main10}).

Soient $M$ un $(\varphi,N,\G_F)$-module supercuspidal et ${\rm JL}(M)^+$ un r\'eseau de ${\rm JL}(M)$
stable par $\check G$.
Si $V$ une $L$-repr\'esentation de $\G_F$, de dimension finie, et $V^+$ est un r\'eseau de $V$ stable
par $\G_F$, on pose $V_k^+=(\O_L/{\goth m}_L^k)\otimes V^+$ et:
\begin{align*}
\Pi_M(V)&:= {\rm Hom}_{\G_F\times\check G}
\big(V\otimes {\rm JL}(M),H^1_{\eet}({\cal M}^\varpi_{n,\C_p},L(1))\big)^\dual\\
\Pi_M(V^+)&:={\rm Hom}_{\G_F\times\check G}
\big(V^+\otimes {\rm JL}(M)^+,H^1_{\eet}({\cal M}^\varpi_{n,\C_p},\O_L(1))\big)^\dual\\
\Pi_M(V^+_k)&:={\rm Hom}_{\G_F\times\check G}
\big(V^+\otimes {\rm JL}(M)^+,H^1_{\eet}({\cal M}^\varpi_{n,\C_p},(\O_L/{\goth m}_L^k)(1))\big)^\dual
\end{align*}
($L$-dual pour le premier, $\O_L$-dual pour le second et $(\O_L/{\goth m}_L^k)$-dual
pour le dernier).
\begin{theo}\label{main10}
Si $\Pi_M(V)\neq 0$, alors pour tout $k\geq 1$, $\Pi_M(V^+_k)$ 
est une repr\'esentation lisse de $G$, non admissible.
\end{theo}
\begin{proof}
$\Pi_M(V)$ est un $G$-banach unitaire (lemme~\ref{bana1})
 et, s'il existe $k\geq 1$ tel que
$\Pi_M(V^+_k)$ est admissible, 
alors $\Pi_M(V)$ poss\`ede un r\'eseau dont la r\'eduction modulo~${\goth m}_L$
est un objet de ${\cal C}$ (lemme~\ref{bana2}). La prop.\,\ref{Ban-ord} implique alors que
${\rm Hom}_G(\widehat{\rm LL}(M),\Pi_M(V))=0$.

Par ailleurs, en passant aux vecteurs $G$-born\'es dans la premi\`ere ligne
du diagramme de~\cite[th.\,0.8]{CDN1} (apr\`es avoir appliqu\'e ${\rm Hom}_{\G_F}(V,-)$)
on obtient une injection de $\Pi_M(V)^\dual$ 
dans $H(V)\otimes\widehat{\rm LL}(M)^\dual$, o\`u
$H(V):={\rm Hom}_{\G_F}(V,X_{\rm st}(M))$ est un $L$-module de rang fini, avec action triviale de $G$. 
Par dualit\'e, cela fournit une fl\`eche de
$H(V)\otimes\widehat{\rm LL}(M)$ dans $\Pi_M(V)$, d'image dense. Comme cette image est nulle d'apr\`es
ce qui pr\'ec\`ede,
on a $\Pi_M(V)=0$,
ce qui prouve, par l'absurde, le th.~\ref{main10}.
\end{proof}

\begin{rema}\label{copr8}
(i) Si $F=\Q_p$, alors $\Pi_M(V)$ et $\Pi_M(V^+_k)$, pour tout $k$, sont admissibles.

(ii) 
Pour tout $M$ supercuspidal tel que $\varpi$ agit trivialement sur ${\rm JL}(M)$ on peut trouver $V$ telle que $\Pi_M(V)\neq 0$.
On prend $\Gamma\subset G'$, cocompact et on d\'efinit la courbe analytique $X_n:={\cal M}_{n,F}^\varpi/\Gamma$.
Alors $X_n$ est l'analytifi\'ee d'une courbe alg\'ebrique d\'efinie sur $F$. On suppose $\Gamma$ assez petit
pour que $X_0$ soit de genre~$\geq 2$. Alors, d'apr\`es Nakajima~\cite[th.\,4]{Naka}, toute repr\'esentation
de ${\rm Gal}(X_n/X_0)=\check G_0/\check G_n$ a une multiplicit\'e non nulle dans $\Omega^1(X_n)$,
et donc aussi dans $H^1_{\rm dR}(X_n)$ et dans $H^1_{\eet}(X_{n,\C_p},L(1))$ par le th\'eor\`eme de comparaison
\'etale--de Rham. Autrement dit,
$V_M:=H^1_{\eet}(X_{n,\C_p},L(1))[M]$ est non nul, pour tout $M$ de niveau~$\leq n$.
On peut alors prendre pour $V$ n'importe quel sous-$L$-module de $V_M$ stable par $\G_F$ (par exemple $V_M$).

(iii) Il ne semble pas possible de d\'eduire du th\'eor\`eme que 
$\Pi_M(V)$ est
non admissible.  La question de son admissibilit\'e reste donc en suspens.

(iv) On peut d\'efinir un $L[G]$-module $$\Pi_M^{\rm an}(V):={\rm Hom}_{\G_F\times\check G}
\big(V\otimes {\rm JL}(M),H^1_{\proet}({\cal M}^\varpi_{n,\C_p},L(1))\big)^\dual$$
En combinant les r\'esultats de \cite{CDN1} avec ceux de Patel-Schmidt-Strauch~\cite{PSS} 
et Ardakov-Wadsley \cite{AWD}, on peut montrer que, si $n=1$, alors $\Pi_M^{\rm an}(V)$ 
est une repr\'esentation localement analytique coadmissible, de longueur finie de $G$. 
Il est probable que cela reste vrai pour tout $n$. 
\end{rema}

\begin{lemm}\label{bana1}
$\Pi_M(V)$ est une repr\'esentation unitaire de $G$ dont $\Pi_M(V)^+$ est un r\'eseau stable par $G$,
et $(\O_L/{\goth m}_L^k)\otimes\Pi_M(V^+)$ est un quotient de $\Pi_M({V}^+_k)$.
\end{lemm}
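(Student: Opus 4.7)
The plan is to derive all three assertions from the profiniteness of the mod-$p^k$ cohomology established in Theorem~\ref{smooth}, transferred to $\C_p$-coefficients via the Hochschild--Serre descent of Corollary~\ref{pointwise1}, followed by a Hom--duality manipulation. Set $W^+:=V^+\otimes_{\O_L}{\rm JL}(M)^+$, a finitely generated $\O_L$-module carrying a $\G_F\times\check G$-action. Write $X:=H^1_{\eet}(\mathcal{M}_{n,\C_p}^{\varpi},\O_L(1))$ and $X_k:=H^1_{\eet}(\mathcal{M}_{n,\C_p}^{\varpi},(\O_L/p^k)(1))$, and define
$$H_{V_k^+} := {\rm Hom}_{\G_F\times\check G}(W^+/p^k,X_k),\quad H_{V^+} := \varprojlim_k H_{V_k^+},$$
so that $\Pi_M(V_k^+)$, $\Pi_M(V^+)$, $\Pi_M(V)$ are the corresponding duals.

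First, I would show that $H_{V^+}$ is profinite with continuous $G$-action. The $\G_F$-action on $W^+/p^k$ factors through a finite quotient ${\rm Gal}(K_k/F)$ (since $W^+/p^k$ has finitely many elements), so $H_{V_k^+}$ is a closed submodule of ${\rm Hom}_{\O_L}(W^+/p^k,X_k^{\G_{K_k}})$. The profiniteness of $X_k^{\G_{K_k}}$ follows from Theorem~\ref{smooth} applied to $\mathcal{M}_{n,K_k}^\varpi$ combined with the Hochschild--Serre argument of Corollary~\ref{pointwise1}; since $W^+/p^k$ is finitely generated, the Hom space is profinite, and so is $H_{V_k^+}$. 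Taking the inverse limit, $H_{V^+}$ is profinite.

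Second, for the Banach structure: $H_V := H_{V^+}\otimes_{\O_L} L$ is a topological $L$-module with compact unit ball $H_{V^+}$, and $\Pi_M(V) = H_V^\dual$ is therefore an $L$-Banach with a continuous $G$-action (inherited from the $G$-action on $X$, which preserves the lattice and commutes with $\G_F\times\check G$). The $\O_L$-dual $\Pi_M(V^+) = H_{V^+}^\dual$ sits inside $\Pi_M(V)$ as a $G$-stable bounded open $\O_L$-lattice, so $\Pi_M(V)$ is unitary with $\Pi_M(V)^+ = \Pi_M(V^+)$. Finally, for the mod-$p^k$ quotient: the coefficient exact sequence $0\to\O_L(1)\xrightarrow{p^k}\O_L(1)\to(\O_L/p^k)(1)\to 0$ produces, in the cohomology long exact sequence, an injection $X/p^k\hookrightarrow X_k$. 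Applying ${\rm Hom}_{\G_F\times\check G}(W^+/p^k,-)$ (and using that $W^+$ is finitely generated, so any map from it to a $p^k$-torsion module factors through $W^+/p^k$) gives an injection $H_{V^+}/p^k\hookrightarrow H_{V_k^+}$; Pontryagin-dualizing this inclusion of profinite $p^k$-torsion modules yields the required surjection $\Pi_M(V_k^+) \twoheadrightarrow (\O_L/{\goth m}_L^k)\otimes\Pi_M(V^+)$.

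The main technical obstacle is justifying the profiniteness over $\C_p$: Theorem~\ref{smooth} is stated over finite extensions of $F$, while the $\C_p$-coefficient cohomology is only a filtered colimit of profinite things and is not itself profinite. The rescue is to take $\G_{K_k}$-invariants, which, via the Hochschild--Serre spectral sequence used in Corollary~\ref{pointwise1}, differs from $H^1$ over $K_k$ by finite-dimensional error terms; the rest is bookkeeping between the continuous $L$-dual, the $\O_L$-dual, and Pontryagin duality of $p^k$-torsion profinite modules.
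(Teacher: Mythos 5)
Votre démonstration suit pour l'essentiel celle du texte : on montre que le pré-dual ${\rm Hom}_{\G_F\times\check G}(V^+\otimes{\rm JL}(M)^+,-)$ est profini en l'écrivant comme limite projective des Hom à coefficients de torsion, en utilisant que l'action galoisienne sur $W^+/p^k$ se factorise par un quotient fini puis la descente de Hochschild--Serre et le th.\,\ref{smooth}, ce qui donne la structure de banach unitaire par dualité; la derni\`ere assertion s'obtient, comme dans le texte, de l'injection $X^+/{\goth m}_L^k\hookrightarrow X^+_k$ entre modules compacts que l'on dualise. Les seules différences (filtration par $p^k$ au lieu de ${\goth m}_L^k$, mention implicite plutôt qu'explicite de l'absence de $p$-torsion de $H^1_{\eet}(\mathcal{M}^\varpi_{n,\C_p},\O_L(1))$, nécessaire pour diviser par $p^k$) sont cosmétiques.
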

  \begin{proof}
Soient $X$, $X^+$ et $X_k^+$ les duaux respectifs de $\Pi_M(V)$, $\Pi_M(V^+)$ et $\Pi_M(V^+_k)$.
Alors $X^+$ est sans $p$-torsion (puisque $H^1_{\eet}({\cal M}_{n,\C_p}, \mathcal{O}_L(1))$ l'est).
On va montrer que $X^+$ est profini, ce qui prouve que $\Pi_M(V^+)$ est un r\'eseau dans un banach,
et que $\Pi_M(V)$ est un banach.
On a 
   $$X^+=
{\varprojlim}_{k} 
   {\rm Hom}_{\G_F}\big(V^+\otimes{\rm JL}(M), H^1_{\eet}({\cal M}_{n,\C_p}^{\varpi}, \mathcal{O}_L/{\goth m}_L^k(1))\big).$$
  Il suffit de voir que chaque 
$ {\rm Hom}_{\G_F}\big(V^+\otimes{\rm JL}(M), H^1_{\eet}({\cal M}_{n,\C_p}^{\varpi}, \mathcal{O}_L/{\goth m}_L^k(1))\big)$ 
est profini, et par l'argument usuel de descente (via la suite spectrale de Hochschild-Serre) 
il suffit de voir que pour toute extension finie 
  $K$ de $F$ le groupe $H^1_{\eet}({\cal M}_{n,K}^{\varpi}, \mathcal{O}_L/{\goth m}_L^k(1))$
 est profini pour tout $k\geq 1$, or cela est garanti par le th.\,\ref{smooth}.

La fl\`eche naturelle $(\O_L/{\goth m}_L^k)\otimes X^+\rightarrow X^+_k$ est injective.
Comme $(\O_L/{\goth m}_L^k)\otimes X^+$ et $X^+_k$ sont compacts, cette injection 
est un hom\'eomorphisme de $(\O_L/{\goth m}_L^k)\otimes X^+$ sur son image qui est ferm\'ee dans $X^+_k$.
Par dualit\'e, cela fournit une surjection de $\Pi_M({V}_k^+)$ sur $(\O_L/{\goth m}_L^k)\otimes\Pi_M(V^+)$.
\end{proof}

\begin{lemm}\label{bana2}
S'il existe $k\geq 1$ tel que $\Pi_M(V^+_k)$ soit admissible, alors
$k_L\otimes \Pi_M(V^+)$ est un objet de ${\cal C}$.
\end{lemm}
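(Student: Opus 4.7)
L'id\'ee est la suivante: l'hypoth\`ese d'admissibilit\'e de $\Pi_M(V^+_k)$ jointe \`a un r\'esultat de pr\'esentation finie (analogue au cor.~\ref{pointwise1} mais tenant compte de l'action suppl\'ementaire de~$\check G$) place $\Pi_M(V^+_k)$ dans la cat\'egorie $\mathcal{C}$; on conclut ensuite par des passages au quotient, en utilisant la stabilit\'e de $\mathcal{C}$ par sous-quotients (th.~\ref{ord-ter}).

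La premi\`ere \'etape consiste \`a \'etablir que $\Pi_M(V^+_k)$ est de pr\'esentation finie comme repr\'esentation lisse de~$G$. Comme ${\rm JL}(M)$ est lisse et $V^+_k$ est fini, on peut choisir une extension finie galoisienne $F'/F$ et un sous-groupe ouvert normal $\check G_N\subset \check G$ tels que $\mathcal{G}_{F'}$ agisse trivialement sur $V^+_k$ et $\check G_N$ agisse trivialement sur ${\rm JL}(M)^+$. Alors
$$\Pi_M(V^+_k)^\dual \simeq {\rm Hom}_{H}\big(V^+_k\otimes {\rm JL}(M)^+,\ H^1_{\eet}(\mathcal{M}_{n,\C_p}^{\varpi},(\O_L/{\goth m}_L^k)(1))^{\mathcal{G}_{F'}\times \check G_N}\big),$$
avec $H:={\rm Gal}(F'/F)\times \check G_0/\check G_N$ (un groupe fini). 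Par descente \`a la Hochschild-Serre comme dans la preuve du cor.~\ref{pointwise1}, le dual de Pontryagin de $H^1_{\eet}(\mathcal{M}_{n,\C_p}^{\varpi},(\O_L/{\goth m}_L^k)(1))^{\mathcal{G}_{F'}}$ est une repr\'esentation lisse de pr\'esentation finie de $G'$ (en partant du th.~\ref{smooth} sur ${\cal M}^\varpi_{n,F'}$ et en prenant les invariants sous $\check G_N$, qui est compact). La prise de Hom avec le $\O_L[H]$-module de type fini $V^+_k\otimes{\rm JL}(M)^+$ pr\'eserve alors la pr\'esentation finie, gr\^ace \`a la prop.~\ref{Shot27} et la prop.~\ref{Shot25} appliqu\'ees \`a la variante \'equivariante sous le groupe fini $H$.

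Par hypoth\`ese, $\Pi_M(V^+_k)$ est de plus admissible. Donc $\Pi_M(V^+_k)\in \mathcal{C}$. Par le lemme~\ref{bana1}, $(\O_L/{\goth m}_L^k)\otimes \Pi_M(V^+)$ est un quotient de $\Pi_M(V^+_k)$, donc appartient \`a $\mathcal{C}$ par stabilit\'e par sous-quotients (th.~\ref{ord-ter}). Enfin, $k_L\otimes \Pi_M(V^+)=\Pi_M(V^+)/{\goth m}_L\Pi_M(V^+)$ est un quotient de $(\O_L/{\goth m}_L^k)\otimes \Pi_M(V^+)=\Pi_M(V^+)/{\goth m}_L^k\Pi_M(V^+)$ (puisque $k\geq 1$), donc appartient aussi \`a $\mathcal{C}$.

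Le point d\'elicat est la premi\`ere \'etape: obtenir la pr\'esentation finie de $\Pi_M(V^+_k)$ comme $G$-module \`a partir du th.~\ref{smooth} qui ne vaut que sur une extension finie $K/F$. La descente de $\C_p$ \`a un tel $K$ n\'ecessite une gestion soigneuse des objets profinis intervenant dans la suite spectrale de Hochschild-Serre et de la compatibilit\'e de la pr\'esentation finie avec la prise d'invariants par un groupe profini (pour $\check G_N$) et de Hom par rapport \`a un groupe fini.
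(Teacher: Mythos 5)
Votre preuve est correcte et suit essentiellement la même voie que celle de l'article: la preuve originale se borne à invoquer le th.~\ref{smooth} (joint à l'hypothèse d'admissibilité) pour conclure que $\Pi_M(V^+_k)$ est un objet de $\mathcal{C}$, puis utilise le fait (lemme~\ref{bana1}) que $(\O_L/{\goth m}_L^k)\otimes\Pi_M(V^+)$, et donc $k_L\otimes\Pi_M(V^+)$, en est un quotient, avec la stabilité de $\mathcal{C}$ par quotients (th.~\ref{ord-ter}). Votre première étape ne fait qu'expliciter la descente que l'article laisse implicite (comme dans le cor.~\ref{pointwise1}); seul petit accroc sans conséquence: le groupe fini pertinent est ${\rm Gal}(F'/F)\times\check G/\varpi^{\Z}\check G_N$ plutôt que ${\rm Gal}(F'/F)\times\check G_0/\check G_N$, puisque le Hom est pris sous $\check G$ tout entier.
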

\begin{proof}
Il r\'esulte du th.~\ref{smooth} que $\Pi_M(V^+_k)$ est un objet de ${\cal C}$. Il en est de m\^eme
de $(\O_L/{\goth m}_L^k)\otimes\Pi_M(V^+)$ (et donc aussi de $k_L\otimes \Pi_M(V^+)$)
puisque c'est un quotient de $\Pi_M(V^+_k)$.
\end{proof}
\begin{prop}\label{Ban-ord}
Soient $\pi$ une $L$-repr\'esentation lisse supercuspidale de $G$ et $\widehat\pi$ son
compl\'et\'e universel, et soit 
$\Pi$ une 
$L$-repr\'esentation de Banach unitaire de $G$ qui poss\`ede un r\'eseau $G$-invariant dont la r\'eduction modulo ${\goth m}_L$ est dans $\mathcal{C}$. Alors 
${\rm Hom}_G(\widehat{\pi}, \Pi)=0$.
\end{prop}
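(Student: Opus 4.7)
Je raisonnerai par l'absurde en supposant qu'il existe un morphisme continu non nul $f: \widehat{\pi} \to \Pi$, et j'exploiterai le cor.\,\ref{finito2} pour comparer $\Pi$ \`a une induite parabolique continue, afin de r\'ealiser $\pi$ comme sous-repr\'esentation lisse d'un quotient d'induite parabolique lisse, ce qui contredira sa supercuspidalit\'e. Soit $\Theta \subset \Pi$ un r\'eseau $G$-invariant avec $\bar{\Theta} := \Theta/\varpi_L\Theta \in \mathcal{C}$. Je commencerai par v\'erifier que $\Pi$ est admissible (n\'ecessaire pour appliquer le cor.\,\ref{finito2}): pour tout sous-groupe ouvert compact $U$ de $G$, le $\mathcal{O}_L$-module $\Theta^U$ est de type fini par Nakayama (car $\Theta^U/\varpi_L\Theta^U$ s'injecte dans $\bar\Theta^U$, de dimension finie sur $k_L$), d'o\`u $\Pi^U = \Theta^U[\tfrac{1}{p}]$ est de dimension finie sur $L$. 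Le cor.\,\ref{finito2} fournira alors une repr\'esentation lisse $\sigma$ de $A$, de type fini sur $\mathcal{O}_L$, et un morphisme $\phi: {\rm Ind}_{\bar{B}}^G(\sigma) \to \Theta$ dont le noyau et le conoyau sont de type fini sur $\mathcal{O}_L$. En compl\'etant $p$-adiquement puis en inversant $p$, j'obtiendrai un morphisme continu $G$-\'equivariant
$$\tilde{\phi}: \widetilde{{\rm Ind}}_{\bar{B}}^G(\sigma_L) \to \Pi$$
de $L$-Banach unitaires, o\`u $\sigma_L := \sigma[\tfrac{1}{p}]$ et $\widetilde{{\rm Ind}}$ d\'esigne l'induite parabolique continue, dont le noyau $K$ et le conoyau $Q$ sont de dimension finie sur $L$. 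Je poserai $\Pi' := \tilde{\phi}(\widetilde{{\rm Ind}}_{\bar{B}}^G(\sigma_L))$, un sous-$L$-Banach ferm\'e de $\Pi$.

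Le conoyau $Q$ \'etant de dimension finie sur $L$, toute application $G$-\'equivariante de $\pi$ vers $Q$ se factorisera par $Q^{\rm sm}$, sous-espace de dimension finie et somme de caract\`eres de $G$; par supercuspidalit\'e (et donc absence de quotient de dimension~$1$) de $\pi$, une telle application sera nulle, et par densit\'e de $\pi$ dans $\widehat{\pi}$ la composition $\widehat{\pi} \to Q$ sera nulle, ce qui permettra de factoriser $f$ par $\Pi'$. Ensuite, pour tout sous-groupe ouvert compact $U$ de $G$, le foncteur des $U$-invariants est exact sur la cat\'egorie des $L$-Banach unitaires (par moyennisation \`a l'aide de la mesure de Haar sur $U$); en l'appliquant \`a la suite exacte $0 \to K \to \widetilde{{\rm Ind}}_{\bar{B}}^G(\sigma_L) \to \Pi' \to 0$ puis en prenant la r\'eunion sur les $U$, j'en d\'eduirai la suite exacte de $G$-repr\'esentations lisses
$$0 \to K \to {\rm Ind}_{\bar{B}}^G(\sigma_L) \to (\Pi')^{\rm sm} \to 0,$$
en utilisant que les vecteurs lisses de l'induite parabolique continue $\widetilde{{\rm Ind}}_{\bar{B}}^G(\sigma_L)$ sont exactement les fonctions localement constantes $G \to \sigma_L$ qui sont $\bar B$-\'equivariantes, c'est-\`a-dire l'induite parabolique lisse ${\rm Ind}_{\bar{B}}^G(\sigma_L)$.

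Pour conclure, la restriction de $f$ \`a $\pi$ sera non nulle (par densit\'e et continuit\'e), donc injective par irr\'eductibilit\'e de $\pi$, r\'ealisant $\pi$ comme sous-repr\'esentation lisse de $\Pi$ contenue dans $(\Pi')^{\rm sm}$. Mais par la suite exacte ci-dessus, $(\Pi')^{\rm sm}$ est un quotient de l'induite parabolique lisse ${\rm Ind}_{\bar{B}}^G(\sigma_L)$ par une sous-repr\'esentation de dimension finie, et ses composantes de Jordan-H\"older sont donc toutes des composantes de Jordan-H\"older de ${\rm Ind}_{\bar{B}}^G(\sigma_L)$ (caract\`eres, tordues de la Steinberg, ou s\'eries principales irr\'eductibles), aucune n'\'etant supercuspidale. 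Ceci contredit l'inclusion $\pi \subset (\Pi')^{\rm sm}$, achevant la preuve. Le point le plus d\'elicat sera la justification rigoureuse du passage \`a la compl\'etion $p$-adique (ce qui passera par une construction au niveau de chaque $\Theta/\varpi_L^n\Theta$, compatible en $n$) ainsi que l'identification pr\'ecise des vecteurs lisses de l'induite parabolique continue.
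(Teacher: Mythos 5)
Votre stratégie est pour l'essentiel celle du texte (appliquer le cor.\,\ref{finito2}, tuer le conoyau de dimension finie par densité de $\pi$ et absence de quotients de dimension finie, puis se ramener à l'induite parabolique), et les étapes de complétion que vous signalez comme délicates se traitent effectivement sans difficulté. En revanche, il y a une erreur réelle au point clé final: l'exactitude du foncteur des $U$-invariants sur les $L$-Banach unitaires \emph{n'est pas} vraie, et l'argument de «moyennisation à l'aide de la mesure de Haar sur $U$» n'existe pas en coefficients $p$-adiques (la mesure de Haar de masse $1$ sur un pro-$p$ groupe n'est pas une mesure bornée: les volumes $1/[U:U']$ explosent $p$-adiquement). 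Concrètement, $H^1_{\rm cont}(U,L)={\rm Hom}_{\rm cont}(U,L)\neq 0$ pour $U$ un groupe de Lie $p$-adique compact, et l'extension unitaire non scindée de dimension $2$ de la représentation triviale de $\Z_p$ par elle-même (donnée par $u\mapsto\big(\begin{smallmatrix}1&u\\0&1\end{smallmatrix}\big)$) fournit un contre-exemple à la surjectivité de $X^U\to Y^U$. Votre suite exacte $0\to K\to {\rm Ind}_{\bar{B}}^G(\sigma_L)\to (\Pi')^{\rm sm}\to 0$ n'est donc pas justifiée telle quelle (et, accessoirement, les vecteurs lisses de $\widetilde{\rm Ind}_{\bar{B}}^G(\sigma_L)$ ne sont pas toute l'induite lisse de $\sigma_L$ lorsque l'action de $A$ sur $\sigma_L$ n'est pas lisse — ce qui n'a rien d'automatique ici — mais seulement une sous-représentation de type série principale, ce qui suffit).

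Le trou est réparable de deux façons. Soit vous contrôlez le défaut d'exactitude: la suite longue de cohomologie continue montre que le conoyau de $\widetilde{\rm Ind}^{\rm sm}\to(\Pi')^{\rm sm}$ se plonge dans $\varinjlim_U H^1_{\rm cont}(U,K)$, qui est de dimension finie (pour $K$ de dimension finie), donc sans constituant supercuspidal, et votre conclusion subsiste. Soit — c'est la voie du texte — vous évitez complètement de calculer $(\Pi')^{\rm sm}$: on écrit $\Pi'\simeq {\rm Ind}_{\bar{B}}^G(W)/W'$ avec $W,W'$ de dimension finie, on dévisse $W$ (quitte à agrandir $L$) en extensions successives de caractères continus, de sorte que $\Pi'$ est une extension successive finie de représentations de Banach de la forme ${\rm Ind}_{\bar{B}}^G(\chi_1\otimes\chi_2)$, ${\rm St}^{\rm cont}\otimes\chi$ ou $\chi\circ\det$; on montre que ${\rm Hom}_G(\widehat{\pi},-)$ s'annule sur chaque cran (car $\pi$ doit tomber dans les vecteurs lisses de chacune, qui sont de la série principale), puis on conclut par récurrence sur la filtration en n'utilisant que l'exactitude à gauche de ${\rm Hom}$ — aucun énoncé d'exactitude des invariants n'est alors nécessaire.
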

          
\begin{proof}
D'apr\`es le cor.\,\ref{finito2},
 il existe une $L$-repr\'esentation unitaire $W$ de~$A$, de dimension finie, et un morphisme 
$G$-\'equivariant $\iota=\iota_{\Pi}: {\rm Ind}_{\bar{B}}^G(W)\to 
\Pi$ dont le noyau et le conoyau sont de dimension finie sur $L$. Supposons qu'il existe une application 
$G$-\'equivariante non nulle $\alpha: \hat{\pi}\to \Pi$. La compos\'ee 
$\hat{\pi}\to \Pi\to {\rm coker}(\iota)$  est forc\'ement nulle car $\pi$ est dense dans $\hat{\pi}$ et n'a pas de quotient de dimension finie. Donc on peut supposer que
$\Pi\simeq {\rm Ind}_{\bar{B}}^G(W)/W'$ pour une repr\'esentation de dimension finie $W'$. 
Quitte \`a remplacer $L$ par une extension finie, on peut supposer que $W$ et $W'$ 
sont des extensions successives de caract\`eres et donc que ${\rm Ind}_{\bar{B}}^G(W)/W'$ 
est une extension successive finie de repr\'esentations 
de la forme ${\rm Ind}_{\bar{B}}^G (\chi_1\otimes\chi_2)$, 
${\rm St}\otimes \chi\circ \det$ ou $\chi\circ \det$ 
avec $\chi, \chi_1,\chi_2: F^{\dual}\to L^{\dual}$ des caract\`eres continus.
Mais $\hat{\pi}$ n'a pas de morphisme $G$-\'equivariant non nul vers une de ces repr\'esentations: 
il suffit en effet de voir que $\pi$ s'envoie sur $0$, 
ce qui est clair car $\pi$ est cuspidale
et l'espace des vecteurs lisses d'une de ces repr\'esentations est de la s\'erie principale.
Donc tout morphisme $G$-\'equivariant de $\hat{\pi}$ vers une extension successive finie 
de repr\'esentations de la forme ci-dessus est nul, ce qui permet de conclure.
                        \end{proof}

 \section{Une suite spectrale pour le foncteur de Scholze}
 
  Le but de ce chapitre est de d\'emontrer l'existence d'une suite spectrale reliant les foncteurs de Scholze \cite{SLT} 
  et la cohomologie des espaces de Drinfeld. L'\'etude de cette suite spectrale dans le cas particulier du groupe $\gl_2(\qp)$ est un ingr\'edient important de la preuve du th\'eor\`eme de finitude
(th.\,\ref{Intro1}), mais cette suite spectrale a sans doute un int\'er\^et en soi, et donc nous l'\'etablirons
  dans la g\'en\'eralit\'e de \cite{SLT}. 

\subsection{Rappels concernant le foncteur de Scholze}

   Soit $F$ une extension finie de $\qp$ et soit $\breve{F}\subset{\C_p}$ 
le compl\'et\'e de l'extension maximale non ramifi\'ee de $F$. Soit 
 $\mathcal{G}_F={\rm Gal}(\bar{F}/F)$.
   
On fixe un entier $n\geq 1$ et une alg\`ebre centrale \`a division $D_p$ sur $F$, d'invariant $1/n$. On note 
$(\mathcal{M}_{\rm LT, K})_{K}$ la tour de Lubin-Tate, une famille d'espaces rigides analytiques sur $\breve{F}$, index\'ee par les sous-groupes
ouverts compacts $K$ de $\gl_n(F)$. Le groupe $D_p^{\dual}$ agit sur chaque \'etage de la tour et le groupe $\gl_n(F)$ agit sur la tour, un \'el\'ement 
$g$ de $\gl_n(F)$ transformant $\mathcal{M}_{\rm LT, K}$ en $\mathcal{M}_{\rm LT, g^{-1}Kg}$. On dispose d'applications de p\'eriodes 
$$\pi_{\rm GH}: \mathcal{M}_{\rm LT, K}\to \mathbb{P}^{n-1}_{\breve{F}},$$
qui sont \'etales surjectives (de fibres $\gl_n(F)/K$), $D_p^{\dual}$-\'equivariantes ($D_p^{\dual}$ agissant sur 
$\mathbb{P}^{n-1}_{\breve{F}}$ via l'identification de cet espace avec la vari\'et\'e de Brauer-Severi associ\'ee \`a $D_p$). 
De plus, tous ces espaces sont munis de donn\'ees de descente \`a la Weil et $\pi_{\rm GH}$ respectent ces donn\'ees. 

    La tour $(\mathcal{M}_{\rm LT, K})_{K}$ poss\`ede une limite, qui est un espace perfecto\"{\i}de $\mathcal{M}_{\rm LT, \infty,\breve{F}}$ sur 
    $\breve{F}$ (cf. \cite{SW}) et les applications des p\'eriodes $\pi_{\rm GH}$ ci-dessus induisent un morphisme 
    $$\pi_{\rm GH}: \mathcal{M}_{\rm LT, \infty,\breve{F}}\to \mathbb{P}^{n-1}_{\breve{F}},$$
    faisant de $\mathcal{M}_{\rm LT, \infty,\breve{F}}$ un $\gl_n(F)$-torseur pro\'etale. Si 
    $\pi$ est un $\gl_n(F)$-module muni de la topologie discr\`ete, on note $\underline{\pi}$ le faisceau constant
   sur le site \'etale de $\mathcal{M}_{\rm LT, \infty,\breve{F}}$ et 
   $$\mathcal{F}_{\pi}:=(\pi_{\rm GH, *}(\underline{\pi}))^{\gl_n(F)},$$
   un faisceau \'etale $D_p^{\dual}$ et Weil-\'equivariant sur $\mathbb{P}^{n-1}_{\breve{F}}$. Concr\`etement, 
   pour tout morphisme \'etale $U\to \mathbb{P}^{n-1}_{\breve{F}}$ on a $\mathcal{F}_{\pi}(U)=
   \mathcal{C}^0_{{\rm GL}_n(F)}(|U_{\infty}|, \pi)$, 
   o\`u $U_{\infty}=U\times_{\mathbb{P}^{n-1}_{\breve{F}}} \mathcal{M}_{\rm LT, \infty,\breve{F}}$ est le pullback de $U$ en niveau infini.
   Si $\pi$ est lisse (i.e. le stabilisateur de tout vecteur de $\pi$ est ouvert), alors les fibres g\'eom\'etriques de 
   $\mathcal{F}_{\pi}$ s'identifient \`a $\pi$ (cf. la preuve de la prop 3.1 de \cite{SLT}). Cette identification d\'epend du choix d'un rel\`evement du point g\'eom\'etrique de $\mathbb{P}^{n-1}_{\breve{F}}$ \`a $\mathcal{M}_{\rm LT, \infty,\breve{F}}$, mais une fois un tel choix fait elle est fonctorielle par rapport \`a $\pi$. On posera 
   $$S^i(\pi):=H^i_{\eet}(\mathbb{P}^{n-1}_{{\C_p}}, \mathcal{F}_{\pi}).$$

   Soit $(A, \mathfrak{m})$ un anneau local, noeth\'erien, complet, de corps r\'esiduel une extension finie de $\mathbf{F}_p$ et soit 
   $H$ un groupe de Lie $p$-adique. On dit qu'un $A[H]$-module $\pi$ est {\it admissible} si 
   $$\pi=\bigcup_{K, n} \pi^{K}[\frak{m}^n],$$
   la r\'eunion \'etant prise sur les sous-groupes ouverts compacts $K$ de $H$ et sur les entiers $n\geq 1$, et si 
 les $\pi^K[\mathfrak{m}^n]$ sont des $A/\mathfrak{m}^n$-modules de type fini (et donc finis tous court). 

Le r\'esultat suivant r\'esume les propri\'et\'es locales 
des foncteurs $\pi\mapsto S^i(\pi)$, \'etablies dans \cite{SLT}. 

\begin{theo}\label{Good}{\rm (Scholze)}
 Soit $(A,\mathfrak{m})$ comme ci-dessus et soit $\pi$ un $A[\gl_n(F)]$-module admissible. 
 
 \begin{enumerate}[label={\rm(\arabic*)}]
 
 \item $S^i(\pi)$ est un 
 $A[D_p^{\dual}]$-module admissible et le morphisme naturel 
 $$S^i(\pi)\otimes_{\zp} \mathcal{O}_{\C_p}\to H^i_{\eet}(\mathbb{P}^{n-1}_{{\C_p}}, \mathcal{F}_{\pi}\otimes_{\zp} \mathcal{O}^+)$$
 est un presqu'isomorphisme. 
 
 \item On a $S^i(\pi)=0$ pour $i>2(n-1)$.
 
 \item L'action\footnote{Induite par l'action de $I_F$ sur ${\C_p}$ et par la donn\'ee de descente.} du groupe de Weil $W_F$ sur $S^i(\pi)$ s'\'etend par continuit\'e en une action de $\mathcal{G}_F$.
 
 \item On a $S^0(\pi^{ {\rm SL}_n(F)})=S^0(\pi)$, l'action de $\gl_n(F)$ s'y factorise par $\det: \gl_n(F)\to F^{\dual}$ et celle de 
 $W_F\times D_p^{\dual}$ par le morphisme $W_F\times D^{\dual}\to F^{\dual}$, inverse du produit de l'application d'Artin (envoyant des Frobenius g\'eom\'etriques sur des uniformisantes) et de la norme r\'eduite.
 \end{enumerate}
\end{theo}

\begin{proof} Les deux premiers points (et les plus d\'elicats) sont 
le th.\,4.4 de \cite{SLT}, le troisi\`eme et le quatri\`eme 
sont les prop.\,4.6 et 4.7 de loc. cit.
\end{proof}

\subsection{Foncteur de Scholze et cohomologie de la tour de Drinfeld}

 Le but de ce paragraphe est de d\'emontrer le th\'eor\`eme de comparaison \ref{compare} ci-dessous. On reprend les notations du paragraphe ci-dessus, en particulier 
 soit $\pi$ un $A[\gl_n(F)]$-module lisse admissible. Supposons aussi qu'il existe $k\geq 1$ tel que $p^k\pi=0$. 
 Alors $\pi^{\vee}$ (dual de Pontryagin de $\pi$) est un $\zp$-module profini, limite inverse des $\zp$-modules de longueur finie $(\pi^{K}[\mathfrak{m}^i])^{\vee}$ (pour $i\geq 1$ et $K$ des sous-groupes ouverts compacts de $\gl_n(F)$). 
 
    L'espace $\mathcal{M}_{\infty}:=\mathcal{M}_{\rm LT, \infty,{\C_p}}$ poss\`ede un recouvrement affino\"{\i}de perfecto\"{\i}de Stein $\{U_i\}_{i\geq 1}$ et on a des suites exactes  $$0\to \R^1\lim_{i} H^{q-1}_{\eet}(U_i, \Z/p^k)\to H^q_{\eet}(\mathcal{M}_{\infty}, \Z/p^k)\to \lim_{i} H^q_{\eet}(U_i, \Z/p^k)\to 0.$$
 On verra ci-dessous que $\R^1\lim_{i} H^{q-1}_{\eet}(U_i, \Z/p^k)=0$, donc $H^q_{\eet}(\mathcal{M}_{\infty}, \Z/p^k)$ est aussi un $\zp$-module prodiscret. Si $U=\lim_i U_i$ et $V=\lim_j V_j$ sont des $\zp$-modules prodiscrets, alors 
 $${\rm Hom}_{\zp}^{\rm cont}(U, V)=\lim_{j}(\colim_{i} {\rm Hom}_{\zp}(U_i, V_j))$$ 
 en est aussi un, en munissant les ${\rm Hom}_{\zp}(U_i, V_j)$ de la topologie discr\`ete.
 
 Soit $G=\gl_n(F)$. La cohomologie (continue) de $G$ \`a coefficients dans un module prodiscret $M$ est par d\'efinition la cohomologie du complexe des cocha\^ines continues sur $G$ \`a valeurs dans $M$.
 
\begin{theo} \label{compare} Soit $\pi$ un $A[G]$-module lisse admissible, tu\'e par $p^k$. Il existe une suite spectrale de $D_p^{\dual}\times W_F$-modules 
 \begin{equation}
 \label{cwir}
  E_2^{i,j}=H^i(G,H^j_{\eet}(\mathcal{M}_{\infty}, \underline{\pi}))\Longrightarrow 
 S^{i+j}(\pi).
 \end{equation}
 De plus $H^j_{\eet}(\mathcal{M}_{\infty}, \underline{\pi})=0$ pour $j\geq 2$.
 
 \end{theo}
 \begin{proof}  Consid\'erons le morphisme des p\'eriodes $\pi_{\rm GH}: \mathcal{M}_{\infty}\to \mathbb{P}^{n-1}_{\C_p}$. C'est un $G$-torseur pro\'etale.  
La suite spectrale (\ref{cwir}) \`a \'etablir est une suite de type Cartan-Leray pour 
  le recouvrement pro\'etale $\mathcal{U}=\{\mathcal{M}_{\infty}\to \mathbb{P}^{n-1}_{\C_p}\}$. 
Pour la construire, consid\'erons la suite spectrale de \v{C}ech pour la projection
$\nu:  \mathbb{P}^{n-1}_{{\C_p},\proeet}\to \mathbb{P}^{n-1}_{{\C_p},\eet}$,
 \begin{equation}
 \label{cwir1}
 E_2^{i,j}=\check{H}^i_{\proeet}(\mathcal{U}, \underline{H}^j(\wt{\mathcal{F}}_{\pi}))\Longrightarrow H^{i+j}_{\proeet}(\mathbb{P}^{n-1}_{\C_p}, \wt{\mathcal{F}}_{\pi})=S^{i+j}(\pi),
 \end{equation}
 o\`u $\wt{\mathcal{F}}_{\pi}:=\nu^\dual{\mathcal{F}}_{\pi}$.
 La derni\`ere \'egalit\'e  dans (\ref{cwir1}) est une cons\'equence de \cite[prop.\,14.8]{Sz-diamonds}. 
 
Il suffit de prouver que l'on a un isomorphisme naturel 
  $$\check{H}^i(\mathcal{U}, \underline{H}^j(\wt{\mathcal{F}}_{\pi}))\simeq H^i(G,  H^j_{\eet}(\mathcal{M}_{\infty}, \underline{\pi})).$$
Pour ce faire, rappelons que $E_2^{i,j}$ est la cohomologie en degr\'e $i$ du complexe 
  $$C^j: =(H^i_{\proeet}(\mathcal{M}_{\infty}, \wt{\mathcal{F}}_{\pi})\to H^i_{\proeet}(\mathcal{M}_{\infty}\times_{\mathbb{P}^{n-1}_{\C_p}} \mathcal{M}_{\infty},  \wt{\mathcal{F}}_{\pi})\to...)$$
  Puisque $\pi_{\rm GH}$ est un $G$-torseur pro\'etale, on a un isomorphisme de diamants\footnote{Si $T$
est un espace topologique, on note $\underline{T}$ le faisceau pro\'etale $\underline{T}(X)=\scc^0(|X|,T)$.} 
  $$\mathcal{M}_{\infty}\times_{\mathbb{P}^{n-1}_{\C_p}} \mathcal{M}_{\infty}\simeq \mathcal{M}_{\infty}\times \underline{G}.$$
Cela implique que (le membre de gauche a $k+1$ facteurs $\mathcal{M}_{\infty}$)
  $$
  \mathcal{M}_{\infty}\times_{\mathbb{P}^{n-1}_{\C_p}}\cdots \times_{\mathbb{P}^{n-1}_{\C_p}}
\mathcal{M}_{\infty} \simeq \mathcal{M}_{\infty}\times \underline{G}^k,\quad k\geq 1.
  $$
De plus, par cet isomorphisme, $\wt{\sff}_{\pi}\simeq {\rm pr}^\dual_{1}\wt{\underline{\pi}}$, 
o\`u ${\rm pr}_1: \mathcal{M}_{\infty}\times \underline{G}^k\to \mathcal{M}_{\infty}$ est la projection canonique et $\wt{\underline{\pi}}=\nu^\dual \underline{\pi}$.
Il s'ensuit que l'on a un
quasi-isomorphisme
  \begin{equation}
  \label{jeden}
  C^j\simeq (H^i_{\eet}(\mathcal{M}_{\infty}, \underline{\pi})\to H^i_{\eet}(\mathcal{M}_{\infty}\times \underline{G}, \underline{\pi})\to...)
  \end{equation}  
Il suffit maintenant de montrer que l'on a un quasi-isomorphisme
    \begin{equation}
    \label{dwa}
    C^j\simeq (\mathcal{C}^0(G^0, H^i_{\eet}(\mathcal{M}_{\infty}, \underline{\pi}))\to \mathcal{C}^0(G^1, H^i_{\eet}(\mathcal{M}_{\infty}, \underline{\pi}))\to \cdots)
    \end{equation}
 Commen\c{c}ons par montrer que, pour $t\geq 1$, on a un isomorphisme 
  \begin{align}
  \label{niedziela11}
  H^i_{\eet}(\mathcal{M}_{\infty}\times \underline{G}^t, \underline{\pi}) & \simeq \mathcal{C}^0(G^t, H^i_{\eet}(\mathcal{M}_{\infty}, \underline{\pi})).
  \end{align}
Soit $\{U_s\}_{s\in\N}$ un recouvrement Stein affino\"{\i}de perfecto\"{\i}de de $\mathcal{M}_{\infty}$.
 Soit $\{S^t_q\}_{q\in \N}$ une suite croissante de sous-ensembles compacts ouverts de $G^t$, dont la r\'eunion est $G^t$.
 On obtient alors un recouvrement 
$$U_s\times \underline{S^t_s}={\rm Spa}( \mathcal{C}^0(S^t_s, \mathcal{O}(U_s)), \mathcal{C}^0(S^t_s, \mathcal{O}^+(U_s)))$$
    de $\mathcal{M}_{\infty}\times \underline{G}^t$ par une suite croissante d'ouverts affino\"{\i}des perfecto\"{\i}des.

    Le lemme 23.6 de \cite{Sz-diamonds} montre que 
    $$H^i_{\eet}(U_s\times \underline{S^t_s}, \underline{\pi})=H^i_{\eet}(U_s, \mathcal{C}^0(S^t_s, \Z/p^k)\otimes_{\Z/p^k} \underline{\pi}).$$
    Puisque $U_s$ est qcqs et $\pi$ est une limite inductive de $\Z/p^k$-modules finis, 
le dernier groupe de cohomologie n'est rien d'autre que 
$\mathcal{C}^0(S^t_s, \Z/p^k)\otimes_{\Z/p^k} \pi\otimes_{\Z/p^k} 
    H^i_{\eet}(U_s, \Z/p^k)$. Le lemme \ref{delicat} ci-dessous montre que pour tout $i$ le syst\`eme projectif 
    $\{H^i_{\eet}(U_s, \Z/p^k)\}_{s}$ est Mittag-Leffler. 
Puisque les $S^t_s$ forment une suite croissante d'ensembles profinis, le syst\`eme projectif 
$\{\mathcal{C}^0(S^t_s, \Z/p^k)\}_{q}$ est Mittag-Leffler (en fait
 $\mathcal{C}^0(S^t_{s+1}, \Z/p^k)\to \mathcal{C}^0(S^t_s, \Z/p^k)$ est surjective). 
On en d\'eduit que le  syst\`eme projectif 
$\{H^i_{\eet}(U_s\times \underline{S^t_s}, \underline{\pi})\}_{s}$ est
aussi Mittag-Leffler, et donc que 
    \begin{equation} 
    \label{niedziela1}H^i_{\eet}(\mathcal{M}_{\infty}\times \underline{G}^t, \underline{\pi})\simeq \lim_{s} 
\big(\mathcal{C}^0(S^t_s, \Z/p^k)\otimes_{\Z/p^k} \pi\otimes_{\Z/p^k} 
    H^i_{\eet}(U_s, \Z/p^k)\big).
    \end{equation}

Par ailleurs, on a
    \begin{align}
    \label{niedziela12}
    \mathcal{C}^0(G^t, H^i_{\eet}(\mathcal{M}_{\infty}, \underline{\pi})) & \stackrel{\sim}{\to} \lim_{s} \mathcal{C}^0(S^t_s, H^i_{\eet}(\mathcal{M}_{\infty}, \underline{\pi}))\\
     & \stackrel{\sim}{\to}\lim_{q,s} \big(\mathcal{C}^0(S^t_s, \Z/p^k)\otimes_{\Z/p^k} \pi\otimes_{\Z/p^k} H^i_{\eet}(U_{s}, \Z/p^k)\big)\notag
        \end{align}
Combin\'e avec l'isomorphisme (\ref{niedziela1}), cela fournit l'isomorphisme (\ref{niedziela11}) que
l'on voulait.
    
Il reste \`a v\'erifier que les isomorphismes  (\ref{niedziela11}) sont compatibles
avec les diff\'erentielles dans les complexes apparaissant dans (\ref{jeden}) et (\ref{dwa}). 
Pour cela, prenons un recouvrement $\{S_q\}_{q}$ de $G$ et recouvrons $G^t$ par les $S^t_q$. 
Cela fournit des complexes \`a partir des objets intervenant dans (\ref{niedziela1}) et (\ref{niedziela12})
et les fl\`eches qui s'en d\'eduisent sont des morphismes de complexes: 
les diff\'erentielles sont des sommes altern\'ees faisant intervenir des projections (qui ne posent
pas de probl\`emes gr\^ace \`a notre choix de recouvrement de $G^t$), des multiplications 
$S_a\times S_b\mapsto S_{c(a,b)}$ induites par la multiplication $\mu: G\times G\mapsto G$, et les actions
      $\underline{S_a}\times U_b\mapsto U_{c(a,b)}$ induites par 
l'action $\underline{G}\times \sm_{\infty}\mapsto \sm_{\infty}$ 
(l'existence de $c(a,b)$ dans les deux cas vient de ce que les $S_a$ sont compacts et les $U_b$ quasi-compacts). 
 \end{proof}

  \begin{lemm}\label{delicat}
  Soit $\{U_s\}_{s\in\N}$ un recouvrement Stein affino\"{\i}de perfecto\"{\i}de de $\mathcal{M}_{\infty}$. Le syst\`eme projectif 
  $\{H^i_{\eet}(U_s, \Z/p^k)\}_s$ est Mittag-Leffler pour tout $i$.
\end{lemm}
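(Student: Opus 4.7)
L'id\'ee est d'utiliser l'\'equivalence de basculement de Scholze pour passer en caract\'eristique $p$, o\`u Artin--Schreier combin\'e \`a l'acyclicit\'e de Tate fournit des r\'esultats d'annulation forts pour les espaces affino\"{\i}des perfecto\"{\i}des, r\'eduisant la propri\'et\'e de Mittag-Leffler \`a un seul degr\'e non trivial.

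D'abord, par d\'evissage via les suites exactes $0 \to \mathbf{Z}/p \to \mathbf{Z}/p^k \to \mathbf{Z}/p^{k-1} \to 0$ et le fait qu'une extension de prosyst\`emes Mittag-Leffler par un prosyst\`eme Mittag-Leffler est encore Mittag-Leffler, on se ram\`ene au cas $k = 1$.

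Ensuite, par l'\'equivalence des sites \'etales entre un espace perfecto\"{\i}de et son bascul\'e (Scholze), $H^i_{\eet}(U_s, \mathbf{F}_p) \simeq H^i_{\eet}(U_s^\flat, \mathbf{F}_p)$, avec $U_s^\flat$ affino\"{\i}de perfecto\"{\i}de en caract\'eristique $p$. La suite exacte d'Artin--Schreier $0 \to \mathbf{F}_p \to \so \xrightarrow{\operatorname{Frob}-1} \so \to 0$ sur $U_s^\flat$, combin\'ee \`a l'acyclicit\'e de Tate ($H^i(U_s^\flat, \so) = 0$ pour $i \geq 1$, $U_s^\flat$ \'etant affino\"{\i}de), donne $H^i_{\eet}(U_s, \mathbf{F}_p) = 0$ pour $i \geq 2$, d'o\`u Mittag-Leffler trivialement en ces degr\'es. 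Pour $i = 0$, la quasi-compacit\'e de $U_s^\flat$ implique que $\pi_0(U_s^\flat)$ est fini, donc $H^0_{\eet}(U_s, \mathbf{F}_p)$ est fini sur $\mathbf{F}_p$ et tout syst\`eme projectif de modules finis v\'erifie Mittag-Leffler.

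L'obstacle principal est le degr\'e $i = 1$, o\`u $H^1_{\eet}(U_s, \mathbf{F}_p) = \coker(\operatorname{Frob} - 1 : \so(U_s^\flat) \to \so(U_s^\flat))$ peut \^etre de dimension infinie. On descend via la pr\'esentation comme limite tilde de $U_s$ \`a la tour de Lubin--Tate en niveau fini: chaque $U_s$ s'\'ecrit comme une limite projective d'affino\"{\i}des $V_s^{K'} \subset \sm_{\mathrm{LT}, K'}$ avec transitions \'etales finies, et la compatibilit\'e de la cohomologie \'etale avec les limites cofiltr\'ees d'espaces adiques qcqs (Huber, Scholze--Weinstein) fournit $H^1_{\eet}(U_s, \mathbf{F}_p) = \varinjlim_{K'} H^1_{\eet}(V_s^{K'}, \mathbf{F}_p)$. \`A chaque niveau fini $K'$, on invoque la filtration de Bloch--Kato--Hyodo sur $\R^j\Psi \mathbf{F}_p$ du \S\ref{copr5} pour ramener Mittag-Leffler pour $\{H^1_{\eet}(V_s^{K'}, \mathbf{F}_p)\}_s$ \`a Mittag-Leffler pour la cohomologie coh\'erente des sous-quotients successifs entre affino\"{\i}des relativement compacts dans un mod\`ele formel de $\sm_{\mathrm{LT}, K'}$; cela d\'ecoule de la th\'eorie classique de Stein, o\`u les fl\`eches de restriction sur la cohomologie coh\'erente entre affino\"{\i}des relativement compacts se factorisent par des espaces de dimension finie. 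Comme une limite inductive filtr\'ee pr\'eserve Mittag-Leffler \`a condition que l'indice de stabilisation soit uniforme en $K'$, la subtilit\'e finale est de choisir l'exhaustion $\{V_s^{K'}\}$ de sorte que cette uniformit\'e soit garantie --- ce qui constitue le c\oe ur technique de l'argument et s'obtient en partant d'une exhaustion fix\'ee en bas de la tour et en remontant par pullback.
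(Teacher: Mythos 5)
Le dévissage au cas $k=1$, le basculement, et la suite d'Artin--Schreier qui annule les degrés $i\geq 2$ coïncident avec le début de la preuve du papier. Mais il y a deux lacunes sérieuses, l'une pour $i=0$ et l'autre pour $i=1$.

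Pour $i=0$, l'affirmation que la quasi-compacité de $U_s^\flat$ entraîne la finitude de $\pi_0(U_s^\flat)$ est fausse. Un affinoïde perfectoïde n'est pas noethérien; ses composantes connexes ne sont pas ouvertes et $\pi_0$ est en général un ensemble \emph{profini infini}. C'est précisément ce qui arrive ici: $\pi_0(\mathcal{M}_\infty)\simeq \Q_p^\dual$ est infini, et les ouverts $U_s$ ont typiquement un $\pi_0$ infini. Le papier contourne cette difficulté par un argument non trivial: prendre $U_s$ au-dessus d'un recouvrement standard du niveau $0$, identifier $\pi_0(U_s)$ à un espace homogène $K/\Gamma_s$ pour $K$ compact ouvert de $\gl_2(\Q_p)$, observer que l'inclusion $U_s\subset U_{s+1}$ induit une chaîne croissante de sous-groupes fermés $\Gamma_s\subset\Gamma_{s+1}\subset K$, et invoquer un théorème de Gl\"ockner~\cite{Glo} garantissant qu'une telle chaîne est stationnaire dans un groupe de Lie $p$-adique compact. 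Votre argument s'effondre donc complètement en degré $0$.

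Pour $i=1$, la stratégie (descendre à niveau fini, Bloch--Kato--Hyodo, remonter par colimite filtrée) n'est pas celle du papier et soulève plusieurs objections. D'une part, la filtration de Bloch--Kato--Hyodo du \S\ref{copr5} s'applique à des schémas formels semi-stables sur $\so_K$ pour $K$ une extension \emph{finie} de $\Q_p$; les objets $V_s^{K'}$ vivent sur $\C_p$ (ou $\breve{\Q}_p$), hors du champ du th.~\ref{generalized}, et l'existence de modèles semi-stables équivariants adéquats côté Lubin--Tate n'est pas établie (la prop.~\ref{appendix} concerne le côté Drinfeld). D'autre part, vous reconnaissez vous-même que l'uniformité en $K'$ de l'indice de stabilisation est le \og c\oe ur technique de l'argument\fg\ sans le résoudre; sans elle, le passage à la colimite ne préserve pas Mittag--Leffler. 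Le papier évite tout cela en travaillant directement au niveau perfectoïde: Artin--Schreier donne $H^1_{\eet}(U_s,\mathbf{F}_p)\simeq\so(U_s^\flat)/(1-\varphi)$, et la propriété Mittag--Leffler se ramène à la densité de l'image de $\so(U_{s+1}^\flat)\to\so(U_s^\flat)$, établie via~\cite[th.\,2.4.3]{Kedl} (compatibilité du débasculement avec les morphismes) et la théorie de Stein classique.
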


\begin{proof}  Par d\'evissage on se ram\`ene au cas $k=1$. Supposons d'abord que $i\geq 1$. Alors 
$$H^i_{\eet}(U_s,\mathbf{F}_p)\simeq H^i_{\eet}(U_s^{\flat},\mathbf{F}_p)$$
et $U_s^{\flat}$ est un affino\"{\i}de perfecto\"{\i}de. La suite d'Artin-Schreier 
$$0\to\mathbf{F}_p\to \mathcal{O}_{U_s^{\flat}}\to \mathcal{O}_{U_s^{\flat}}\to 0$$
et l'annulation de $H^i_{\eet}(U_s^{\flat}, \mathcal{O}_{U_j^{\flat}})$ pour $i\geq 1$ montrent que 
$H^i_{\eet}(U_s,\mathbf{F}_p)=0$ pour $i\geq 2$ et 
$$H^1_{\eet}(U_s,\mathbf{F}_p)\simeq \mathcal{O}(U_s^{\flat})/(1-\varphi).$$

Il suffit donc de prouver que la fl\`eche naturelle $\mathcal{O}(U_{s+1}^{\flat})\to \mathcal{O}(U_s^{\flat})$ 
est d'image dense. Par \cite[th. 2.4.3]{Kedl}, cela suit du m\^eme \'enonc\'e pour les d\'ebascul\'es, 
qui est standard. \label{G: faut ajouter que tout est compatible avec la topologie}
      
Il reste \`a traiter le cas $i=0$. On peut supposer que les $U_s$ sont les images inverses du 
recouvrement standard $\{V_s\}_s$ de l'espace en niveau $0$ (qui est un disque ouvert). Alors $U_s\to V_s$ est un $K$-torseur pro\'etale, donc 
$K$ agit transitivement sur $\pi_0(U_s)$. L'inclusion $U_s\to U_{s+1}$ 
correspond sur les $\pi_0$ \`a une fl\`eche $K/\Gamma_s\to K/\Gamma_{s+1}$, avec $\Gamma_s\subset \Gamma_{s+1}$ des sous-groupes ferm\'es de $K$,
et, sur les $H^0$, \`a l'application naturelle
 $\Hom_{\rm cont}(K/\Gamma_{s+1},\mathbf{F}_p)\to \Hom_{\rm cont}(K/\Gamma_s,\mathbf{F}_p)$.  
    
Il suffit donc de prouver que le prosyst\`eme $\{ \Hom_{\rm cont}(K/\Gamma_s,\mathbf{F}_p)\}_s$ est Mittag-Leffler.
Or d'apr\`es~\cite[prop 2.4]{Glo}, $K$ contient un sous-groupe ouvert $K'$ tel que toute suite
croissante de sous-groupes ferm\'es soit stationnaire. Mais alors $K$ a la m\^eme propri\'et\'e puisque
$K'$ est d'indice fini dans $K$. Il s'ensuit que notre syst\`eme est en fait constant pour $s$ assez grand,
et donc, a fortiori, Mittag-Leffler.
\end{proof}

 \begin{coro}\label{SS}
 Soit $\pi$ un $A[G]$-module lisse admissible, tu\'e par $p^k$. Il existe une suite spectrale de $D_p^{\dual}\times W_{F}$-modules 
 \begin{equation}\label{SS11}
  E_2^{i,j}=H^i(G, {\rm Hom}_{\zp}^{\rm cont}(\pi^{\vee}, H^j_{\eet}(\mathcal{M}_{\infty}, \Z/p^k)))\Longrightarrow 
 S^{i+j}(\pi).
 \end{equation}
 De plus $H^j_{\eet}(\mathcal{M}_{\infty}, \Z/p^k)=0$ pour $j\geq 2$.
 \end{coro}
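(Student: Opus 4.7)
The corollary should follow almost directly from Theorem~\ref{compare}; the plan has two components.

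First, the vanishing $H^j_{\eet}(\mathcal{M}_\infty, \Z/p^k) = 0$ for $j \geq 2$ is obtained by specializing the vanishing statement of Theorem~\ref{compare} (namely $H^j_{\eet}(\mathcal{M}_\infty, \underline{\pi}) = 0$ for $j \geq 2$ and any lisse admissible $A[G]$-module $\pi$ killed by $p^k$) to the trivial $G$-module $\pi = \Z/p^k$, which is manifestly lisse and admissible. This gives the second assertion of the corollary for free.

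Second, the spectral sequence~\eqref{SS11} is obtained from the spectral sequence of Theorem~\ref{compare} by establishing an isomorphism of $G \times D_p^{\dual} \times W_F$-modules
$$H^j_{\eet}(\mathcal{M}_\infty, \underline{\pi}) \simeq {\rm Hom}^{\rm cont}_{\Z_p}(\pi^\vee, H^j_{\eet}(\mathcal{M}_\infty, \Z/p^k))$$
and applying $H^i(G, -)$ to both sides. To construct the isomorphism, I would first write $\pi = \varinjlim_\alpha \pi_\alpha$ as a filtered colimit of finite $\Z/p^k$-submodules, taking $\pi_\alpha = \pi^{K_\alpha}[\mathfrak{m}^{N_\alpha}]$ (possible by admissibility), so that $\pi^\vee = \varprojlim_\alpha \pi_\alpha^\vee$ is profinite. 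The key ingredient is then formula~\eqref{niedziela1} from the proof of Theorem~\ref{compare}, which for $t = 0$ reads
$$H^j_{\eet}(U_s, \underline{\pi}) = \pi \otimes_{\Z/p^k} H^j_{\eet}(U_s, \Z/p^k)$$
for each affinoid perfecto\"{\i}de $U_s$ in the Stein exhaustion of $\mathcal{M}_\infty$. Passing to $\varprojlim_s$ and using the Mittag--Leffler property established in Lemma~\ref{delicat} yields
$$H^j_{\eet}(\mathcal{M}_\infty, \underline{\pi}) \simeq \varprojlim_s \bigl(\pi \otimes_{\Z/p^k} H^j_{\eet}(U_s, \Z/p^k)\bigr).$$

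The final step converts the tensor product into a continuous Hom. For each finite $\pi_\alpha$, Pontryagin duality provides a natural isomorphism $\pi_\alpha \simeq {\rm Hom}_{\Z_p}(\pi_\alpha^\vee, \Z/p^k)$ of finite $\Z/p^k$-modules, from which one deduces $\pi_\alpha \otimes_{\Z/p^k} N \simeq {\rm Hom}_{\Z_p}(\pi_\alpha^\vee, N)$ naturally in the finite $\Z/p^k$-module $N$; passing to the colimit over $\alpha$ on the left and to the inverse limit of finite quotients of $H^j_{\eet}(U_s, \Z/p^k)$ on the right, and then to $\varprojlim_s$, yields exactly ${\rm Hom}^{\rm cont}_{\Z_p}(\pi^\vee, H^j_{\eet}(\mathcal{M}_\infty, \Z/p^k))$ with the required equivariance. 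The main obstacle is precisely this last identification: one must be careful both with the Pontryagin duality at finite level (since for general finite $\Z/p^k$-modules the natural map $\pi_\alpha \otimes N \to {\rm Hom}(\pi_\alpha^\vee, N)$ is not an isomorphism without extra hypotheses), and with the compatible passage to pro-finite topologies on $\pi^\vee$ and on the cohomology groups arising from the Stein exhaustion; the $G \times D_p^{\dual} \times W_F$-equivariance must then be checked from the naturality of all constructions involved in the proof of Theorem~\ref{compare}.
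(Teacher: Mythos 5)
Your approach coincides with the paper's: you specialize Theorem~\ref{compare} (taking $\pi=\Z/p^k$ for the vanishing claim) and then convert the coefficient $H^j_{\eet}(\mathcal{M}_\infty,\underline{\pi})$ into ${\rm Hom}^{\rm cont}_{\zp}(\pi^\vee,H^j_{\eet}(\mathcal{M}_\infty,\Z/p^k))$ by passing through the identity $H^j_{\eet}(U_s,\underline{\pi})\simeq\pi\otimes_{\Z/p^k}H^j_{\eet}(U_s,\Z/p^k)$ from the proof of Theorem~\ref{compare} and taking $\varprojlim_s$ via Mittag--Leffler. The paper itself states the middle isomorphism $\pi\otimes H^j_{\eet}(U_s,\Z/p^k)\simeq{\rm Hom}^{\rm cont}_{\zp}(\pi^\vee,H^j_{\eet}(U_s,\Z/p^k))$ without comment, so you are not deviating.

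The caveat you raise at the end, however, is a real one and you should resolve it rather than merely flag it. For a finite $\Z/p^k$-module $\pi_\alpha$ and an arbitrary $\Z/p^k$-module $N$, the canonical map $\pi_\alpha\otimes_{\Z/p^k}N\to{\rm Hom}_{\Z/p^k}(\pi_\alpha^\vee,N)$, $m\otimes n\mapsto(\phi\mapsto\phi(m)n)$, is \emph{not} an isomorphism in general: take $\pi_\alpha=N=\mathbf{F}_p$ over $\Z/p^k$ with $k\geq 2$; the map is multiplication by $p^{k-1}$, hence zero, while both sides equal $\mathbf{F}_p$. It \emph{is} an isomorphism when $N$ is $\Z/p^k$-free, since both functors of $\pi_\alpha$ are then additive and agree on $\Z/p^k$. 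The missing lemma is therefore that $H^j_{\eet}(U_s,\Z/p^k)$ is a free $\Z/p^k$-module for each affinoid perfecto\"{\i}de $U_s$. This holds: for $j=0$ the group is $\mathcal{C}^0(\pi_0(U_s),\Z/p^k)$, manifestly free; for $j=1$, the vanishing of $H^2_{\eet}(U_s,-)$ on torsion sheaves together with the surjectivity of $H^0_{\eet}(U_s,\Z/p^k)\to H^0_{\eet}(U_s,\Z/p^{k-1})$ gives, via the two exact sequences $0\to\mathbf{F}_p\to\Z/p^k\to\Z/p^{k-1}\to 0$ and $0\to\Z/p^{k-1}\to\Z/p^k\to\mathbf{F}_p\to 0$, the equality $p^{k-1}H^1_{\eet}(U_s,\Z/p^k)=H^1_{\eet}(U_s,\Z/p^k)[p]$, which is exactly the criterion for a $\Z/p^k$-module to be free. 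The same use of $H^2=0$ also justifies the tensor formula $H^j_{\eet}(U_s,\underline{\pi_\alpha})\simeq\pi_\alpha\otimes_{\Z/p^k}H^j_{\eet}(U_s,\Z/p^k)$ for non-flat $\pi_\alpha$, another point the paper passes over silently. With this freeness lemma in hand your argument closes; you should include it rather than leaving the identification as an open concern.
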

 \begin{proof}
Compte-tenu du th.\,\ref{compare}, il suffit de v\'erifier que
 $$
 H^j_{\eet}(\mathcal{M}_{\infty}, \underline{\pi})\simeq {\rm Hom}_{\zp}^{\rm cont}(\pi^{\vee}, H^j_{\eet}(\mathcal{M}_{\infty}, \Z/p^k)).
 $$
En raisonnant comme dans la preuve dudit th\'eor\`eme et en reprenant ses notations, on obtient:
   \begin{align*}
   H^j_{\eet}(\mathcal{M}_{\infty}, \underline{\pi}) & \stackrel{\sim}{\to} \lim_{s} (\pi\otimes H^j_{\eet}(U_s, \Z/p^k))\simeq 
     \lim_{s} {\rm Hom}_{\zp}^{\rm cont}(\pi^{\vee}, H^j_{\eet}(U_s, \Z/p^k))\\
      & \simeq {\rm Hom}_{\zp}^{\rm cont}(\pi^{\vee}, H^j_{\eet}(\mathcal{M}_{\infty}, \Z/p^k)).\qedhere
    \end{align*}
 \end{proof}
 
    Pour $G=\gl_2(\Q_p)$ on obtient le r\'esultat suivant: 
 
\begin{coro}\label{cafe52}Soit $\pi$ un $k_L[G]$-module lisse admissible, irr\'eductible. On dispose d'un morphisme naturel
$$S^1(\pi)
     \to {\rm Hom}_{k_L[G]}^{\rm cont}(\pi^{\vee}, H^1_{\eet}(\mathcal{M}_{\infty},
   k_L))$$
qui est un isomorphisme si $\pi$ n'appartient pas \`a un twist 
$\{\chi,{\rm St}\otimes\chi, I({\chi,\chi\epsilon})\}$ 
du bloc de la Steinberg, et dont les noyau et conoyau sont
de dimension finie sur $k_L$ si $\pi$ appartient \`a un twist du bloc de la Steinberg.
\end{coro}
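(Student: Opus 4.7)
Je proposerais d'utiliser directement la suite spectrale (\ref{SS11}) du cor.\,\ref{SS} avec $k=1$. Comme $H^j_{\eet}(\mathcal{M}_{\infty},k_L)=0$ pour $j\geq 2$, cette suite spectrale n'a que deux colonnes non nulles, et la suite exacte des termes de bas degr\'e s'\'ecrit
$$0\to E_2^{1,0}\to S^1(\pi)\to E_2^{0,1}\to E_2^{2,0},$$
o\`u $E_2^{0,1}=H^0(G,\Hom^{\rm cont}_{\zp}(\pi^\vee,H^1_{\eet}(\mathcal{M}_{\infty},k_L)))$ s'identifie canoniquement \`a $\Hom^{\rm cont}_{k_L[G]}(\pi^\vee,H^1_{\eet}(\mathcal{M}_{\infty},k_L))$ (puisque tout est tu\'e par $p$). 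Le morphisme central est alors pr\'ecis\'ement le morphisme d'arr\^ete de l'\'enonc\'e. Son noyau est $E_2^{1,0}=H^1(G,M)$ et son conoyau s'injecte dans $E_2^{2,0}=H^2(G,M)$, o\`u l'on a pos\'e $M:=\Hom^{\rm cont}_{\zp}(\pi^\vee,H^0_{\eet}(\mathcal{M}_{\infty},k_L))$.

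L'observation cl\'e pour contr\^oler $H^i(G,M)$ est que l'action de $G$ sur $H^0_{\eet}(\mathcal{M}_{\infty},k_L)\simeq\scc^0(\pi_0(\mathcal{M}_{\infty}),k_L)$ se factorise par $\det:G\to\Q_p^\dual$: en effet, $G$ agit sur l'ensemble $\pi_0(\mathcal{M}_{\infty})$ (espace homog\`ene principal sous $\Q_p^\dual$) via son d\'eterminant, cf.~\S\,\ref{noti2}. Il en r\'esulte que $\SL_2(\Q_p)$ agit trivialement sur $H^0_{\eet}(\mathcal{M}_{\infty},k_L)$, et l'action r\'esiduelle de $\SL_2(\Q_p)$ sur $M$ ne porte plus, via la dualit\'e de Pontryagin, que sur le facteur $\pi$.

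L'\'etape suivante consiste \`a appliquer la suite spectrale de Hochschild-Serre associ\'ee \`a l'extension $1\to\SL_2(\Q_p)\to G\stackrel{\det}{\to}\Q_p^\dual\to 1$: la trivialit\'e de l'action de $\SL_2(\Q_p)$ sur $H^0_{\eet}(\mathcal{M}_{\infty},k_L)$ fournit des isomorphismes du type $H^q(\SL_2(\Q_p),M)\simeq\scc^0(\pi_0(\mathcal{M}_{\infty}),H^q(\SL_2(\Q_p),\pi))$, et une forme continue du lemme de Shapiro (sous la forme $H^p(\Q_p^\dual,\scc^0(\Q_p^\dual,V))=0$ pour $p\geq 1$ et tout $\Q_p^\dual$-module continu $V$) permet de d\'eg\'en\'erer la suite spectrale externe et de ramener le calcul de $H^i(G,M)$ \`a celui de $H^i(\SL_2(\Q_p),\pi)$, \'eventuellement tordu par un caract\`ere de $\Q_p^\dual$.

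Il ne reste qu'\`a invoquer les r\'esultats de Fust~\cite{Fust}, qui montrent que pour $\pi$ une repr\'esentation lisse admissible absolument irr\'eductible de $G$ sur $k_L$, le $k_L$-module $H^i(\SL_2(\Q_p),\pi)$ est de dimension finie, et nul pour $i\geq 1$ sauf lorsque $\pi$ appartient \`a un twist du bloc de la Steinberg $\{\chi,\mathrm{St}\otimes\chi,I(\chi,\chi\epsilon)\}$. Pour traiter le cas o\`u $\pi$ est seulement irr\'eductible (mais pas absolument irr\'eductible), on utilise le lemme \ref{irred}\,(iii) qui, apr\`es extension des scalaires \`a $\bar{k}_L$, d\'ecompose $\pi$ en une orbite galoisienne de composantes absolument irr\'eductibles, auxquelles Fust s'applique. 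L'obstacle principal sera la v\'erification rigoureuse des identifications topologiques dans la r\'eduction \`a la cohomologie de $\SL_2(\Q_p)$, notamment l'identification pr\'ecise de $M$ comme $G$-module continu et la justification du lemme de Shapiro continu dans ce cadre (o\`u $\Q_p^\dual$ n'est pas profini, mais s'\'ecrit $\varpi^\Z\times\Z_p^\dual$ ce qui permet de traiter s\'epar\'ement les deux facteurs).
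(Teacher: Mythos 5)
Votre preuve est correcte et suit essentiellement la m\^eme d\'emarche que celle de l'article~: m\^eme suite exacte \`a cinq termes issue de la suite spectrale du cor.\,\ref{SS}, m\^eme r\'eduction \`a $H^i(\SL_2(\Q_p),\pi)$ via le fait que $G$ agit sur $\pi_0(\mathcal{M}_{\infty})$ \`a travers le d\'eterminant, puis les r\'esultats de Fust et le lemme~\ref{irred} pour le cas non absolument irr\'eductible. La seule variante est que l'article identifie directement ${\rm Hom}^{\rm cont}_{k_L}(\pi^{\vee},H^0_{\eet}(\mathcal{M}_{\infty},k_L))$ \`a ${\rm Ind}_{\SL_2(\Q_p)}^{G}(\pi)$ et applique le lemme de Shapiro (le d\'eterminant admettant une section continue), ce qui \'evite votre d\'etour par Hochschild--Serre pour $1\to\SL_2(\Q_p)\to G\to\Q_p^{\dual}\to 1$ et la justification d'un \og Shapiro continu\fg{} pour $\Q_p^{\dual}$.
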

 \begin{proof}La suite spectrale (\ref{SS11})
      fournit une suite exacte 
   \begin{align}
   \label{seq11}
   0  \to H^1(G, {\rm Hom}_{\zp}^{\rm cont}(\pi^{\vee},  & H^0_{\eet}(\mathcal{M}_{\infty}, 
   k_L))) \stackrel{f_{\pi}}{\to} S^1(\pi)
     \to {\rm Hom}_{k_L[G]}^{\rm cont}(\pi^{\vee}, H^1_{\eet}(\mathcal{M}_{\infty}, 
   k_L))\\
   &  \to H^2(G, {\rm Hom}_{\zp}^{\rm cont}(\pi^{\vee},   H^0_{\eet}(\mathcal{M}_{\infty}, 
   k_L))).\notag
   \end{align}
On a un isomorphisme 
  $${\rm Hom}_{k_L}^{\rm cont}(\pi^{\vee}, H^0_{\eet}(\mathcal{M}_{\infty}, 
   k_L)) \simeq {\rm Ind}_{{\rm SL}_2(\qp)}^{G} (\pi).$$
   Le lemme de Shapiro \cite[prop.\,IX.2.3]{BW} 
(notons que le d\'eterminant $G\to\Q_p^{\dual}$ a une section continue) montre alors que 
$$V_i(\pi):=H^i(G, {\rm Hom}_{k_L}^{\rm cont}(\pi^{\vee}, H^0_{\eet}(\mathcal{M}_{\infty}, 
   k_L)))\simeq H^i({\rm SL}_2(\qp), \pi).$$
L'\'enonc\'e du corollaire (i.e.~la finitude des $V_i(\pi)$ et leur nullit\'e si $\pi$ n'appartient pas
\`a un twist du bloc de la Steinberg)
est donc une cons\'equence de r\'esultats de Fust~\cite[th.\,1.2, lemma 6.3]{Fust}
(utiliser le lemme \ref{irred} pour se ramener
au cas absolument irr\'eductible et noter qu'une repr\'esentation admissible de 
$G$ avec un caract\`ere central est d\'ej\`a admissible en tant que repr\'esentation de 
${\rm SL}_2(\qp)$). 
\end{proof}

\section{Longueur finie}\label{lg1}
Dans ce chapitre, on suppose que $F=\Q_p$, avec\footnote{\label{inutile1}On suppose que $p>2$ pour pouvoir utiliser l'existence de globalisations avec de bonnes propri\'et\'es (le seul endroit o\`u cette hypoth\`ese est utilis\'ee est dans la prop.\,\ref{globaliseDPS2} ci-dessous).}
 $p>2$ et on prend $\varpi=p$.
Notre but est de prouver le r\'esultat de finitude suivant.
\begin{theo}\label{main11}
    Pour toute extension finie $K$ de $\qp$, 
le $G$-module $H^1_{\eet}( \mathcal{M}_{n,K}^{ p}, k_L)$ est le dual d'une repr\'esentation lisse de $G$, de longueur finie.
   \end{theo}
   
\begin{rema}
   On a bien s\^ur un \'enonc\'e analogue avec des coefficients $\O_L/{\goth m}_L^k$, 
qui se d\'eduit formellement du th\'eor\`eme ci-dessus et du th.\,\ref{smooth}. 
En combinant cela avec le \no\ref{hecke2}, on obtient (par voie assez d\'etourn\'ee\footnote{Le th.\,\ref{main10bis} semble cependant fournir des obstructions s\'erieuses \`a un argument purement g\'eom\'etrique.}) l'admissibilit\'e 
   des $G$-modules lisses $H^1_{\eet}( \mathcal{M}_{n,K}^{ p}, \O_L/\varpi_L^k)^{\vee}$. 
\end{rema}
      
\subsection{Compatibilit\'e local-global}\label{lg2}
  Le but de ce paragraphe est de d\'emontrer la prop.\,\ref{key} ci-dessous, 
qui joue un r\^ole cl\'e dans la preuve du th\'eor\`eme de longueur finie. 
Les ingr\'edients de la preuve sont les travaux de Scholze \cite{SLT}, 
  Ludwig \cite{Ludwig} et \paskunas\, \cite{Pask}.

\begin{prop}\label{key}
Soit $(\chi_1, \chi_2)$ un couple de caract\`eres
lisses $\qp^{\dual}\to \overline{\bf F}_p^{\dual}$, g\'en\'erique.
Si $K$ est une extension finie de $\qp$ telle que\footnote{
La repr\'esentation $I(\chi_1,\chi_2)$ est celle d\'efinie au \no\ref{BL11}.}
$S^1(I(\chi_1,\chi_2))^{\mathcal{G}_K}\ne 0$, alors 
$\chi_1$ ou $\chi_2$ se factorise par ${\rm Gal}(K/\qp)$.
\end{prop}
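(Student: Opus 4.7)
Puisque $(\chi_1,\chi_2)$ est g\'en\'erique, $\pi:=I(\chi_1,\chi_2)$ n'appartient pas \`a un twist du bloc de la Steinberg, et le cor.\,\ref{cafe52} fournit un isomorphisme $\mathcal{G}_{\qp}\times\check G$-\'equivariant \`a noyau et conoyau de dimension finie pr\`es:
$$S^1(\pi)\simeq {\rm Hom}_{k_L[G]}^{\rm cont}(\pi^\vee, H^1_{\eet}(\mathcal{M}_\infty, k_L)).$$
Via l'isomorphisme de Faltings entre tours de Drinfeld et de Lubin-Tate compl\'et\'ees, le membre de droite se traduit en termes de la cohomologie compl\'et\'ee de courbes de Shimura quaternioniques, ce qui nous permet d'exploiter la compatibilit\'e local-global modulo~$p$ pour contr\^oler l'action galoisienne.

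Premi\`ere \'etape (globalisation). Suivant la strat\'egie de \paskunas~\cite{Pask} et utilisant les r\'esultats de Gee-Kisin~\cite{GK}, on construit une repr\'esentation galoisienne globale $\bar r:\mathcal{G}_{\mathbb{Q}}\to \gl_2(\overline{\bf F}_p)$ telle que $\bar r|_{\mathcal{G}_{\qp}}\simeq \bar r_{\chi_1,\chi_2}$ (extension non scind\'ee de $\chi_2$ par $\chi_1$) apparaissant dans la cohomologie modulo~$p$ d'une courbe de Shimura quaternionique $X$ associ\'ee \`a une alg\`ebre indefinite $B/\mathbb{Q}$ d\'eploy\'ee en $p$, les composantes locales aux autres places \'etant choisies aussi contr\^ol\'ees que possible. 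L'uniformisation $p$-adique de Cerednik-Drinfeld permet alors de plonger la composante $\bar r$-isotypique pertinente de $S^1(\pi)$ dans la partie localis\'ee $H^1_{\eet}(X_{\overline{\mathbb{Q}}}, k_L)_{\bar r}$, dont l'action galoisienne est contr\^ol\'ee par $\bar r$.

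Deuxi\`eme \'etape (identification des caract\`eres). Le bloc contenant $\pi=I(\chi_1,\chi_2)$ est $\{I(\chi_1,\chi_2), I(\chi_2,\chi_1)\}$ avec $\rho_{\cal B}=\chi_1\oplus\chi_2$ (rem.\,\ref{pasku3}), et l'on a ${\bf V}(I(\chi_i,\chi_j))=\chi_i$ (\no\ref{BL11}). Combin\'e \`a la compatibilit\'e local-global de Scholze~\cite{SLT} et aux r\'esultats de \paskunas-Tung~\cite{PT}, ceci impose que l'action de $\mathcal{G}_{\qp}$ sur la composante pertinente de $S^1(\pi)$ se factorise \`a travers les caract\`eres $\chi_1$ et $\chi_2$ (vus comme caract\`eres galoisiens via la th\'eorie locale du corps de classes). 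Par cons\'equent, l'hypoth\`ese $S^1(\pi)^{\mathcal{G}_K}\ne 0$ entra\^{\i}ne que $\chi_1$ ou $\chi_2$ est trivial sur $\mathcal{G}_K$, c'est-\`a-dire se factorise par ${\rm Gal}(K/\qp)$.

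L'obstacle principal r\'eside dans le contr\^ole rigoureux de l'action galoisienne sur $S^1(\pi)$: il faut \'eviter l'apparition de caract\`eres parasites provenant des autres places dans la globalisation. On exploite \`a cette fin la g\'en\'ericit\'e de $(\chi_1,\chi_2)$ et la libert\'e dans le choix des types locaux garantie par \cite{GK}; un argument d'isotypie dans l'esprit de Carayol-Scholze (via \cite{SLT}) fournit la s\'eparation fine des composantes galoisiennes souhait\'ee, les calculs explicites de Ludwig~\cite{Ludwig} pour les s\'eries principales servant de guide pour l'identification finale.
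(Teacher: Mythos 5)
Votre plan reprend la strat\'egie g\'en\'erale du texte (globalisation de $\bar{r}_{\chi_1,\chi_2}$ puis argument d'isotypie pour contraindre l'action de $\mathcal{G}_{\qp}$), mais il manque l'\'etape d\'ecisive: rien dans votre argument ne fait de $S^1(I(\chi_1,\chi_2))$ --- objet purement local attach\'e \`a une seule repr\'esentation irr\'eductible --- un sous-quotient galoisien d'un objet global. Le cor.\,\ref{cafe52} et l'isomorphisme de Faltings ne font que r\'e\'ecrire $S^1(\pi)$ en termes de $H^1_{\eet}(\mathcal{M}_{\infty},k_L)$, qui reste local; affirmer que l'on peut \og plonger\fg{} la composante pertinente de $S^1(\pi)$ dans $H^1_{\eet}(X_{\overline{\mathbf{Q}}},k_L)_{\bar{r}}$ est pr\'ecis\'ement ce qu'il faudrait d\'emontrer. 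Le texte construit ce pont ainsi: on globalise $\bar{r}$ sur un corps totalement r\'eel $E$ de degr\'e pair o\`u $p$ est totalement d\'ecompos\'e (prop.\,\ref{globaliseDPS2}; la globalisation invoqu\'ee via Gee--Kisin est \'enonc\'ee dans ce cadre, pas sur $\mathbf{Q}$), on forme, du c\^ot\'e de l'alg\`ebre de quaternions totalement d\'efinie $D_0$, le module $M=M_{\psi,\lambda,\mathfrak{m}}$ dont tous les facteurs de Jordan--H\"older sont $I(\chi_1,\chi_2)$ ou $I(\chi_2,\chi_1)$ (prop.\,\ref{Paskpuzzle}, qui repose sur \paskunas\ et \paskunas-Tung), et c'est le th\'eor\`eme d'annulation de Ludwig, $S^2(I(\chi_i,\chi_j))=0$, joint \`a $S^0(I(\chi_i,\chi_j))=0$, qui montre que $S^1(M)$ est une extension successive de copies de $S^1(I(\chi_1,\chi_2))$ et $S^1(I(\chi_2,\chi_1))$; ainsi $S^1(I(\chi_1,\chi_2))$ devient un sous-quotient de $S^1(M)\simeq \wh{H}^1_{\psi,\lambda}(U^{\pp},k_L)_{\mathfrak{m}}[\mathfrak{m}_A]$ (prop.\,\ref{LGC}), que l'isotypie de Scholze (prop.\,\ref{isotypic}) identifie \`a $\bar{r}\otimes_{k_L}N'$ avec $N'$ trivial sous $\mathcal{G}_{\qp}$, d'o\`u la conclusion. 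Chez vous, Ludwig n'intervient que comme \og guide\fg{}, alors que son annulation est l'ingr\'edient qui rend ce d\'evissage possible; sans lui, et sans le module $M$, le lien entre $S^1(\pi)$ et la cohomologie globale n'est pas \'etabli.

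Deux remarques suppl\'ementaires. Votre montage global est incoh\'erent: vous prenez $B/\mathbf{Q}$ d\'eploy\'ee en $p$ tout en invoquant l'uniformisation de Cerednik--Drinfeld, qui ne concerne que les courbes de Shimura attach\'ees \`a une alg\`ebre ramifi\'ee en $p$; dans le texte, c'est l'alg\`ebre $D$ ramifi\'ee en $\pp$ qui porte la cohomologie compl\'et\'ee cible du foncteur de Scholze, et l'alg\`ebre d\'efinie $D_0$, d\'eploy\'ee en toute place finie (ce qui force d'ailleurs $[E:\mathbf{Q}]$ pair), qui porte les formes automorphes auxquelles on applique $S^1$. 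Enfin, votre \'etape finale est correcte dans son principe --- tout sous-quotient de $\bar{r}\otimes_{k_L}N'$ n'a que $\chi_1$ et $\chi_2$ comme facteurs galoisiens, donc des invariants non nuls sous $\mathcal{G}_K$ forcent $\chi_1$ ou $\chi_2$ \`a se factoriser par ${\rm Gal}(K/\qp)$ --- mais elle n'a de port\'ee qu'une fois le pont pr\'ec\'edent construit.
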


\begin{rema}
 Nous renvoyons le lecteur au chapitre 8 de \cite{HW} pour des r\'esultats plus fins concernant la structure de $S^1(I(\chi_1,\chi_2))$.
\end{rema}
      
  M\^eme si la prop.\,\ref{key} est purement locale, nous aurons besoin 
d'un certain nombre d'objets de nature globale pour la prouver. 
Nous supposerons\footnote{Cela ne pose pas de probl\`eme puisque l'on peut toujours remplacer $L$ par une extension finie dans ce qui suit.} que 
$[k_L:\mathbf{F}_p]\geq 3$.  

\subsubsection{Globalisation}\label{lg3}
Soit 
$\bar{r}: \mathcal{G}_{\qp}\to \gl_2(k_L)$ une repr\'esentation continue. Le r\'esultat suivant \cite[prop.\,8.1]{DPS2} est une cons\'equence directe de \cite[cor.\,A.3]{GK}:

\begin{prop}\label{globaliseDPS2}
  Il existe un corps totalement r\'eel $E$, de degr\'e pair sur $\mathbf{Q}$, dans lequel $p$ est totalement d\'ecompos\'e, 
ainsi qu'une repr\'esentation automorphe cuspidale r\'eguli\`ere, alg\'ebrique, de poids $0$ de 
  $\gl_2(\mathbf{A}_E)$, telle que:
  
  $\bullet$ la repr\'esentation galoisienne $\rho_{\pi}: \mathcal{G}_E\to \gl_2(\overline{\mathbf{Q}}_p)$ associ\'ee \`a $\pi$ est non ramifi\'ee en dehors de $p$;
  
  $\bullet$ on a ${\rm SL}_2(k_L)\subset {\rm Im}(\overline{\rho}_{\pi})\subset \gl_2(k_L)$ 
{\rm (en particulier $\overline{\rho}_{\pi}$ est absolument irr\'eductible)}
 et $ \overline{\rho}_{\pi,v}:=\overline{\rho}_{\pi}|_{\mathcal{G}_{E_v}}$ est isomorphe \`a $\bar{r}$ pour toute place $v$ de $E$ divisant $p$.
\end{prop}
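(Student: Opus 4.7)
The plan is to deduce this directly from the globalization result of Gee--Kisin \cite[cor.\,A.3]{GK}, which is designed precisely for this kind of situation: producing a totally real field $E$, with $p$ totally split, together with a cuspidal automorphic representation $\pi$ of $\gl_2(\mathbf{A}_E)$ whose associated residual Galois representation has prescribed local behavior at each place above $p$ and whose image is controlled globally.

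First, I would verify the hypotheses of \cite[cor.\,A.3]{GK} for our fixed $\bar{r}:\mathcal{G}_{\qp}\to \gl_2(k_L)$. The assumption $p>2$ (made at the outset of the chapter) and the assumption $[k_L:\mathbf{F}_p]\geq 3$ (harmless, since we may freely enlarge $L$) are exactly the conditions needed to ensure that the prospective global mod $p$ representation can be arranged to have image containing $\mathrm{SL}_2(k_L)$, hence to be absolutely irreducible with adequate image. Applying the Gee--Kisin corollary then yields a totally real field $E$ in which $p$ is totally split, and a cuspidal automorphic $\pi$, regular algebraic of weight $0$, such that the associated $\rho_\pi:\mathcal{G}_E\to\gl_2(\overline{\mathbf{Q}}_p)$ is unramified outside $p$, with $\overline{\rho}_{\pi,v}\simeq \bar{r}$ for every place $v\mid p$, and with $\mathrm{SL}_2(k_L)\subset\mathrm{Im}(\overline{\rho}_\pi)\subset\gl_2(k_L)$.

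The only potentially non-automatic point is the parity condition $[E:\mathbf{Q}]$ even. If the $E$ produced by Gee--Kisin already has even degree, we are done. Otherwise, I would perform a quadratic base change: choose a totally real quadratic extension $E'/E$ in which every place of $E$ above $p$ splits completely (such an $E'$ exists by weak approximation, e.g.\ take $E'=E(\sqrt{d})$ for a suitable totally positive $d\in\mathbf{Q}$ with prescribed local behavior at the primes of $\mathbf{Q}$ below $p$), and replace $\pi$ by its base change to $\gl_2(\mathbf{A}_{E'})$. Cuspidality of the base change is preserved as long as $\overline{\rho}_\pi|_{\mathcal{G}_{E'}}$ remains absolutely irreducible, which is guaranteed by the image condition $\mathrm{SL}_2(k_L)\subset\mathrm{Im}(\overline{\rho}_\pi)$ (since $[k_L:\mathbf{F}_p]\geq 3$, no index-two subgroup of $\mathrm{Im}(\overline{\rho}_\pi)$ can contain $\mathrm{SL}_2(k_L)$); unramifiedness outside $p$ and the local isomorphism at places above $p$ are clearly inherited.

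The main (and essentially only) obstacle is hidden inside the invocation of \cite[cor.\,A.3]{GK}: that result bundles together a delicate mix of potential automorphy, level-raising and image-control arguments. Once it is cited, the remainder of the proof is the routine parity base-change described above.
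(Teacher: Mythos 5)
Your proof takes the same route as the paper, which establishes the result by citing \cite[prop.\,8.1]{DPS2}, itself a direct consequence of \cite[cor.\,A.3]{GK}, exactly as you do. One small slip: your parenthetical should say that any index-two subgroup of $\mathrm{Im}(\overline{\rho}_\pi)$ must still \emph{contain} $\mathrm{SL}_2(k_L)$ (since $\mathrm{SL}_2(k_L)$ is perfect once $|k_L|\geq 4$, hence has no index-two subgroup), rather than the inverted claim that no index-two subgroup can contain it.
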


  Fixons pour toute la suite $E$ et $\pi$ comme dans la proposition ci-dessus. Notons simplement 
   $\mathbf{A}$ l'anneau des ad\`eles de $E$, $\mathbf{A}_f$ les ad\`eles finis, et 
$$\bar{\rho}:=\bar{\rho}_{\pi}: \mathcal{G}_E\to \gl_2(k_L), 
\quad \rho=\rho_{\pi}: \mathcal{G}_E\to \gl_2(\overline{\mathbf{Q}}_p).$$ 
Quitte \`a remplacer $L$ par une extension finie et \`a conjuguer 
   $\rho$, on peut supposer que $\rho(\mathcal{G}_{E})\subset \gl_2(\mathcal{O}_L)$. 
   
   On regarde le caract\`ere unitaire 
$$\psi:=\chi_{\rm cyc}\cdot \det \rho: \mathcal{G}_{E}\to \mathcal{O}_L^{\dual}$$
  comme un caract\`ere continu de $\mathbf{A}_{f}^{\dual}/E^{\dual}$, par la th\'eorie du corps 
de classes (normalis\'ee de telle sorte que les uniformisantes s'envoient sur 
des frobenius g\'eom\'etriques). On note $\psi_v$ la restriction de $\psi$ \`a un sous-groupe de d\'ecomposition en $v$, et on consid\`ere $\psi_v$ comme un caract\`ere de $E_v^{\dual}$.

  Fixons enfin une place $\pp$ de $E$ divisant $p$ (on a donc $E_{\pp}=\qp$) ainsi qu'une place infinie $\infty_{0}$ de $E$, et notons
  $$\zeta=\psi_{\pp}: \qp^{\dual}\to \mathcal{O}_L^{\dual}.$$ 
      On consid\`ere aussi $
      \zeta$ comme un caract\`ere de $G$.

     \subsubsection{Groupes quaternioniques}
   
  Soit $D_0$ une alg\`ebre de quaternions sur $E$, compacte (modulo le centre) en toute place infinie de 
   $E$ et d\'eploy\'ee en toute place finie de~$E$. Soit $\mathcal{O}_{D_0}$ un ordre maximal de 
   $D_0$, et fixons des isomorphismes $(\mathcal{O}_{D_0})_v\simeq M_2(\mathcal{O}_{E_v})$ pour toute place finie $v$. 
   Soit $D$ l'alg\`ebre de quaternions sur $E$ obtenue \`a partir de $D_0$ en \'echangeant les invariants en 
   $\pp$ et $\infty_{0}$. On fixe un isomorphisme 
\begin{equation}\label{isom}
D_0\otimes_E \mathbf{A}^{\pp,\infty_0}\simeq D\otimes_E \mathbf{A}^{\pp,\infty_0}
\end{equation}

Notons $\check{\mathbb G}$ et ${\mathbb G}$ les groupes alg\'ebriques associ\'es
\`a $D^\dual $ et $D_0^\dual$, i.e. $\check{\mathbb G}(R)=(R\otimes_E D)^{\dual}$
et ${\mathbb G}(R)=(R\otimes_E D_0)^{\dual}$
pour toute $E$-alg\`ebre $R$.
En particulier 
$${\mathbb G}(E_{\pp})\simeq G=\gl_2(\qp)
\quad{\rm et}\quad 
\check{\mathbb G}(E_{\pp})\simeq \check G$$ o\`u $\check G$ est le groupe des unit\'es de 
l'alg\`ebre de quaternions non d\'eploy\'ee sur $\Q_p$. 
On identifie $\mathbf{A}_f^{\dual}$ aux centres de ${\mathbb G}(\mathbf{A}_f)$ 
et $\check{\mathbb G}(\mathbf{A}_f)$. 

 On peut d\'ecomposer ${\mathbb G}(\mathbf{A}_f)$ comme une r\'eunion disjointe finie de doubles classes 
 ${\mathbb G}(E)t_i U_{\rm max} \mathbf{A}_f^{\dual}$, le groupe $U_{\rm max}:=\prod_{v\nmid \infty} \gl_2(\mathcal{O}_{E_v})$ \'etant identifi\'e (par les choix faits ci-dessus) \`a un sous-groupe ouvert compact de 
 ${\mathbb G}(\mathbf{A}_f)$. Les
 $(U_{\rm max}  \mathbf{A}_f^{\dual}\cap t_i {\mathbb G}(E) t_i^{-1})/E^{\dual}$ 
sont des groupes finis, et on note 
 $N$ le produit des cardinaux de ces groupes finis. Le lemme 8.2 de \cite{DPS2} montre l'existence d'une place finie $w_1$ de $E$ telle que $\mathbf{N}(w_1)$ 
soit premier avec $2Np$ et non congru \`a $1$ modulo $p$, et telle que le quotient des valeurs propres de $\bar{\rho}({\rm Frob}_{w_1})$ ne soit pas 
 $1$ ou $\mathbf{N}(w_1)^{\pm 1}$. On fixe une telle place $w_1$ et on d\'efinit un sous-groupe ouvert compact $U$ de ${\mathbb G}(\mathbf{A}_f)$ par 
 $$U=\prod_{v\nmid \infty} U_v,$$
 avec $U_v=\gl_2(\mathcal{O}_{E_v})$ pour $v\ne w_1$ et 
 $$U_{w_1}=\{g\in \gl_2(\mathcal{O}_{E_{w_1}})|\,\, g\equiv \matrice{1}{*}{0}{1} \pmod{\varpi_{w_1}}\}.$$
    
    Le sous-groupe $U$ de ${\mathbb G}(\mathbf{A}_f)$ donne naissance \`a 
    un sous-groupe ouvert compact $U^{\pp}:=\prod_{v\nmid \pp\infty} U_v$ de 
  ${\mathbb G}(\mathbf{A}_{f}^{\pp})$, que l'on identifie aussi \`a un sous-groupe ouvert compact de 
  $\check{\mathbb G}(\mathbf{A}_f^{\pp})$ par l'isomorphisme 
${\mathbb G}(\mathbf{A}_f^{\pp})\simeq\check{\mathbb G}(\mathbf{A}_f^{\pp})$ 
induit par (\ref{isom}). Posons 

$$U_p:=(\mathcal{O}_{D_0}\otimes \zp)^{\dual}=\prod_{v\mid p}Ê\gl_2(\zp),\,\,U_p^{\pp}:=\prod_{v\mid p, v\ne \pp} U_v,\,\, U^p:=\prod_{v\nmid p\infty} U_v.$$

\subsubsection{Formes automorphes et op\'erateurs de Hecke}\label{whatthehecke}
  Soit $A$ un $\mathcal{O}_L$-module topologique. Le $\mathcal{O}_L$-module 
  $$S(U^p, A):=\mathcal{C}^0({\mathbb G}(E)\backslash {\mathbb G}(\mathbf{A}_{f})/U^p, A)$$
 est
une repr\'esentation de $\mathbf{A}_{f}^{\dual}\,{\mathbb G}(E\otimes_{\mathbf{Q}} \qp)$ 
(agissant par translation \`a droite). 
On note
$S_{\psi}(U^p, A)$ le sous-espace $\psi$-isotypique (pour l'action de $\mathbf{A}_{f}^{\dual}$) de 
$S(U^p,A)$. 
Si $\lambda$ est une representation continue de 
$U_p^{\pp}$ sur un $\mathcal{O}_L$-module libre de type fini, telle que 
$\mathbf{A}_{f}^{\dual}\cap U_p^{\pp}$ agisse \`a travers $\psi$, on 
dispose du 
$G$-module
$$S_{\psi,\lambda}(U^{\pp}, A):={\rm Hom}_{U_p^{\pp}}
(\lambda, S_{\psi}(U^p, A)).$$

   Soit $S$ un ensemble fini de places de $E$, contenant $w_1$, les places divisant $p$ et les places infinies de $E$. L'alg\`ebre 
     $$\mathbb{T}_S^{\rm univ}=\mathcal{O}[T_v, S_v]_{v\notin S}$$ 
 de polyn\^omes en les variables indiqu\'ees agit\footnote{$S_v$ (resp. $T_v$) agit \`a travers la double classe 
$U_v \matrice{\varpi_v}{0}{0}{\varpi_v}$ (resp. $U_v \matrice{\varpi_v}{0}{0}{1}U_v$).} 
  sur $S(U^p, A)$, et cette action commute avec celle de 
$\mathbf{A}_{f}^{\dual}\,{\mathbb G}(E\otimes_{\mathbf{Q}} \qp)$.
 Ainsi $S(U^p, A)$ devient un 
$\mathbb{T}_S^{\rm univ}[\mathbf{A}_{f}^{\dual}\,{\mathbb G}(E\otimes_{\mathbf{Q}} \qp)]$-module et $S_{\psi}(U^p, A)$ un $\mathbb{T}_S^{\rm univ}[{\mathbb G}(E\otimes_{\mathbf{Q}} \qp)]$-module.
   
   La repr\'esentation $\rho: \mathcal{G}_E\to \gl_2(\mathcal{O}_L)$ est non ramifi\'ee en dehors de 
   $S$ et induit un morphisme  $\mathbb{T}_S^{\rm univ}\to k_L$ envoyant $T_v$ (respectivement $S_v$) sur ${\rm tr}(\bar{\rho}({\rm Frob}_v))$ (respectivement $\mathbf{N}(v)^{-1}\det (\bar{\rho}({\rm Frob}_v))$). On note 
    $$  \mathfrak{m}=\ker(\mathbb{T}_S^{\rm univ}\to k_L)$$
le noyau de ce morphisme, c'est donc un 
    id\'eal maximal $\mathfrak{m}$ de $\mathbb{T}_S^{\rm univ}$, de corps r\'esiduel $k_L$, tel que $\psi({\rm Frob}_v)\equiv S_v\pmod {\mathfrak{m}}$ pour $v\notin S$.

\begin{prop}
{\rm a)} Le localis\'e $S_{\psi}(U^p, L/\mathcal{O}_L)_{\mathfrak{m}}$ est non nul, admissible et injectif dans 
${\rm Rep}^{\rm lisse, \psi_p}\, U_p$, o\`u $\psi_p$ est le caract\`ere du centre de 
$U_p=\prod_{v\mid p} \gl_2(\zp)$ induit par~$\psi$. 

{\rm b)} On peut choisir $\lambda$ de la forme $\lambda=\otimes_{v\mid p, v\ne \pp} \lambda_v$, de telle sorte que 
$\lambda_v$ ait $\psi_v$ pour caract\`ere central et $S_{\psi,\lambda}(U^{\pp}, L/\mathcal{O}_L)_{\mathfrak{m}}\ne 0$. 
\end{prop}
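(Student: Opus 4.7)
Voici la strat\'egie que je suivrais pour d\'emontrer cette derni\`ere proposition, qui combine des ingr\'edients classiques (Jacquet-Langlands, compacit\'e ad\'elique pour une alg\`ebre d\'efinie) avec les r\'esultats profonds de \paskunas~\cite{Pask} et \paskunas--Tung~\cite{PT} sur l'injectivit\'e de la cohomologie compl\'et\'ee apr\`es localisation en un id\'eal maximal non-Eisenstein.

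\textbf{Non-nullit\'e.} Comme $D_0$ est totalement d\'efinie et d\'eploy\'ee en toute place finie, la correspondance de Jacquet-Langlands s'applique et transf\`ere la repr\'esentation automorphe $\pi$ du cor.\,\ref{globaliseDPS2} en une repr\'esentation automorphe cuspidale de ${\mathbb G}(\mathbf{A}_E)$, dont la partie archim\'edienne est triviale car $\pi$ est de poids $0$. La composante finie fournit alors, pour un choix convenable de $U^p$, un vecteur non nul dans $S_\psi(U^p, \overline{\mathbf{Q}}_p)$ dont les valeurs propres de Hecke aux places $v\notin S$ se r\'eduisent modulo ${\goth m}_L$ sur $({\rm tr}(\bar{\rho}({\rm Frob}_v)), {\bf N}(v)^{-1}\det\bar{\rho}({\rm Frob}_v))$. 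La condition impos\'ee au choix de $w_1$ (non-congruence de ${\bf N}(w_1)$ \`a $1$ modulo $p$ et valeurs propres de $\bar{\rho}({\rm Frob}_{w_1})$ bien s\'epar\'ees) garantit que, quitte \`a tordre par un caract\`ere de $U_{w_1}$ de conducteur~$\varpi_{w_1}$, cette contribution survit dans $S_\psi(U^p, \mathcal{O}_L/\varpi_L^k)_\mathfrak{m}$ pour tout $k$, et donc dans $S_\psi(U^p, L/\mathcal{O}_L)_\mathfrak{m}$ par passage \`a la limite inductive.

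\textbf{Admissibilit\'e.} Puisque ${\mathbb G}$ est anisotrope modulo son centre \`a toute place infinie, le th\'eor\`eme de finitude usuel des doubles classes montre que
$${\mathbb G}(E)\backslash {\mathbb G}(\mathbf{A}_f)/U^p K_p \mathbf{A}_f^\dual$$
est fini pour tout sous-groupe ouvert compact $K_p\subset U_p$. Il s'ensuit que le $\mathcal{O}_L$-module $S_\psi(U^p, L/\mathcal{O}_L)^{K_p}[\varpi_L^n]$ est de type fini, ce qui est pr\'ecis\'ement l'admissibilit\'e comme $\mathcal{O}_L[U_p]$-module dans la cat\'egorie \`a caract\`ere central $\psi_p$ fix\'e.

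\textbf{Injectivit\'e.} C'est le point cl\'e et la principale difficult\'e. Je montrerais que $S_\psi(U^p, L/\mathcal{O}_L)_\mathfrak{m}$ est injective dans ${\rm Rep}^{\rm lisse,\psi_p}\, U_p$ en dualisant par Pontryagin: il s'agit de prouver que $M:=S_\psi(U^p, L/\mathcal{O}_L)_\mathfrak{m}^\vee$ est un $\mathcal{O}_L[[U_p]]$-module projectif \`a caract\`ere central~$\psi_p^{-1}$. Pour cela, j'invoquerais les r\'esultats de \paskunas~\cite{Pask} (compl\'et\'es par \paskunas--Tung~\cite{PT}) qui, \`a partir de l'hypoth\`ese que ${\rm SL}_2(k_L)\subset{\rm Im}(\bar{\rho})$ (et donc que $\bar{\rho}$ est absolument irr\'eductible) et de la condition de Taylor-Wiles en $w_1$, \'etablissent que $M$ est libre sur l'anneau de d\'eformation local $R_{\bar{\rho}_\pp}^{\psi_\pp}\,\widehat{\otimes}\,\bigotimes_{v\mid p,\, v\ne\pp}R_{\bar{\rho}_v}^{\psi_v}$, via la compatibilit\'e local-global et le patching. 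La libert\'e sur l'anneau local au-dessus de $\pp$ fournit l'injectivit\'e recherch\'ee dans la cat\'egorie ${\rm Rep}^{\rm lisse,\psi_p}\, U_p$.

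\textbf{Partie (b).} Une fois (a) d\'emontr\'e, le sous-module $S_\psi(U^p, L/\mathcal{O}_L)_\mathfrak{m}[\varpi_L]$ est un $k_L$-module lisse, admissible, non nul et \`a caract\`ere central $\bar{\psi}_p$, et sa restriction \`a $U_p^\pp$ admet donc un socle non nul. Tout constituant irr\'eductible de ce socle se d\'ecompose en un produit tensoriel $\otimes_{v\mid p,v\ne\pp}\bar\lambda_v$ de repr\'esentations irr\'eductibles lisses de $U_v=\gl_2(\mathcal{O}_{E_v})$ \`a caract\`ere central $\bar{\psi}_v$, i.e.\ de poids de Serre tordus. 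Pour chaque $v$, on rel\`eve $\bar\lambda_v$ en un $\mathcal{O}_L$-r\'eseau $\lambda_v$ \`a caract\`ere central $\psi_v$ (possible car on peut prendre par exemple le r\'eseau tautologique $\bar\lambda_v$ vu comme objet dans la cat\'egorie des $\mathcal{O}_L[U_v]$-modules, $\psi_v$ relevant $\bar\psi_v$ par construction), et on pose $\lambda:=\otimes_v\lambda_v$. Par construction, ${\rm Hom}_{U_p^\pp}(\lambda, S_\psi(U^p, L/\mathcal{O}_L)_\mathfrak{m})\ne 0$, ce qui ach\`eve la preuve.

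L'obstacle principal est le troisi\`eme point: l'injectivit\'e de $S_\psi(U^p, L/\mathcal{O}_L)_\mathfrak{m}$ dans la cat\'egorie indiqu\'ee. Elle repose sur toute la machinerie de la correspondance de Langlands locale $p$-adique et sur les r\'esultats $R=T$ locaux et globaux, mais s'applique dans notre cadre gr\^ace aux conditions de Taylor-Wiles auxiliaires impos\'ees (choix de $w_1$) et \`a l'hypoth\`ese de grande image sur $\bar{\rho}$.
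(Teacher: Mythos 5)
Le texte se contente de renvoyer \`a \cite[prop.~8.4, lemme~8.6]{DPS2} sans d\'etailler la preuve; votre tentative tente de reconstituer l'argument, et la structure g\'en\'erale (Jacquet--Langlands pour la non-nullit\'e, compacit\'e ad\'elique pour l'admissibilit\'e, rel\`evement de poids de Serre pour le (b)) est correcte dans ses grandes lignes. Le probl\`eme central est votre traitement de l'injectivit\'e.

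Vous invoquez le $R=T$ local de {\paskunas} et le patching pour d\'eduire que $M=S_\psi(U^p,L/\mathcal{O}_L)_{\mathfrak{m}}^\vee$ est libre sur un produit d'anneaux de d\'eformation locaux, et vous en tirez la projectivit\'e sur $\mathcal{O}_L[[U_p]]$. C'est circulaire: dans tous les travaux de ce type ({\paskunas} \cite{Pask}, le six-author paper \cite{6auteurs}), la projectivit\'e de $M$ sur l'alg\`ebre d'Iwasawa \`a caract\`ere central est une \emph{entr\'ee} du patching, pas une sortie, car elle fournit (via Auslander--Buchsbaum) la condition de Cohen--Macaulay qui rend le patching op\'erant. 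L'argument correct est nettement plus \'el\'ementaire et c'est pr\'ecis\'ement la raison d'\^etre du choix de $w_1$: puisque $\mathbf{N}(w_1)\not\equiv 1\pmod p$ et $\mathbf{N}(w_1)$ est premier \`a $2Np$, les groupes finis $(U_{\max}\mathbf{A}_f^\dual \cap t_i\mathbb{G}(E)t_i^{-1})/E^\dual$ ont un ordre premier \`a $p$, de sorte que $U_p\mathbf{A}_f^\dual$ agit ``librement modulo un groupe d'ordre premier \`a $p$'' sur les doubles classes. Il s'ensuit directement que $S_\psi(U^p,L/\mathcal{O}_L)$ est une somme directe de facteurs de $\mathcal{C}^0_{\psi_p}(U_p,L/\mathcal{O}_L)$, objet injectif de ${\rm Rep}^{\rm lisse,\psi_p}\,U_p$. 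La localisation en $\mathfrak{m}$ est un facteur direct, donc pr\'eserve l'injectivit\'e. Aucun ingr\'edient profond n'est requis ici; c'est le lemme~5.2 de \cite{Pask}.

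Un point secondaire sur le (b): le ``r\'eseau tautologique $\bar\lambda_v$ vu comme $\mathcal{O}_L$-module'' n'est pas un $\mathcal{O}_L$-module libre (il est de $p$-torsion), donc ne fournit pas le $\lambda_v$ cherch\'e. Il faut r\'eellement relever le poids de Serre $\bar\lambda_v=\det^a\otimes\operatorname{Sym}^b k_L^2$ en un $\mathcal{O}_L$-module libre $\operatorname{Sym}^b\mathcal{O}_L^2\otimes\det^{\tilde a}$, puis tordre par un caract\`ere de $\gl_2(\mathcal{O}_{E_v})$ \`a valeurs dans $1+\mathfrak{m}_L$ (possible car $p>2$ permet d'extraire les racines carr\'ees) pour faire co\"{\i}ncider le caract\`ere central avec $\psi_v$ et non seulement avec un rel\`evement de $\bar\psi_v$. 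Enfin, le passage de ${\rm Hom}(\bar\lambda,\cdot)\ne 0$ \`a ${\rm Hom}(\lambda,\cdot)\ne 0$ utilise l'injectivit\'e de $S_\psi(U^p,L/\mathcal{O}_L)_{\mathfrak{m}}$ \'etablie au (a); cette d\'ependance m\'erite d'\^etre rendue explicite.
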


\begin{proof} Le premier point est le contenu de \cite[prop.\,8.4]{DPS2}
et le second est d\'emontr\'e dans le lemme 8.6 de loc.cit. 
\end{proof}

 On fixe par la suite une repr\'esentation $\lambda$ comme dans la proposition ci-dessus et on note pour simplifier
 $$S_{\psi,\lambda, \mathfrak{m}}:=S_{\psi, \lambda}(U^{\pp}, L/\mathcal{O}_L)_{\mathfrak{m}}.$$ 

\begin{prop}\label{Paskpuzz}  Soit $\mathcal{B}$ le bloc associ\'e \`a $\bar{r}^{\rm ss}$. Alors 
       $S_{\psi,\lambda, \mathfrak{m}}$ est un objet admissible de ${\rm Rep}_{\mathcal{B}}^{\zeta}\, G$, dont la restriction \`a $K={\rm GL}_2(\zp)$ est injective dans ${\rm Rep}^{\zeta}\, K$.

\end{prop}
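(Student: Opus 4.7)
The plan is to reduce all four assertions to the properties of $S_{\psi}(U^p, L/\mathcal{O}_L)_{\mathfrak{m}}$ already established in the preceding proposition. Since localization at $\mathfrak{m}$ is exact and commutes with the functor $\mathrm{Hom}_{U_p^{\pp}}(\lambda, -)$ (here $\lambda$ is free of finite rank over $\mathcal{O}_L$), I would first identify
\[
S_{\psi,\lambda,\mathfrak{m}} \simeq \mathrm{Hom}_{U_p^{\pp}}\!\bigl(\lambda,\ S_{\psi}(U^p, L/\mathcal{O}_L)_{\mathfrak{m}}\bigr),
\]
carrying the $G = {\mathbb G}(E_{\pp}) = \gl_2(\qp)$-action through the factor $U_{\pp} \subset U_p$. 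The central character of $G$ on this space is obtained by restricting $\psi$ to $\qp^{\dual} \subset \mathbf{A}_f^{\dual}$, which is exactly $\psi_{\pp} = \zeta$; meanwhile the choice of $\lambda_v$ with central character $\psi_v$ for $v \mid p$, $v \neq \pp$, has already been used in matching characters on $U_p^{\pp}$.

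For admissibility as a $G$-module, I would use that for any open compact $K' \subset G$ and any $n \geq 1$ one can choose an open compact $K_0 \subset U_p^{\pp}$ acting trivially on $\lambda/\mathfrak{m}_L^n$; then the space $S_{\psi,\lambda,\mathfrak{m}}^{K'}[\mathfrak{m}_L^n]$ embeds into a finite direct sum of copies of $S_{\psi}(U^p, L/\mathcal{O}_L)_{\mathfrak{m}}^{K' K_0}[\mathfrak{m}_L^n]$, which is of finite length over $\mathcal{O}_L$ by admissibility of $S_{\psi}(U^p, L/\mathcal{O}_L)_{\mathfrak{m}}$ over $U_p$. For the injectivity of the restriction to $K = \gl_2(\zp) = U_{\pp}$ inside $\mathrm{Rep}^{\zeta}\,K$, I would observe that the functor $\mathrm{Hom}_{U_p^{\pp}}(\lambda, -)$ from $\mathrm{Rep}^{\rm lisse,\psi_p}\, U_p$ to $\mathrm{Rep}^{\zeta}\, K$ is right adjoint to the exact functor $M \mapsto M \otimes_{\mathcal{O}_L} \lambda$ (twisted to match central characters); since the left adjoint is exact, the right adjoint preserves injectives, so from the injectivity of $S_{\psi}(U^p, L/\mathcal{O}_L)_{\mathfrak{m}}$ in $\mathrm{Rep}^{\rm lisse, \psi_p}\, U_p$ we deduce the injectivity of $S_{\psi,\lambda,\mathfrak{m}}|_K$ in $\mathrm{Rep}^{\zeta}\, K$.

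The main obstacle is the assertion that $S_{\psi,\lambda,\mathfrak{m}}$ lies in the block $\mathcal{B}$ associated to $\bar{r}^{\rm ss}$. The idea is to argue that every irreducible subquotient $\pi$ of $S_{\psi,\lambda,\mathfrak{m}}$ satisfies $\mathbf{V}(\pi) \simeq \bar{r}^{\rm ss}$ (up to a Galois twist). Since the algebra $\mathbb{T}^{\rm univ}_S$ acts via a quotient supported at the maximal ideal $\mathfrak{m}$ associated to $\mathrm{Tr}\,\bar{\rho}$, and since the reconstruction of $\bar{\rho}({\rm Frob}_v)$ from $T_v, S_v$ mod $\mathfrak{m}$ matches, at unramified places $v \neq \pp$, the characteristic polynomial of $\bar{\rho}$, the global pseudo-character on $S_{\psi,\lambda,\mathfrak{m}}$ deforms $\mathrm{Tr}\,\bar{\rho}$. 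Applying mod-$p$ local–global compatibility at $\pp$ (in the form used by Paškūnas in \cite{Pask}), which goes through the compatibility of Colmez's functor $\mathbf{V}$ with the $\pp$-component of this pseudo-character, yields $\mathbf{V}(\pi) \simeq \bar{\rho}|_{\mathcal{G}_{E_{\pp}}}^{\rm ss} = \bar{r}^{\rm ss}$, hence $\pi \in \mathcal{B}$ by the bijection of rem.\,\ref{pasku3}. This step is the delicate one and uses essentially the globalization of prop.\,\ref{globaliseDPS2} together with the $R = T$ type input of th.\,\ref{pasku6}.
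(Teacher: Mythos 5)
Your treatment of the formal parts is sound: the identification $S_{\psi,\lambda,\mathfrak{m}}\simeq \mathrm{Hom}_{U_p^{\pp}}(\lambda, S_{\psi}(U^p,L/\mathcal{O}_L)_{\mathfrak{m}})$, the computation of the central character as $\psi_{\pp}=\zeta$, the finiteness of $K'$-invariants via a small $K_0\subset U_p^{\pp}$ acting trivially on $\lambda/\mathfrak{m}_L^n$, and the preservation of injectives by the right adjoint of the exact functor $M\mapsto M\otimes_{\mathcal{O}_L}\lambda$ are all correct; they reconstruct the content of the lemmas 5.1--5.3 of \cite{Pask} that the paper simply cites for these points.

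The gap is in the block-membership assertion, which is the real content of the proposition. Your proposed mechanism --- a Hecke-valued pseudo-character deforming $\mathrm{Tr}\,\bar{\rho}$ together with ``compatibility of Colmez's functor $\mathbf{V}$ with the $\pp$-component of this pseudo-character'' --- is not a formal consequence of the inputs you invoke (the globalization of prop.\,\ref{globaliseDPS2} and the local $R=T$ theorem \ref{pasku6}): the statement that the $\mathfrak{m}$-localized space of algebraic modular forms only meets the block attached to $\bar{r}^{\rm ss}$ at the place $\pp$ is precisely a mod~$p$ local--global compatibility statement, and it is established in \cite{Pask} (lemme 5.3 et prop.\,5.4) by a genuine argument: one uses the injectivity of the restriction to $K$ and a capture/density argument with classical (automorphic) points, at which the $p$-adic correspondence is compatible with the classical one and the associated local Galois representations lift $\bar{r}$; nothing of this sort follows from th.\,\ref{pasku6}, which only compares deformation rings with the Bernstein-type centre $Z_{\mathcal{B}}$. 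Moreover your intermediate claim ``$\mathbf{V}(\pi)\simeq \bar{r}^{\rm ss}$ (up to a Galois twist) for every irreducible subquotient $\pi$'' is false as stated for non-supersingular blocks: e.g.\ $\mathbf{V}(I(\chi_1,\chi_2))=\chi_1$ is one-dimensional, and no Galois twist should occur; the correct formulation is that the block of $\pi$ corresponds to $\bar{r}^{\rm ss}$ under the bijection of la rem.\,\ref{pasku3}. So either you quote \cite{Pask} for exactly this statement (as the paper does), or you must supply the capture argument (or Emerton-style local--global compatibility); as written, the key step is asserted rather than proved.
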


\begin{proof}
  Toutes les r\'ef\'erences ci-dessous sont par rapport \`a l'article de \paskunas\, \cite{Pask}. Le premier point est \'etabli dans les lemmes 5.1, 5.3 et la prop.\,5.4, pour le second voir les lemmes 5.2 et 5.3. 
\end{proof}

\begin{rema} Si l'on munit 
    $$    S_{\psi, \lambda}(U^{\pp},\mathcal{O}_L)_{\mathfrak{m}}:=\varprojlim_{n} 
     S_{\psi, \lambda}(U^{\pp},\mathcal{O}_L/p^n)_{\mathfrak{m}}$$
     de la topologie $p$-adique, alors   $$S_{\psi,\lambda}(U^{\pp}, \mathcal{O}_L)_{\mathfrak{m}}^d
:={\rm Hom}_{\mathcal{O}_L}^{\rm cont}(S_{\psi,\lambda}(U^{\pp}, \mathcal{O}_L)_{\mathfrak{m}}, \mathcal{O}_L)$$
est un $\mathcal{O}_L$-module compact pour la topologie de la convergence simple et 
on dispose d'un isomorphisme 
naturel (combiner la relation $(27)$ dans \cite{Pask} avec la dualit\'e de Schikhof)
$$S_{\psi,\lambda, \mathfrak{m}}^{\vee}\simeq S_{\psi,\lambda}(U^{\pp}, \mathcal{O}_L)_{\mathfrak{m}}^d.$$
\end{rema}
 
 \begin{prop}\label{Paskpuzzle}
  Le $G$-module
   $$M_{\psi,\lambda, \mathfrak{m}}=(k_L\otimes_{R^{{\rm ps}, \zeta}_{\mathcal{B}}} S_{\psi,\lambda, \mathfrak{m}}^{\vee})^{\vee}$$
   est un objet de longueur finie de ${\rm Rep}_{\mathcal{B}}^{\zeta}\, G$, et tout $\pi\in \mathcal{B}$
   est un sous-quotient irr\'eductible de $M_{\psi,\lambda, \mathfrak{m}}$. 
 \end{prop}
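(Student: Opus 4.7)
Le plan consiste \`a traduire les deux assertions en \'enonc\'es sur le $E_{\cal B}^\zeta$-module $S_{\psi,\lambda,\mathfrak{m}}^\vee$ via l'\'equivalence de cat\'egories de Paškūnas (prop.\,\ref{pasku2}), puis \`a exploiter le th\'eor\`eme $R=T$ (th.\,\ref{pasku6}) et la projectivit\'e de $S_{\psi,\lambda,\mathfrak{m}}^\vee$ sur $K$. Dualement par Pontryagin, $M_{\psi,\lambda,\mathfrak{m}}^\vee=k_L\otimes_{R_{\cal B}^{{\rm ps},\zeta}} S_{\psi,\lambda,\mathfrak{m}}^\vee$, et il suffit de voir que ce $k_L$-espace est de dimension finie et que ses quotients irr\'eductibles (comme $G$-module, i.e.~les duaux des sous-objets irr\'eductibles de $M_{\psi,\lambda,\mathfrak{m}}$) \'epuisent ${\cal B}$.

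Pour la finitude, j'utiliserais d'abord que l'admissibilit\'e de $S_{\psi,\lambda,\mathfrak{m}}$ (prop.\,\ref{Paskpuzz}) se traduit, via Paškūnas, par le fait que $S_{\psi,\lambda,\mathfrak{m}}^\vee$ est un $E_{\cal B}^\zeta$-module de type fini. Le th.\,\ref{pasku6} fournit l'\'egalit\'e (\`a pseudo-nullit\'e pr\`es, inoffensive ici puisque $p>2$ donne au pire un conoyau tu\'e par $2$, inversible) $Z_{\cal B}^\zeta\simeq R_{\cal B}^{{\rm ps},\zeta}$ modulo $\mathcal{O}_L$-torsion, et par Paškūnas~\cite{Paskext} on sait de plus que $E_{\cal B}^\zeta$ est un $Z_{\cal B}^\zeta$-module de type fini. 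La compos\'ee donne que $E_{\cal B}^\zeta\otimes_{R_{\cal B}^{{\rm ps},\zeta}}k_L$ est une $k_L$-alg\`ebre de dimension finie, et donc $M_{\psi,\lambda,\mathfrak{m}}^\vee$, quotient de type fini sur cette alg\`ebre, est de dimension finie sur $k_L$. Cela entra\^{\i}ne que $M_{\psi,\lambda,\mathfrak{m}}$ est de longueur finie dans ${\rm Rep}_{\cal B}^\zeta\,G$.

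Pour la seconde assertion, la strat\'egie est de montrer que le support de $S_{\psi,\lambda,\mathfrak{m}}^\vee$ comme $Z_{\cal B}^\zeta$-module, modulo $\mathfrak{m}_R$, rencontre la composante correspondant \`a chaque $\pi\in{\cal B}$. L'ingr\'edient clef est la $K$-injectivit\'e de $S_{\psi,\lambda,\mathfrak{m}}$ (prop.\,\ref{Paskpuzz}): par la description explicite des enveloppes injectives de poids de Serre dans ${\rm Rep}^\zeta\,K$ (cf.~\cite{Pask}, \cite{PT}), cela force le $K$-socle de $S_{\psi,\lambda,\mathfrak{m}}$ \`a contenir tous les poids de Serre associ\'es \`a $\bar r^{\rm ss}=\rho_{\cal B}$, et donc tous les poids de Serre attach\'es aux repr\'esentations de ${\cal B}$. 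Puisque toute $\pi\in{\cal B}$ poss\`ede un poids de Serre apparaissant dans son $K$-socle (classification de Barthel-Livn\'e-Breuil rappel\'ee au \S\,\ref{hecke2}, cf.~aussi rem.\,\ref{pasku3}), la composante $e_\pi S_{\psi,\lambda,\mathfrak{m}}^\vee$ attach\'ee \`a $P_\pi$ (par l'idempotent bloc-par-bloc dans $E_{\cal B}^\zeta$) est non nulle et reste non nulle apr\`es tensorisation par $k_L\otimes_{R_{\cal B}^{{\rm ps},\zeta}}-$ par le lemme de Nakayama; cette composante fournit alors $\pi$ comme sous-quotient de $M_{\psi,\lambda,\mathfrak{m}}$ via l'\'equivalence de Gabriel.

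L'obstacle principal sera la derni\`ere \'etape: il faut contr\^oler pr\'ecis\'ement la structure du $K$-socle de $S_{\psi,\lambda,\mathfrak{m}}$ pour garantir que les poids de Serre manquants de $\bar r$ (en particulier ceux qui n'apparaissent pas dans la partie ordinaire d\'ej\`a vue par la cohomologie automorphe de poids $0$) contribuent bien \`a $M_{\psi,\lambda,\mathfrak{m}}$. Techniquement, cela repose sur les r\'esultats de \paskunas~\cite{Pask} sur la structure compl\`ete des composantes $e_\pi S_{\psi,\lambda,\mathfrak{m}}^\vee$ et leur non-annulation apr\`es r\'eduction; une v\'erification cas par cas selon le type de bloc (supersinguliere, non ordinaire, Steinberg) sera probablement n\'ecessaire, le cas d\'elicat \'etant celui du bloc de la Steinberg o\`u l'on devra invoquer les compl\'ements de \paskunas-Tung~\cite{PT}.
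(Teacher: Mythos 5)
Votre argument de finitude contient une confusion entre le dual de Pontryagin et le module de multiplicit\'e de l'\'equivalence de Gabriel--\paskunas. L'alg\`ebre $E_{\cal B}^\zeta={\rm End}_G(P_{\cal B})$ n'agit pas sur $S^\vee:=S_{\psi,\lambda,\mathfrak{m}}^\vee$; l'admissibilit\'e de $S_{\psi,\lambda,\mathfrak{m}}$ (prop.\,\ref{Paskpuzz}) se traduit par une propri\'et\'e de finitude du module ${\rm Hom}_G(P_{\cal B},S^\vee)$, pas de $S^\vee$ lui-m\^eme. Surtout, la conclusion que vous visez, $\dim_{k_L}M_{\psi,\lambda,\mathfrak{m}}^\vee<\infty$, est fausse: elle entra\^{\i}nerait que $M_{\psi,\lambda,\mathfrak{m}}$ est de dimension finie sur $k_L$, ce qui contredit la seconde assertion de l'\'enonc\'e m\^eme (les blocs contiennent des irr\'eductibles de dimension infinie). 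L'argument se r\'epare en rempla\c{c}ant $M^\vee$ par ${\rm Hom}_G(P_{\cal B},M^\vee)\simeq{\rm Hom}_G(P_{\cal B},S^\vee)\otimes_{R_{\cal B}^{{\rm ps},\zeta}}k_L$ (projectivit\'e de $P_{\cal B}$), module de type fini sur l'alg\`ebre de dimension finie $E_{\cal B}^\zeta\otimes_{R_{\cal B}^{{\rm ps},\zeta}}k_L$ (la finitude de ${\rm Hom}_G(P_{\cal B},S^\vee)$ venant de \cite{PT}); la longueur finie de $M$ suit alors de l'\'equivalence de la prop.\,\ref{pasku2} --- c'est en substance le cor.\,6.7 de \cite{PT}, que le texte cite tel quel.

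Le vrai trou est dans la seconde assertion. Votre r\'eduction par projectivit\'e et Nakayama topologique ({\og si ${\rm Hom}_G(P_\pi,S^\vee)\neq0$ alors ${\rm Hom}_G(P_\pi,M^\vee)\neq0$\fg}) est correcte, mais l'ingr\'edient propos\'e pour la non-nullit\'e ne l'est pas: l'injectivit\'e de $S_{\psi,\lambda,\mathfrak{m}}|_K$ dit seulement que cette restriction est une somme d'enveloppes injectives des poids de son socle, elle ne dit rien sur \emph{quels} poids y apparaissent --- ce point rel\`eve de la partie {\og poids\fg} de la conjecture de Serre, un th\'eor\`eme global profond que vous n'invoquez pas. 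Et m\^eme en l'admettant, le crit\`ere par le $K$-socle est insuffisant: la globalisation est faite avec la repr\'esentation \emph{non scind\'ee} $\bar{r}=\bar{r}_{\chi_1,\chi_2}$, dont l'ensemble des poids modulaires peut omettre le poids du socle de $I(\chi_2,\chi_1)$; ce constituant peut alors \^etre absent du $K$-socle de $S_{\psi,\lambda,\mathfrak{m}}$ (il n'y intervient que comme sous-quotient), et votre m\'ethode ne peut pas le d\'etecter, alors qu'il intervient bel et bien dans $M_{\psi,\lambda,\mathfrak{m}}$. Votre dernier paragraphe signale la difficult\'e mais la renvoie \`a la structure des composantes $e_\pi S^\vee$, ce qui ne la r\'esout pas. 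La preuve du texte contourne pr\'ecis\'ement cet obstacle en passant par la caract\'eristique nulle: un argument automorphe (comme dans la preuve du th.\,6.5 de \cite{Pask}) fournit un point ferm\'e $x$ de $R_{\cal B}^{{\rm ps},\zeta}$, supercuspidal en $\pp$, avec $\Pi_x=S^\vee[1/p]/\ker(x)\neq0$ somme de copies d'un Banach absolument irr\'eductible non ordinaire (\cite[cor.\,6.16]{PT}); la compatibilit\'e entre les correspondances de Langlands $p$-adique et modulo~$p$ assure que tout $\pi\in{\cal B}$ intervient dans la r\'eduction d'un r\'eseau de $\Pi_x$, et la projectivit\'e de $P_\pi$ jointe \`a $S^\vee\simeq\varprojlim_j S^\vee/\mathfrak{m}^jS^\vee$ donne la contradiction voulue. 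Sans un tel apport en caract\'eristique nulle (ou un \'enonc\'e de poids de Serre nettement plus fort que ce que vous esquissez), votre \'etape clef reste non d\'emontr\'ee.
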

 
 \begin{proof} Notons que $M_{\psi,\lambda, \mathfrak{m}}$ a bien un sens gr\^ace \`a la prop.\,\ref{Paskpuzz}. Le fait que 
 $M_{\psi,\lambda, \mathfrak{m}}$ est de longueur finie est une cons\'equence de l'admissibilit\'e de $S_{\psi,\lambda, \mathfrak{m}}^{\vee}$ (prop.\,\ref{Paskpuzz}) et du cor.\,6.7 de \cite{PT}. 
 
   Supposons qu'il existe $\pi\in \mathcal{B}$ qui n'est pas un sous-quotient irr\'eductible de $M_{\psi,\lambda, \mathfrak{m}}$. Soit 
   $S=S_{\psi,\lambda, \mathfrak{m}}$ et $M=M_{\psi,\lambda, \mathfrak{m}}$, et soit $m_{{\rm ps}}$ l'id\'eal maximal de $R^{{\rm ps}, \zeta}_{\mathcal{B}}$. Comme $\pi^{\vee}$ n'intervient pas dans 
   $M^{\vee}\simeq S^{\vee}/m_{{\rm ps}} S^{\vee}$, il n'intervient pas non plus dans 
   $S^{\vee}/m_{{\rm ps}}^j S^{\vee}$ pour tout $j\geq 1$. Si $P$ est une enveloppe projective de 
   $\pi^{\vee}$ dans ${\rm Rep}^{{\rm ladm}, \zeta} \, G$, on a donc ${\rm Hom}_G(P, S^{\vee}/m_{{\rm ps}}^j S^{\vee})=0$ pour tout $j$, en particulier
   ${\rm Hom}_G(P, \varprojlim_{j} S^{\vee}/m_{{\rm ps}}^j S^{\vee})=0$. Comme $S^{\vee}$ est la limite inverse de ses quotients de longueur finie, et chacun est tu\'e par une puissance de $m_{{\rm ps}}$, on a $ \varprojlim_{j} S^{\vee}/m_{{\rm ps}}^j S^{\vee}\simeq S^{\vee}$. Par projectivit\'e de $P$, on en d\'eduit que ${\rm Hom}_G(P, \Theta^d)=0$ pour tout r\'eseau $G$-stable $\Theta$ de 
   $\Pi_x:=S^{\vee}[1/p]/\ker(x)$, et tout morphisme $x: R^{{\rm ps}, \zeta}_{\mathcal{B}}\to \overline{\mathbf{Q}}_p$ (avec comme d'habitude 
   $\Theta^d:={\rm Hom}_{\mathcal{O}_L}^{\rm cont}(\Theta, \mathcal{O}_L)$).
   
   On va voir ci-dessous qu'il existe un morphisme $x: R^{{\rm ps}, \zeta}_{\mathcal{B}}\to \overline{\mathbf{Q}}_p$ tel que 
    la sp\'ecialisation du pseudo-caract\`ere universel en $x$ soit la trace d'une repr\'esentation absolument irr\'eductible (quitte \`a remplacer $L$ par une extension finie) et tel que 
    $\Pi_x$ soit non nul.
     Alors $\Pi_x\simeq \Pi^{\oplus d}$ pour un $d\geq 1$ et une repr\'esentation absolument irr\'eductible non ordinaire $\Pi$ de $G$ (cela suit de \cite[cor.\,6.16]{PT}). 
Mais la compatibilit\'e entre les correspondances de Langlands $p$-adique et modulo $p$ 
(\cite{bergercompat} et \cite[prop.\,III.55, rem.\,III.56]{CD}) montre que 
   $\pi$ intervient dans la r\'eduction modulo $\varpi$ d'un r\'eseau $G$-stable de $\Pi$, une contradiction.     
   
   Pour finir, il reste \`a justifier l'existence d'un tel $x$.
 L'argument est identique \`a celui utilis\'e dans la preuve du th.\,6.5 de \cite{Pask}, avec $S_{\psi,\lambda}(U^{\pp}, \mathcal{O}_L)_{\mathfrak{m}}[1/p]$ \`a la place de $S_{\psi,\lambda}(U^p, \mathcal{O}_L)_{\mathfrak{m}}[1/p]$.
 Plus pr\'ecis\'ement, comme dans loc.cit. on peut construire un pro-$\pp$ sous-groupe ouvert $V_{\pp}$ de $U_{\pp}$ et une repr\'esentation $\gamma$ de $V_{\pp}$ de dimension finie sur 
   $L$ telle que tout vecteur propre pour les op\'erateurs de Hecke agissant sur l'espace de dimension finie ${\rm Hom}_{V_{\pp}}(\gamma, S_{\psi,\lambda}(U^{\pp}, \mathcal{O}_L)_{\mathfrak{m}}[1/p])$ induit une forme automorphe qui est supercuspidale en $\pp$, et donc la repr\'esentation galoisienne associ\'ee est absolument irr\'eductible en restriction \`a $\mathcal{G}_{E_{\pp}}$.
 Enfin, le fait que l'espace ${\rm Hom}_{V_{\pp}}(\gamma, S_{\psi,\lambda}(U^{\pp}, \mathcal{O}_L)_{\mathfrak{m}}[1/p])$ est non nul est expliqu\'e \`a la fin du deuxi\`eme paragraphe dans la preuve de loc.cit.
 \end{proof}
   
\subsubsection{Cohomologie compl\'et\'ee et compatibilit\'e local-global}\label{lgc}

 Soit 
  $D_{\pp}=D\otimes_E E_{\pp}$ l'alg\`ebre de quaternions non d\'eploy\'ee sur $\qp=E_{\pp}$. Si $V$ est un sous-groupe ouvert de 
     $U_p^{\pp} \mathcal{O}_{D_{\pp}}^{\dual}$, notons ${\rm Sh}_{VU^p}$ la courbe de Shimura d\'efinie sur $E$, associ\'ee \`a $VU^p$. 
   
     Le $\mathcal{O}_L$-module\footnote{La limite est prise sur les sous-groupes ouverts compacts $V$ de $U_p^{\pp} \mathcal{O}_{D_{\pp}}^{\dual}$.}
     $$\wh{H}^1(U^p, \mathcal{O}_L/\varpi_L^n):=\varinjlim_{V} H^1_{\eet}({\rm Sh}_{VU^{p}, \bar{E}}, \mathcal{O}_L/\varpi_L^n)$$
    est naturellement un $\mathcal{G}_{E,S}\times \mathbb{T}_S^{\rm univ}[\mathbf{A}_{f}^{\dual}\check{\mathbb G}(E\otimes_{\mathbf{Q}} \qp)]$-module. 
        Soit $\wh{H}^1_{\psi}(U^p, \mathcal{O}_L/\varpi_L^n)_{\mathfrak{m}}$ la partie $\psi$-isotypique (pour l'action de $\mathbf{A}_{f}^{\dual}$) du localis\'e $\wh{H}^1(U^p, \mathcal{O}_L/\varpi_L^n)_{\mathfrak{m}}$ et soit 
    $$\wh{H}^1_{\psi, \lambda}(U^{\pp}, \mathcal{O}_L/\varpi_L^n)_{\mathfrak{m}}:={\rm Hom}_{U_p^{\pp}}
(\lambda, \wh{H}^1_{\psi}(U^p, \mathcal{O}_L/\varpi_L^n)_{\mathfrak{m}}).$$
Posons   $$\wh{H}^1_{\psi, \lambda, \mathfrak{m}}:=\varinjlim_{n} \wh{H}^1_{\psi, \lambda}(U^{\pp}, \mathcal{O}_L/\varpi_L^n)_{\mathfrak{m}},$$
    les applications de transition \'etant induites par celles qui fournissent l'isomorphisme usuel $\varinjlim_{n} \mathcal{O}_L/\varpi_L^n\simeq L/\mathcal{O}_L$.

 Le r\'esultat de compatibilit\'e local-global suivant est obtenu dans \cite[lemma 6.2]{Pask}, en utilisant les r\'esultats de Scholze \cite{SLT}.
     
     \begin{prop}\label{LGC}
      Il existe un isomorphisme naturel de $\mathbb{T}_S^{\rm univ}[\mathcal{G}_{\qp}\times D_{\pp}^{\dual}]$-modules 
      $$S^1(S_{\psi,\lambda, \mathfrak{m}})\simeq \wh{H}^1_{\psi, \lambda, \mathfrak{m}}.$$
     \end{prop}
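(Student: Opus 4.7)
The plan is to derive this isomorphism directly from Scholze's main comparison theorem in \cite{SLT}. The key point is that, since $D$ has been obtained from $D_0$ by swapping the invariants at $\pp$ and $\infty_0$, the Shimura curves $\mathrm{Sh}_{VU^p}$ fit precisely into the setup considered by Scholze: their completed cohomology is computed by the cohomology of the sheaf $\mathcal{F}_\pi$ on $\mathbb{P}^1_{\C_p}$ associated to the (split-at-$\pp$) automorphic representation space for $D_0$. Concretely, Scholze's theorem yields a natural isomorphism of smooth $\mathcal{O}_L/\varpi_L^n[\mathcal{G}_{\qp}\times D_\pp^\dual]$-modules, equivariant for the prime-to-$p$ Hecke algebra $\mathbb{T}_S^{\rm univ}$,
$$\wh{H}^1(U^p,\mathcal{O}_L/\varpi_L^n)\simeq S^1\bigl(S(U^p,\mathcal{O}_L/\varpi_L^n)\bigr),$$
obtained by comparing both sides to the cohomology of the infinite-level Lubin-Tate space via its period map to $\mathbb{P}^1_{\C_p}$. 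The $\mathcal{G}_\qp$-action on the right comes from Scholze's descent datum (see (3) of th.\,\ref{Good}), and matches the natural Galois action on the left by construction of the Lubin-Tate tower as the $p$-adic uniformization of the Shimura curves.

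From this starting point, the proof proceeds by taking successive functorial operations on both sides and checking that $S^1$ commutes with them. First I would take the $\psi$-isotypic part for the action of the center $\mathbf{A}_f^\dual$; since the central character factors out of both sides cleanly, this produces
$$\wh{H}^1_\psi(U^p,\mathcal{O}_L/\varpi_L^n)\simeq S^1\bigl(S_\psi(U^p,\mathcal{O}_L/\varpi_L^n)\bigr).$$
Next, taking $\mathrm{Hom}_{U_p^\pp}(\lambda,-)$ commutes with $S^1$ because $\lambda$ is a finite free $\mathcal{O}_L$-module on which $U_p^\pp$ (a group away from $\pp$) acts trivially with respect to the Lubin-Tate tower, so the operation amounts to taking a finite direct summand in the argument of $S^1$. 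This gives the identification after passing to the $\lambda$-type. Finally, $\mathfrak m$-localization commutes with $S^1$ by the compatibility of the Scholze functor with the prime-to-$p$ Hecke action, and passing to the colimit in $n$ is permissible by the exactness and compatibility of $S^1$ with filtered colimits of admissible representations tu\'es par une puissance de $p$ (which follows from the almost isomorphism in th.\,\ref{Good}(1)).

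The main technical obstacle is establishing the initial comparison isomorphism with all of its equivariances intact. Scholze's construction via the period map $\pi_{\mathrm{GH}}\colon \mathcal{M}_{\mathrm{LT},\infty,\breve F}\to \mathbb{P}^1_{\breve F}$ naturally yields the $D_\pp^\dual$-action and the Weil-group action (promoted to a $\mathcal{G}_\qp$-action by (3) of th.\,\ref{Good}); one must then identify $\mathrm{Sh}_{VU^p}$ with a quotient of the Lubin-Tate tower by an appropriate arithmetic subgroup arising from the strong approximation theorem applied to $D_0^\dual$, and verify that the Hecke operators $T_v,S_v$ at places $v\notin S$ act identically through the automorphic realization on one side and through their tautological action on $\mathcal{F}_\pi$ on the other. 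Once this is granted, all subsequent steps are routine unpackings of definitions and applications of the exactness of $S^1$ on admissible representations.
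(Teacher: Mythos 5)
Your overall strategy is the one the paper actually relies on: the paper does not reprove this statement but cites \cite[lemma 6.2]{Pask}, which is itself deduced from Scholze's global comparison in \cite{SLT} by exactly the kind of reductions you describe ($\psi$-part, $\lambda$-part, localization at $\mathfrak{m}$, colimit in $n$). So the skeleton is right, but two of your justifications would not survive scrutiny, and they are precisely where the content of the cited lemma lies. First, the claim that taking the $\psi$-isotypic part and applying ${\rm Hom}_{U_p^{\pp}}(\lambda,-)$ ``amounts to taking a finite direct summand in the argument of $S^1$'' is false with $\O_L/\varpi_L^n$-coefficients: the center contains $1+p\Z_p$ and $U_p^{\pp}$ is (up to finite index) a pro-$p$ group, so eigenspace and isotypic decompositions are not direct sum decompositions, and since $S^1$ is only a cohomological functor (flanked by $S^0$ and $S^2$), the commutation of $S^1$ with these left-exact operations is not formal. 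The correct mechanism is the one the paper quotes just before: after localizing at the non-Eisenstein $\mathfrak{m}$, $S_{\psi}(U^p,L/\O_L)_{\mathfrak{m}}$ is injective as a smooth $U_p$-representation with central character, so ${\rm Hom}_{U_p^{\pp}}(\lambda,-)$ computes derived invariants, and one controls the boundary terms (e.g.\ the vanishing of the ${\rm SL}_2(\qp)$-invariants, hence of the relevant $S^0$-contributions, after localization) before comparing with $\wh{H}^1$; this is what \cite[lemma 6.2]{Pask} actually does, and your write-up silently skips it.

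Second, your description of the geometric input is off: the curves ${\rm Sh}_{VU^p}$ are attached to $D$, which is \emph{ramified} at $\pp$, so they are uniformized at $\pp$ by the Drinfeld tower (\v{C}erednik--Drinfeld), not by the Lubin--Tate tower, and the arithmetic group comes from $D_0^{\dual}$ acting through its $\pp$-component $\gl_2(\qp)$; the Lubin--Tate space and the period map $\pi_{\rm GH}$ only enter through the Faltings/Scholze--Weinstein identification of the two towers at infinite level. That identification, together with the matching of the $\G_{\qp}$-action (via the descent datum, cf.\ (3) of th.\,\ref{Good}) and of the Hecke operators at $v\notin S$, is the nontrivial content of \cite[\S 5]{SLT}; it is legitimate to cite it, but not to present it as the Lubin--Tate tower uniformizing the Shimura curves. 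With these two points repaired — i.e.\ invoking the injectivity statement and the localization to justify the commutation, and citing Scholze's \S 5 (or directly \cite[lemma 6.2]{Pask}) for the equivariant comparison — your argument becomes the standard proof.
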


En munissant $$\wh{H}^1_{\psi, \lambda}(U^{\pp}, \mathcal{O}_L)_{\mathfrak{m}}:=\varprojlim_{n} \wh{H}^1_{\psi, \lambda}(U^{\pp}, \mathcal{O}_L/\varpi_L^n)_{\mathfrak{m}}$$
de la topologie $p$-adique et 
$$\wh{H}^1_{\psi, \lambda}(U^{\pp}, \mathcal{O}_L)_{\mathfrak{m}}^d:={\rm Hom}_{\mathcal{O}_L}^{\rm cont}(\wh{H}^1_{\psi, \lambda}(U^{\pp}, \mathcal{O}_L)_{\mathfrak{m}}, \mathcal{O}_L)$$
de la topologie de la convergence simple (pour laquelle il est compact), on obtient un isomorphisme 
de $\mathbb{T}_S^{\rm univ}[\mathcal{G}_{\qp}\times D_{\pp}^{\dual}]$-modules 
  $$ (\wh{H}^1_{\psi, \lambda, \mathfrak{m}})^{\vee}\simeq  
  (\wh{H}^1_{\psi, \lambda}(U^{\pp}, \mathcal{O}_L)_{\mathfrak{m}})^{d}.$$

La repr\'esentation $\bar{\rho}$ se d\'eforme en une repr\'esentation $$\rho_{\mathfrak{m}}: \mathcal{G}_{E,S}\to \gl_2(\mathbb{T}(U^{\pp})_{\mathfrak{m}}),$$ telle que le polyn\^ome caract\'eristique de $\rho_{\mathfrak{m}}({\rm Frob}_v)$ soit 
$X^2-T_v X+N(v)S_v$ pour $v\notin S$ (cf. prop.\,5.7 de \cite{SLT}).

\begin{prop}\label{isotypic}
 Si $N={\rm Hom}_{\mathbb{T}(U^{\pp})_{\mathfrak{m}}[\mathcal{G}_{E,S}]}(\rho_{\mathfrak{m}}, \wh{H}^1_{\psi,\lambda}(U^{\pp}, \mathcal{O}_L)_{\mathfrak{m}})$, alors le morphisme naturel 
 $$\rho_{\mathfrak{m}}\otimes_{\mathbb{T}(U^{\pp})_{\mathfrak{m}}} N\to \wh{H}^1_{\psi,\lambda}(U^{\pp}, \mathcal{O}_L)_{\mathfrak{m}}$$
 est un isomorphisme.
\end{prop}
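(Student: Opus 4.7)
The plan is to recognize this as an isotypic decomposition of the localized completed cohomology with respect to the Galois representation $\rho_{\mathfrak{m}}$, in the spirit of Carayol's lemma and its descendants (Nyssen--Rouquier, Scholze). The crucial input is the absolute irreducibility of $\bar\rho$ guaranteed by the big-image condition ${\rm SL}_2(k_L)\subset {\rm Im}(\bar\rho)$ coming from the globalization of \S\ref{lg3}, together with the Eichler--Shimura relation linking Hecke operators to traces of Frobenii. Neither ingredient depends on the local structure at $\pp$, so the proof is essentially a formal application of a well-established machinery once the setup is in place.

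First, I would note that the evaluation map $v\otimes f\mapsto f(v)$ is manifestly equivariant for all the relevant actions ($\mathcal{G}_{E,S}$, $\mathbb{T}(U^\pp)_{\mathfrak{m}}$, $G$ and $D_\pp^\dual$). Both sides are $\varpi_L$-adically separated and complete $\mathcal{O}_L$-modules: the target by construction (it is the continuous dual of a $\varpi_L$-adically complete module), the source because $\mathbb{T}(U^\pp)_{\mathfrak{m}}$ is a complete Noetherian local $\mathcal{O}_L$-algebra and $\rho_{\mathfrak{m}}$ is finitely generated over it. By a standard Nakayama-plus-Mittag-Leffler argument, it therefore suffices to verify the isomorphism after reduction modulo $\varpi_L^n$ for every $n$, and even at each finite quaternionic level $V$, where the relevant cohomology $H^1_{\eet}({\rm Sh}_{VU^\pp,\bar E},\mathcal{O}_L/\varpi_L^n)_{\mathfrak{m}}$ is a finitely generated module over the Artinian ring $\mathbb{T}(U^\pp)_{\mathfrak{m}}/\varpi_L^n$.

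At such a finite level, the Eichler--Shimura relation together with the defining property of $\rho_{\mathfrak{m}}$ (cf.\,the discussion preceding the proposition) forces the trace of ${\rm Frob}_v$ acting on the module to equal ${\rm tr}\,\rho_{\mathfrak{m}}({\rm Frob}_v)$ times a scalar depending only on the generic rank of the module. By Chebotarev density, this determines the character of the $\mathcal{G}_{E,S}$-action. Since $\bar\rho$ is absolutely irreducible, one can then apply the Nyssen--Rouquier theorem (or the Carayol-type statement used by Scholze in \cite[Prop.\,5.7]{SLT}) to conclude that the module is isotypic of type $\rho_{\mathfrak{m}}$, i.e.\,canonically of the form $\rho_{\mathfrak{m}}\otimes_{\mathbb{T}(U^\pp)_{\mathfrak{m}}} N_{V,n}$, and that the evaluation map realises this identification.

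The main obstacle is the limit argument: one has to check that the isotypic decompositions at finite level are compatible under the transition maps $V'\subset V$ and $n+1\twoheadrightarrow n$, and that passing to the inverse limit commutes with tensoring by $\rho_{\mathfrak{m}}$ over $\mathbb{T}(U^\pp)_{\mathfrak{m}}$. This is handled by the fact that each $N_{V,n}$ is a finitely generated $\mathbb{T}(U^\pp)_{\mathfrak{m}}/\varpi_L^n$-module, so the projective systems are automatically Mittag--Leffler; combined with the flatness of $\rho_{\mathfrak{m}}$ over $\mathbb{T}(U^\pp)_{\mathfrak{m}}$ (freeness of rank $2$ after a residually trivial twist, or more directly its local-free structure coming from absolute irreducibility of $\bar\rho$), this ensures that the limit of the isomorphisms is the sought-after isomorphism.
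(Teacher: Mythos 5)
Sur le fond, votre démarche est correcte, mais il faut savoir que la preuve du papier se réduit à une citation : la proposition est obtenue en combinant les prop.\,5.3, 5.4 et 5.8 de \cite{SLT}, la partie $\psi$-isotypique et le passage à ${\rm Hom}_{U_p^{\pp}}(\lambda,-)$ étant formels puisque ces opérations commutent à l'action de $\mathcal{G}_{E,S}$ et de l'algèbre de Hecke. Ce que vous proposez est donc essentiellement une reconstruction de la preuve des résultats cités (argument à la Carayol/Boston--Lenstra--Ribet), et le squelette en est le bon : relation d'Eichler--Shimura aux niveaux finis, Chebotarev, irréductibilité absolue de $\bar\rho$, lemme d'isotypie, puis passage à la limite. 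Deux points sont toutefois à rectifier. D'une part, la relation d'Eichler--Shimura ne « force » pas la trace de ${\rm Frob}_v$ sur le module à être un multiple de ${\rm tr}\,\rho_{\mathfrak{m}}({\rm Frob}_v)$ (une telle identité de traces n'a pas de sens avant d'avoir l'isotypie) ; elle donne que ${\rm Frob}_v$ annule le polynôme $X^2-T_vX+\mathbf{N}(v)S_v$, c'est-à-dire une condition de type Cayley--Hamilton relative à $\rho_{\mathfrak{m}}$, et c'est exactement cette condition (étendue à tout $g\in\mathcal{G}_{E,S}$ par Chebotarev et continuité) qui est l'hypothèse du lemme d'isotypie. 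D'autre part, Nyssen--Rouquier concerne le relèvement des pseudo-caractères en vraies représentations, donc la construction de $\rho_{\mathfrak{m}}$ (c'est le contenu de la prop.\,5.7 de \cite{SLT}) ; le moteur de l'isotypie est le lemme de Carayol/BLR, valable pour des modules arbitraires satisfaisant la condition de polynôme caractéristique (le noyau de $\mathbb{T}_{\mathfrak{m}}[\mathcal{G}_{E,S}]\to M_2(\mathbb{T}_{\mathfrak{m}})$ annule le module, qui devient alors un module sur $M_2(\mathbb{T}_{\mathfrak{m}})$ et l'on conclut par Morita), si bien que votre dévissage par niveaux finis et l'argument de Mittag--Leffler sont superflus : il suffit d'appliquer le lemme à chaque $\wh{H}^1_{\psi,\lambda}(U^{\pp},\mathcal{O}_L/\varpi_L^n)_{\mathfrak{m}}$, puis de passer à la limite projective, ce qui est immédiat car ${\rm Hom}$ commute toujours aux limites projectives et $\rho_{\mathfrak{m}}$, libre de rang $2$ sur $\mathbb{T}(U^{\pp})_{\mathfrak{m}}$, fait commuter le produit tensoriel à ces limites.
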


\begin{proof} Combiner les prop.\,5.3, 5.4, 5.8 de l'article de Scholze \cite{SLT}.
\end{proof}
   
\subsubsection{Preuve de la prop.\,\ref{key}}\label{lg5} On suppose que 
$\bar{r}=\bar{r}_{\chi_1,\chi_2}$, avec $(\chi_1, \chi_2)$ g\'en\'erique. 
Notons pour simplifier $V=V_{\psi,\lambda, \mathfrak{m}}$, $A=R^{{\rm ps}, \zeta}_{\mathcal{B}}$ et $M=M_{\psi,\lambda, \mathfrak{m}}=(k_L\otimes_{A} S_{\psi,\lambda, \mathfrak{m}}^{\vee})^{\vee}$. 
Alors 
$M$ est 
 un $k_L[G]$-module lisse, de longueur finie, dont l'ensemble des sous-quotients irr\'eductibles est $\{I(\chi_1,\chi_2), I(\chi_2,\chi_1)\}$, d'apr\`es la prop.\,\ref{Paskpuzzle}.

 Le th\'eor\`eme d'annulation de Ludwig (r\'esultat principal de \cite{Ludwig}) 
affirme que $S^2(I(\chi_1,\chi_2))=0=S^2(I(\chi_2,\chi_1))$. Comme de plus 
$S^0(I(\chi_1,\chi_2))=0=S^0(I(\chi_2,\chi_1))$ (utiliser la prop.\,4.7 de \cite{SLT} et le fait que $I(\chi_1, \chi_2)$ et 
$I(\chi_2, \chi_1)$ n'ont pas d'invariants non nuls sous ${\rm SL}_2(\qp)$),  
 on en d\'eduit que $S^1(M)$ est une extension successive de 
copies de $S^1(I(\chi_1,\chi_2))$ et $S^1(I(\chi_2,\chi_1))$. D'autre part,
 la prop.\,\ref{LGC} combin\'ee avec le lemme 6.1 de \cite{Pask} montrent que $$S^1(M)\simeq \wh{H}^1_{\psi,\lambda}(U^{\pp}, k_L)_{\mathfrak{m}}[\mathfrak{m}_A].$$
On conclut que $S^1(I(\chi_1,\chi_2))$ est un sous-quotient de $\wh{H}^1_{\psi,\lambda}(U^{\pp}, k_L)_{\mathfrak{m}}[\mathfrak{m}_A]$ en tant que $\mathcal{G}_{\qp}$-module. 

   Puisque 
$\bar{\rho}$ est une globalisation de $\bar{r}$, la prop.\,\ref{isotypic} montre que 
$\wh{H}^1_{\psi,\lambda}(U^{\pp}, k_L)_{\mathfrak{m}}[\mathfrak{m}_A]$ est de la forme $\bar{r}\otimes_{k_L} N'$ pour un $k_L$-espace vectoriel $N'$ muni de l'action triviale de $\mathcal{G}_{\qp}$. Ainsi, $S^1(I(\chi_1,\chi_2))$ est un sous-quotient de $\bar{r}\otimes_{k_L} N'$ en tant que $\mathcal{G}_{\qp}$-module. En utilisant la suite exacte 
$$0\to \chi_1\otimes_{k_L} N'\to \bar{r}\otimes_{k_L} N'\to \chi_2\otimes_{k_L} N'\to 0$$
on voit que tout sous-quotient de $\bar{r}\otimes_{k_L} N'$ 
contient une copie de $\chi_1$ ou de $\chi_2$, et donc si $S^1(I(\chi_1,\chi_2))^{\mathcal{G}_K}\ne 0$ alors 
$\chi_1$ ou $\chi_2$ se factorise par ${\rm Gal}(K/\qp)$, ce qui permet de conclure.

\subsection{Preuve du th\'eor\`eme de longueur finie}\label{lg6}
  Dans ce paragraphe nous mettons ensemble les r\'esultats \'etablis ci-dessus pour d\'emontrer 
le th\'eor\`eme 
de finitude (th.\,\ref{main11} ci-dessus).

   \begin{proof}({\em Du  th.\,\ref{main11}})
   Puisque $\mathcal{M}_{n,K}^{ p}$ n'a qu'un nombre fini de composantes connexes,
$H^0_{\eet}({\cal M}^{ p}_{n,K'},L)$ est un groupe fini pour tout $K'$ avec $[K':K]<\infty$, et
 une application de la suite de Hochschild-Serre permet de d\'eduire le r\'esultat pour $K$
du r\'esultat pour~$K'$. L'argument est analogue (mais plus simple) \`a celui utilis\'e
dans la preuve du th.\,\ref{smooth}.
Il suffit donc de d\'emontrer le th\'eor\`eme pour 
   $K$ assez grand. 
   
   D'apr\`es le th.\,\ref{smooth}, pour une telle extension $K$, le 
   $G$-module 
    $$V:=H^1_{\eet}(\mathcal{M}_{n,K}^{ p}, k_L)^{\vee}$$
    est lisse, de type fini. Comme $V$ est un $G'$-module\footnote{Rappelons que $G'=G/p^{\Z}$.}, par le 
  th.\,\ref{pierre1}, {\it il suffit de montrer}
 que le cosocle de $V$ est de longueur finie, autrement dit 
{\it que, pour tout $k_L[G']$-module lisse irr\'eductible $\pi$, l'espace  
     $${\rm Hom}_{k_L[G']}(V, \pi)\simeq {\rm Hom}^{\rm cont}_{k_L[G']}(\pi^{\vee}, H^1_{\eet}(\mathcal{M}_{n,K}^{ p}, k_L))$$ est de dimension finie sur $k_L$, nul pour presque tout~$\pi$}
 (i.e.~sauf un nombre fini). 
     
Notons que par la preuve du lemme \ref{cc} le caract\`ere central 
d'un $k_L[G']$-module lisse et irr\'eductible $\pi$  
ne prend qu'un nombre fini de valeurs, {\it on peut donc supposer 
le caract\`ere central de tous les $\pi$ fix\'e, \'egal \`a $\delta$}. 

La preuve consiste \`a suivre le diagramme suivant d'espaces:
     $$
     \xymatrix@R=5mm@C=3mm{
     \sm_{{\rm Dr},\infty} \ar[d]^{p^{\Z}}\ar@{-}[rr]^-{\sim}& &     \sm_{{\rm LT},\infty}\ar[ddr]^{\pi_{\rm GH}}_{G}\\
    \sm^{ p}_{{\rm Dr},\infty} \ar[d]^{\check{G}_n} \\
       \sm^{ p}_{n,\C_p} \ar[d]^{\sg_K}& &&\mathbb{P}^1_{\C_p}\\
       \sm^{ p}_{n,K}
       }
     $$
      \begin{enumerate}
     \item On monte de $\sm_{n,K}^{p}$ au niveau infini $\sm_{{\rm Dr},\infty} $ 
de la tour de Drinfeld, ce qui remplace 
${\rm Hom}^{\rm cont}_{k_L[G']}(\pi^{\vee}, H^1_{\eet}(\mathcal{M}_{n,K}^{{p}}, k_L))$
     par ${\rm Hom}^{\rm cont}_{k_L[G]}(\pi^{\vee}, H^1_{\eet}(\mathcal{M}_{{\rm Dr},\infty}, k_L))^{W_K\times \check{G}_n}$.
     \item 
On passe du c\^ot\'e Lubin-Tate, o\`u est d\'efini le foncteur de Scholze,
 via l'isomorphisme 
$ \sm_{{\rm Dr},\infty}\simeq\sm_{{\rm LT},\infty}:={\cal M}_\infty$
de Faltings~\cite{Faltings}.
     \item Le cor.\,\ref{cafe52}
permet de remplacer
${\rm Hom}^{\rm cont}_{k_L[G]}(\pi^{\vee}, H^1_{\eet}(\mathcal{M}_{\infty}, k_L))^{\mathcal{G}_K\times \check{G}_n}$
par $S^1(\pi)^{W_K\times \check{G}_n}$
 \item On \'etudie $S^1(\pi)^{\mathcal{G}_K\times \check{G}_n}$ par des m\'ethodes globales.
     \end{enumerate}

On dit qu'un foncteur $\pi\mapsto V(\pi)$ en $k_L$-modules est {\it d'image finie}
 si $V(\pi)$ est de dimension finie sur $k_L$ pour tout 
$k_L[G']$-module lisse  irr\'eductible $\pi$, et nul pour presque tout tel~$\pi$.
{\it On cherche \`a prouver que 
$\pi\mapsto {\rm Hom}^{\rm cont}_{k_L[G']}(\pi^{\vee}, H^1_{\eet}(\mathcal{M}_{n,K}^{{p}}, k_L))$
est d'image finie.}
\vskip2mm
    $\bullet$  {\em Mont\'ee en niveau infini.}  

     \begin{lemm}\label{lg11}
Il suffit de prouver que le foncteur
$$\pi\mapsto Y(\pi): ={\rm Hom}^{\rm cont}_{k_L[G]}(\pi^{\vee}, H^1_{\eet}(\mathcal{M}_{\infty}, k_L))^{{W}_K\times \check{G}_n}$$
est d'image finie.
     \end{lemm}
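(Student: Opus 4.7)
\emph{Proof proposal for Lemma~\ref{lg11}.} The plan is to climb from $\mathcal{M}_{n,K}^p$ up to $\mathcal{M}_\infty$ along the tower
$$\mathcal{M}_\infty \xrightarrow{p^{\mathbf{Z}}} \mathcal{M}^p_{{\rm Dr},\infty} \xrightarrow{\check{G}_n} \mathcal{M}^p_{n,\mathbf{C}_p} \xrightarrow{\mathcal{G}_K} \mathcal{M}^p_{n,K},$$
at each stage applying the Hochschild--Serre spectral sequence (for a pro-\'etale torsor in the first two cases, for a Galois scalar extension in the third; the continuity and profiniteness needed to handle the inverse limits in degree~$0$ are furnished by the th.\,\ref{smooth} and the arguments of \S\,\ref{smoothie}). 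At each stage I would apply the functor $\mathrm{Hom}^{\rm cont}_{k_L[G']}(\pi^\vee,-)$ and show that every term except the one corresponding to pushing the target further up the tower is of finite image in the sense introduced just before the lemma.

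More precisely, I would first use the short exact sequence of low-degree terms of Hochschild--Serre for $\mathcal{M}^p_{n,\mathbf{C}_p}/\mathcal{M}^p_{n,K}$:
$$0\to H^1(\mathcal{G}_K,H^0_{\eet}(\mathcal{M}^p_{n,\mathbf{C}_p},k_L))\to H^1_{\eet}(\mathcal{M}^p_{n,K},k_L)\to H^1_{\eet}(\mathcal{M}^p_{n,\mathbf{C}_p},k_L)^{\mathcal{G}_K}\to H^2(\mathcal{G}_K,H^0).$$
Since $\pi_0(\mathcal{M}^p_{n,\mathbf{C}_p})$ is a torsor under the finite group $\mathbf{Z}_p^{\dual}/(1+p^n\mathbf{Z}_p)$ (for $F=\mathbf{Q}_p$), the $H^0$ piece is a finite $k_L$-module on which $G'$ acts through a finite quotient of $G'/[G',G']$ (i.e.\ through the determinant). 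Therefore $\mathrm{Hom}^{\rm cont}_{k_L[G']}(\pi^\vee,H^i(\mathcal{G}_K,H^0))$ is nonzero only for the finitely many one-dimensional irreducible $\pi$ whose central character matches, and is of finite $k_L$-dimension on those. The same analysis then applies to the two further Hochschild--Serre sequences, one for the $\check{G}_n$-cover $\mathcal{M}^p_{{\rm Dr},\infty}/\mathcal{M}^p_{n,\mathbf{C}_p}$ and one for the $p^{\mathbf{Z}}$-cover $\mathcal{M}_\infty/\mathcal{M}^p_{{\rm Dr},\infty}$ (here $p^{\mathbf{Z}}$ acts trivially on the Hom because $\pi$ is a $G'$-module, which eliminates the $p^{\mathbf{Z}}$-cohomology contribution in degree~$0$). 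The Faltings isomorphism $\mathcal{M}_{{\rm Dr},\infty,\mathbf{C}_p}\simeq \mathcal{M}_{{\rm LT},\infty,\mathbf{C}_p}=\mathcal{M}_\infty$ identifies the top of the tower with the perfectoid space on which the Scholze functor lives.

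Chaining the three spectral sequences and applying $\mathrm{Hom}^{\rm cont}_{k_L[G']}(\pi^\vee,-)$, one obtains a finite filtration of $\mathrm{Hom}^{\rm cont}_{k_L[G']}(\pi^\vee,H^1_{\eet}(\mathcal{M}^p_{n,K},k_L))$ whose successive quotients are either subquotients of $Y(\pi)$ or subquotients of $\mathrm{Hom}^{\rm cont}_{k_L[G']}(\pi^\vee,V)$ with $V$ a finite-dimensional $k_L[G']$-module on which $G'$ acts through its abelianization (and via a character whose restriction to $p^{\mathbf{Z}}$ is trivial). By the classification recalled in \S\,\ref{qc3} and rem.\,\ref{BL113}, the latter type of Hom is automatically of finite image as a functor of $\pi$. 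Thus if $\pi\mapsto Y(\pi)$ has finite image, so does $\pi\mapsto\mathrm{Hom}^{\rm cont}_{k_L[G']}(\pi^\vee,H^1_{\eet}(\mathcal{M}^p_{n,K},k_L))$, as desired.

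The main technical obstacle will be to make the pro-\'etale Hochschild--Serre spectral sequence work rigorously when the covering group is only profinite (not finite), and in particular to verify that the Mittag-Leffler arguments of the proof of th.\,\ref{compare} extend to the profinite covers considered here so that the $H^0$ terms remain profinite with continuous $G'$-action and the resulting Hom-functors factor through the abelianization of $G'$; once this is in place, the combinatorial reduction to $Y(\pi)$ is straightforward.
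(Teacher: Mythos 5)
Your proposal follows the same three‑stage tower climb (Hochschild--Serre for $\mathcal{G}_K$, then for $\check{G}_n$, then for $p^{\mathbf{Z}}$, with Faltings at the top) that the paper uses, and you correctly identify the framework; but the phrase ``the same analysis then applies to the two further Hochschild--Serre sequences'' papers over two genuine difficulties that are precisely what the paper has to work to control.

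First, for the $\check{G}_n$-cover of $\mathcal{M}^p_{n,\mathbf{C}_p}$ by $\mathcal{M}^p_{{\rm Dr},\infty}$, the $H^0$ is \emph{not} a finite $k_L$-module: $\pi_0(\mathcal{M}^p_{{\rm Dr},\infty})\simeq\mathbf{Q}_p^{\dual}/p^{\mathbf{Z}}$ is infinite, so $H^0_{\eet}(\mathcal{M}^p_{{\rm Dr},\infty},k_L)\simeq\mathcal{C}^0(\mathbf{Q}_p^{\dual}/p^{\mathbf{Z}},k_L)$ is infinite-dimensional. The finiteness argument from the $\mathcal{G}_K$-step (``finitely many connected components $\Rightarrow$ finite $H^0$'') does not carry over; instead one must observe that $\mathcal{C}^0(\mathbf{Q}_p^{\dual}/p^{\mathbf{Z}},k_L)$ is an admissible $\check{G}_n$-representation and that $\check{G}_n$ is a compact, torsion-free $p$-adic Lie group, so that each $H^i(\check{G}_n,H^0)$ is still finite-dimensional. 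Second, for the $p^{\mathbf{Z}}$-cover, your remark that ``$p^{\mathbf{Z}}$ acts trivially on the Hom, which eliminates the $p^{\mathbf{Z}}$-cohomology contribution in degree $0$'' handles the $H^0(p^{\mathbf{Z}},H^1_{\eet}(\mathcal{M}_{{\rm Dr},\infty},k_L))$ side, but does not dispose of the other term $H^1(p^{\mathbf{Z}},H^0_{\eet}(\mathcal{M}_{{\rm Dr},\infty},k_L))$ in the five-term sequence. Here $H^0_{\eet}(\mathcal{M}_{{\rm Dr},\infty},k_L)\simeq\mathcal{C}^0(\mathbf{Q}_p^{\dual},k_L)$, which is an induced module ${\rm Ind}_{\{1\}}^{p^{\mathbf{Z}}}\mathcal{C}^0(\mathbf{Z}_p^{\dual},k_L)$; the vanishing of this $H^1$ requires a Shapiro's lemma computation, not a formal triviality. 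Once these two points are supplied, the reduction to the finiteness of $\pi\mapsto Y(\pi)$ is as you describe.
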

     \begin{proof} On part du $k_L$-module
     $$
      {\rm Hom}^{\rm cont}_{k_L[G']}(\pi^{\vee},   H^1_{\eet}(\mathcal{M}_{n,K}^{{p}}, k_L))
     $$
   et on monte de $\mathcal{M}_{n,K}^{{p}}$ \`a $\mathcal{M}_{\infty}$ en trois \'etapes.
\vskip2mm
\noindent
$\bullet$
{\it \'Etape 1: de $\mathcal{M}_{n,K}^{{p}}$ \`a $\mathcal{M}_{n,\C_p}^{{p}}$.}
On a 
une suite exacte 
      \begin{equation}
      \label{pierwszy}
      0\to H^1(\sg_K, H^0_{\eet}(\mathcal{M}_{n,\C_p}^{{p}}, k_L))\to H^1_{\eet}(\mathcal{M}_{n,K}^{{p}}, k_L)\to H^1_{\eet}(\mathcal{M}_{n,\C_p}^{{p}}, k_L)^{\mathcal{G}_K}.
      \end{equation}
Les fl\`eches dans la suite ci-dessus sont continues et la suite est strictement exacte.
Cela se voit par les m\^emes arguments que
       dans l'\'etape $3$ de la preuve du th.\,\ref{smooth}:
       \begin{itemize}
     \item  On \'ecrit
     des suites exactes analogues pour les ouverts d'un recouvrement Stein de $\mathcal{M}^p_{n,K}$; 
elles existent par le corollaire $3.7.5$ de \cite{dJvdp} et sont strictement exactes car tous les termes sont naturellement munis de la topologie discr\`ete.
     \item    
Comme le premier terme ne d\'epend pas de l'ouvert et donc est  Mittag-Leffler,
on voit que la suite exacte (\ref{pierwszy}) est la limite des suites exactes pour les ouverts;
c'est donc une suite strictement exacte de modules prodiscrets.
     \end{itemize}      
      En appliquant ${\rm Ho}(-):={\rm Hom}^{\rm cont}_{k_L[G]}(\pi^{\vee},-)$ 
\`a (\ref{pierwszy}),
 on obtient donc une suite exacte 
      \begin{align*}
      0\to {\rm Ho}( H^1(\sg_K, H^0_{\eet}(\mathcal{M}_{n,\C_p}^{{p}}, k_L)))\to 
{\rm Ho}(H^1_{\eet}(\mathcal{M}_{n,K}^{{p}}, k_L))
         \to {\rm Ho}(H^1_{\eet}(\mathcal{M}_{n,\C_p}^{{p}}, k_L))^{\mathcal{G}_K}.
        \end{align*}
      Puisque $\mathcal{M}_{n,\C_p}^{{p}}$ n'a qu'un nombre fini de composantes connexes, il suffit de montrer que 
      $\pi\mapsto{\rm Hom}^{\rm cont}_{k_L[G']}(\pi^{\vee},H^1_{\eet}(\mathcal{M}_n^{{p}}, k_L))^{\mathcal{G}_K}$ 
est d'image finie.
      
\vskip2mm
\noindent
$\bullet$
{\it \'Etape 2: 
 de $\mathcal{M}_{n,\C_p}^{{p}}$ \`a $\mathcal{M}_{{\rm Dr},\infty}^{{p}}$.}  
Nous allons maintenant utiliser un argument similaire pour le
$\check{G}_n$-torseur pro\'etale $\mathcal{M}_{{\rm Dr},\infty}^{{p}}\to \mathcal{M}_{n,\C_p}^{{p}}$. 
En utilisant un argument analogue \`a celui de la preuve du th.\,\ref{compare}, 
 on obtient une suite spectrale 
      $$E_2^{i,j}=H^i(\check{G}_n, H^j_{\eet}(\mathcal{M}_{{\rm Dr},\infty}^{{p}}, k_L))\Longrightarrow H^{i+j}_{\eet}(\mathcal{M}_{n,\C_p}^{{p}}, k_L).$$
Celle-ci fournit une suite strictement exacte (m\^eme argument que dans l'\'etape $1$) 
$$0\to H^1(\check{G}_n, H^0_{\eet}(\mathcal{M}_{{\rm Dr},\infty}^{{p}}, k_L))\to H^1_{\eet}(\mathcal{M}_{n,\C_p}^{{p}}, k_L)\to H^1_{\eet}(\mathcal{M}_{{\rm Dr},\infty}^{{p}}, k_L)^{\check{G}_n}.$$
En appliquant ${\rm Ho}(-):={\rm Hom}^{\rm cont}_{k_L[G']}(\pi^{\vee},-)^{\sg_K}$, 
on obtient une suite exacte 
\begin{align*}
      0\to {\rm Ho}(H^1(\check{G}_n, H^0_{\eet}(\mathcal{M}_{{\rm Dr},\infty}^{{p}}, k_L)))^{\sg_K}\to 
    {\rm Ho}(H^1_{\eet}(\mathcal{M}_{n,\C_p}^{{p}}, k_L))^{\sg_K}
         \to {\rm Ho}(H^1_{\eet}(\mathcal{M}_{{\rm Dr},\infty}^{{p}}, k_L))^{\mathcal{G}_K\times\check{G}_n}.
        \end{align*}

      Contrairement \`a $\pi_0(\mathcal{M}_{n,\C_p}^{{p}})$, l'espace 
      $\pi_0(\mathcal{M}_{{\rm Dr},\infty}^{{p}})$ est infini, donc on doit faire plus attention. 
       La description des composantes connexes g\'eom\'etriques de la tour de Lubin-Tate \cite{Strauchgeocc} montre que 
   $\pi_0(\mathcal{M}_{{\rm Dr},\infty})\simeq \qp^{\dual}$,  les groupes $\check{G}$ et $G$ agissant par la norme r\'eduite et le d\'eterminant, respectivement. 
    En prenant les invariants par $p\in Z(G)$, on voit que $\pi_0(\mathcal{M}_{{\rm Dr},\infty}^{{p}})\simeq \qp^{\dual}/p^{\Z}$.
Donc
   \begin{equation}
   \label{ciemno11}
   H^0_{\eet}(\mathcal{M}_{{\rm Dr},\infty}, 
   k_L)\simeq \mathcal{C}^0(\qp^{\dual}, k_L),\quad 
            H^0_{\eet}(\mathcal{M}_{{\rm Dr},\infty}^{{p}}, k_L)\simeq \mathcal{C}^0(\qp^{\dual}/p^{\Z}, k_L).
            \end{equation}
Comme $\mathcal{C}^0(\qp^{\dual}/p^{\Z}, k_L)$ est une repr\'esentation lisse admissible de 
$\check{G}_n$, et comme $\check{G}_n$ est un groupe de Lie $p$-adique compact, sans torsion, 
les $ H^i(\check{G}_n, H^0_{\eet}(\mathcal{M}_{{\rm Dr},\infty}^{{p}}, k_L))$  
sont de dimension finie sur $k_L$, pour tout $i\geq 0$ (cf.~\cite[ lemma 3.13]{SLT}).
      
Il suffit donc de v\'erifier que
$\pi\mapsto{\rm Hom}_{k_L[G']}(\pi^{\vee}, H^1_{\eet}(\mathcal{M}_{{\rm Dr},\infty}^{{p}}, k_L))^{\mathcal{G}_K\times \check{G}_n}$
est d'image finie.
      
\vskip2mm
\noindent
$\bullet$
{\it \'Etape 3:
 de $\mathcal{M}_{{\rm Dr},\infty}^{{p}}$ \`a $\mathcal{M}_{\infty}$.}  
Enfin, on utilise un argument similaire pour le
      $p^{\Z}$-torseur \'etale $\mathcal{M}_{{\rm Dr},\infty}\to \mathcal{M}_{{\rm Dr},\infty}^{{p}}$. 
En utilisant un argument analogue \`a celui de la preuve du th.\,\ref{compare} 
(notons que $p^{\Z}$ est un groupe localement profini),  on obtient une suite spectrale 
      $$E_2^{i,j}=H^i(p^{\Z}, H^j_{\eet}(\mathcal{M}_{{\rm Dr},\infty}, k_L))\Longrightarrow H^{i+j}_{\eet}(\mathcal{M}_{{\rm Dr},\infty}^{{p}}, k_L).$$
Celle-ci fournit une suite strictement exacte (m\^eme argument que dans l'\'etape $1$) 
      \begin{equation}\label{pic1}0\to H^1(p^{\Z}, H^0_{\eet}(\mathcal{M}_{{\rm Dr},\infty}, k_L))\to H^1_{\eet}(\mathcal{M}_{{\rm Dr},\infty}^{{p}}, k_L)\to 
      H^0(p^{\Z},H^1_{\eet}(\mathcal{M}_{{\rm Dr},\infty}, k_L))\to 0
      \end{equation}
      Notons que  \begin{align*}
 H^1(p^{\mathbf{Z}}, H^0_{\eet}(\mathcal{M}_{{\rm Dr},\infty}, k_L))) 
&\simeq  H^1(p^{\mathbf{Z}},\mathcal{C}^0(\qp^{\dual}, k_L))\\
&\simeq  H^1(p^{\mathbf{Z}}, {\rm Ind}^{p^{\Z}}_{\{1\}} \mathcal{C}^0(\Z_p^{\dual}, k_L))
 \simeq H^1(\{1\},\mathcal{C}^0(\Z_p^{\dual}, k_L))=0,
 \end{align*}
o\`u le premier isomorphisme vient de (\ref{ciemno11}) 
et le troisi\`eme du lemme de Shapiro~\cite[prop.\,IX.2.3]{BW}. 

La seconde fl\`eche dans (\ref{pic1}) est donc un isomorphisme, et comme $p^\Z$ agit trivialement sur $\pi^\vee$,
on a $${\rm Hom}_{k_L[G]}(\pi^{\vee}, H^0(p^\Z,H^1_{\eet}(\mathcal{M}_{{\rm Dr},\infty}, k_L)))
={\rm Hom}_{k_L[G]}(\pi^{\vee}, H^1_{\eet}(\mathcal{M}^p_{{\rm Dr},\infty}, k_L)).$$
On peut remplacer
$\mathcal{M}_{{\rm Dr},\infty}$ par $\sm_{\infty}$ gr\^ace \`a l'isomorphisme
de Faltings, ce qui permet de conclure.
\end{proof}

      $\bullet$  {\em Passage au foncteur de Scholze.}---
 Nous allons \'etudier l'espace $Y(\pi)$ via la suite spectrale \ref{SS}.
\begin{lemm}\label{lg12}
Il suffit de d\'emontrer que le foncteur
$\pi\mapsto S^1(\pi)^{\mathcal{G}_K\times \check{G}_n}$ est d'image finie.
      \end{lemm}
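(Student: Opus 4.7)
Le plan est d'utiliser directement le cor.\,\ref{cafe52}, qui fournit pour toute $k_L[G]$-repr\'esentation lisse admissible irr\'eductible $\pi$ un morphisme naturel
$$\phi_\pi: S^1(\pi) \longrightarrow {\rm Hom}^{\rm cont}_{k_L[G]}(\pi^{\vee}, H^1_{\eet}(\mathcal{M}_{\infty}, k_L))$$
dont le noyau et le conoyau sont des $k_L$-espaces de dimension finie, m\^eme nuls si $\pi$ n'appartient pas \`a un twist $\{\chi, {\rm St}\otimes\chi, I(\chi,\chi\epsilon)\}$ du bloc de la Steinberg. La construction de $\phi_\pi$ via la suite spectrale du cor.\,\ref{SS} montre qu'il est $\check G_n$-\'equivariant et $W_K$-\'equivariant, et comme l'action de $W_K$ sur $S^1(\pi)$ s'\'etend par continuit\'e en une action de $\mathcal{G}_K$ d'apr\`es le th.\,\ref{Good}(3) (et comme $W_K$ est dense dans $\mathcal{G}_K$), on a $S^1(\pi)^{W_K\times \check G_n}=S^1(\pi)^{\mathcal{G}_K\times \check G_n}$; $\phi_\pi$ est donc $\mathcal{G}_K\times \check G_n$-\'equivariant.

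Je distinguerais alors deux cas en prenant les invariants par $\mathcal{G}_K\times\check G_n$ dans la factorisation de $\phi_\pi$ \`a travers son image. Si $\pi$ n'appartient pas \`a un twist du bloc de la Steinberg, $\phi_\pi$ est un isomorphisme, donc $Y(\pi)\simeq S^1(\pi)^{\mathcal{G}_K\times \check G_n}$ et l'hypoth\`ese d'image finie sur la famille de droite fournit directement celle de $Y(\pi)$ sur cette classe de $\pi$. Dans le cas contraire, en scindant $\phi_\pi$ en deux suites exactes courtes et en appliquant le foncteur (exact \`a gauche) des invariants, on obtient une inclusion de $S^1(\pi)^{\mathcal{G}_K\times \check G_n}/(\ker\phi_\pi)^{\mathcal{G}_K\times \check G_n}$ dans $Y(\pi)$, puis une inclusion de $Y(\pi)$ dans une extension de $({\rm coker}\,\phi_\pi)^{\mathcal{G}_K\times \check G_n}$ par ce quotient; puisque $\ker\phi_\pi$ et ${\rm coker}\,\phi_\pi$ sont de dimension finie sur $k_L$, il en est de m\^eme de leurs invariants, et donc $\dim_{k_L}Y(\pi)$ diff\`ere de $\dim_{k_L}S^1(\pi)^{\mathcal{G}_K\times \check G_n}$ d'une quantit\'e born\'ee.

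Il reste \`a observer que le caract\`ere central $\delta$ peut \^etre fix\'e (conform\'ement \`a la r\'eduction effectu\'ee au d\'ebut de la preuve du th.\,\ref{main11}), et qu'alors la rem.\,\ref{BL113} garantit qu'il n'y a qu'un nombre fini de $\pi$ irr\'eductibles \`a caract\`ere central $\delta$ qui ne soient pas des s\'eries principales g\'en\'eriques, et en particulier un nombre fini de $\pi$ appartenant \`a un twist du bloc de la Steinberg. La contribution des $\pi$ exceptionnelles est donc globalement contr\^ol\'ee, et l'image finie de $\pi\mapsto S^1(\pi)^{\mathcal{G}_K\times \check G_n}$ entra\^{i}ne celle de $\pi\mapsto Y(\pi)$. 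Aucune difficult\'e s\'erieuse n'est attendue: le point le plus d\'elicat est essentiellement rassembl\'e dans le cor.\,\ref{cafe52} et sa version fine pour le bloc de la Steinberg (qui s'appuie elle-m\^eme sur le calcul de Fust~\cite{Fust} de la cohomologie de ${\rm SL}_2(\Q_p)$), d\'ej\`a \'etabli.
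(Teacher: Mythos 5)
Votre d\'emarche est essentiellement celle du texte: on part de la suite exacte \`a quatre termes issue du cor.\,\ref{cafe52} (la suite~(\ref{seq111})) et on prend les invariants sous $\mathcal{G}_K\times\check G_n$. Il y a cependant une impr\'ecision dans le d\'ecompte des dimensions qui m\'erite d'\^etre corrig\'ee.

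Si l'on scinde $\phi_\pi$ via son image $I$, on a deux suites exactes courtes $0\to\ker\phi_\pi\to S^1(\pi)\to I\to 0$ et $0\to I\to T\to \coker\phi_\pi$ (o\`u $T$ d\'esigne le terme cible). En prenant les $H$-invariants ($H=\mathcal{G}_K\times\check G_n$), la premi\`ere donne
$$0\to (\ker\phi_\pi)^H\to S^1(\pi)^H\to I^H\to H^1(H,\ker\phi_\pi),$$
de sorte que $I^H$ contient $S^1(\pi)^H/(\ker\phi_\pi)^H$ mais peut \^etre strictement plus grand: le quotient est contr\^ol\'e par $H^1(H,\ker\phi_\pi)$, terme que vous ne mentionnez pas. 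La seconde suite donne $0\to I^H\to Y(\pi)\to(\coker\phi_\pi)^H$, si bien que $Y(\pi)$ est une extension d'un sous-espace de $(\coker\phi_\pi)^H$ par $I^H$ tout entier, et non pas par $S^1(\pi)^H/(\ker\phi_\pi)^H$ comme vous l'\'ecrivez. Pour que la borne annonc\'ee tienne, il faut donc ajouter l'observation que $H^1(\mathcal{G}_K\times\check G_n,\ker\phi_\pi)$ est de dimension finie sur $k_L$ d\`es que $\ker\phi_\pi$ l'est (ce qui est vrai: $\check G_n$ est un groupe de Lie $p$-adique compact et $H^1(\mathcal{G}_K,-)$ d'un module fini est fini). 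C'est exactement ce que fait le texte: on y commence par montrer que $\pi\mapsto H^0(H,A(\pi))$ est d'image finie, o\`u $A(\pi)=\coker(f_\pi)=S^1(\pi)/V_1(\pi)$ joue le r\^ole de $I$, en invoquant la finitude de $H^1(H,V_1(\pi))$, puis on conclut avec $0\to A(\pi)^H\to Y(\pi)\to V_2(\pi)^H$. \`A cette correction pr\`es, votre argument est correct et co\"{\i}ncide avec celui du texte; la distinction de cas (blocs de la Steinberg ou non) que vous introduisez n'apporte pas de simplification r\'eelle puisque les deux cas se traitent uniform\'ement avec les m\^emes suites exactes.
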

      \begin{proof} 
On suppose que $\pi\mapsto S^1(\pi)^{\mathcal{G}_K\times \check{G}_n}$ est d'image finie;
on va montrer qu'il en est de m\^eme de $\pi\mapsto Y(\pi)$.
      
      La suite exacte (\ref{seq11}) de la preuve du cor.\,\ref{cafe52} s'\'ecrit
   \begin{align}
   \label{seq111}
   0  \to V_1(\pi)
\stackrel{f_{\pi}}{\to} S^1(\pi)
 {\to} {\rm Hom}_{k_L[G]}^{\rm cont}(\pi^{\vee}, H^1_{\eet}(\mathcal{M}_{\infty},k_L))\to V_2(\pi)
   \end{align}
o\`u $\pi\mapsto V_1(\pi)$ et $\pi\mapsto V_2(\pi)$, d\'efinis dans la
preuve du cor.\,\ref{cafe52}, sont d'images finies.
    Soit $A(\pi)$ le conoyau de la fl\`eche $f_{\pi}$ de (\ref{seq111}). 
On a une suite exacte
     \begin{align*}
     0\to  & H^0(W_K\times \check{G}_n,V_1(\pi))\to H^0(W_K\times \check{G}_n,S^1(\pi))\stackrel{f_{\pi}}{\to} H^0(W_K\times \check{G}_n,A(\pi))\\
      & \to H^1(W_K\times \check{G}_n,V_1(\pi)).
     \end{align*}
Nous pr\'etendons que $\pi\mapsto H^0(W_K\times \check{G}_n,A(\pi))$ est d'image finie.
En effet, ceci est, par hypoth\`ese, vrai pour
 $\pi\mapsto H^0(W_K\times \check{G}_n,S^1(\pi))$. Il suffit donc de v\'erifier
que c'est vrai aussi pour $H^1(W_K\times \check{G}_n,V_1(\pi))$, ce qui est clair
 car $ \check{G}_n$ est un groupe de Lie $p$-adique compact et $\pi\mapsto V_1(\pi)$ est d'image finie.
   
La suite exacte
$$0\to H^0(W_K\times \check{G}_n,A(\pi))\to Y(\pi)\to 
H^0(W_K\times \check{G}_n,V_2(\pi))$$
ce que l'on sait de $V_2(\pi)$,
et ce que l'on vient de d\'emontrer sur $H^0(W_K\times \check{G}_n,A(\pi))$,
impliquent que $\pi\mapsto Y(\pi)$ est d'image finie, ce que l'on voulait.
\end{proof}

     $\bullet$  {\em  Analyse de $S^1(\pi)^{\mathcal{G}_K\times \check{G}_n}$.} 
Soit  $\pi$ un $k_L[G']$-module, lisse irr\'eductible. Il r\'esulte du lemme~\ref{cc} et 
     du \no\ref{hecke2} que le $G$-module $\pi$ est admissible, 
et donc le th\'eor\`eme de finitude de Scholze \ref{Good} 
permet de conclure que $S^1(\pi)^{\mathcal{G}_K\times \check{G}_n}$ est bien de dimension finie sur $k_L$. 
     
Il reste \`a prouver que $S^1(\pi)^{\mathcal{G}_K\times \check{G}_n}\neq 0$ 
pour seulement un nombre fini de $k_L[G']$-modules $\pi$, lisses et irr\'eductibles. 
     Pour tout tel $\pi$, posons 
      $$\bar{\pi}=\overline{\mathbf{F}}_p\otimes_{k_L} \pi.$$
      Le morphisme naturel $\overline{\mathbf{F}}_p\otimes_{k_L} \mathcal{F}_{\pi}\to 
     \mathcal{F}_{\bar{\pi}}$ est un isomorphisme (cela peut se tester sur les fibres g\'eom\'etriques) et on a un isomorphisme
\'equivariant
      \begin{align}
     \label{iso1}
          S^1(\bar{\pi}) \simeq \overline{\mathbf{F}}_p\otimes_{k_L}  S^1(\pi)
     \end{align}
obtenu en composant les isomorphismes \'equivariants
   \begin{align*}
     S^1(\bar{\pi})  =H^1_{\eet}(\mathbb{P}^1_{\C_p},  \mathcal{F}_{\bar{\pi}})\simeq H^1_{\eet}(\mathbb{P}^1_{\C_p}, \overline{\mathbf{F}}_p{\otimes_{k_L}} \mathcal{F}_{\pi})
         \simeq \overline{\mathbf{F}}_p{\otimes_{k_L}}
 H^1_{\eet}(\mathbb{P}^1_{\C_p}, \mathcal{F}_{\pi})\simeq \overline{\mathbf{F}}_p
{\otimes_{k_L}}  S^1(\pi),
        \end{align*}
(pour le troisi\`eme isomorphisme on a utilis\'e le fait que,
$\mathbb{P}^1_{\C_p}$ \'etant quasi-compact, la cohomologie 
de $\mathbb{P}^1_{\C_p}$ commute aux limites inductives filtrantes).
     
      Soit $M$ l'ensemble des (classes d'isomorphisme de) $k_L[G']$-modules lisses irr\'eductibles 
      $\pi$ pour lesquels $S^1(\pi)^{\mathcal{G}_K\times \check{G}_n}\ne 0$ et soit 
      $T$ l'ensemble 
      des (classes d'isomorphisme de) $\overline{\mathbf{F}}_p[G']$-modules lisses irr\'eductibles 
     $\pi$ tels que $S^1(\pi)^{\mathcal{G}_K}\ne 0$. 
       La prop.\,\ref{key} combin\'ee avec la rem.\,\ref{BL113} montrent que $T$ est fini.        
       
     Si $\pi\in M$, l'isomorphisme (\ref{iso1})  montre que 
 $\bar{\pi}$ poss\`ede un facteur irr\'eductible dans~$T$. 
 Si $M$ est infini, il existe 
 $\pi_1,\pi_2\in M$ non isomorphes 
 et tels que $\overline{\pi}_1$ et $\overline{\pi}_2$ poss\`edent un facteur irr\'eductible commun. 
 Comme $\overline{\pi}_1$ et $\overline{\pi}_2$ sont semi-simples (lemme \ref{irred}), 
il s'ensuit que ${\rm Hom}_{\overline{\mathbf{F}}_p[G]}(\bar{\pi}_1, \bar{\pi}_2)\ne 0$. 
Le lemme~\ref{51} montre que 
 ${\rm Hom}_{k_L[G]}({\pi}_1, {\pi}_2)\ne 0$, une contradiction. 
Donc $M$ est fini, ce qui finit la preuve du th\'eor\`eme.  
        \end{proof}
 
 \begin{rema} Dans la preuve du th.\,\ref{main11}, 
on n'a utilis\'e l'action de $\G_K$ que pour montrer la nullit\'e de 
${\rm Hom}_{k_L[G']}^{\rm cont}(\pi^\vee, H^1_{\eet}(\sm_{n,K}^p,k_L))$ pour presque tout $\pi$,
pas pour la finitude de ce module. La m\^eme preuve (partant de $\sm_{n,\C_p}^p$ au lieu de $\sm_{n,K}^p$)
permet donc de montrer que:
\begin{coro}\label{lg13}
{\it ${\rm Hom}_{k_L[G']}^{\rm cont}(\pi^\vee, H^1_{\eet}(\sm_{n,\C_p}^p,k_L))$ est de dimension finie
pour tout $k_L[G']$-module $\pi$ lisse et admissible}.
\end{coro}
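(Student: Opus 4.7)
The plan is to retrace the proof of Theorem~\ref{main11} almost verbatim, simply replacing $\sm_{n,K}^p$ by $\sm_{n,\C_p}^p$ throughout and skipping those steps that genuinely used the Galois action $\G_K$. Inspection of that proof shows that the action of $\G_K$ was invoked only twice: in Step~1 of Lemma~\ref{lg11} (the Hochschild-Serre descent $\sm_{n,\C_p}^p \to \sm_{n,K}^p$), and at the very end, to argue via the global Proposition~\ref{key} that the relevant $\Hom$ group vanishes for all but finitely many $\pi$. For the corollary we need only finiteness, not the vanishing-for-almost-all-$\pi$ half, so the Galois input can be dispensed with entirely.

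Concretely, I would first mimic Steps~2 and~3 of the proof of Lemma~\ref{lg11}, climbing from $\sm_{n,\C_p}^p$ through the $\check G_n$-cover $\sm_{{\rm Dr},\infty}^p$ and then through the $p^{\mathbf{Z}}$-cover $\sm_{{\rm Dr},\infty}$, and finally transporting to the Lubin-Tate side via the Faltings isomorphism $\sm_{{\rm Dr},\infty}\simeq \sm_\infty$. The correction terms arising in the associated Hochschild-Serre sequences involve $H^i(\check G_n, H^0_{\eet}(\sm_{{\rm Dr},\infty}^p,k_L))$ and $H^i(p^{\mathbf{Z}}, H^0_{\eet}(\sm_{{\rm Dr},\infty},k_L))$; these are controlled exactly as in the original proof, using that $H^0_{\eet}(\sm_{{\rm Dr},\infty}^p,k_L)\simeq \mathcal{C}^0(\Q_p^\dual/p^{\mathbf{Z}},k_L)$ is an admissible $\check G_n$-module, and that cohomology of $p^{\mathbf{Z}}$ with coefficients in $\mathcal{C}^0(\Q_p^\dual,k_L)$ vanishes by Shapiro. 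This reduces the statement to showing that
$$Y(\pi) := \Hom_{k_L[G]}^{\rm cont}\big(\pi^\vee, H^1_{\eet}(\sm_\infty,k_L)\big)^{\check G_n}$$
is finite-dimensional over $k_L$ for every smooth admissible $\pi$.

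Next, I would apply the spectral sequence of Corollary~\ref{SS}, which produces an exact sequence
$$0 \to V_1(\pi) \to S^1(\pi) \to \Hom_{k_L[G]}^{\rm cont}\big(\pi^\vee, H^1_{\eet}(\sm_\infty,k_L)\big) \to V_2(\pi),$$
with $V_i(\pi)\simeq H^i(\SL_2(\Q_p),\pi)$ by Shapiro's lemma (as in the proof of Corollary~\ref{cafe52}). Taking $\check G_n$-invariants reduces the problem to bounding $S^1(\pi)^{\check G_n}$ and the $V_i(\pi)^{\check G_n}$. For the first term, the admissibility part of Theorem~\ref{Good}(1) tells us that $S^1(\pi)$ is an admissible $\check G$-module, hence $S^1(\pi)^{\check G_n}$ is finite-dimensional since $\check G_n\subset \check G$ is compact open. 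For the terms $V_i(\pi)$, a d\'evissage from the irreducible case (using that admissibility is preserved under subquotients and that $\check G_n$-invariants are left-exact on admissible modules) together with the finiteness results of Fust~\cite{Fust} yields that $V_i(\pi)$ is finite-dimensional for admissible $\pi$, so a fortiori so is $V_i(\pi)^{\check G_n}$.

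Combining these bounds via the displayed exact sequence gives the finiteness of $Y(\pi)$, completing the argument. The main point requiring attention is the d\'evissage step for Fust's result: one must verify that $H^i(\SL_2(\Q_p),\pi)$ is finite-dimensional for an \emph{arbitrary} smooth admissible $\pi$ (with central character, which may be ensured after restricting to a finite-index subgroup by Lemma~\ref{cc}), and not just for irreducible ones. This follows by induction on the length of a suitable filtration of $\pi$ by admissible subrepresentations of finite type, and does not require any further geometric input.
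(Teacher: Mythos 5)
Your proposal follows the paper's own route: the paper proves the corollary precisely by observing that the proof of th.~\ref{main11} works verbatim starting from $\sm^p_{n,\C_p}$ instead of $\sm^p_{n,K}$, the Galois action being needed only for the vanishing for almost all $\pi$; your climb through the $\check G_n$- and $p^{\mathbf{Z}}$-covers, the Faltings isomorphism, the spectral sequence of cor.~\ref{SS}, Scholze's admissibility of $S^1(\pi)$ over $\check G$ and Fust's finiteness for $H^i(\SL_2(\Q_p),\pi)$ is exactly that argument.

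The one step that does not hold up as written is your final paragraph. Lemma~\ref{cc} provides central characters only for \emph{irreducible} $G'$-modules; restricting an admissible $\pi$ to a finite-index open subgroup does not produce a central character; and an admissible smooth representation need not admit a finite filtration by finitely generated admissible subrepresentations, so the proposed induction is not available. What the Fust input actually requires is that $\pi$ be admissible as an $\SL_2(\Q_p)$-representation, and this is not automatic for an admissible $G'$-module without central character: for instance $\mathcal{C}^\infty(\Q_p^\dual/p^{2\mathbf{Z}},k_L)\circ\det$ is admissible over $G'$ but has infinite-dimensional $\SL_2(\Q_p)$-invariants. Note that only $H^1$ and $H^2$ of $\SL_2(\Q_p)$ enter (so $H^0$ causes no harm), that Scholze's th.~\ref{Good} needs no central character, and that in the only place the corollary is used (th.~\ref{factor3}) the representation $\pi$ has finite length; there the required finiteness of $H^1(\SL_2(\Q_p),\pi)$ and $H^2(\SL_2(\Q_p),\pi)$ follows by d\'evissage along the finitely many Jordan--H\"older constituents (each irreducible, hence with central character by Lemma~\ref{cc} and admissible over $\SL_2(\Q_p)$, as in th.~\ref{scholzevariation}) and the long exact sequence in continuous cohomology. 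So either restrict the statement you prove to $\pi$ with central character or of finite length, or supply a genuine argument for the finiteness of $H^i(\SL_2(\Q_p),\pi)$, $i=1,2$, for a general admissible $G'$-module.
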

\end{rema}

\subsection{Compl\'ements}\label{lg66}
  La preuve du th\'eor\`eme de finitude a d'autres applications; nous en donnons trois ci-dessous.

\subsubsection{Multiplicit\'es des repr\'esentations de $G$}
La premi\`ere application est valable pour $F$ quelconque:

  \begin{theo}\label{scholzevariation}
  Si $\pi$ est une $k_L$-repr\'esentation lisse admissible de $\gl_2(F)$, \`a caract\`ere central, alors 
  ${\rm Hom}_{k_L[G]}^{\rm cont}(\pi^{\vee}, H^1_{\eet}(\mathcal{M}_{n,\C_p},k_L))$ est de dimension finie sur $k_L$.
  \end{theo}

\begin{proof}
L'argument utilis\'e dans la preuve du cor.\,\ref{cafe52} n'utilise pas l'hypoth\`ese $F=\Q_p$ implicite dans ce corollaire, et fournit une suite exacte 
$$0\to H^1({\rm SL}_2(F), \pi)\to S^1(\pi)\to {\rm Hom}_{k_L[G]}^{\rm cont}(\pi^{\vee}, H^1_{\eet}(\mathcal{M}_{\infty}, k_L))\to H^2({\rm SL}_2(F), \pi).$$
Puisque $\pi$ est admissible en tant que $\gl_2(F)$-repr\'esentation et poss\`ede un caract\`ere central (forc\'ement trivial sur un sous-groupe ouvert de $F^{\dual}$), $\pi$ est admissible en tant que 
${\rm SL}_2(F)$-repr\'esentation lisse. Le lemme 6.3 de \cite{Fust} montre alors que 
$H^i({\rm SL}_2(F), \pi)$ est de dimension finie sur $k_L$ pour tout $i$. Puisque 
$S^1(\pi)$ est admissible en tant que $\check{G}$-repr\'esentation par le th\'eor\`eme de finitude de Scholze, on en d\'eduit que ${\rm Hom}_{k_L[G]}^{\rm cont}(\pi^{\vee}, H^1_{\eet}(\mathcal{M}_{\infty}, k_L))^{\check{G}_n}$ est de dimension finie sur $k_L$. 
D'autre part, les arguments (purement g\'eom\'etriques et donc
valables aussi quand $F\ne \Q_p$) utilis\'es dans les \'etapes $1-3$ de la preuve du lemme \ref{lg11}
fournissent une suite exacte 
$$0\to H^1(\check{G}_n, \mathcal{C}^0(F^{\times}, k_L))\to H^1_{\eet}(\mathcal{M}_{n,\C_p},k_L)\to 
H^1_{\eet}(\mathcal{M}_{\infty}, k_L)^{\check{G}_n}.$$
En appliquant le foncteur ${\rm Hom}_{k_L[G]}^{\rm cont}(\pi^{\vee}, -)$ \`a cette suite exacte 
et en tenant compte de ce que l'on vient de d\'emontrer, il suffit de v\'erifier que 
$$\dim_{k_L} {\rm Hom}_{k_L[G]}^{\rm cont}(\pi^{\vee}, H^1(\check{G}_n, \mathcal{C}^0(F^{\times}, k_L)))<\infty.$$
Le $k_L$-espace vectoriel 
$W=H^1(\check{G}_n, \mathcal{C}^0(\mathcal{O}_F^{\times}, k_L))$ est de dimension finie (par admissibilit\'e de 
la $\check{G}$-repr\'esentation $\mathcal{C}^0(\mathcal{O}_F^{\times}, k_L)$) et 
si $\varpi$ est une uniformisante de $F$,
$$H^1(\check{G}_n, \mathcal{C}^0(F^{\times}, k_L))=H^1(\check{G}_n, {\rm Ind}_{\{1\}}^{\varpi^{\mathbf{Z}}} \mathcal{C}^0(\mathcal{O}_F^{\times}, k_L))={\rm Ind}_{\{1\}}^{\varpi^{\mathbf{Z}}}  W.$$
On en d\'eduit une injection 
$${\rm Hom}_{k_L[G]}^{\rm cont}(\pi^{\vee}, H^1(\check{G}_n, \mathcal{C}^0(F^{\times}, k_L)))
\hookrightarrow (\pi\otimes_{k_L} W)^{{\rm SL}_2(F)},$$ 
et le membre de droite est de dimension finie sur $k_L$
car $\pi$ est admissible en tant que 
${\rm SL}_2(F)$-repr\'esentation et $W$ est de dimension finie sur $k_L$. 
\end{proof}

\subsubsection{Multiplicit\'e des repr\'esentations de $\G_{\Q_p}$}
 Si $\bar{\rho}: \mathcal{G}_{\Q_p}\to {\rm GL}_d(k_L)$ est une repr\'esentation continue, on pose
 $$\bPi^{\rm geo}_n(\bar{\rho}):={\rm Hom}_{k_L[\mathcal{G}_{\Q_p}]}(\bar{\rho}, H^1_{\eet}(\mathcal{M}_{n,\C_p}^{p},k_L))^{\vee}.$$
 Le reste de ce num\'ero est consacr\'e \`a la preuve du r\'esultat suivant:
 
\begin{theo}\label{pointwise2} Soit 
   $\bar{\rho}: \mathcal{G}_{\Q_p}\to {\rm GL}_d(k_L)$ une repr\'esentation continue.
   
   {\rm a)} $\bPi^{\rm geo}_n(\bar{\rho})$ est un $G$-module lisse, de longueur finie.   
   
   {\rm b)} Si $d>2$ et si $\bar{\rho}$ est absolument irr\'eductible, alors $\bPi^{\rm geo}_n(\bar{\rho})=0$.
   
 \end{theo}
 
  \begin{proof}
a) Analogue \`a la preuve du cor.\,\ref{pointwise1}, gr\^ace au th.\,\ref{main11}. 

b) Supposons que $\bPi^{\rm geo}_n(\bar{\rho})\neq 0$, 
et prenons un quotient irr\'eductible $\pi$ de $\bPi^{\rm geo}_n(\bar{\rho})$. Alors 
$\pi^{\vee}$ s'injecte dans ${\rm Hom}_{k_L[\mathcal{G}_{\Q_p}]}(\bar{\rho}, H^1_{\eet}(\mathcal{M}_{n,\C_p}^{p},k_L))$ et donc 
$\bar{\rho}$ s'injecte dans ${\rm Hom}_G(\pi^{\vee}, H^1_{\eet}(\mathcal{M}_{n,\C_p}^{p},k_L))$. Les m\^emes arguments
que dans la preuve du lemme \ref{lg11} combin\'es au fait que $\bar{\rho}$ ne peut pas intervenir dans 
$H^0_{\eet}(\mathcal{M}_{\infty}, k_L)$ et \`a la suite exacte \ref{seq11} montrent que 
${\rm Hom}_{k_L[\mathcal{G}_{\Q_p}]}(\bar{\rho}, S^1(\pi))\ne 0$. 

 Notons que 
$\pi$ n'est pas un caract\`ere, car $S^1$ tue les caract\`eres. Quitte \`a remplacer $L$ par une extension finie, on peut supposer que 
$\pi$ est absolument irr\'eductible. Soit $\mathcal{B}$ le bloc contenant $\pi$ et soit 
$\bar{r}$ la repr\'esentation semi-simple de dimension $2$ associ\'ee \`a $\mathcal{B}$. On globalise 
$\bar{r}$ comme au d\'ebut de ce chapitre et on consid\`ere le module $M=M_{\psi, \lambda, \mathfrak{m}}$ introduit dans la prop.\,\ref{Paskpuzzle}.

 Supposons que $\pi$ est supersinguli\`ere. Alors $\mathcal{B}=\{\pi\}$ et la prop.\,\ref{Paskpuzzle} montre que l'on dispose d'une suite exacte 
 $$0\to \pi\to M\to \pi'\to 0,$$
 $\pi'$ \'etant une extension successive de copies de $\pi$. En particulier $(\pi')^{{\rm SL}_2(\qp)}=0$, donc 
 $S^0(\pi')=0$ et $S^1(\pi)$ s'injecte dans $S^1(M)$, qui lui-m\^eme s'injecte dans un $\mathcal{G}_{\qp}$-module
  $\bar{r}$-isotypique (prop.\,\ref{isotypic} et \ref{LGC}). On en d\'eduit (via la prop.\,5.4 de \cite{SLT}) que 
 $S^1(\pi)$ est $\bar{r}$-isotypique, 
ce qui contredit le fait que ${\rm Hom}_{k_L[\mathcal{G}_{\Q_p}]}(\bar{\rho}, S^1(\pi))\ne 0$. 
 
  Le cas o\`u $\pi$ n'est pas supersinguli\`ere est plus p\'enible. Dans ce cas on peut \'ecrire $\bar{r}=\chi_1\oplus \chi_2$ pour deux caract\`eres 
  galoisiens $\chi_1, \chi_2$. 
  
  \begin{lemm}\label{chiant}
   Soit $Z$ un sous $G$-module de $M$ et soit $H$ un sous-groupe ouvert compact de 
   $D_p^{\dual}$. Si $\mathcal{E}\in \{S^1(Z)^H, H^1(H, S^1(Z))\}$, alors 
   $\mathcal{E}$ est une 
    repr\'esentation de 
   dimension finie de $\mathcal{G}_{\qp}$, 
   dont les    
  sous-quotients irr\'eductibles sont de dimension $1$.
    \end{lemm}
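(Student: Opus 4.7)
The plan is to exploit the short exact sequence $0\to Z\to M\to M/Z\to 0$ and the long exact sequence it yields in Scholze-cohomology, combined with what we already know about $S^1(M)$ from the global input of propositions \ref{LGC} and \ref{isotypic}.

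First I would set up the long exact sequence
$$S^0(M/Z)\to S^1(Z)\to S^1(M)\to S^1(M/Z)$$
and control each relevant term as a $\mathcal{G}_{\mathbb{Q}_p}$-module. For $S^1(M)$, the argument of \S\,\ref{lg5} shows (using \ref{LGC} and \ref{isotypic}) that $S^1(M)\simeq \widehat{H}^1_{\psi,\lambda}(U^\wp,k_L)_{\mathfrak{m}}[\mathfrak{m}_A]$ embeds as a $\mathcal{G}_{\mathbb{Q}_p}$-module into a quotient of $\bar r\otimes_{k_L} N'$ for some $k_L$-space $N'$; since $\bar r=\chi_1\oplus\chi_2$ in the non-supersingular case of interest, every finite-dimensional Galois-stable subquotient of $S^1(M)$ has Jordan-Hölder factors among $\{\chi_1,\chi_2\}$, hence of dimension $1$. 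For the kernel of $S^1(Z)\to S^1(M)$, which is a quotient of $S^0(M/Z)$, I would invoke point $(4)$ of th.\,\ref{Good}: the action of $W_F\times D_p^\dual$ on $S^0$ factors through the inverse of Artin reciprocity and the reduced norm, so the Galois action factors through $\mathcal{G}_{\mathbb{Q}_p}^{\mathrm{ab}}$ and its irreducible subquotients are characters. Assembling these, every finite-dimensional Galois-stable subquotient of $S^1(Z)$ has $1$-dimensional Jordan-Hölder factors.

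Next I would pass from $S^1(Z)$ to its $H$-invariants and $H$-cohomology. By th.\,\ref{Good}\,(1), $S^1(Z)$ is an admissible smooth $k_L[D_p^\dual]$-module, so for any open compact subgroup $H\subset D_p^\dual$ the space $S^1(Z)^H$ is finite-dimensional over $k_L$. For $H^1(H,S^1(Z))$ I would argue as in the proof of lemma \ref{lg11} (step 2): writing $S^1(Z)=\varinjlim_m S^1(Z)^{H_m}$ for a decreasing basis $H_m\subset H$ of open normal subgroups, each $S^1(Z)^{H_m}$ is finite-dimensional, and standard Lazard-style finiteness for continuous cohomology of compact $p$-adic Lie groups with smooth admissible coefficients gives $\dim_{k_L}H^1(H,S^1(Z))<\infty$.

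Finally, since the actions of $D_p^\dual$ and $\mathcal{G}_{\mathbb{Q}_p}$ on $S^1(Z)$ commute (both come from the natural structures on the Lubin--Tate tower), both $S^1(Z)^H$ and $H^1(H,S^1(Z))$ carry induced $\mathcal{G}_{\mathbb{Q}_p}$-actions, and their Jordan-Hölder factors are subquotients of those of $S^1(Z)$; by the first paragraph these are $1$-dimensional. The main obstacle I anticipate is the cohomological finiteness $\dim H^1(H,S^1(Z))<\infty$: one must check that passing from the admissibility of $S^1(Z)$ (which yields finiteness of $H^0$) to the higher cohomology really does preserve finite-dimensionality; this should reduce to continuity of the transition maps in $\varinjlim S^1(Z)^{H_m}$ and the observation that each finite-dimensional piece carries a $p$-adic analytic action of $H/H_m$, whence Lazard's theorem applies.
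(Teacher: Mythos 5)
Your strategy follows the paper's closely: you identify the long exact sequence $S^0(M/Z)\to S^1(Z)\to S^1(M)$, use point (4) of Theorem~\ref{Good} to show that the Galois action on $S^0(M/Z)$ is abelian, use the global input (Propositions~\ref{LGC} and~\ref{isotypic}) to embed $S^1(M)$ in $\bar{r}\otimes_{k_L} U$, and conclude that the Jordan--H\"older factors of $S^1(Z)$ as a $\mathcal{G}_{\Q_p}$-module are one-dimensional; the finite-dimensionality of the $H$-invariants and $H$-cohomology via admissibility is also the route the paper takes. However, your closing sentence, that the Jordan--H\"older factors of $H^1(H,S^1(Z))$ are ``subquotients of those of $S^1(Z)$'', is asserted rather than proved, and this is precisely where the lemma requires work: $H^1(H,-)$ is not a subquotient functor, so this does not follow merely from the commutation of the two actions. (For $S^1(Z)^H$ it is immediate, since that is a $\mathcal{G}_{\Q_p}$-submodule of $S^1(Z)$; the issue is with $H^1$.)

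What the paper actually does is upgrade the qualitative information about Jordan--H\"older factors into a $D_p^\dual\times\mathcal{G}_{\Q_p}$-equivariant filtration $0\subset\ker\alpha\subset S^1(Z)$, where $\alpha:S^1(Z)\to S^1(M)$, whose graded pieces are successive extensions of modules of the form $\chi\otimes U$ with $\chi$ a Galois character acting on the first factor only: for $\ker\alpha$ because it is a quotient of $S^0(M/Z)=S^0\big((M/Z)^{\SL_2(\Q_p)}\big)$, and for $T={\rm Im}(\alpha)$ via the equivariant exact sequence $0\to\chi_1\otimes U_1\to T\to\chi_2\otimes U_2\to 0$ cut out of the inclusion $T\subset\bar{r}\otimes U$. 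Since $H^i(H,\chi\otimes U)=\chi\otimes H^i(H,U)$ is $\chi$-isotypic as a Galois module, the six-term exact sequence in $H$-cohomology attached to $0\to\ker\alpha\to S^1(Z)\to T\to 0$ sandwiches $S^1(Z)^H$ and $H^1(H,S^1(Z))$ between successive extensions of one-dimensional Galois representations, giving the lemma. Your ``subquotient'' claim can in fact be rescued, since for $H$ profinite and $X$ smooth one has $H^1(H,X)=\colim_{H'}H^1(H/H',X^{H'})$ over open normal $H'\subset H$, and each $H^1(H/H',X^{H'})$ is a subquotient of a finite direct sum of copies of the Galois submodule $X^{H'}\subset X$; but this argument is the whole content of the step and must be made explicit.
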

  
  \begin{proof} Le fait que $\dim \mathcal{E}<\infty$ est une cons\'equence de l'admissibilit\'e de 
  $S^1(Z)$. Soit $\alpha: S^1(Z)\to S^1(M)$ la fl\`eche induite par l'inclusion $Z\subset M$. On a une suite exacte 
  $$0\to (\ker(\alpha))^H\to S^1(Z)^H\to ({\rm Im}(\alpha))^H\to H^1(H, \ker(\alpha))\to H^1(H, S^1(Z))\to H^1(H, {\rm Im}(\alpha))$$
  de repr\'esentations de dimension finie de $\mathcal{G}_{\qp}$. La suite exacte 
   $$S^0(M/Z)\to S^1(Z)\to S^1(M)$$
montre que $\ker(\alpha)$ est un quotient de $S^0(M/Z)=S^0((M/Z)^{{\rm SL}_2(\qp)})$ en tant que 
$D^{\dual}\times \mathcal{G}_{\qp}$-module. Mais $(M/Z)^{{\rm SL}_2(\qp)}$ est une extension successive de caract\`eres (quitte \`a remplacer $L$ par une extension finie), donc $S^0((M/Z)^{{\rm SL}_2(\qp)})$ est une extension successive finie de caract\`eres en tant que 
  $D^{\dual}$-module et en tant que $\mathcal{G}_{\qp}$-module. On en d\'eduit que $(\ker(\alpha))^H$ et $H^1(H, \ker(\alpha))$ sont des extensions successives de caract\`eres galoisiens. 
  
  Soit $T={\rm Im}(\alpha)$, alors $T$ est un sous $D^{\dual}\times \mathcal{G}_{\qp}$-module de $S^1(M)$. 
  On sait (prop.\,\ref{isotypic} et \ref{LGC}) que l'on dispose d'une injection 
  $D^{\dual}\times \mathcal{G}_{\qp}$-\'equivariante 
  $S^1(M)\subset \bar{r}\otimes_{k_L} U$ pour un $D^{\dual}$-module lisse admissible 
  $U$ avec action triviale de $\mathcal{G}_{\qp}$. On a donc une injection $D^{\dual}\times \mathcal{G}_{\qp}$-\'equivariante
  $T\subset \bar{r}\otimes_{k_L} U$. On a une suite exacte $D^{\dual}\times \mathcal{G}_{\qp}$-\'equivariante
  $$0\to \chi_1\otimes U\to \bar{r}\otimes U\to \chi_2\otimes U\to 0$$
  et donc une suite exacte $D^{\dual}\times \mathcal{G}_{\qp}$-\'equivariante
  $$0\to \chi_1\otimes U_1\to T\to \chi_2\otimes U_2\to 0,$$
  avec $U_i$ des sous $D^{\dual}$-modules de $U$. En passant \`a la $H$-cohomologie continue on voit que 
  $T^H$ et $H^1(T,H)$ sont des extensions successives de $\chi_1$ et $\chi_2$, ce qui permet de conclure.
    \end{proof}

    Revenons \`a la preuve du th\'eor\`eme. Soit $H$ un sous-groupe ouvert pro-$p$ de $D^{\dual}$. Comme 
    ${\rm Hom}_{k_L[\mathcal{G}_{\Q_p}]}(\bar{\rho}, S^1(\pi))$ est lisse et non nulle,  
     ${\rm Hom}_{k_L[\mathcal{G}_{\Q_p}]}(\bar{\rho}, S^1(\pi)^H)\ne 0$. D'autre part, 
     on dispose (prop.\,\ref{Paskpuzzle}) d'une suite exacte de $G$-modules 
$$0\to X\to Y\to \pi\to 0,$$
avec $X,Y$ des sous $G$-modules de $M$. Comme $\pi$ n'est pas un caract\`ere (donc $\pi^{{\rm SL}_2(\qp)}=0$) on a 
$S^0(\pi)=0$, d'o\`u une suite exacte 
$$0\to S^1(X)\to S^1(Y)\to S^1(\pi)\to S^2(X),$$
ou encore 
$$0\to (S^1(Y)/S^1(X))^H\to S^1(\pi)^H\to S^2(X)^H.$$
Pour arriver \`a une contradiction il suffit de montrer que 
${\rm Hom}_{k_L[\mathcal{G}_{\Q_p}]}(\bar{\rho}, \mathcal{E})=0$ pour $\mathcal{E}\in \{(S^1(Y)/S^1(X))^H,  S^2(X)^H\}$. Si 
$\mathcal{E}=(S^1(Y)/S^1(X))^H$ cela suit du lemme \ref{chiant}, supposons donc que 
$\mathcal{E}=S^2(X)^H$.
Comme $X$ est une extension successive de repr\'esentations qui sont soit des steinberg, soit des induites paraboliques, soit des caract\`eres, et comme $S^2$ tue la steinberg et les induites (par le th\'eor\`eme de Ludwig) et transforme les caract\`eres en des caract\`eres, il s'ensuit que 
$S^2(X)^H$ est une extension successive de caract\`eres en tant que $\mathcal{G}_{\qp}$-module, ce qui permet de conclure.   
 \end{proof}

\begin{rema}\label{pointwise2.1}
 Si $d=2$ et $\bar{\rho}$ est absolument irr\'eductible, alors on peut montrer 
(avec des arguments semblables \`a ceux utilis\'es ci-dessus) que $\bPi^{\rm geo}_n(\bar{\rho})$ 
est dans le bloc de ${\rm Rep}^{{\rm ladm}}\, G$ d\'ecoup\'e par 
  $\bar{\rho}$. Ce r\'esultat n'est pas vrai quand $\bar{\rho}$ est r\'eductible car un morphisme
$\bar\rho\to H^1_{\eet}({\cal M}^p_{n,\C_p},k_L)$ peut se factoriser par un quotient strict
de $\bar\rho$.  Il est possible qu'il reste valable si on impose que
$\bar\rho\to H^1_{\eet}({\cal M}^p_{n,\C_p},k_L)$ soit injectif.
\end{rema}

\subsubsection{Vecteurs lisses du c\^ot\'e Lubin-Tate}
   Soit ${\rm LT}_{n,\C_p}:=\mathcal{M}_{\rm LT, G_n}\otimes_{\breve{\Q}_p} {\C_p}$ 
le $n$-\`eme \'etage de la tour de Lubin-Tate, et soit 
   ${\rm LT}_{n,\C_p}^{p}$ le quotient par l'action de $p^{\Z}$, 
$p$ \'etant vu comme \'el\'ement du centre de $\check{G}$.  Alors 
   ${\rm LT}_{n,\C_p}^{p}$ est d\'efini sur $F$, 
ce qui permet de d\'efinir ${\rm LT}_{n,K}^{p}$ pour toute extension finie $K$ de $F$. 

   \begin{theo}\label{LTfin}
    On suppose que $F=\Q_p$. Si $K$ est une extension finie de 
    $\Q_p$, l'espace des vecteurs $\check{G}$-lisses de 
    $H^1_{\eet}({\rm LT}^p_{n,K}, k_L)$ est de dimension finie sur $k_L$.   
   \end{theo}
   
   \begin{proof}
     Comme dans l'\'etape $1$ de la preuve du lemme \ref{lg11}, il suffit de voir que l'espace des vecteurs lisses de
     $H^1_{\eet}({\rm LT}_{n,\C_p}^p, k_L)^{\mathcal{G}_K}$ est de dimension finie. Comme dans l'\'etape $2$ de la m\^eme preuve on obtient une suite exacte 
     $0\to A\to H^1_{\eet}({\rm LT}_{n,\C_p}^p, k_L)\to H^1_{\eet}(\mathcal{M}_{\infty}^p, k_L)^{G_n}$ avec $A$ de dimension finie, donc il suffit de voir que 
     $\varinjlim_{j} H^1_{\eet}(\mathcal{M}_{\infty}^p, k_L)^{G_n\times \mathcal{G}_K\times \check{G}_j}$ est de dimension finie. On va montrer que cet espace est nul.

\begin{lemm}
 L'espace des vecteurs $G$-lisses de 
$H^1_{\eet}(\mathcal{M}_{\infty}^p, k_L)^{\mathcal{G}_K\times \check{G}_j}$ est de dimension finie.
\end{lemm}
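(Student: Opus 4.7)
La strat\'egie est de redescendre au niveau fini sur la tour de Drinfeld par deux d\'evissages successifs de type Hochschild-Serre, puis d'appliquer le th.\,\ref{main11} et le th.\,\ref{LTappli}. Un vecteur $G$-lisse $v$ de $M:=H^1_{\eet}(\mathcal{M}_\infty^p, k_L)^{\mathcal{G}_K\times\check G_j}$ est par d\'efinition fix\'e par un certain $G_i$, et l'espace des vecteurs $G$-lisses de $M$ s'\'ecrit comme la r\'eunion croissante $\bigcup_i H^1_{\eet}(\mathcal{M}_\infty^p,k_L)^{G_i\times\check G_j\times\mathcal{G}_K}$. Il s'agit donc de montrer que cette r\'eunion est de dimension finie sur $k_L$.

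La premi\`ere \'etape sera d'effectuer une double descente: d'abord par Hochschild-Serre pour le $\check G_j$-torseur pro-\'etale $\mathcal{M}_\infty^p \to \mathcal{M}_{j,\C_p}^p$ (comme dans l'\'etape 2 de la preuve du lemme \ref{lg11}), puis par descente galoisienne de $\mathcal{M}_{j,\C_p}^p$ \`a $\mathcal{M}_{j,K}^p$ (comme dans l'\'etape 1 du m\^eme lemme). Combin\'ees, ces deux descentes identifient $H^1_{\eet}(\mathcal{M}_\infty^p,k_L)^{\check G_j\times\mathcal{G}_K}$ \`a un sous-quotient $G$-\'equivariant de $H^1_{\eet}(\mathcal{M}_{j,K}^p,k_L)$, modulo des termes d'erreur contr\^ol\'es par $H^*(\check G_j, H^0(\mathcal{M}_\infty^p,k_L))^{\mathcal{G}_K}$ et par $H^*(\mathcal{G}_K, H^0(\mathcal{M}_{j,\C_p}^p,k_L))$.

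Pour le terme principal, le th.\,\ref{main11} assure que $H^1_{\eet}(\mathcal{M}_{j,K}^p,k_L)^\vee$ est un $G'$-module lisse de longueur finie, et le th.\,\ref{LTappli} implique alors que l'espace des vecteurs $G$-lisses de $H^1_{\eet}(\mathcal{M}_{j,K}^p,k_L)$ est de dimension finie. Pour les termes d'erreur, on exploitera la description $H^0(\mathcal{M}_\infty^p,k_L) \simeq \mathcal{C}^0(\Z_p^\dual, k_L)$ fournie par la relation (\ref{ciemno11}), sur laquelle les actions de $G$, $\check G$ et $\mathcal{G}_K$ se factorisent respectivement par le d\'eterminant, la norme r\'eduite et la th\'eorie locale du corps de classes. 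Les images de $\check G_j$ et de $\mathcal{G}_K$ dans $\Z_p^\dual$ \'etant d'indice fini, les invariants communs sur $\mathcal{C}^0(\Z_p^\dual, k_L)$ sont de dimension finie, et un argument de d\'ecomposition en cosets, combin\'e \`a l'admissibilit\'e pour l'action de $\check G_j$, contr\^ole les cohomologies sup\'erieures $H^p(\check G_j, H^0)^{\mathcal{G}_K}$. Ces termes d'erreur ne contribuent donc qu'\`a travers des sous-espaces de dimension finie, et les $G_i$-invariants stabilisent pour $i$ assez grand.

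L'obstacle principal sera le suivi rigoureux des termes d'erreur issus des deux suites spectrales de Hochschild-Serre, en particulier la v\'erification de la compatibilit\'e du d\'evissage avec la limite inductive sur $i$ et avec la prise de vecteurs $G$-lisses (qui commute aux groupes de cohomologie discrets de $\check G_j$ et $\mathcal{G}_K$, mais demande un peu de soin topologique). Modulo ces v\'erifications techniques, le th.\,\ref{main11}, le th.\,\ref{LTappli} et l'analyse du $\pi_0$ \`a travers $\mathcal{C}^0(\Z_p^\dual, k_L)$ suffiront \`a conclure.
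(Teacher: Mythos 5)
Your argument follows the same route as the paper: the two Hochschild--Serre descents reduce the statement to the finite-level $H^1_{\eet}(\mathcal{M}_{j,K}^p, k_L)$, whose $G$-smooth vectors are finite-dimensional thanks to th.\,\ref{main11}, while the error terms coming from $H^0$ are finite-dimensional. The paper packages this more efficiently, extracting from the same spectral sequences a single $G$-equivariant map $H^1_{\eet}(\mathcal{M}_{j,K}^p, k_L)\to H^1_{\eet}(\mathcal{M}_{\infty}^p, k_L)^{\mathcal{G}_K\times \check{G}_j}$ with finite-dimensional kernel and cokernel and then invoking a short transfer lemma for $G$-smooth vectors, which spares the explicit error-term bookkeeping and the stabilization-over-$i$ verification you defer as technical.
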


\begin{proof} Les suites spectrales usuelles (voir la preuve du lemme \ref{lg11}) montrent que l'on dispose d'une fl\`eche 
$H^1_{\eet}(\mathcal{M}_{j,K}^p, k_L)\to H^1_{\eet}(\mathcal{M}_{\infty}^p, k_L)^{\mathcal{G}_K\times \check{G}_j}$ dont le noyau et le conoyau sont de dimension finie sur $k_L$. Si $f: A\to B$ est un morphisme $G$-\'equivariant entre des $k_L[G]$-modules, dont le noyau et le conoyau sont de dimension finie sur $k_L$, et si $A^{G-{\rm lisse}}$ est de dimension finie, alors $B^{G-{\rm lisse}}$ est aussi de dimension finie. Il suffit donc de voir que 
$H^1_{\eet}(\mathcal{M}_{j,K}^p, k_L)^{G-{\rm lisse}}$ est de dimension finie, ce qui d\'ecoule du fait que $H^1_{\eet}(\mathcal{M}_{j,K}^p, k_L)$ est le dual d'une repr\'esentation lisse admissible de $G$.
\end{proof}

\begin{lemm}
 L'espace $H^1_{\eet}(\mathcal{M}_{\infty}^p, k_L)$ 
ne contient pas de sous-$G$-module non nul, de dimension finie sur $k_L$.
\end{lemm}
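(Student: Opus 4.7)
The plan is to argue by contradiction. Suppose $W\subset H^1_{\eet}(\mathcal{M}_\infty^p, k_L)$ is a nonzero finite-dimensional $G$-submodule. Since $k_L$ is finite, the continuous $G$-action on $W$ is automatically smooth, and $p\in Z(G)$ acts trivially on $\mathcal{M}_\infty^p$, so the central character of $W$ sends $p$ to $1$. Extending scalars via lemmas \ref{51} and \ref{irred} and passing to a simple subrepresentation, we may assume $W$ is absolutely irreducible. By the Barthel-Livn\'e-Breuil classification (together with Breuil's work), the only finite-dimensional absolutely irreducible smooth $\gl_2(\Q_p)$-representations over $\overline{\mathbf{F}}_p$ are characters, so $W\simeq\chi\circ\det$ for some smooth character $\chi:\Q_p^\dual\to\overline{\mathbf{F}}_p^\dual$ with $\chi(p)^2=1$.

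Next I use the Hochschild-Serre spectral sequence for the $p^{\Z}$-cover $\mathcal{M}_\infty\to\mathcal{M}_\infty^p$. The Shapiro-type vanishing $H^1(p^{\Z}, H^0_{\eet}(\mathcal{M}_\infty, k_L))=0$ used earlier in the derivation of (\ref{pic1}) gives a $G$-equivariant injection $H^1_{\eet}(\mathcal{M}_\infty^p, k_L)\hookrightarrow H^1_{\eet}(\mathcal{M}_\infty, k_L)^{p^{\Z}}$, so $\chi\circ\det$ embeds $G$-equivariantly in $H^1_{\eet}(\mathcal{M}_\infty, k_L)$. Apply the Scholze spectral sequence of Corollary \ref{SS} with $\pi=\chi\circ\det$, which is admissible as a finite-dimensional smooth representation. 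The identification $H^0_{\eet}(\mathcal{M}_\infty, k_L)\simeq{\rm Ind}_{\SL_2(\Q_p)}^G k_L$ (with $G$ acting on $\Q_p^\dual$ via $\det$) together with $\chi|_{\SL_2(\Q_p)}=1$ give $E_2^{i,0}\simeq H^i(\SL_2(\Q_p), k_L)$, finite-dimensional by Fust's theorem \cite{Fust}. The five-term exact sequence then identifies our nonvanishing ${\rm Hom}^{\rm cont}_{k_L[G]}((\chi\circ\det)^{-1}, H^1_{\eet}(\mathcal{M}_\infty, k_L))$, up to finite-dimensional correction, with $S^1(\chi\circ\det)$, producing a nontrivial class in the latter.

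The hard part is to exclude the existence of this class. My plan is to follow the global strategy used in the proof of Proposition \ref{key}: the character $\chi\circ\det$ lies in a twist of the Steinberg block (Proposition \ref{pasku0}), whose associated semisimple Galois representation is the reducible $\bar{r}=\chi\oplus\chi\epsilon$. Globalize $\bar{r}$ to an absolutely irreducible modular $\bar{\rho}$ via Proposition \ref{globaliseDPS2}, form the associated completed cohomology $\wh{H}^1_{\psi,\lambda,\mathfrak{m}}$, and invoke the local-global compatibility of Proposition \ref{LGC} together with the isotypic factorization of Proposition \ref{isotypic}. Combined with Paskunas-Tung's analysis of the Steinberg block (Proposition \ref{Paskpuzzle}) and Ludwig's vanishing $S^2(I(\chi,\chi\epsilon))=0$, a long-exact-sequence argument mirroring the end of Proposition \ref{key} constrains the Galois representations appearing in $S^1(\chi\circ\det)$ to $\bar{r}$-isotypic ones. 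The $G$-equivariance of the original embedding (and not merely its underlying cohomological class) then forces the image in $\bar{r}\otimes N'$ to lie in a $\chi$-split line, contradicting the fact that $\bar{r}$ is a non-split extension in the generic case. Completing this diagram chase is the principal technical obstacle, and the main input from the specifically geometric setup (as opposed to pure representation theory) enters through the compatibility between the $G$-action coming from the cohomology of $\mathcal{M}_\infty^p$ and the Galois isotypic decomposition on the globalized side.
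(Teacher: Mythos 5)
Your reduction to a smooth character $\chi\circ\det$, the passage from $\mathcal{M}_\infty^p$ to $\mathcal{M}_\infty$ via the $p^{\Z}$-cover, and the appeal to the exact sequence of cor.~\ref{cafe52} are exactly the paper's starting point; the gap lies in how you exploit that sequence. From a nonzero element of ${\rm Hom}^{\rm cont}_{k_L[G]}((\chi\circ\det)^{\vee},H^1_{\eet}(\mathcal{M}_\infty,k_L))$ you claim to produce a nonzero class in $S^1(\chi\circ\det)$ ``up to finite-dimensional correction''. This inference is invalid: the Hom space you are trying to kill is itself a priori finite dimensional (possibly one-dimensional), so the finite-dimensional term $H^2(\SL_2(\Q_p),k_L)$ on the right of the sequence could absorb it entirely. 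Fust's results give only \emph{finiteness} of $H^i(\SL_2(\Q_p),\pi)$ for representations in a twist of the Steinberg block --- and a character lies precisely in such a block --- not vanishing. The real content of the paper's proof at this point is the vanishing $H^2(\SL_2(\Q_p),\mathbf{F}_p)=0$, obtained from the triviality of the abelianization of $\SL_2(\Q_p)$, the sequence $0\to\mathbf{F}_p\to\mathbf{Q}/\mathbf{Z}\to\mathbf{Q}/\mathbf{Z}\to 0$, and Moore's computation of $H^2(\SL_2(\Q_p),\mathbf{R}/\mathbf{Z})$ as the Pontryagin dual of the group of roots of unity of $\Q_p$, whose $p$-part is trivial. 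Your proposal never addresses this term.

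The second problem is your plan for ruling out a class in $S^1(\chi\circ\det)$. The global machinery of prop.~\ref{key} (globalization, prop.~\ref{LGC}, prop.~\ref{isotypic}, Ludwig, \paskunas-Tung) can only constrain the Galois structure of $S^1$: at best it shows that $S^1$ of an object of the block is a subquotient of an $\bar r$-isotypic module with $\bar r=\chi\oplus\chi\epsilon$, which is not contradictory at all --- note moreover that this $\bar r$ is split, the pair $(\chi,\chi\epsilon)$ is exactly the non-generic one, so the ``non-split extension in the generic case'' you invoke for the final contradiction does not exist, and in the present lemma there is no $\G_K$-invariance condition available to play against the isotypy. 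What makes the argument work, and what the paper uses, is a purely local vanishing: $S^1(\chi\circ\det)=0$, because the sheaf attached to the trivial character is the constant sheaf, $H^1_{\eet}(\mathbf{P}^1_{\C_p},\mathbf{F}_p)=0$, and the case of a general smooth character follows by twisting (\cite[lemme 7.4]{DPS2}). With both outer terms of the exact sequence zero, the Hom term vanishes; no globalization is needed.
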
     

\begin{proof}
Puisque $L$ est arbitraire, il suffit de voir que $H^1_{\eet}(\mathcal{M}_{\infty}^p, k_L)$ ne contient pas de caract\`ere lisse 
de $G$. Par l'\'etape $3$ de la preuve du lemme \ref{lg11} il suffit de montrer la m\^eme assertion pour $H^1_{\eet}(\mathcal{M}_{\infty}, k_L)$. On veut donc montrer que ${\rm Hom}_G^{\rm cont}(\delta, H^1_{\eet}(\mathcal{M}_{\infty}, k_L))=0$ pour tout caract\`ere lisse $\delta$ de $G$. Comme dans la preuve du cor.\,\ref{cafe52} on a une suite exacte 
$$S^1(\delta^{-1})\to {\rm Hom}_G^{\rm cont}(\delta, H^1_{\eet}(\mathcal{M}_{\infty}, k_L))\to H^2({\rm SL}_2(\Q_p), \delta^{-1}).$$
Le lemme 7.4 de \cite{DPS2} combin\'e avec la trivialit\'e de $S^1(1)$ (qui vient simplement du fait que le faisceau associ\'e \`a la repr\'esentation triviale est le faisceau constant, qui a une cohomologie nulle en degr\'e $1$ sur $\mathbf{P}^1$) montre que le terme \`a gauche est nul. Celui \`a droite l'est aussi: il suffit de voir que $H^2({\rm SL}_2(\Q_p), \mathbf{F}_p)=0$. En utilisant la suite exacte 
$0\to \mathbf{F}_p\to \mathbf{Q}/\mathbf{Z}\to \mathbf{Q}/\mathbf{Z}\to 0$ et le fait que l'ab\'elianis\'e de 
${\rm SL}_2(\Q_p)$ est trivial, il suffit de voir que la $p$-torsion de $H^2({\rm SL}_2(\Q_p), \mathbf{Q}/\mathbf{Z})$ est nulle. Mais 
on voit facilement que $H^2({\rm SL}_2(\Q_p), \mathbf{Q}/\mathbf{Z})\simeq H^2({\rm SL}_2(\Q_p), \mathbf{R}/\mathbf{Z})$ et ce dernier groupe a \'et\'e calcul\'e par Moore \cite{Moore}, et il vaut le dual de Pontryagin du groupe des racines de l'unit\'e dans $\Q_p$. Le r\'esultat s'en d\'eduit. 
\end{proof}

 Si l'on combine les deux lemmes on voit que l'espace des vecteurs $G$-lisses de $H^1_{\eet}(\mathcal{M}_{\infty}^p, k_L)^{\mathcal{G}_K\times \check{G}_j}$ est nul pour tout $j$, 
et donc $\varinjlim_{j} H^1_{\eet}(\mathcal{M}_{\infty}^p, k_L)^{G_n\times \mathcal{G}_K\times \check{G}_j}=0$. 
    \end{proof}

      \begin{rema}
      Le th.\,\ref{LTappli} combin\'e avec l'argument ci-dessus montre que les th.\,\ref{LTfin} et \ref{main11} sont en fait 
      \'equivalents. Il semble raisonnable de penser que le th.\,\ref{LTfin} reste valable pour $F\ne \Q_p$ (alors que nous avons vu que le th.\,\ref{main11} est bien sp\'ecifique \`a $F=\Q_p$).
      \end{rema} 


\section{Factorisation de $H^1_{\eet}(\mathcal{M}_{n,\Qbar_p}, L(1))$}
Dans ce chapitre, on d\'emontre notre r\'esultat principal, \`a savoir la factorisation
\`a la Emerton de la cohomologie \'etale g\'eom\'etrique de
${\cal M}_{n}^p$ (th.\,\ref{factor10} ci-dessous).  

\vskip2mm
Dans tout le chapitre,
$M$ d\'esigne un $(\varphi,N,\G_{\Q_p})$-module, et nous renvoyons au \no\ref{noti5} pour les
d\'efinitions des objets qui lui sont associ\'es: ${\rm LL}(M)$, ${\rm JL}(M)$, $M_{\rm dR}$,
${\cal L}$, $V_{M,{\cal L}}$, $\Pi_{M,{\cal L}}$ ainsi que pour les notions
de {\og supercuspidal\fg}, {\og sp\'ecial\fg} et {\og de niveau~$\leq n$\fg}.

On note $\Phi{\rm N}$ l'ensemble des $M$ sp\'eciaux ou supercuspidaux,
$\Phi{\rm N}^p\subset \Phi{\rm N}$ le sous-ensemble des $M$ tels que
$p\in Z(\check G)$ agit trivialement sur ${\rm JL}(M)$, et on rajoute un $n$
en indice (i.e.~$\Phi{\rm N}_n$ et $\Phi{\rm N}^p_n$)
pour indiquer le sous-ensemble des $M$ de niveau~$\leq n$.
\vskip1mm
Si $X$ est un $L[\check G]$-module lisse avec action triviale de $p\in Z(\check G)$, alors $X$ a une d\'ecomposition
$$X=\oplus_MX[M]\otimes_L {\rm JL}(M),$$
o\`u $\check{G}$ agit trivialement sur $X[M]$.
\vskip1mm
On pose aussi, pour all\'eger un peu les notations,
$$H^1_{\proet}(-):=H^1_{\proet}(-,L(1)),\quad H^1_{\eet}(-):=H^1_{\eet}(-,L(1)),
\quad H^1_{\eet}(-)^+:=H^1_{\eet}(-,\O_L(1)).$$

\Subsection{Compl\'ements \`a \cite{CDN1}}
\subsubsection{Multiplicit\'es des repr\'esentations de $\G_{\Q_p}$}
Le r\'esultat suivant est l'un des r\'esultats principaux de~\cite{CDN1}.
(Si $\delta$ est un caract\`ere de $\Q_p^\dual$, on note 
$$\delta_G:=\delta\circ\nu_G\quad{\rm et}\quad \delta_{\check G}:=\delta\circ\nu_{\check G}$$
les caract\`eres de $G$ et $\check G$, et simplement
$\delta$ le caract\`ere de ${\rm W}_{\Q_p}$ qui lui sont associ\'es.)
   \begin{theo}\label{cdn1.1}
Si $V\in {\rm Rep}_L\G_{\Q_p}$ est absolument irr\'eductible, de dimension~$\leq 2$, alors
    $${\rm Hom}_{{\rm W}_{\Q_p}}(V, H^1_{\eet}(\mathcal{M}_{n,\C_p}))\simeq
\begin{cases}\Pi_{M,{\cal L}}^\dual\otimes_L {\rm JL}(M) 
&{\text{si $V=V_{M,{\cal L}}$ et $M$ de niveau~$\leq n$}},\\
({\rm St}^{\rm cont}\otimes\delta_G)^\dual
\otimes\delta_{\check G} &{\text{si $V=\delta$ et
$\delta$ est de niveau~$\leq n$,}}\\
0 &{\text{si $V$ n'est pas de niveau~$\leq n$.}}\end{cases}$$
   \end{theo}
\begin{rema}\label{cdn1.3}
{\rm (i)} Dans~\cite{CDN1}, le cas des caract\`eres n'est pas trait\'e, mais il se prouve facilement en
remarquant que, si $M$ est supercuspidal, $X_{\rm st}(M)$ n'a pas de sous-$\G_{\Q_p}$-repr\'esentation
de dimension~$1$ (cf. (i) de~\cite[prop.\,2.5]{CDN1}), et donc que les caract\`eres
de $\G_{\Q_p}$ vivent dans les $[M]$-composantes, pour $M$ sp\'ecial. Comme celles-ci
proviennent, par torsion par un caract\`ere, de la cohomologie du demi-plan de Drinfeld,
on conclut en utilisant~\cite[th.\,1.7]{CDN1}.

{\rm (ii)}
Dans~\cite{CDN1},
on prouve aussi que ${\rm Hom}_{{\rm W}_{\Q_p}}(V, H^1_{\eet}(\mathcal{M}_{n,\C_p}))=0$
si $V$ est absolument irr\'eductible, de dimension~$\geq 3$.
La preuve qui y est donn\'ee est combinatoirement assez p\'enible. Le lecteur trouvera
une preuve plus naturelle de cet \'enonc\'e un peu plus loin (th.\,\ref{aju3}).
\end{rema}

\subsubsection{Consid\'erations topologiques}
Si $\mathcal{F}$ est un $L$-espace vectoriel localement convexe muni d'une action continue de $G$,
 on note $\mathcal{F}^b$ l'espace des vecteurs 
     $G$-born\'es de $\mathcal{F}$,
 i.e. les $v\in \mathcal{F}$ pour lesquels $\{g\cdot v,\  g\in G\}$ est born\'e dans 
     $\mathcal{F}$.

     \begin{lemm}\label{b}

{\rm    a)} Si
    $\Pi$ est une $L$-repr\'esentation de Banach unitaire de $G$, alors $(\Pi^\dual)^b=\Pi^\dual$.

{\rm      b)} Si $\pi$ est une $L$-repr\'esentation lisse admissible de 
     $G$, de longueur finie, alors $(\pi^\dual)^b\simeq \widehat{\pi}^\dual$,
    o\`u $\widehat{\pi}$ est le compl\'et\'e unitaire universel de $\pi$. 
    
{\rm    c)} Si $\Pi$ est une repr\'esentation unitaire admissible de $G$, de longueur finie, alors 
    $(\Pi^{\rm an,*})^b=\Pi^\dual$.
         \end{lemm}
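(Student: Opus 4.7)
The three statements all assert that the $G$-bounded part of a topological dual captures the continuous dual of a natural unitary Banach avatar of the representation. The plan is to establish (a) directly from $G$-invariance of the Banach norm, then deduce (b) via the universal unitary completion $\widehat\pi$, and finally handle (c) by combining (a) with the fact that an admissible unitary $\Pi$ of finite length is essentially the unique $G$-equivariant Banach completion of its locally analytic vectors.

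For (a), I would fix a $G$-invariant Banach norm $\|\cdot\|$ on $\Pi$, so that every bounded subset sits inside some $G$-stable ball $p^{-N}\Pi^0$. Given $v^\dual\in\Pi^\dual$ and a compact $C\subset\Pi$, the set $g^{-1}C$ remains in $p^{-N}\Pi^0$ uniformly in $g$, hence
$$\sup_{g\in G}\sup_{v\in C}|(g\cdot v^\dual)(v)|=\sup_{g\in G}\sup_{v\in C}|v^\dual(g^{-1}v)|\le p^N\|v^\dual\|<\infty.$$
This shows $\{g\cdot v^\dual\}_{g\in G}$ is bounded in $\Pi^\dual$ for the compact-open topology, i.e.~$v^\dual\in(\Pi^\dual)^b$.

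For (b), by the construction of the universal unitary completion (Emerton for the principal series/Steinberg components and~\cite{gl2,CDP} for the supersingular components) every $\pi\in{\rm Rep}^{\rm adm}\,G$ of finite length admits $\widehat\pi$, with $\pi\to\widehat\pi$ of dense image. Functoriality yields $\widehat\pi^\dual\hookrightarrow\pi^\dual$, and (a) gives $\widehat\pi^\dual\subseteq(\pi^\dual)^b$. Conversely, given $\ell\in(\pi^\dual)^b$, the $G$-orbit $G\cdot\ell$ is bounded in $\pi^\dual$ (with pointwise topology on $\pi$), so the polar
$$\Lambda=\{v\in\pi\colon |(g\cdot\ell)(v)|\le 1\text{ for all }g\in G\}$$
is a $G$-stable $\O_L$-lattice in~$\pi$. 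The Banach completion $\widehat\Lambda[\tfrac{1}{p}]$ is then a unitary Banach $L$-representation of $G$ receiving $\pi$ continuously, so the universal property of $\widehat\pi$ furnishes a continuous $G$-equivariant map $\widehat\pi\to\widehat\Lambda[\tfrac{1}{p}]$. The functional $\ell$, being bounded by $1$ on $\Lambda$, extends to an element of $\widehat\Lambda[\tfrac{1}{p}]^\dual$ whose pullback lies in $\widehat\pi^\dual$ and restricts to $\ell$ on $\pi$, finishing~(b).

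For (c), the continuous injection $\Pi^{\rm an}\hookrightarrow\Pi$ has dense image (Schneider--Teitelbaum), inducing an injection $\Pi^\dual\hookrightarrow(\Pi^{\rm an})^\dual$; by (a), $\Pi^\dual\subseteq((\Pi^{\rm an})^\dual)^b$. For the reverse inclusion, given $\ell\in((\Pi^{\rm an})^\dual)^b$, the polar of its $G$-orbit is a $G$-stable open lattice $\Lambda\subset\Pi^{\rm an}$, whose gauge defines a $G$-invariant seminorm on $\Pi^{\rm an}$; the Hausdorff completion $\widetilde\Pi_\Lambda$ is a $G$-equivariant unitary Banach receiving $\Pi^{\rm an}$ with dense image. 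The heart of the argument is to produce a continuous $G$-equivariant map $\widetilde\Pi_\Lambda\to\Pi$, through which $\ell$ extends to an element of~$\Pi^\dual$. The main obstacle lies precisely here: the seminorm $\|\cdot\|_\Lambda$ arises from an arbitrary $G$-invariant lattice in $\Pi^{\rm an}$ and its topology on $\Pi^{\rm an}$ a priori bears no relation to the one inherited from $\Pi$. To overcome this I would invoke the Schneider--Teitelbaum equivalence between admissible Banach and admissible locally analytic representations of $G$, together with the finite length of $\Pi$, which forces any such Banach completion of $\Pi^{\rm an}$ to admit a canonical continuous $G$-map to $\Pi$; alternatively, one reduces to $\Pi$ irreducible, where admissibility rigidifies the $G$-invariant Banach topology up to equivalence.
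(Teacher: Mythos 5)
For parts a) and b) your arguments are correct and in substance the same as the paper's: the paper proves a) via Banach--Steinhaus and unitarity, and b) by using the explicit description of $\widehat{\pi}$ as the completion of $\pi$ with respect to the gauge of a finitely generated $G$-stable lattice $\O_L[G]\cdot v_1+\cdots+\O_L[G]\cdot v_r$ (finite length, hence finite generation, is what makes this work); your polar-of-the-orbit construction plus the universal property of $\widehat\pi$ is just another phrasing of the same identification.

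Part c) is where there is a genuine gap. The paper does not prove c) at all: it quotes it as \cite[th.\,0.2]{CD} (Colmez--Dospinescu, \emph{Compl\'et\'es universels de repr\'esentations de ${\rm GL}_2(\Q_p)$}), which says precisely that for $\Pi$ unitary, admissible, of finite length over ${\rm GL}_2(\Q_p)$ the universal unitary completion of $\Pi^{\rm an}$ is $\Pi$ — equivalently, every $G$-bounded functional on $\Pi^{\rm an}$ is continuous for the norm of $\Pi$. This is a deep theorem, specific to ${\rm GL}_2(\Q_p)$, whose proof goes through the $p$-adic local Langlands correspondence ($(\varphi,\Gamma)$-modules and the explicit structure of $\Pi^{\rm an}$ for the representations attached to $2$-dimensional Galois representations), not through soft functional analysis.

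Your proposed bridge does not exist: there is no Schneider--Teitelbaum \emph{equivalence} between admissible Banach and admissible locally analytic representations — the locally analytic vectors functor $\Pi\mapsto\Pi^{\rm an}$ is exact and faithful, but far from an equivalence — and neither finite length nor admissibility forces an arbitrary unitary Banach completion of $\Pi^{\rm an}$ to receive a map to, or map to, $\Pi$. Concretely, the lattice $\Lambda\subset\Pi^{\rm an}$ you obtain as the polar of the orbit $G\cdot\ell$ is open for the (much finer) locally analytic topology of $\Pi^{\rm an}$ but has no reason to be bounded in $\Pi$, so Schur-type commensurability arguments for invariant lattices of an irreducible admissible Banach do not apply: a priori the completion of $\Pi^{\rm an}$ with respect to the gauge of $\Lambda$ is a unitary Banach unrelated to $\Pi$, and ruling this out is exactly the content of the theorem of \cite{CD}. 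So your proof of c) as written would not close; the correct move at this point is to invoke \cite[th.\,0.2]{CD} (as the paper does) or to reproduce its $(\varphi,\Gamma)$-module argument, which is a substantial piece of work.
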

     
     \begin{proof}
D'apr\`es le th\'eor\`eme de Banach-Steinhaus, si $\Pi$ est un fr\'echet (en particulier, si $\Pi$ est un banach),
$\mu\in \Pi^\dual$ est $G$-born\'e si et seulement si $\{\langle \mu,g^{-1}\cdot v\rangle,\ g\in G\}$
est born\'e, pour tout $v\in G$.  On en d\'eduit facilement que, si $\Pi$ est un banach unitaire, tout $\mu$ est
$G$-born\'e, ce qui prouve le a).

Pour prouver le b), il suffit de remarquer que $\widehat{\pi}$ est le compl\'et\'e par la norme d\'efinie
par $\O_L[G]\cdot v_1+\cdots +\O_L[G]\cdot v_r$, pour tous $v_1,\dots,v_r$ tels que ce module soit un r\'eseau
de $\pi$. Un \'el\'ement de $\widehat{\pi}^\dual$ est donc un \'el\'ement de $\pi^\dual$
born\'e sur $\O_L[G]\cdot v_1+\cdots  +\O_L[G]\cdot v_r$, pour tous les $v_1,\dots,v_r$ comme ci-dessus;
c'est donc un \'el\'ement $G$-born\'e de $\pi^\dual$ d'apr\`es le th\'eor\`eme de Banach-Steinhaus.

Enfin, le c) est~\cite[th.\,0.2]{CD}. 
     \end{proof}
      Si $X,Y$ sont des $L$-espaces vectoriels localement convexes, 
on \'ecrit ${\rm Hom}(X,Y)$ pour l'espace des applications $L$-lin\'eaires continues de $X$ dans $Y$, muni de la topologie forte (i.e. de la convergence sur les parties born\'ees de $X$), et
on note simplement $X'={\rm Hom}(X,L)$ le dual fort de $X$.
      
      \begin{lemm}\label{aju1}
       Soit $E$ un $L$-fr\'echet nucl\'eaire et soient $F,D$ des $L$-fr\'echets. Il existe un isomorphisme naturel 
       $${\rm Hom}(E, F\widehat{\otimes}_L D)\simeq {\rm Hom}(E, F)\widehat{\otimes}_L D.$$
      \end{lemm}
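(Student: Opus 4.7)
La strat\'egie repose sur le th\'eor\`eme de repr\'esentation pour les applications lin\'eaires continues depuis un Fr\'echet nucl\'eaire, \`a savoir que, pour $E$ un $L$-fr\'echet nucl\'eaire et $F$ un $L$-fr\'echet quelconque, on a un isomorphisme topologique naturel
\[
{\rm Hom}(E,F)\simeq E'\,\widehat{\otimes}_L\,F,
\]
o\`u le membre de gauche est muni de la topologie forte et $E'$ est le dual fort de $E$. C'est l'analogue non-archim\'edien d'un r\'esultat classique de Grothendieck, disponible dans ce cadre gr\^ace au fait que, pour $E$ nucl\'eaire, les produits tensoriels projectif et injectif co\"{\i}ncident et que les applications lin\'eaires continues $E\to F$ s'identifient aux op\'erateurs nucl\'eaires.

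Le plan est alors d'encha\^iner trois isomorphismes. D'abord j'appliquerais l'isomorphisme ci-dessus avec $F$ remplac\'e par $F\widehat{\otimes}_L D$ (qui est encore un fr\'echet) pour obtenir
\[
{\rm Hom}(E,F\widehat{\otimes}_L D)\simeq E'\,\widehat{\otimes}_L\,(F\widehat{\otimes}_L D).
\]
Ensuite j'utiliserais l'associativit\'e du produit tensoriel compl\'et\'e (valable sans restriction car, $E$ \'etant nucl\'eaire, $E'$ l'est aussi et donc le choix de topologie tensorielle est sans ambigu\"{\i}t\'e) pour r\'e\'ecrire ceci comme $(E'\widehat{\otimes}_L F)\widehat{\otimes}_L D$. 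Enfin, une nouvelle application de l'isomorphisme de nucl\'earit\'e, dans l'autre sens, donne
\[
(E'\widehat{\otimes}_L F)\widehat{\otimes}_L D\simeq {\rm Hom}(E,F)\,\widehat{\otimes}_L D,
\]
ce qui conclut.

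Le point d\'elicat est de v\'erifier que l'on a bien affaire \`a un isomorphisme \emph{topologique} entre $L$-espaces localement convexes, et non seulement \`a une bijection $L$-lin\'eaire. Il faut donc s'assurer que la topologie forte sur ${\rm Hom}(E,F\widehat{\otimes}_L D)$ correspond bien, via la cha\^ine d'identifications ci-dessus, \`a la topologie du produit tensoriel compl\'et\'e sur ${\rm Hom}(E,F)\widehat{\otimes}_L D$; cela r\'esulte du fait que la formation de ${\rm Hom}(E,-)$ commute aux limites projectives et, sur les fr\'echets, pr\'eserve les fl\`eches d'identification topologique gr\^ace \`a la nucl\'earit\'e de $E$ (qui garantit que les born\'es de $E$ sont contenus dans des disques compacto\"{\i}des). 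La naturalit\'e de toutes les fl\`eches en jeu assure par ailleurs la fonctorialit\'e de l'isomorphisme final en $E$, $F$ et $D$.
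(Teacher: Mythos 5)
Votre démonstration est correcte et suit essentiellement la même route que celle du texte : l'isomorphisme ${\rm Hom}(E,X)\simeq E'\widehat{\otimes}_L X$ pour tout fréchet $X$ (que le texte tire de la prop.\,18.8 de Schneider, via le fait qu'un fréchet nucléaire est bornologique et semi-réflexif, là où vous invoquez l'analogue non archimédien du théorème de Grothendieck), appliqué deux fois et combiné à l'associativité du produit tensoriel complété. Les précautions topologiques que vous ajoutez à la fin sont superflues puisque l'isomorphisme cité est déjà topologique et naturel.
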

      
      \begin{proof}
      Comme $E$ est un $L$-fr\'echet nucl\'eaire, il est bornologique et semi-r\'eflexif, 
donc par \cite[prop.\,18.8]{livreSchneider}, 
on a un isomorphisme naturel $E'\widehat{\otimes}_{L} X\simeq {\rm Hom}(E, X)$ pour tout $L$-fr\'echet $X$. On a donc 
      $${\rm Hom}(E, F\widehat{\otimes}_L D)\simeq E'\widehat{\otimes}_{L} ( F\widehat{\otimes}_L D)\simeq 
      (E'\widehat{\otimes}_{L} F)\widehat{\otimes}_{L} D\simeq {\rm Hom}(E, F)\widehat{\otimes}_{L} D,$$
      ce qui permet de conclure.      
      \end{proof}
      
      \begin{lemm}\label{aju2}
       Si $X$ est un $L$-espace localement convexe s\'epar\'e muni d'une action continue de 
       $G$, et si $B$ est un $L$-banach muni de l'action triviale de $G$, alors 
       on a un isomorphisme naturel $(X\widehat{\otimes}_L B)^G\simeq X^G\widehat{\otimes}_L B$.
      \end{lemm}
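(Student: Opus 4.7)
Le plan est de construire le morphisme naturel de l'\'enonc\'e et de montrer qu'il est un isomorphisme en utilisant une base orthogonale du $L$-banach $B$. L'inclusion continue $X^G \hookrightarrow X$ (o\`u $X^G = \bigcap_{g\in G}\ker(g-\mathrm{id})$ est ferm\'ee comme intersection de noyaux de morphismes continus, l'action \'etant continue) induit par fonctorialit\'e du produit tensoriel compl\'et\'e une fl\`eche continue $\iota: X^G \widehat{\otimes}_L B \to X \widehat{\otimes}_L B$. Comme $G$ agit trivialement sur $B$, l'image de $\iota$ est constitu\'ee de vecteurs $G$-invariants; $\iota$ se factorise donc par $(X \widehat{\otimes}_L B)^G$, fournissant le morphisme naturel cherch\'e.

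Pour montrer que cette fl\`eche est un isomorphisme, on choisit une base orthogonale $(e_i)_{i \in I}$ de $B$ sur $L$. Tout \'el\'ement $y \in X \widehat{\otimes}_L B$ s'\'ecrit alors de mani\`ere unique sous la forme $y = \sum_{i \in I} x_i \otimes e_i$, o\`u $(x_i)_{i \in I}$ est une famille d'\'el\'ements de $X$ tendant vers $0$ au sens appropri\'e, les $x_i$ \'etant continus en $y$. Sous cette identification, $X^G \widehat{\otimes}_L B$ correspond aux familles $(x_i)$ v\'erifiant $x_i \in X^G$ pour tout $i$. Or, si $y = \sum_i x_i \otimes e_i$ est $G$-invariant, l'\'egalit\'e $g \cdot y = \sum_i (g \cdot x_i) \otimes e_i = \sum_i x_i \otimes e_i$ combin\'ee \`a l'unicit\'e du d\'eveloppement impose $g \cdot x_i = x_i$ pour tous $i$ et $g$; donc chaque $x_i$ appartient \`a $X^G$. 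La r\'eciproque est imm\'ediate, d'o\`u la surjectivit\'e; l'injectivit\'e r\'esulte directement de l'unicit\'e du d\'eveloppement.

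La principale difficult\'e technique r\'eside dans la justification de l'existence d'une telle base orthogonale et dans la validit\'e de l'unicit\'e du d\'eveloppement pour un espace $X$ localement convexe s\'epar\'e g\'en\'eral. Dans les applications de ce lemme, $B$ est typiquement un $L$-banach de type d\'enombrable (par exemple le $L$-dual continu d'un anneau de Kisin $\check{R}_{{\cal B},M}$), pour lequel l'existence d'une base orthonormale est garantie par l'analyse fonctionnelle $p$-adique classique, et l'unicit\'e de l'expansion r\'esulte de la continuit\'e des coefficients lin\'eaires; l'argument ci-dessus fournit alors l'isomorphisme recherch\'e.
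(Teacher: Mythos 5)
Votre preuve suit essentiellement la m\^eme d\'emarche que celle du texte: choisir une base orthonormale $(e_i)_{i\in I}$ de $B$, identifier $X\widehat{\otimes}_L B$ \`a l'espace $\ell^\infty_0(I,X)$ des familles tendant vers $0$ suivant le filtre des compl\'ementaires des parties finies, et prendre les invariants composante par composante gr\^ace \`a l'unicit\'e du d\'eveloppement. La r\'eserve finale sur le type d\'enombrable est superflue: $L$ \'etant \`a valuation discr\`ete, tout $L$-banach admet une base orthonormale (quitte \`a remplacer sa norme par une norme \'equivalente \`a valeurs dans $|L^\dual|$), de sorte que votre argument vaut pour $B$ arbitraire, exactement comme dans le texte.
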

      
      \begin{proof}
Si $(e_i)_{i\in I}$ est une base orthonormale de $B$, alors $X\widehat{\otimes}_L B=\ell^\infty_0(I,X)$
(suites tendant vers $0$ suivant le filtre des compl\'ementaires des parties finies), et
$(X\widehat{\otimes}_L B)^G=\ell^\infty_0(I,X^G)=X^G\widehat{\otimes}_L B$.
\end{proof}

     Rappelons que $H^1_{\proet}(\mathcal{M}_{n, \C_p})$ est un espace de Fr\'echet, en tant que limite inverse des espaces de Banach $H^1_{\eet}(U_i)$ (de boule unit\'e $H^1_{\eet}(U_i)^+$), 
si $\{U_i\}_{i\geq 1}$ est un recouvrement Stein de $\mathcal{M}_{n, \C_p}$. Le r\'esultat suivant est 
\cite[prop.\,2.12]{CDN1}.       
     \begin{prop}\label{etproet}
     L'application naturelle $H^1_{\eet}(\mathcal{M}_{n, \C_p})\to H^1_{\proet}(\mathcal{M}_{n, \C_p})$ est injective et induit un isomorphisme     
      $$H^1_{\eet}(\mathcal{M}_{n, \C_p})\simeq H^1_{\proet}(\mathcal{M}_{n, \C_p})^b_.$$
            \end{prop}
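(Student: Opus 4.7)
The plan is to use a Stein cover $\{U_i\}_{i\geq 1}$ of $\mathcal{M}_{n,\C_p}$ by quasi-compact open subspaces (available by Proposition \ref{appendix}) in order to compute both cohomologies as inverse limits. On the proétale side, the standard comparison between proétale and étale cohomology on quasi-compact rigid analytic spaces (with $\O_L/p^k$ coefficients), combined with a Milnor exact sequence, will give
\[
H^1_{\proet}(\mathcal{M}_{n,\C_p}) \;=\; \varprojlim_i H^1_{\eet}(U_i),
\]
which is precisely the Fréchet structure on the left, a limit of Banach spaces whose unit balls are $H^1_{\eet}(U_i)^+$. On the étale side, the results already established in the proof of Theorem~\ref{smooth} (in particular the cohomological profiniteness and Mittag-Leffler property of the prosystem $\{H^1_{\eet}(U_i, \O_L/p^k)\}_i$) will yield $H^1_{\eet}(\mathcal{M}_{n,\C_p}, \O_L(1)) \simeq \varprojlim_i H^1_{\eet}(U_i)^+$, and hence $H^1_{\eet}(\mathcal{M}_{n,\C_p}) = L\otimes_{\O_L} \varprojlim_i H^1_{\eet}(U_i)^+$.

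Under these identifications, the natural comparison map will realize $H^1_{\eet}(\mathcal{M}_{n,\C_p})$ as the subset of sequences $(\alpha_i)\in \varprojlim_i H^1_{\eet}(U_i)$ for which there exists an integer $N$, \emph{uniform in $i$}, with $p^N\alpha_i\in H^1_{\eet}(U_i)^+$. Injectivity is then immediate, as is one inclusion of the second assertion: $\varprojlim_i H^1_{\eet}(U_i)^+$ is $G$-stable by functoriality and bounded in the Fréchet topology (since its image in each Banach quotient lies in the unit ball), so every element of $L\otimes_{\O_L} \varprojlim_i H^1_{\eet}(U_i)^+$ has bounded $G$-orbit.

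The remaining and principal point will be the converse: a vector $v\in H^1_{\proet}(\mathcal{M}_{n,\C_p})$ whose $G$-orbit is bounded must be uniformly bounded in the sense above. By the definition of boundedness in a Fréchet space, $G\cdot v$ will project to a bounded subset of each Banach $H^1_{\eet}(U_i)$, yielding integers $N_i$ with $g\cdot v|_{U_i}\in p^{-N_i}H^1_{\eet}(U_i)^+$ for every $g\in G$ and every $i$; the delicate issue is to make the integer $N_i$ independent of~$i$. To handle this, the plan is to invoke Proposition~\ref{appendix} once more: since the irreducible components of the special fiber of the $G$-equivariant semi-stable model form only finitely many $G$-orbits, the Stein cover can be chosen so that each $U_i$ is contained in a finite union of $G$-translates of one of finitely many fixed $U_{i_0}$. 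The restriction of $v$ to a translate $gU_{i_0}$ coincides, via pullback by $g$, with the restriction of $g^{-1}\cdot v$ to $U_{i_0}$, so the uniform bound on the $G$-orbit of $v$ over a fixed $U_{i_0}$ will propagate to a uniform bound over all $U_i$, completing the argument.
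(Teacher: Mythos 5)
Note first that the paper does not actually prove this proposition: it is quoted verbatim from \cite[prop.\,2.12]{CDN1}, so your argument has to stand entirely on its own. Your reductions are mostly reasonable: $H^1_{\proet}(\mathcal{M}_{n,\C_p})=\varprojlim_i H^1_{\eet}(U_i)$ is the Fr\'echet structure recalled just before the statement, and $H^1_{\eet}(\mathcal{M}_{n,\C_p})^+\simeq\varprojlim_i H^1_{\eet}(U_i)^+$ follows from Milnor sequences. But the Mittag--Leffler/profiniteness input you borrow from the proof of Theorem \ref{smooth} is not available here: that proof uses in an essential way that the base is a finite extension of $\Q_p$, not $\C_p$ (what you actually need is only the finiteness of the $H^0(U_i,\O_L/p^k)$, which is easy). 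Also, injectivity is not ``immediate'': the kernel of $L\otimes_{\O_L}\varprojlim_i H^1_{\eet}(U_i)^+\to\varprojlim_i H^1_{\eet}(U_i)$ is $L\otimes_{\O_L}\varprojlim_i(\text{torsion})$, and an inverse limit of torsion modules need not be torsion, so some control of the torsion of the $H^1_{\eet}(U_i)^+$ is required.

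The genuine gap is in your final propagation step. Knowing that the restrictions of $v$ (and of its whole $G$-orbit) to all translates $gU_{i_0}$ lie in $p^{-N}H^1_{\eet}(gU_{i_0})^+$ does \emph{not} bound the denominator of $v|_{U_i}$, not even for a single fixed $i$. For the cover $U_i=\bigcup_j g_jU_{i_0}$, the \v{C}ech-to-derived exact sequence gives
$0\to\check{H}^1(\{g_jU_{i_0}\},\underline{H}^0)\to H^1_{\eet}(U_i)\to\prod_j H^1_{\eet}(g_jU_{i_0})$,
so the image of $\check{H}^1(\underline{H}^0)$ consists exactly of the classes restricting to $0$ on every member of the cover; for any integral such class $c$ and any $m\geq 0$, the class $p^{-m}c$ passes your test while having denominator $p^m$. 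This ``loop'' part is nonzero for the coverings $\mathcal{M}_n$ (the dual graph of the semistable model of Prop.\,\ref{appendix} has cycles once the fibers of $\mathcal{M}_n\to\Omega_{\rm Dr}$ are disconnected), so your criterion --- which only sees the image of $v|_{U_i}$ in $\prod_j H^1_{\eet}(g_jU_{i_0})$ --- is strictly weaker than uniform boundedness of denominators, and the claimed ``propagation'' fails. An additional idea is needed to control this \v{C}ech $H^1$ of $H^0$'s (for instance an integral \v{C}ech description of $H^1_{\eet}$ over the tree-indexed cover of the whole space together with the way $G$ permutes its pieces, or the explicit description of $H^1_{\proet}$ via the fundamental exact sequence as in \cite{CDN1}); without it, the key implication ``$G$-born\'e $\Rightarrow$ d\'enominateurs uniform\'ement born\'es'' is not established.
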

            
\Subsubsection{Multiplicit\'es de repr\'esentations de $G$}

\begin{prop}\label{omega}
   Soit $\Pi$ une $L$-repr\'esentation de Banach unitaire, admissible, absolument irr\'eductible de 
   $G$. Alors 
   $${\rm Hom}_G(\Pi^\dual, L\otimes \Omega^1(\mathcal{M}^p_{n,\Q_p}))\simeq
\begin{cases}\mathcal{L}\otimes_L {\rm JL}(M) &{\text{si $\Pi=\Pi_{M,{\cal L}}$ et $M$ de niveau~$\leq n$}},\\
 \delta_{\check G} &{\text{si $\Pi={\rm St}^{\rm cont}\otimes \delta_{G}$, 
$\delta$ de niveau~$\leq n $}},\\
0 &{\text{sinon.}}\end{cases}$$

   \end{prop}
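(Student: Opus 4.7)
The plan is to derive the proposition from Theorem~\ref{icdn1.2} (and its character analogue from Remark~\ref{cdn1.3}(i)) by passing through pro-\'etale cohomology and extracting $\Omega^1$ from the Hodge-Tate filtration.

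First, on $X := \mathcal{M}^p_{n,\C_p}$, the Hodge-Tate sequence for Stein rigid spaces gives a $G \times \G_{\Q_p} \times \check G$-equivariant short exact sequence of Fr\'echet spaces
\[
0 \to H^1(X, \mathcal{O})(1) \to H^1_{\proet}(X, L(1)) \,\widehat\otimes_L\, \C_p \to \C_p \,\widehat\otimes_{\Q_p}\, \Omega^1(\mathcal{M}^p_{n,\Q_p}) \to 0,
\]
the right-hand identification using that the global differentials of the Stein space $X$ descend to the $\Q_p$-model. Apply $\Hom_G(\Pi^\dual, -)$: since $\Pi^\dual$ is compact, the functor is exact on sequences of Fr\'echets, and Lemmas~\ref{aju1}--\ref{aju2} ensure compatibility with the topological tensor products needed to recover $L \otimes_{\Q_p} \Omega^1(\mathcal{M}^p_{n,\Q_p})$ as a $\G_{\Q_p}$-fixed subspace of $\C_p \,\widehat\otimes_{\Q_p}\, \Omega^1(\mathcal{M}^p_{n,\Q_p})$.

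Second, compute the middle term. Proposition~\ref{etproet} identifies $H^1_{\eet}(X)$ with the subspace of $G$-bounded vectors of $H^1_{\proet}(X,L(1))$, and by Theorem~\ref{icdn1.2},
\[
\Hom_G(\Pi_{M,{\cal L}}^\dual, H^1_{\eet}(X)) \simeq V_{M,{\cal L}} \otimes_L {\rm JL}(M).
\]
Extending this along the $G$-continuous dual (Lemma~\ref{b}) and completing via Lemma~\ref{aju1} yields a description of $\Hom_G(\Pi^\dual_{M,{\cal L}}, H^1_{\proet}(X,L(1)) \,\widehat\otimes_L\, \C_p)$ on which the induced Hodge-Tate grading matches the Hodge filtration of $V_{M,{\cal L}}$ coming from $p$-adic Langlands.

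Third, the key geometric input: by the very definition
\[
V_{M,{\cal L}} = \ker\bigl( (\bcris^+ \otimes_{\Q_p^{\rm nr}} M)^{\varphi=p} \to \C_p \otimes_{\Q_p} (M_{\rm dR}/{\cal L}) \bigr),
\]
the line ${\cal L} \subset M_{\rm dR}$ is exactly the weight-$0$ piece of $V_{M,{\cal L}}$ in the Hodge-Tate decomposition, hence precisely what projects non-trivially to the $\Omega^1$-factor of the sequence above. Taking $\G_{\Q_p}$-fixed vectors (Lemma~\ref{aju2}) then gives
\[
\Hom_G(\Pi_{M,{\cal L}}^\dual, L \otimes_{\Q_p} \Omega^1(\mathcal{M}^p_{n,\Q_p})) \simeq {\cal L} \otimes_L {\rm JL}(M).
\]
The Steinberg case $\Pi = {\rm St}^{\rm cont} \otimes \delta_G$ is handled identically, starting from the character variant ($V = \delta$) of Theorem~\ref{cdn1.1}; the Galois character $\delta$ collapses upon Galois descent of the Hodge-Tate piece, leaving only the Jacquet--Langlands factor $\delta_{\check G}$. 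For any other absolutely irreducible admissible unitary $\Pi$, Theorem~\ref{icdn1.2} already gives $\Hom_G(\Pi^\dual, H^1_{\eet}(X)) = 0$, whence the claimed vanishing.

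The hard part is the Hodge-Tate and topological bookkeeping, on two fronts: ensuring that $\Hom_G(\Pi^\dual, -)$ commutes simultaneously with the Hodge-Tate projection and with the Galois descent from $\C_p$-coefficients back to $L$; and verifying that the filtration induced on $\Hom_G(\Pi^\dual, H^1_{\proet}(X, L(1)))$ by the $p$-adic Langlands correspondence coincides with the Hodge filtration ${\cal L} \subset V_{M,{\cal L}}$ arising from the geometry of the Drinfeld tower. Matching these two a priori distinct filtrations is the technical core of the argument.
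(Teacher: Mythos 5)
Your proposal has a fatal logical defect: it is circular within the logical structure of this paper. You take Theorem~\ref{icdn1.2} (= Theorem~\ref{cdn1.2}) as input and try to derive Proposition~\ref{omega} from it, but the paper proves Theorem~\ref{cdn1.2} \emph{using} Proposition~\ref{omega} --- the proof of Theorem~\ref{cdn1.2} explicitly invokes ``la prop.~\ref{omega} et un argument comme dans le lemme~\ref{passtoan}'' to show that $\Hom_G((\Pi^{\rm an})^\dual, (L\otimes\Omega^1(\mathcal{M}_{n,\Q_p}))[M])$ vanishes for all but one $\Pi$. The actual logical order is: $[{\rm DL}] \Rightarrow$ Prop.~\ref{omega} $\Rightarrow$ Thm.~\ref{cdn1.2}, not the reverse. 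Your argument would only be salvageable if one supplied an independent proof of Theorem~\ref{cdn1.2}, which the paper does not do.

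Worse, the exact sequence you start from is simply wrong for the space at hand. For a \emph{Stein} rigid space $X=\mathcal{M}^p_{n,\C_p}$ one has $H^1(X,\mathcal{O})=0$ by Kiehl's Theorem~B, so your claimed Hodge--Tate sequence would collapse to an isomorphism $H^1_{\proet}(X,L(1))\,\widehat\otimes_L\,\C_p \simeq \C_p\,\widehat\otimes_{\Q_p}\Omega^1(\mathcal{M}^p_{n,\Q_p})$, which is false. The relationship between $H^1_{\proet}$ and $\Omega^1$ on Stein spaces goes in the \emph{opposite} direction: the exponential and $\dlog$ maps (CDN1, th.~5.11, as recalled in the paper's proof) give a diagram whose vertical arrows are injective, yielding
\[
0\to  H^1_{\proeet}(\mathcal{M}^p_{n,\C_p})[M]\to
(L\otimes\Omega^1(\mathcal{M}^p_{n, \C_p}))[M]
\to \frac{M_{\rm dR}\otimes {\C_p}}{X_{\rm st}^+(M)}\widehat{\otimes}_L {\rm LL}(M)^\dual\to 0,
\]
in which $H^1_{\proet}$ sits as a \emph{subspace} of $\Omega^1[M]$, not a quotient. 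Your approach of extracting $\Omega^1$ as a Hodge--Tate graded quotient of $H^1_{\proet}\otimes\C_p$ therefore has no chance to produce the desired $\mathcal{L}\otimes{\rm JL}(M)$; if anything it would compute the wrong graded piece. The paper instead takes the structure theorem of Dospinescu--Le~Bras~[DL] as the primary input: for $M$ supercuspidal, [DL] provides the isomorphism $(L\otimes H^1_{\rm dR}(\mathcal{M}^p_{n,\Q_p}))[M]\simeq M_{\rm dR}\otimes_L{\rm LL}(M)^\dual$ and, for each line $\mathcal{L}$, a short exact sequence
\[
0\to (\Pi_{M,\mathcal{L}}^{\rm an})^\dual\to (L\otimes\Omega^1(\mathcal{M}^p_{n,\Q_p}))[M]\to (M_{\rm dR}/\mathcal{L})\otimes_{L} {\rm LL}(M)^\dual\to 0.
\]
Applying $\Hom_G(\Pi^\dual,-)$ to this sequence (and using Lemma~\ref{b}~(c) to control $\Hom$ from $\Pi^\dual$ into $(\Pi_{M,\mathcal{L}}^{\rm an})^\dual$, together with the bijectivity of $V\mapsto\bPi(V)$ and the compatibility with classical local Langlands) gives the case analysis directly, with the key step being a comparison between two different choices of $\mathcal{L}$ to pin down the dimension as exactly~$1$. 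Nothing like your Hodge--Tate grading or Galois descent from $\C_p$ appears; the ``hard part'' you flag (matching two a~priori distinct filtrations) is not the technical core of the real proof.
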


\begin{proof}
  D'apr\`es la discussion ci-dessus il suffit de comprendre la multiplicit\'e de $\Pi^\dual$ 
dans $(L\otimes\Omega^1(\mathcal{M}^p_{n,\Q_p}))[M]$ pour tout $M\in \Phi{\rm N}^p$. 
  
   Si $M$ est sp\'ecial, on se ram\`ene, par torsion, \`a $M={\rm Sp}$,
et $(L\otimes\Omega^1(\mathcal{M}_{n,\Q_p}^p))[M]$ 
est alors l'espace des formes diff\'erentielles sur le demi-plan de Drinfeld,
 dont la structure est tr\`es simple, voir~\cite[prop.\,1.6]{CDN1}. 
Le r\'esultat s'en d\'eduit aisement dans ce cas. 

  On suppose dans ce qui suit que $M$ est supercuspidal.
   Le th\'eor\`eme principal de~\cite{DL} fournit un isomorphisme\footnote{Dans \cite{DL} il y a une hypoth\`ese ambiante que les caract\`eres centraux des repr\'esentations sont triviaux, et donc une identification implicite entre $M_{\rm dR}$ et son dual.} 
$$j: (L\otimes H^1_{\rm dR}(\mathcal{M}^p_{n,\Q_p}))[M]\simeq M_{\rm dR}\otimes_L {\rm LL}(M)^\dual$$
  et, pour chaque 
   $\mathcal{L}$, une suite exacte 
   $$0\to (\Pi_{M,\mathcal{L}}^{\rm an})^\dual\to (L\otimes\Omega^1(\mathcal{M}^p_{n,\Q_p}))[M]\to 
(M_{\rm dR}/\mathcal{L})\otimes_{L} {\rm LL}(M)^\dual\to 0,$$
    la fl\`eche 
$(L\otimes\Omega^1(\mathcal{M}^p_{n,\Q_p}))[M]\to (M_{\rm dR}/\mathcal{L})\otimes_{L} {\rm LL}(M)^\dual$ 
\'etant induite par la fl\`eche naturelle 
$\Omega^1(\mathcal{M}^p_{n,\Q_p})\to H^1_{\rm dR}(\mathcal{M}^p_{n,\Q_p})$ et par l'isomorphisme $j$. Distinguons deux cas:
   
\vskip1mm
   $\bullet$ $\Pi$ est une s\'erie principale ou une tordue de la Steinberg.
Dans ce cas, 
   ${\rm Hom}_G (\Pi^\dual, (\Pi_{M,\mathcal{L}}^{\rm an})^\dual)=0$ et 
   ${\rm Hom}_G(\Pi^\dual, {\rm LL}(M)^\dual)=0$ (le premier point d\'ecoule du point c) du lemme \ref{b}, 
le  second r\'esulte de ce que ${\rm LL}(M)$ est supercuspidale, alors que les vecteurs lisses de $\Pi$
sont de la s\'erie principale).
   
\vskip1mm
   $\bullet$ $\Pi=\bPi(V)$ avec $V$ irr\'eductible de dimension $2$. 

\quad $\diamond$ Si 
   $V$ n'est pas de type $M$, alors ${\rm Hom}_G(\Pi^\dual, (\Pi_{M,\mathcal{L}}^{\rm an})^\dual)=0$ 
(par le point c) du lemme \ref{b} et l'injectivit\'e 
de la correspondance $V\mapsto \bPi(V)$ pour $V$ irr\'eductible de dimension $2$), 
et ${\rm Hom}_G(\Pi^\dual, {\rm LL}(M)^\dual)=0$ 
(par la compatibilit\'e de la correspondance de Langlands locale $p$-adique avec la classique), 
donc ${\rm Hom}_G(\Pi^\dual, (L\otimes\Omega^1(\mathcal{M}^p_{n,\Q_p}))[M])=0$ dans ce cas. 

\quad $\diamond$ Si $V$ est de type $M$, disons $V=V_{M, \mathcal{L}}$, en appliquant 
   ${\rm Hom}_G(\Pi^\dual, -)$ \`a la suite exacte ci-dessus on voit que 
   ${\rm Hom}_G(\Pi^\dual, (L\otimes\Omega^1(\mathcal{M}^p_{n,\Q_p}))[M])\ne 0$. 
En appliquant le m\^eme foncteur \`a une suite exacte du type ci-dessus,
 mais avec un autre $\mathcal{L}'\subset M_{\rm dR}$ 
(et en utilisant le fait que ${\rm Hom}_G(\Pi^\dual, (\Pi_{M, \mathcal{L'}}^{\rm an, *}))=0$,
 qui d\'ecoule encore du lemme \ref{b}),
 on en d\'eduit que ${\rm Hom}_G(\Pi^\dual, (L\otimes\Omega^1(\mathcal{M}^p_{n,\Q_p}))[M])$ 
est de dimension au plus $1$ et s'injecte dans $\mathcal{L}$,
 ce qui permet de conclure.
\end{proof}

   \begin{theo}\label{cdn1.2}
    Soit $\Pi$ une $L$-repr\'esentation de Banach unitaire, admissible, absolument irr\'eductible de 
   $G$. Alors 
   $${\rm Hom}_G(\Pi^\dual, H^1_{\eet}(\mathcal{M}^p_{n, \C_p}))\simeq
\begin{cases}V_{M,{\cal L}}\otimes_L {\rm JL}(M) &{\text{si $\Pi=\Pi_{M,{\cal L}}$, et $M$ de niveau~$\leq n$}},\\
\delta\otimes \delta_{\check G}
 &{\text{si $\Pi={\rm St}^{\rm cont}\otimes \delta_G$, $\delta$ de niveau~$\leq n $}},\\
0 &{\text{sinon}}.\end{cases}$$
   \end{theo}
   
  \begin{proof} On d\'ecompose $H^1_{\eet}(\mathcal{M}^p_{n, \C_p})$ selon les divers 
  $M\in \Phi{\rm N}^p_n$. Si $M$ est sp\'ecial, par torsion on se ram\`ene au cas $M={\rm Sp}$, et alors   
 $$ H^1_{\eet}(\mathcal{M}_{n,\C_p})[M]\simeq 
   ({\rm St}^{\rm cont})^\dual,$$
   par \cite{CDN1}, ce qui permet de conclure.
   
   On suppose par la suite que $M$ est supercuspidal.  
   Nous allons montrer d'abord que l'on peut remplacer $\Pi$ par 
  $\Pi^{\rm an}$ et $H^1_{\eet}(\mathcal{M}^p_{n, \C_p})$ par $H^1_{\proeet}(\mathcal{M}^p_{n, \C_p})$ sans changer les multiplicit\'es.
      (Le passage de $\Pi$ \`a $\Pi^{\rm an}$ va nous permettre d'utiliser le lemme~\ref{aju1}.)

      \begin{lemm}\label{passtoan}
      Les injections $\Pi^\dual\to (\Pi^{\rm an})^\dual$ et $H^1_{\eet}(\mathcal{M}^p_{n, \C_p})\to H^1_{\proeet}(\mathcal{M}^p_{n, \C_p})$ induisent un isomorphisme 
      $${\rm Hom}_G((\Pi^{\rm an})^\dual, H^1_{\proeet}(\mathcal{M}^p_{n, \C_p}))
\overset{\sim}{\to}
 {\rm Hom}_G(\Pi^\dual, H^1_{\eet}(\mathcal{M}^p_{n, \C_p})).$$
      \end{lemm}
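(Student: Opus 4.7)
The plan is to deduce the isomorphism from two structural identifications of $G$-bounded vectors: the identification $((\Pi^{\rm an})^\dual)^b = \Pi^\dual$ from Lemma~\ref{b}(c), and the identification $(H^1_{\proet}(\mathcal{M}^p_{n,\C_p}))^b = H^1_{\eet}(\mathcal{M}^p_{n,\C_p})$ of Proposition~\ref{etproet}. Together with a Frobenius-reciprocity-type argument using the distribution algebra $D(H,L)$ of a compact open subgroup $H \subset G$, these give a bijection.

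First I would verify that the natural restriction map
$$R: {\rm Hom}_G((\Pi^{\rm an})^\dual, H^1_{\proet}(\mathcal{M}^p_{n,\C_p})) \longrightarrow {\rm Hom}_G(\Pi^\dual, H^1_{\eet}(\mathcal{M}^p_{n,\C_p}))$$
is well defined: any continuous $G$-equivariant map sends $G$-bounded vectors to $G$-bounded vectors, so since the elements of $\Pi^\dual \subset (\Pi^{\rm an})^\dual$ are $G$-bounded by Lemma~\ref{b}(c), their images in $H^1_{\proet}$ are $G$-bounded, hence lie in $H^1_{\eet}$ by Proposition~\ref{etproet}. One direction of the inverse to $R$ is then produced by the inclusions $\Pi^\dual\hookrightarrow(\Pi^{\rm an})^\dual$ and $H^1_{\eet}\hookrightarrow H^1_{\proet}$.

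For bijectivity of $R$ I would invoke the Schneider--Teitelbaum description of $(\Pi^{\rm an})^\dual$: for $H\subset G$ a compact open subgroup, $(\Pi^{\rm an})^\dual$ is obtained from $\Pi^\dual$ by extension of scalars from the Iwasawa algebra $\Lambda(H,L)$ to the locally analytic distribution algebra $D(H,L)$, so that $\Pi^\dual$ generates $(\Pi^{\rm an})^\dual$ topologically as a $D(H,L)$-module. Under the hypothesis that $H^1_{\proet}(\mathcal{M}^p_{n,\C_p})$ carries an extension of its $G$-action to a continuous $D(H,L)$-action, the bijectivity of $R$ is then formal: injectivity follows because a continuous $G$-equivariant $\phi$ vanishing on $\Pi^\dual$ is $D(H,L)$-equivariant and hence vanishes on the dense subspace generated by $\Pi^\dual$; surjectivity follows by setting $\tilde\psi(\mu\cdot v) = \mu\cdot\psi(v)$ for $\mu\in D(H,L)$ and $v\in\Pi^\dual$, and checking that this formula is well defined and continuous using the $D(H,L)$-module structure on each side.

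The main obstacle is therefore to verify that $H^1_{\proet}(\mathcal{M}^p_{n,\C_p})$ is locally $H$-analytic as a $G$-representation, so that $D(H,L)$ acts. This should be extracted from the Stein presentation used to define $H^1_{\proet}$, together with the fact that the building blocks $H^1_{\eet}(U_i)$ and $\Omega^1(\mathcal{M}^p_{n,\C_p})$ appearing in \cite{CDN1} carry natural locally analytic structures compatible with the $G$-action; an explicit route is to combine the results of \cite{CDN1} identifying $H^1_{\proet}$ with objects of $(\varphi,N,\G_{\Q_p})$-modules and differential forms, each of which is visibly locally analytic. Once this analyticity input is available, everything else in the plan is a routine verification.
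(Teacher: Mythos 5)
Your proposal follows essentially the same route as the paper: well-definedness of the restriction map via $G$-bounded vectors (lemme~\ref{b} and prop.~\ref{etproet}), injectivity by density of $\Pi^\dual$ in $(\Pi^{\rm an})^\dual$, and surjectivity by extending scalars along $\Lambda(K)\to D(K)$ and invoking Schneider--Teitelbaum's identification $(\Pi^{\rm an})^\dual\simeq\Pi^\dual\otimes_{\Lambda(K)}D(K)$. The one input you leave as a hypothesis --- the $D(K)$-module structure on $H^1_{\proet}(\mathcal{M}^p_{n,\C_p})$ --- is supplied in the paper exactly by the second route you sketch: $H^1_{\proet}(\mathcal{M}^p_{n,\C_p})$ is a $K$-stable closed sub-Fr\'echet of $\Omega^1(\mathcal{M}^p_{n,\C_p})=\Omega^1(\mathcal{M}^p_{n,\Q_p})\wotimes\C_p$ (via ${\rm dlog}$, using the fundamental diagram of \cite{CDN1}), which is a topological $D(K)$-module by \cite[th.~3.2]{DL}; your first suggestion, putting locally analytic structures on the individual $H^1_{\eet}(U_i)$ of a Stein cover, is shakier since the $U_i$ are not $G$-stable.
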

      
      \begin{proof}
       Tout morphisme $G$-\'equivariant $f: (\Pi^{\rm an})^\dual\to H^1_{\proeet}(\mathcal{M}^p_{n, \C_p})$
       doit envoyer $\Pi^\dual$ dans $H^1_{\proeet}(\mathcal{M}^p_{n, \C_p})^b$, qui s'identifie \`a 
       $H^1_{\eet}(\mathcal{M}^p_{n, \C_p})$ par la prop.\,\ref{etproet}. Cela fournit une fl\`eche 
       $$\iota: {\rm Hom}_G((\Pi^{\rm an})^\dual, H^1_{\proeet}(\mathcal{M}^p_{n, \C_p}))
\to {\rm Hom}_G(\Pi^\dual, H^1_{\eet}(\mathcal{M}^p_{n, \C_p})),$$
       dont l'injectivit\'e est une cons\'equence de la densit\'e de 
       $\Pi^\dual$ dans $(\Pi^{\rm an})^\dual$.
        
      Soit $f: \Pi^\dual\to H^1_{\eet}(\mathcal{M}^p_{n, \C_p})$ 
       une fl\`eche $G$-\'equivariante continue. Soit $K$ un sous-groupe ouvert compact de 
       $G$ et soit $D(K)$ (resp. $\Lambda(K)$) l'alg\`ebre des distributions (resp. des mesures) sur 
       $K$ \`a valeurs dans $L$. Le $G$-Fr\'echet $H^1_{\proeet}(\mathcal{M}^p_{n, \C_p})$ est un 
       $D(K)$-module, car il s'identifie \`a un sous-fr\'echet $K$-stable de 
$\Omega^1(\mathcal{M}^p_{n, \C_p})=
       \Omega^1(\mathcal{M}_{n,\Q_p}^p)\widehat{\otimes}_{\Q_p} \C_p$, et ce dernier est un $D(K)$-module topologique par 
       le th.\,3.2 de \cite{DL}. Donc $f$ se prolonge en une application 
       $f: \Pi^\dual\otimes_{\Lambda(K)} D(K)\to H^1_{\proeet}(\mathcal{M}^p_{n, \C_p})$. Par le th\'eor\`eme de Schneider et Teitelbaum (th.\,7.1 de \cite{STInv}) le terme de gauche s'identifie \`a $(\Pi^{\rm an})^\dual$, et l'application qui s'en d\'eduit 
       $\tilde{f}: (\Pi^{\rm an})^\dual\to H^1_{\proeet}(\mathcal{M}^p_{n, \C_p})$ est $G$-\'equivariante, continue et un ant\'ec\'edent de $f$, ce qui montre que $\iota$ est bien un isomorphisme.
      \end{proof}

     Soit $M\in \Phi{\rm N}^p_{n}$ supercuspidal. Le th.\,5.11 de \cite{CDN1} 
  montre l'existence d'un diagramme commutatif de $G$-fr\'echets 
$$\xymatrix@R=.5cm@C=.4cm{0 \ar[r]&\ (L\otimes\mathcal{O}({\cal M}^p_{n, \C_p}))[M]\ar[r]^-{{\rm exp}}\ar@{=}[d]&
H^1_{\proet}({\cal M}^p_{n, \C_p})[M]\ar[d]^{{\rm dlog}}\ar[r]&
X_{\rm st}^+(M)\widehat{\otimes}_L{\rm LL}(M)^\dual \ar[d]^{\theta}\ar[r]&0\\
0\ar[r]& (L\otimes\mathcal{O}({\cal M}^p_{n, \C_p}))[M] \ar[r]^-d&\Omega^1({\cal M}^p_{n, \C_p})[M]\ar[r]
&({\C_p} \otimes_{\Q_p} M_{\rm dR})\widehat{\otimes}_L {\rm LL}(M)^\dual \ar[r]&0
},$$
les fl\`eches verticales \'etant injectives 
(car $M$ est de pente $1/2$, et $X^+_{\rm st}(M)\to {\C_p}\otimes_{\Q_p}M_{\rm dR}$ est injective), 
d'image ferm\'ee (comme le montre le th.\,1.8 de \cite{CDN3}). 
      On en d\'eduit une suite exacte de $G$-fr\'echets 
 $$0\to  H^1_{\proeet}(\mathcal{M}^p_{n,\C_p})[M]\to 
(L\otimes\Omega^1(\mathcal{M}^p_{n, \C_p}))[M]
\to \tfrac{M_{\rm dR}\otimes {\C_p}}{X_{\rm st}^+(M)}\widehat{\otimes}_L {\rm LL}(M)^\dual\to 0.$$
 En appliquant ${\rm H}(-):={\rm Hom}_G((\Pi^{\rm an})^\dual,-)$, 
et en tenant compte des r\'esultats ci-dessus,  
 on obtient 
\begin{align*}
{\rm H}\big(H^1_{\eet}(\mathcal{M}^p_{n, \C_p})&[M]\big)\simeq {\rm H}\big(H^1_{\proeet}(\mathcal{M}_{n,\C_p})[M]\big)\\
&\simeq{\rm Ker}\big[\,
 {\rm H}\big((L\otimes\Omega^1(\mathcal{M}_{n,\Q_p}))[M]\widehat{\otimes}_{\Q_p} \C_p\big)
\to {\rm H}\big(\tfrac{M_{\rm dR}\otimes {\C_p}}{X_{\rm st}^+(M)}\widehat{\otimes}_L {\rm LL}(M)^\dual\big)\,\big]\\
& \simeq {\rm Ker}\big[\, {\rm H}\big((L\otimes\Omega^1(\mathcal{M}_{n,\Q_p}))[M]\big)\widehat{\otimes}_{\Q_p} \C_p\to 
 \tfrac{M_{\rm dR}\otimes {\C_p}}{X_{\rm st}^+(M)}\widehat{\otimes}_L {\rm H}\big({\rm LL}(M)^\dual\big)\,\big]
\end{align*}
 
On a ${\rm Hom}_G((\Pi^{\rm an})^\dual, (L\otimes\Omega^1(\mathcal{M}_{n,\Q_p}))[M])=0$, 
 par la prop.\,\ref{omega} et un argument comme dans le lemme \ref{passtoan},
sauf si 
 $\Pi=\Pi_{M, \mathcal{L}}$, auquel cas l'espace en question est~$\mathcal{L}$.
 On peut donc supposer que $\Pi=\Pi_{M, \mathcal{L}}$, 
et alors $ {\rm Hom}_G((\Pi^{\rm an})^\dual, {\rm LL}(M)^\dual)\simeq {\cal L}$, donc 
\begin{align*}
 {\rm Hom}_G(\Pi^\dual, H^1_{\eet}(\mathcal{M}^p_{n, \C_p}))&\simeq 
{\rm Ker}\big(\mathcal{L}\otimes_{\Q_p} \C_p\to 
   \tfrac{M_{\rm dR}\otimes {\C_p}}{X_{\rm st}^+(M)}\big)\\
&\simeq X_{\rm st}^+(M)\cap (\mathcal{L}\otimes_{\Q_p} \C_p)\simeq V_{M,\mathcal{L}},
\end{align*}
ce qui permet de conclure. 
     \end{proof}

     Le th\'eor\`eme de finitude~\ref{main11} combin\'e au r\'esultat ci-dessus permet
 d'obtenir une preuve nettement plus naturelle (mais pas forc\'ement plus simple...) du r\'esultat suivant, un des plus d\'elicats de \cite{CDN1}.
     
\begin{theo}\label{aju3}
Si $V$ est une $L$-repr\'esentation absolument irr\'eductible de $\G_{\Q_p}$, de dimension $\geq 3$,
alors 
$${\rm Hom}_{\G_{\Q_p}}(V, H^1_{\eet}(\mathcal{M}^p_{n, \C_p}))=0.$$
\end{theo}
   
   \begin{proof}
   Supposons que ce n'est pas le cas, alors la $G$-repr\'esentation $\Pi(V)={\rm Hom}_{\G_{\Q_p}}(V, H^1_{\eet}(\mathcal{M}^p_{n, \C_p}))^\dual$ est une $L$-repr\'esentation de Banach (lemme\,\ref{bana1}) 
qui poss\`ede un r\'eseau $\Pi(V^+)$ dont la r\'eduction modulo ${\goth m}_L$ est de longueur finie 
(par le th\'eor\`eme de finitude), 
donc admissible. Ainsi $\Pi(V)$ est une $L$-repr\'esentation de Banach unitaire, admissible, de longueur finie. Par une cons\'equence du lemme de Schur~\cite[lemme\,3.14]{DS}, quitte \`a remplacer
   $L$ par une extension finie,
 on peut supposer que $\Pi(V)$ contient une $L$-repr\'esentation de Banach unitaire, admissible, 
absolument irr\'eductible $\Pi$. Mais alors on obtient une injection 
$V\to {\rm Hom}_G(\Pi^\dual, H^1_{\eet}(\mathcal{M}^p_{n, \C_p}))$, ce qui contredit le th.\,\ref{cdn1.2}.
   \end{proof}
   
\subsection{Familles de repr\'esentations potentiellement semi-stables}\label{fami1}
Si $M={\rm Sp}\otimes\eta$ est sp\'ecial (note~\ref{special}), 
on pose $R_{M,{\cal B}}=L$.
On pose aussi
$\rho_{{\cal B},M}=0$ et $\bPi^\dual(\rho_{{\cal B},M})=0$, sauf si
${\cal B}$ est le bloc de
${\rm St}\otimes\bar\eta$, o\`u $\bar\eta$ est la r\'eduction modulo~$p$ de
$\eta$, auquel cas 
on pose
$\rho_{{\cal B},M}=\eta$ 
(vu comme caract\`ere de $\G_{\Q_p}$
via la th\'eorie locale du corps de classes), 
et $\bPi^\dual(\rho_{{\cal B},M})=({\rm St}^{\rm cont}\otimes\eta)^\dual$.

    Supposons maintenant que $M$ est supercuspidal et fixons un bloc $\cal B$.
     Soit $I_{M, \cal B}$ l'intersection des id\'eaux maximaux $\pp$ de 
     $R_{\cal B}^{{\rm ps},\delta_M}[1/p]$ tels que la sp\'ecialisation en 
     $\pp$ du pseudo-caract\`ere universel soit la trace d'une repr\'esentation 
     de de Rham \`a poids~$0$ et~$1$, et de type $M$ 
(et donc de d\'eterminant $\delta_M\epsilon$). L'anneau 
$$R_{{\cal B},M}:=R_{\cal B}^{{\rm ps},\delta_M}[1/p]/I_{\cal B, M}$$ est alors r\'eduit, de Jacobson, et on note
$R^+_{{\cal B},M}$ l'image de $R_{\cal B}^{{\rm ps},\delta_M}$ dans $R_{{\cal B},M}$. On a donc 
$R_{{\cal B},M}:=R^+_{{\cal B},M}[\tfrac{1}{p}]$. 

\begin{theo}\label{fami5}
L'anneau $R_{{\cal B},M}$ s'identifie
\`a l'anneau des fonctions analytiques born\'ees sur un ouvert de $\piqp$,
est un produit fini d'anneaux principaux,
 et il existe une unique {\rm (\`a isomorphisme pr\`es)} 
repr\'esentation $$\rho_{{\cal B},M}:\G_{\Q_p}\to\gl_2(R_{{\cal B},M})$$
 telle que
$${\rm Tr}\circ \rho_{{\cal B},M}=\alpha_{{\cal B},M}\circ T_{\cal B}^{\delta_M}.$$
\end{theo}

\begin{proof}
L'unicit\'e de $\rho_{{\cal B},M}$ est une cons\'equence du lemme~\ref{abstrait} ci-dessous et de l'irr\'eductibilit\'e
des $V_{M,{\cal L}}$ (cons\'equence de celle de $M$). L'existence de $\rho_{{\cal B},M}$ et les autres propri\'et\'es de $R_{{\cal B},M}$ sont \'etablies dans \cite[th.\,0.1]{petit}.  
\end{proof}

 \begin{lemm}\label{abstrait}
  Soit $A$ un anneau principal ayant une infinit\'e d'id\'eaux premiers (ou un produit de tels anneaux),
et soient $V,W$ des repr\'esentations de $\mathcal{G}_{\Q_p}$ localement libres de type fini sur $A$ 
et dont les sp\'ecialisations en tout point ferm\'e de 
  ${\rm Spec}(A)$ sont absolument irr\'eductibles. 
Si ces sp\'ecialisations sont isomorphes pour tout point ferm\'e, alors $V$ et $W$ sont isomorphes. 
 \end{lemm}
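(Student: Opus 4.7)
The plan is to reduce to the case where $A$ is a genuine PID (treating each factor of a product separately) and then to exploit the fact that every finitely generated torsion-free module over such an $A$ is free. Once $A$ is a PID, the hypothesis that $V$ and $W$ are locally free of finite type forces them to be free of the same rank $n$ over $A$ (the rank agrees at every closed point and hence is constant). The hypothesis that the specializations agree at every closed point implies in particular that $\mathrm{Tr}\,\rho_V$ and $\mathrm{Tr}\,\rho_W$ coincide modulo every maximal ideal of $A$; since $A$ is a reduced Jacobson ring (infinitely many primes, integral domain), the intersection of its maximal ideals is zero, so these traces agree globally as functions $\mathcal{G}_{\Q_p}\to A$. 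A standard pseudocharacter argument, combined with the fact that at one closed point the associated pseudocharacter is the trace of an absolutely irreducible representation, then shows that $V_K$ and $W_K$ (with $K=\mathrm{Frac}(A)$) are absolutely irreducible, and having equal traces they are isomorphic by Brauer--Nesbitt.

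Next I would study the $A$-module $H := \mathrm{Hom}_{A[\mathcal{G}_{\Q_p}]}(V,W)$. It is a submodule of $\mathrm{Hom}_A(V,W)\simeq M_n(A)$, hence finitely generated and torsion-free. After tensoring with $K$ it embeds into (and, by the previous step, equals) the one-dimensional space $\mathrm{Hom}_{K[\mathcal{G}_{\Q_p}]}(V_K,W_K)$. Thus $H$ is a finitely generated torsion-free $A$-module of generic rank exactly $1$; since $A$ is a PID, $H\simeq A$ and we may fix a generator $\phi_0$. It remains to prove that $\phi_0\colon V\to W$ is an isomorphism.

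The key point, where the hypothesis at closed points is used substantively, is the following divisibility argument. Fix a maximal ideal $\mathfrak{m}$ with uniformizer $\pi$ and residue field $k$. If the reduction $\bar\phi_0 \in \mathrm{Hom}_k(V_\mathfrak{m},W_\mathfrak{m})$ were zero, then all entries of $\phi_0$ (in some pair of bases) would lie in $\mathfrak{m}$, so $\phi_0 = \pi\,\psi$ with $\psi\in M_n(A)$. The $\mathcal{G}_{\Q_p}$-equivariance of $\phi_0$ and the torsion-freeness of $M_n(A)$ force $\psi$ to be $\mathcal{G}_{\Q_p}$-equivariant, i.e.\ $\psi\in H$; writing $\psi = a\phi_0$ with $a\in A$ yields $\phi_0 = \pi a\,\phi_0$, hence $\pi a = 1$ in $A$, a contradiction. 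Therefore $\bar\phi_0\neq 0$, and since $V_\mathfrak{m}\simeq W_\mathfrak{m}$ is absolutely irreducible, $\bar\phi_0$ is automatically an isomorphism, so $\det\phi_0\notin\mathfrak{m}$. As this holds for every maximal ideal, $\det\phi_0\in A^\times$ and $\phi_0$ is a global isomorphism. The subtlest ingredient is the passage from absolute irreducibility at one closed point to absolute irreducibility at the generic fibre (needed to apply Brauer--Nesbitt); the rest is a rank-one descent that exploits the PID hypothesis precisely to prevent a prime from dividing the generator of $H$.
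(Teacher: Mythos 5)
Your argument is correct, and while its first half coincides with the paper's (traces agree modulo every maximal ideal, hence agree globally because the Jacobson radical of a principal ring with infinitely many primes is zero; generic absolute irreducibility plus equal traces gives $V_K\simeq W_K$), the key integral step is genuinely different. The paper takes an intertwining matrix $M\in\gl_d(K)$, puts it in Smith normal form $M=PDQ$, normalizes so that the first elementary divisor is a unit, and shows that a non-unit $\lambda_i$ would force $V$ and $W$ to be reducible modulo any prime dividing $\lambda_i$ (the relation $W_\sigma D=DV_\sigma$ makes the span of the first $i-1$ basis vectors stable modulo such a prime), contradicting the hypothesis. You instead work with $H=\Hom_{A[\mathcal{G}_{\Q_p}]}(V,W)$: it is finitely generated, torsion-free, of generic rank one, hence free of rank one over the PID; the generator $\phi_0$ cannot reduce to zero at any maximal ideal (otherwise $\phi_0=\pi\psi$ with $\psi$ equivariant by torsion-freeness, contradicting that $\phi_0$ generates $H$), so by Schur's lemma at each closed point $\det\phi_0$ lies in no maximal ideal and $\phi_0$ is an isomorphism. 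The two mechanisms are close cousins (both "saturate" the generic intertwiner using the PID structure), but yours avoids matrix normal forms in favour of the structure theory of modules plus Schur, and is arguably cleaner to write basis-free; the paper's is more hands-on and pinpoints exactly which primes would witness reducibility. A further small divergence: you deduce generic absolute irreducibility from absolute irreducibility at a single closed point (Burnside plus Nakayama, or closedness of the reducibility locus), whereas the paper argues that generic reducibility would propagate to almost all closed points, using once more the infinitude of primes; both are valid, and you still use that infinitude where it is indispensable, namely for the global equality of traces. One point you pass over quickly is that $H\neq 0$ (clear denominators of a generic isomorphism to see that $H\otimes_AK$ is exactly the one-dimensional Hom space), but this is routine.
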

\begin{proof}
Supposons $A$ principal (donc int\`egre).
Les repr\'esentations ont m\^eme trace modulo tout id\'eal maximal, et donc ont m\^eme trace.
Elles sont irr\'eductibles sur le corps des fractions $K$ de $A$ car, sinon, elles seraient r\'eductibles
modulo presque tout id\'eal maximal de $A$; 
il s'ensuit qu'elles sont isomorphes sur $K$. Si $V_\sigma$ et $W_\sigma$
d\'esignent les matrices de $\sigma\in\G_{\Q_p}$ dans des bases de $V$ et $W$ sur $A$, il existe
$M\in\gl_d(K)$ telle que $V_\sigma=M^{-1}W_\sigma M$, pour tout $\sigma$.

La th\'eorie des diviseurs \'el\'ementaires permet d'\'ecrire $M$ sous la forme $PDQ$, avec
$P,Q\in\gl_d(A)$ et $D$ diagonale de $(\lambda_1,\dots,\lambda_d)$ et $\lambda_i\mid\lambda_{i+1}$.
Changer de bases permet de supposer que $P=Q=1$, et quitte \`a multiplier $M$ par une matrice scalaire,
on peut supposer que $\lambda_1$ est une unit\'e.  Il s'agit de prouver que tous les $\lambda_i$ sont des
unit\'es. Dans le cas contraire, il y a un $i$ minimum qui n'est pas une unit\'e, et la relation
$V_\sigma=D^{-1}W_\sigma D$ implique que $V$ et $W$ ne sont pas irr\'eductibles modulo les id\'eaux maximaux
divisant $\lambda_i$, contrairement \`a l'hypoth\`ese.

Ceci permet de conclure dans le cas $A$ principal. 
Le cas g\'en\'eral s'en d\'eduit en d\'ecomposant tout
via les idempotents de $A$ correspondant \`a chaque facteur de $A$.
\end{proof}

\begin{rema}
 Pour la plupart des blocs $\mathcal{B}$ le th.\,\ref{fami5}
 d\'ecoule facilement des r\'esultats de Kisin 
 \cite{Kis} et de l'observation \cite{Sandra} que l'espace rigide associ\'e \`a $R_{{\cal B},M}$ 
s'identifie \`a un ouvert de la droite projective analytique 
(en utilisant la filtration de Hodge des diverses repr\'esentations 
 interpol\'ees par cet anneau), ce qui permet de montrer que cet anneau est un produit fini d'anneaux principaux. Les blocs d\'elicats sont ceux correspondant \`a un twist des repr\'esentations 
 $1\oplus 1$ et $1\oplus \epsilon$ (cette derni\`ere seulement pour $p=3$). 
L'approche de \cite{petit} permet de traiter tous les blocs sur le m\^eme pied.
\end{rema}

  Si $x$ est un id\'eal maximal de $R_{{\cal B},M}$ on note
  $\kappa(x)$ son corps r\'esiduel (une extension finie de $L$) et 
  $\rho_x$ la sp\'ecialisation de $\rho_{{\cal B},M}$ en $x$.

\begin{defi}\label{factor12}
On d\'efinit un $R_{{\cal B},M}[G]$-module $\bPi^\dual(\rho_{{\cal B},M})$ de telle sorte que pour tout id\'eal maximal
$x$ de $R_{{\cal B},M}$ on ait 
$$\kappa(x)\otimes_{R_{{\cal B},M}} \bPi^\dual(\rho_{{\cal B},M})=\bPi(\rho_x)^\dual.$$
Pour cela, on part d'un $R^+_{{\cal B},M}$-r\'eseau $\mathcal{G}_{\qp}$-stable $\rho_{{\cal B},M}^{\diamond,+}$ du
$R_{{\cal B},M}$-dual de $\rho_{{\cal B},M}$.
Le dual de Pontryagin de $\rho_{{\cal B},M}^{\diamond,+}$ est une limite inductive $\varinjlim_i V_i$
 de repr\'esentations de dimension $2$ sur des quotients de $R_{{\cal B},M}^+$ et on pose
$\bPi^\dual(\rho_{{\cal B},M})=L\otimes_{\O_L}\varprojlim_i\bPi(V_i)^\vee$.
On a aussi $\bPi^\dual(\rho_{{\cal B},M})={\bf D}(\rho_{{\cal B},M}^{\diamond,+})^\natural\boxtimes\piqp$ ou encore
$\bPi^\dual(\rho_{{\cal B},M})={\rm Hom}_{R_{{\cal B},M}}({\rm LL}_{M,{\cal B}}^{[0,1]},R_{{\cal B},M})$, o\`u ${\rm LL}_{M,{\cal B}}^{[0,1]}$ est la compl\'et\'ee ${\cal B}$-adique
de ${\rm LL}(M)$ (cf.~\cite{petit}) et on ne consid\`ere que les morphismes continus.
\end{defi}

\subsection{Factorisation en niveau fini}\label{qc1}
Ce paragraphe est consacr\'e \`a la preuve du r\'esultat suivant (l'espace $H^1_{\eet}(\sm^p_{n,\Qbar_p})$ est d\'efini dans l'introduction, voir aussi le num\'ero ci-dessous, et la compl\'etion est $p$-adique). 

\begin{theo}\label{factor10}
Si $L$ est assez grand pour que tous les ${\rm JL}(M)$ soient d\'efinis sur $L$, on a une d\'ecomposition
$$H^1_{\eet}(\sm^p_{n,\Qbar_p})\simeq
\oplus_M\big(\widehat{\oplus}_{\cal B} \ \bPi^\dual(\rho_{{\cal B},M})
\otimes \rho_{{\cal B},M}\otimes \check{R}_{{\cal B},M}\big)\otimes_L{\rm JL}(M),$$
o\`u les produits tensoriels non sp\'ecifi\'es sont
au-dessus de $R_{{\cal B},M}$ et $\check{R}_{{\cal B},M}={\rm Hom}(\check{R}_{{\cal B},M},L)$.
\end{theo}
\begin{proof}
C'est une cons\'equence de la prop.\,\ref{dec entier}, de la rem.\,\ref{factor4}, et du (ii) du th.\,\ref{factor5}.
\end{proof}

\subsubsection{Extension des scalaires \`a $\Qbar_p$ ou \`a $\C_p$}\label{qc2}
On pose
$$H^{i,+}_{\Qbar_p}:=H^i_{\eet}({\cal M}^p_{n,\Qbar_p})^+=\varprojlim\nolimits_k\big(\varinjlim\nolimits_{[K:\Q_p]<\infty}
H^i_{\eet}({\cal M}^p_{n,K},(\O_L/{p^k})(1))\big),$$
i.e. la cohomologie compl\'et\'ee de la tour $({\cal M}^p_{n,K})_{[K:\Q_p]<\infty}$ (noter 
que $n$ est fix\'e ici). Soit $H^{i}_{\Qbar_p}=L\otimes_{\mathcal{O}_L} H^{i,+}_{\Qbar_p}$.

Pour simplifier les notations, posons pour $i\geq 0$ et $?\in \{\C_p, K\}$, avec $[K:\Q_p]<\infty$,
$$H^i_{k,?}:=H_{\eet}^i({\cal M}^p_{n,?}, (\O_L/{p^k})(1)),
\quad H^{i,+}_{?}:=H_{\eet}^i({\cal M}^p_{n,?})^+,\quad H^i_{k,\Qbar_p}:=\varinjlim_K H^i_{k,K}.$$
On a donc $H^{i,+}_{?}\simeq \varprojlim_k H^i_{k,?}$ pour $i\leq 1$ (seuls cas dont nous aurons besoin) et $H^{i,+}_{\Qbar_p}\simeq \varprojlim_{k} H^i_{k,\Qbar_p}$.

 \begin{prop}\label{QC1}
 L'application naturelle
 $$H^{1,+}_{\Qbar_p}\to H^{1,+}_{\C_p}$$
 est injective et identifie 
 $H^{1,+}_{\Qbar_p}$ \`a l'ensemble des $x\in H^{1,+}_{\C_p}$
presque lisses sous l'action de~$\G_{\Q_p}$ {\rm (i.e.~qui,
pour pour tout $k\geq 1$, sont fixes modulo $p^k$ par
un sous-groupe ouvert de $\G_{\Q_p}$)}.
 \end{prop}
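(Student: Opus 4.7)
The plan is to reduce the statement, level by level in $k$, to a Hochschild--Serre calculation, and then pass to the inverse limit over $k$. For each finite extension $K/\Q_p$ and each $k\geq 1$, the argument used in the proof of cor.\,\ref{pointwise1} (Hochschild--Serre for the $\G_K$-cover $\sm^p_{n,\C_p}\to \sm^p_{n,K}$, applied to a Stein covering of $\sm^p_{n,K}$ and then strictly passed to the limit via Mittag--Leffler as in the proof of th.\,\ref{smooth}) gives a strictly exact sequence of profinite $\O_L/p^k$-modules
$$0\to H^1(\G_K, H^0_{k,\C_p})\to H^1_{k,K}\to (H^1_{k,\C_p})^{\G_K}\to H^2(\G_K, H^0_{k,\C_p}).$$
The crucial point is that $H^0_{k,\C_p}$ is \emph{finite}: by \S\,\ref{noti2}, $\pi_0(\sm^p_{n,\C_p})$ is a principal homogeneous space under the finite group $\Q_p^\dual/(p^{\mathbf{Z}}(1+p^n\Z_p))\simeq (\Z/p^n\Z)^\dual$, so $H^0_{k,\C_p}$ is a finite $\O_L/p^k$-module on which $\G_{\Q_p}$ acts through a finite quotient.

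Next, one takes the filtered colimit over all finite extensions $K/\Q_p$. Filtered colimits are exact in abelian categories, so the sequence stays exact. For the outer terms I would invoke the general fact that for any finite discrete $\G_{\Q_p}$-module $M$ and any $i\geq 1$, $\varinjlim_K H^i(\G_K,M)=0$: a continuous cocycle $\G_K^i\to M$ factors through a finite quotient, hence becomes trivial on restriction to $\G_{K'}$ for the appropriate finite extension $K'/K$. We therefore obtain, for each $k$, an isomorphism
$$\varinjlim_K H^1_{k,K}\ \xrightarrow{\ \sim\ }\ \varinjlim_K (H^1_{k,\C_p})^{\G_K}=(H^1_{k,\C_p})^{\rm sm},$$
the right-hand side being by definition the subspace of $\G_{\Q_p}$-smooth vectors of $H^1_{k,\C_p}$.

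Finally, pass to the inverse limit over $k$. Applying $\varprojlim_k$ to the isomorphism above yields $H^{1,+}_{\Qbar_p}\cong\varprojlim_k (H^1_{k,\C_p})^{\rm sm}$, and the levelwise inclusions $(H^1_{k,\C_p})^{\rm sm}\hookrightarrow H^1_{k,\C_p}$ induce an injection $\varprojlim_k (H^1_{k,\C_p})^{\rm sm}\hookrightarrow H^{1,+}_{\C_p}$. An element $x=(x_k)\in H^{1,+}_{\C_p}$ lies in the image precisely when each reduction $x_k$ is $\G_{\Q_p}$-smooth, i.e.~fixed by some open subgroup $U_k\subset\G_{\Q_p}$ possibly depending on $k$; this is exactly the condition that $\sigma(x)\equiv x\pmod{p^k}$ for every $\sigma\in U_k$, i.e.~the almost-smoothness of $x$ under $\G_{\Q_p}$.

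The only non-formal input here is the Hochschild--Serre exact sequence on the non quasi-compact Stein space $\sm^p_{n,K}$, which is already settled in the proof of cor.\,\ref{pointwise1}; I do not expect any serious obstacle, since the vanishing of $H^i(\G_K,H^0_{k,\C_p})$ in the colimit is purely formal once one observes the finiteness of $H^0_{k,\C_p}$, and the compatibility of the construction with passage to the inverse limit over $k$ is built into the definitions of $H^{1,+}_{\Qbar_p}$ and $H^{1,+}_{\C_p}$.
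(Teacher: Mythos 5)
Your proposal follows essentially the same route as the paper's proof: the Hochschild--Serre sequence for $\sm^p_{n,\C_p}\to\sm^p_{n,K}$, the finiteness of $H^0_{k,\C_p}$ (finitely many connected components), the vanishing of $\varinjlim_K H^i(\G_K,-)$ for $i=1,2$ on finite modules giving $H^1_{k,\Qbar_p}\simeq (H^1_{k,\C_p})^{\G_{\Q_p}\text{-lisse}}$, and then passage to the inverse limit over $k$. The only point you gloss over is the equivalence, for $x\in H^{1,+}_{\C_p}$, between ``the image of $x$ in $H^1_{k,\C_p}$ is fixed by an open subgroup'' and ``$\sigma(x)\equiv x\pmod{p^kH^{1,+}_{\C_p}}$ on an open subgroup'': the nontrivial direction uses the injectivity of the natural map $H^{1,+}_{\C_p}/p^k\to H^1_{k,\C_p}$, which is precisely the map $\alpha_k$ the paper introduces (and asserts to be $\G_{\Q_p}$-equivariant and injective) to justify this step.
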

 \begin{proof}

 La suite de Hochschild-Serre fournit des suites exactes
 $$0\to H_{\eet}^1(\sg_K, H^0_{k,\C_p})\to H^1_{k,K}\to (H^1_{k,\C_p})^{\sg_K}
\to H^2(\sg_K,  H^0_{k,\C_p})$$
Or $H^0_{k,\C_p}$ est un groupe fini car ${\cal M}^p_{n,\C_p}$ n'a
qu'un nombre fini de composantes connexes. Il s'ensuit que
  $\varinjlim_K H^j(\sg_K, H^0_{k,\C_p})=0$ pour $j\in \{1,2\}$ 
  et donc 
  $$H^1_{k,\Qbar_p}=\varinjlim\nolimits_{K} H^1_{k,K}\simeq (H^1_{k,\C_p})^{\sg_{\qp}{\text{-lisse}}}.$$
  En particulier la fl\`eche $H^1_{k,\Qbar_p}\to H^1_{k,\C_p}$ est injective et on obtient l'injectivit\'e de l'application naturelle $H^{1,+}_{\Qbar_p}\to H^{1,+}_{\C_p}$ en passant \`a la limite sur $k$.
  
Le morphisme \'evident $\gamma_k: H^{1,+}_{\C_p}\to H^1_{k, \C_p}$ se factorise 
$\gamma_k=\alpha_k\circ \beta_k$, o\`u 
$\beta_k: H^{1,+}_{\C_p}\to  H^{1,+}_{\C_p}/p^k$ est la projection canonique et 
$\alpha_k: H^{1,+}_{\C_p}/p^k\to H^1_{k, \C_p}$ est
$\sg_{\qp}$-\'equivariante et injective. En particulier pour tout $z\in H^{1,+}_{\C_p}$ 
et tout $k\geq 1$ les deux assertions suivantes sont \'equivalentes: 
     
$\bullet$ $\beta_k(z)$ est fixe par un sous-groupe ouvert de $\mathcal{G}_{\qp}$. 
     
$\bullet$ $\gamma_k(z)$ est fixe par un sous-groupe ouvert de $\mathcal{G}_{\qp}$.

La seconde se traduit par  
$\gamma_k(z)\in (H^1_{k,\C_p})^{\sg_{\qp}{\text{-lisse}}}\simeq H^1_{k,\Qbar_p}$.
En passant \`a la limite sur $k$, on voit que l'ensemble des $x\in H^{1,+}_{\C_p}$ presque lisses sous l'action de $\G_{\Q_p}$
 s'identifie \`a $\varprojlim_k (H^1_{k,\C_p})^{\sg_{\qp}{\text{-lisse}}}\simeq \varprojlim_k H^1_{k,\Qbar_p}=H^{1,+}_{\Qbar_p}$.
\end{proof}
                   
\begin{coro}\label{QC2}
{\rm a)} $H^{1,+}_{\Qbar_p}$ est sans $p$-torsion, $p$-adiquement complet et $p$-satur\'e dans $H^{1,+}_{\C_p}$.

{\rm b)} Pour tout $k\geq 1$ le $\mathcal{G}_{\qp}$-module $H^{1,+}_{\Qbar_p}/p^k$ est lisse. 
\end{coro}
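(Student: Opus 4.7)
La preuve est essentiellement formelle et s'appuie de manière cruciale sur la prop.\,\ref{QC1}, qui identifie $H^{1,+}_{\Qbar_p}$ au sous-module des éléments presque lisses (sous l'action de $\G_{\Q_p}$) de $H^{1,+}_{\C_p}$. Je commencerai donc par établir les propriétés analogues pour $H^{1,+}_{\C_p}$ lui-même, puis les transporterai à $H^{1,+}_{\Qbar_p}$ par un argument direct.

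L'ingrédient de base est que $\sm^p_{n,\C_p}$ n'a qu'un nombre fini de composantes connexes, donc chaque $H^0_{k,\C_p}$ est fini. Les suites exactes longues associées aux suites courtes de coefficients $0 \to (\O_L/p)(1) \to (\O_L/p^{k+1})(1) \to (\O_L/p^k)(1) \to 0$ montrent alors que les noyaux des morphismes de transition $H^1_{k+1,\C_p} \to H^1_{k,\C_p}$ sont finis (quotients des $H^0_{k,\C_p}$). Le critère de Mittag-Leffler donne la nullité de $R^1\varprojlim_k H^1_{k,\C_p}$, donc la complétude $p$-adique de $H^{1,+}_{\C_p} = \varprojlim_k H^1_{k,\C_p}$, et une analyse composante par composante des $p$-torsions finies des $H^1_{k,\C_p}$ (reliées à $H^0_{1,\C_p}$ via les suites exactes) donnera l'absence de $p$-torsion dans la limite.

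Une fois cela établi, la partie a) se déduit de la prop.\,\ref{QC1}: l'injection $H^{1,+}_{\Qbar_p} \hookrightarrow H^{1,+}_{\C_p}$ transmet l'absence de $p$-torsion. Pour la saturation, si $y \in H^{1,+}_{\C_p}$ vérifie $py \in H^{1,+}_{\Qbar_p}$ et si $k\geq 1$, fixons un sous-groupe ouvert $U$ de $\G_{\Q_p}$ tel que $\sigma(py) \equiv py \pmod{p^{k+1}}$ pour tout $\sigma \in U$; on a donc $p(\sigma y - y) \equiv 0 \pmod{p^{k+1}}$, et l'absence de $p$-torsion dans $H^{1,+}_{\C_p}$ force $\sigma y \equiv y \pmod{p^k}$, ce qui prouve que $y$ est presque lisse, donc dans $H^{1,+}_{\Qbar_p}$. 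Pour la complétude, une suite de Cauchy $(y_n)$ dans $H^{1,+}_{\Qbar_p}$ converge vers un $y \in H^{1,+}_{\C_p}$; pour chaque $k$, on a $y \equiv y_n \pmod{p^k}$ pour $n$ assez grand, et comme $y_n$ est fixe modulo $p^k$ par un sous-groupe ouvert, il en est de même de $y$.

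Pour b), tout $\bar y \in H^{1,+}_{\Qbar_p}/p^k$ se relève en $y \in H^{1,+}_{\Qbar_p}$; par définition de la presque-lissité, $y$ est fixe modulo $p^k$ par un sous-groupe ouvert de $\G_{\Q_p}$, donc $\bar y$ aussi, ce qui établit la lissité de l'action sur $H^{1,+}_{\Qbar_p}/p^k$. L'étape la plus délicate sera le contrôle fin de l'absence de $p$-torsion et de la complétude $p$-adique de $H^{1,+}_{\C_p}$; le reste se réduit à une manipulation formelle des définitions et de la prop.\,\ref{QC1}.
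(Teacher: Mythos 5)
Votre traitement du corollaire proprement dit suit la m\^eme voie que le texte : le a) se d\'eduit de la prop.~\ref{QC1} et des deux faits (absence de $p$-torsion, compl\'etude $p$-adique) concernant $H^{1,+}_{\C_p}$, et vos arguments de saturation et de stabilit\'e par limites $p$-adiques sont corrects. Pour le b), votre phrase \og $y$ est fixe modulo $p^k$ [...] donc $\bar y$ aussi \fg{} passe sous silence le fait que la presque-lissit\'e ne donne que $gy-y\in p^kH^{1,+}_{\C_p}$ ; pour conclure dans $H^{1,+}_{\Qbar_p}/p^k$ il faut invoquer la saturation du a) (c'est exactement ce que fait le texte), mais comme vous l'avez \'etablie, la correction est imm\'ediate.

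En revanche, l'\'etape pr\'eliminaire o\`u vous voulez d\'emontrer les propri\'et\'es de $H^{1,+}_{\C_p}$ (que le texte se contente d'invoquer) est fautive telle quelle. La suite exacte longue associ\'ee \`a $0\to(\O_L/p)(1)\to(\O_L/p^{k+1})(1)\to(\O_L/p^k)(1)\to 0$ s'\'ecrit
$$H^0_{k,\C_p}\stackrel{\partial}{\to}H^1_{1,\C_p}\to H^1_{k+1,\C_p}\to H^1_{k,\C_p}\to H^2_{1,\C_p},$$
donc le noyau de la fl\`eche de transition $H^1_{k+1,\C_p}\to H^1_{k,\C_p}$ est l'image de $H^1_{1,\C_p}$ (un quotient de $H^1_{1,\C_p}$ par un sous-groupe fini), et non un quotient fini de $H^0_{k,\C_p}$ : il est en g\'en\'eral de dimension infinie, et l'argument de Mittag-Leffler que vous fondez sur la finitude de ces noyaux s'effondre. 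Par ailleurs, m\^eme acquise, la nullit\'e de ${\rm R}^1\varprojlim_k H^1_{k,\C_p}$ ne donnerait pas la compl\'etude $p$-adique de la limite (elle ne contr\^ole que la topologie de limite projective, qui n'est pas a priori la topologie $p$-adique). Ce qui sert r\'eellement : la finitude des $H^0_{k,\C_p}$ tue ${\rm R}^1\varprojlim_k H^0_{k,\C_p}$ et identifie $H^{1,+}_{\C_p}$ \`a $\varprojlim_k H^1_{k,\C_p}$, et l'absence de $p$-torsion se lit sur la suite $H^0(\sm^p_{n,\C_p},\O_L(1))\to H^0(\sm^p_{n,\C_p},(\O_L/p)(1))\to H^{1,+}_{\C_p}\stackrel{p}{\to}H^{1,+}_{\C_p}$, la premi\`ere fl\`eche \'etant surjective parce que $\sm^p_{n,\C_p}$ n'a qu'un nombre fini de composantes connexes (et que $\C_p$ contient les racines $p$-i\`emes de l'unit\'e) ; la compl\'etude $p$-adique demande elle aussi son propre argument (standard, via les suites reliant $H^{1,+}_{\C_p}/p^k$ \`a $H^1_{k,\C_p}$), et non le raisonnement que vous esquissez. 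Comme le texte invoque ces faits sans les red\'emontrer, votre preuve du corollaire lui-m\^eme reste valable une fois cette \'etape pr\'eliminaire corrig\'ee ou simplement admise.
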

\begin{proof}
a) C'est une cons\'equence directe de la proposition ci-dessus et du fait que $H^{1,+}_{\C_p}$ est sans $p$-torsion et $p$-adiquement complet. 

b) Soit 
$x\in H^{1,+}_{\Qbar_p}$ et soit $K$ tel que $gx-x\in p^k H^{1,+}_{\C_p}$ pour $g\in \mathcal{G}_K$. Par 
a) on a $gx-x\in p^k H^{1,+}_{\Qbar_p}$ pour $g\in \mathcal{G}_K$, 
ce qui montre que $H^{1,+}_{\Qbar_p}/p^k=\varinjlim_{K} (H^{1,+}_{\Qbar_p}/p^k )^{\mathcal{G}_K}$.
\end{proof}

\begin{ques}\label{QC3}
$H^{1,+}_{\Qbar_p}$ est-il dense dans $H^{1,+}_{\C_p}$?
\end{ques}

\subsubsection{D\'ecomposition suivant les blocs}\label{qc8}
  Le but de ce num\'ero est de d\'emontrer l'existence d'une d\'ecomposition au niveau entier, de la forme 
  $$H^{1,+}_{\Qbar_p}\simeq\widehat{\oplus}_{\mathcal{B}} [{P}_{\mathcal{B}}\otimes_{{E}_{\mathcal{B}}} {\mathbbm m}_{\mathcal{B}}(H^{1,+}_{\Qbar_p})],$$
  la compl\'etion \'etant $p$-adique et ${\mathbbm m}_{\mathcal{B}}(H^{1,+}_{\Qbar_p})$ \'etant un 
  $\O_L$-module sans torsion, s\'epar\'e et complet
pour la topologie $p$-adique, dont le $\O_L$-dual continu est de type fini sur~$R_{\cal B}^{\rm ps}$.
Le num\'ero suivant d\'ecrit la fibre g\'en\'erique de 
${\mathbbm m}_{\mathcal{B}}(H^{1,+}_{\Qbar_p})$ 
(il ne semble pas facile de d\'ecrire le module 
${\mathbbm m}_{\mathcal{B}}(H^{1,+}_{\Qbar_p})$ lui-m\^eme). 

\begin{prop}\label{damntricky}
 $(H^{1,+}_{\Qbar_p}/p^k)^{\mathcal{G}_K}$ est le dual d'un $\O_L[G']$-module lisse de longueur finie pour tous 
  $k\geq 1$ et $[K:\qp]<\infty$.
\end{prop}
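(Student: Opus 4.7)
The plan is to deduce this from the finite-length theorem (Theorem \ref{main11}) by comparing $(H^{1,+}_{\Qbar_p}/p^k)^{\mathcal{G}_K}$ with $H^1_{k,K} := H^1_{\eet}(\mathcal{M}^p_{n,K}, (\O_L/p^k)(1))$ through a chain of $G'$-equivariant maps whose kernels and cokernels are finite $\O_L$-modules, and then invoking stability under subquotients and extensions of the category of duals of finite-length smooth $\O_L[G']$-modules (as recalled in \S\,\ref{hecke2}).

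First, I would upgrade Theorem \ref{main11} from $k_L$ to $\O_L/p^k$ coefficients. Starting from $k=1$ and using the short exact sequences $0 \to k_L(1) \to (\O_L/p^k)(1) \to (\O_L/p^{k-1})(1) \to 0$, together with the profiniteness of étale cohomology provided by Theorem \ref{smooth}, the associated long exact cohomology sequence and induction on $k$ give that $H^1_{k,K}$ is Pontryagin-dual to a smooth $\O_L[G']$-module of finite length. Next, Hochschild--Serre for the cover $\mathcal{M}^p_{n,\C_p} \to \mathcal{M}^p_{n,K}$ (already used in the proof of Proposition \ref{QC1}) yields the four-term exact sequence
$$0 \to H^1(\mathcal{G}_K, H^0_{k,\C_p}) \to H^1_{k,K} \to (H^1_{k,\C_p})^{\mathcal{G}_K} \to H^2(\mathcal{G}_K, H^0_{k,\C_p})$$
whose outer terms are finite (as $\pi_0(\mathcal{M}^p_{n,\C_p})$ is finite). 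Hence $(H^1_{k,\C_p})^{\mathcal{G}_K}$ is again dual to a finite-length smooth $\O_L[G']$-module. Moreover, any $\mathcal{G}_K$-invariant vector of $H^1_{k,\C_p}$ is automatically $\mathcal{G}_{\Q_p}$-smooth, so the identification $H^1_{k,\Qbar_p} = (H^1_{k,\C_p})^{\mathcal{G}_{\Q_p}\text{-lisse}}$ established in the proof of Proposition \ref{QC1} gives $(H^1_{k,\Qbar_p})^{\mathcal{G}_K} = (H^1_{k,\C_p})^{\mathcal{G}_K}$, which is therefore also dual to a finite-length smooth $\O_L[G']$-module.

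The hard step is to compare $(H^{1,+}_{\Qbar_p}/p^k)^{\mathcal{G}_K}$ with $(H^1_{k,\Qbar_p})^{\mathcal{G}_K}$. Since $H^1_{k,\Qbar_p}$ is annihilated by $p^k$, the projection $H^{1,+}_{\Qbar_p} \to H^1_{k,\Qbar_p}$ factors as a $G' \times \mathcal{G}_{\Q_p}$-equivariant map
$$\phi_k : H^{1,+}_{\Qbar_p}/p^k \longrightarrow H^1_{k,\Qbar_p}.$$
To control its kernel and cokernel, I would apply the Milnor $\R^1\varprojlim$ sequence to the inverse system $\{H^1_{j,\Qbar_p}\}_j$ together with the long exact sequences coming from $0 \to (\O_L/p^k)(1) \to (\O_L/p^j)(1) \to (\O_L/p^{j-k})(1) \to 0$: the resulting error terms all come from $H^0_{*,\Qbar_p}$, which is a finite $\O_L$-module, and from an $\R^1\varprojlim$ that vanishes by Mittag--Leffler (each $H^0_{j,\Qbar_p}$ being finite, and each $H^1_{j,K'}$ being profinite by Theorem \ref{smooth}). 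Passing to $\mathcal{G}_K$-invariants (left exact) therefore squeezes $(H^{1,+}_{\Qbar_p}/p^k)^{\mathcal{G}_K}$ between $(H^1_{k,\Qbar_p})^{\mathcal{G}_K}$ and a finite error, and the closure properties of our category conclude the proof.

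The main obstacle is the rigorous analysis of $\phi_k$: the inverse limit defining $H^{1,+}_{\Qbar_p}$ and the filtered colimit over finite subextensions of $\Qbar_p/\Q_p$ interact subtly, and one must verify that the comparison remains well-behaved after taking $\mathcal{G}_K$-invariants, so that the resulting error is concentrated in a small piece coming from $H^0$ (on which $G'$ acts trivially). Once this bookkeeping is done, the result follows formally from the upgraded version of Theorem \ref{main11}.
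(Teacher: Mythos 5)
Your preliminary steps are fine and consistent with what the paper uses: the $\O_L/p^k$-version of the finiteness theorem, Hochschild--Serre, and the identification $(H^1_{k,\Qbar_p})^{\mathcal{G}_K}=(H^1_{k,\C_p})^{\mathcal{G}_K}$ together produce (on $\mathcal{G}_K$-invariants) the same continuous injection $\iota\colon Y:=(H^{1,+}_{\Qbar_p}/p^k)^{\mathcal{G}_K}\hookrightarrow Z:=(H^1_{k,\C_p})^{\mathcal{G}_K}$ that the paper works with; injectivity of your $\phi_k$ is indeed available, most simply from the $p$-saturation of $H^{1,+}_{\Qbar_p}$ dans $H^{1,+}_{\C_p}$ (cor.\,\ref{QC2}). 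The gap is in your ``hard step'' and in the final deduction. The claimed control of $\coker(\phi_k)$ is not correct: the obstruction to lifting a class of $H^1_{k,\Qbar_p}$ to $H^{1,+}_{\Qbar_p}$ lives in the groups $H^2_{j,\Qbar_p}$ (boundary maps) and in a $\R^1\varprojlim$ of a system built from the $H^1_{j,\Qbar_p}$, not from the $H^0$'s (the $H^0$'s only control the kernel); and the modules $H^1_{j,\Qbar_p}=\varinjlim_K H^1_{j,K}$ are colimits of profinite modules, hence neither profinite nor compact, so the Mittag--Leffler vanishing you invoke — which everywhere in the paper rests on finiteness or profiniteness of the terms, over $\C_p$ or over a fixed finite $K$ — is not available. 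This is precisely where the interchange of $\varinjlim_K$ and $\varprojlim_j$ bites, and the paper deliberately proves nothing about this cokernel.

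More fundamentally, even granting a comparison of $Y$ with $Z$ up to finite error, the conclusion does not follow from ``closure properties of the category'': an abstract $G'$-stable $\O_L$-submodule of $Z=\pi^\vee$ which is not closed (for instance a dense submodule of finite colength) need not be the Pontryagin dual of any smooth representation, and nothing in your sketch shows that the image of $\iota$ is closed, i.e.\ that $Y$ with its natural topology is profinite. Establishing this is the actual content of the paper's proof: since $Z$ is the dual of an admissible representation killed by $p^k$, it is a finitely generated $\O_L[[H]]$-module for $H\subset G$ open compact; the $\O_L[[H]]$-structure extends to $Y$ and $\iota$ is $\O_L[[H]]$-linear, so $Y$ is finitely generated over the noetherian ring $\O_L[[H]]$; its initial topology being Hausdorff, it coincides with the canonical profinite topology of a finitely generated $\O_L[[H]]$-module, whence $Y$ is compact, $\iota$ is a closed embedding, and $Y^\vee$ is a quotient of $Z^\vee$, hence smooth of finite length. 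Without this Iwasawa-theoretic step (or a substitute for it) your squeeze does not yield the statement; with it, the cokernel estimate you aim for becomes unnecessary.
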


\begin{proof} Notons pour simplifier $Y=(H^{1,+}_{\Qbar_p}/p^k)^{\mathcal{G}_K}$, muni de la topologie induite par celle de $H^{1,+}_{\Qbar_p}$. On dispose d'une injection continue $\iota: Y\to Z:=(H^1_{k, \C_p})^{\mathcal{G}_K}$, compos\'ee des injections 
$Y\to (H^{1,+}_{\C_p}/p^k)^{\mathcal{G}_K}$ et $(H^{1,+}_{\C_p}/p^k)^{\mathcal{G}_K}\to Z$
(cf. cor.\,\ref{QC2} pour la premi\`ere). 

Par le th\'eor\`eme de finitude,
 $Z$ est un $\O_L$-module profini, dual d'une repr\'esentation lisse de longueur finie, donc admissible, de $G$. En particulier la topologie sur $Y$ est s\'epar\'ee. 
Soit $H$ un sous-groupe ouvert compact de $G$. Par le th\'eor\`eme de finitude l'action de $H$ sur chaque 
$H^1_{k,K}$ se prolonge en une structure de $\O_L[[H]]$-module topologique, donc l'action de 
$H$ sur $Y$ se prolonge en une structure de $\O_L[[H]]$-module topologique, et 
$\iota$ est $\O_L[[H]]$-lin\'eaire (car $H$-\'equivariante et continue). On en d\'eduit que $Y$ est de type fini comme $\O_L[[H]]$-module, puisque $Z$ l'est. Comme $Y$ est aussi s\'epar\'e, la topologie sur $Y$ d\'efinie par sa structure de $\O_L[[H]]$-module de type fini (pour laquelle il est profini) est la m\^eme que sa topologie de d\'epart, donc $Y$ est profini et $\iota$ est un hom\'eomorphisme sur son image, ce qui permet de conclure.
\end{proof}

\begin{rema} Indiquons une autre preuve de la proposition ci-dessus.
Si $X=\varprojlim_k X_k$ o\`u les $X_k$ sont des espaces topologiques, et si $Y_k={\rm Im}(X\to X_k)$
est muni de la topologie induite par celle de $X_k$, alors $X\hookrightarrow\prod_k X_k$ se factorise
par $X\hookrightarrow\prod_k Y_k\hookrightarrow\prod_k X_k$ 
et $\prod_kY_k\subset\prod_k X_k$ est muni de la topologie induite, et
donc $X=\varprojlim Y_k$ comme espace topologique.
On va utiliser ce qui pr\'ec\`ede pour $X_k=H^{1}_{k,\Qbar_p}$ (et alors $Y_k=
H^{1,+}_{\Qbar_p}/p^k$).

 Montrons maintenant que 
si $K$ est une extension finie de $\Q_p$, alors
$Y_k\cap H^1_{k,K}$ est un module compact, topologiquement de longueur finie comme $\O_L[G]$-module. Il suffit de prouver que $Y_k\cap H^1_{k,K}$ est ferm\'e dans $H^1_{k,K}$ puisque
ce dernier est compact, topologiquement de longueur finie comme $\O_L[G]$-module.

Soit $(v_i)_{i\in I}$ une famille finie d'\'el\'ements de $H^{1,+}_{\Qbar_p}$ dont les images
modulo $p^k$ appartiennent \`a $Y_k\cap H^1_{k,K}$.
Si $j\geq k$, soit $\Pi_j$ l'adh\'erence dans $H^1_{j,\C_p}$ du sous-$\O_L[G\times\G_{\Q_p}]$-module
engendr\'e par les images des~$v_i$. Comme la famille est finie, il existe une extension finie
$K_j$ de $\Q_p$ telle que 
$\Pi_j\subset H^1_{j,K_j}$, et comme $H^1_{j,K_j}$ est topologiquement de longueur finie et $\Pi_j$
ferm\'e par hypoth\`ese, on en d\'eduit que $\Pi_j$ est compact et topologiquement de longueur finie.
La fl\`eche naturelle $\Pi_{j+1}\to\Pi_j$ est surjective car l'image est compacte et donc ferm\'ee, 
stable par $G$ et $\G_{\Q_p}$, et contient les images des~$v_i$.  La limite projective $\Pi$ des $\Pi_j$
est compacte et donc s'identifie \`a un sous-module ferm\'e de $H^{1,+}_{\C_p}$ contenu dans
$H^{1,+}_{\Qbar_p}$, stable par $G$ et $\G_{\Q_p}$; c'est donc l'adh\'erence dans 
$H^{1,+}_{\Qbar_p}$ du sous-$\O_L[G\times\G_{\Q_p}]$-module
engendr\'e par les $v_i$.  La fl\`eche $\Pi\to\Pi_k$ est surjective puisque toutes les fl\`eches de
transition sont surjectives. On a donc d\'emontr\'e que $\Pi_k\subset Y_k\cap H^1_{k,K}$. Comme
$H^1_{k,K}$ est de longueur finie, il en est de m\^eme de $\Pi_k$ et la longueur de
$\Pi_k$ est born\'ee ind\'ependamment de $(v_i)_{i\in I}$. Si la famille $(v_i)_{i\in I}$ est
choisie pour que la longueur de $\Pi_k$ soit maximale, on a
$\Pi_k=Y_k\cap H^1_{k,K}$, ce qui permet de conclure puisque $\Pi_k$ est ferm\'e par d\'efinition.

\end{rema}

\begin{prop}\label{dec entier} Soit $${\mathbbm m}_{\mathcal{B}}(H^{1,+}_{\Qbar_p}):={\rm Hom}_G(P_{\cal B}, H^{1,+}_{\Qbar_p}).$$
L'application naturelle 
$${\oplus}_{\mathcal{B}}[{P}_{\mathcal{B}}\otimes_{{E}_{\mathcal{B}}} {\mathbbm m}_{\mathcal{B}}(H^{1,+}_{\Qbar_p})]\to H^{1,+}_{\Qbar_p}$$
identifie $H^{1,+}_{\Qbar_p}$ au compl\'et\'e $p$-adique de
${\oplus}_{\mathcal{B}}[{P}_{\mathcal{B}}\otimes_{{E}_{\mathcal{B}}} {\mathbbm m}_{\mathcal{B}}(H^{1,+}_{\Qbar_p})]$.

\end{prop}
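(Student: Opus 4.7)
Le plan est d'\'etablir la d\'ecomposition en blocs niveau par niveau, pour chaque couple $(K,k)$ avec $[K:\Q_p]<\infty$ et $k\geq 1$, puis de passer \`a la limite en deux \'etapes : d'abord une limite inductive sur $K$, puis une limite projective sur $k$.

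On partira de la prop.\,\ref{damntricky}, qui affirme que le $\O_L[G']$-module dont $(H^{1,+}_{\Qbar_p}/p^k)^{\mathcal{G}_K}$ est le dual est lisse de longueur finie. La th\'eorie de Gabriel (prop.\,\ref{pasku2}) fournira alors une d\'ecomposition en blocs
$(H^{1,+}_{\Qbar_p}/p^k)^{\mathcal{G}_K} \simeq \bigoplus_{\mathcal{B}} P_{\mathcal{B}}\otimes_{E_{\mathcal{B}}} {\mathbbm m}^{K,k}_{\mathcal{B}}$,
o\`u ${\mathbbm m}^{K,k}_{\mathcal{B}} := \Hom_G(P_{\mathcal{B}}, (H^{1,+}_{\Qbar_p}/p^k)^{\mathcal{G}_K})$, la somme \'etant finie. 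En utilisant le (b) du cor.\,\ref{QC2}, on a $H^{1,+}_{\Qbar_p}/p^k = \varinjlim_K (H^{1,+}_{\Qbar_p}/p^k)^{\mathcal{G}_K}$, et comme les limites inductives commutent aux sommes directes (la fonctorialit\'e de la d\'ecomposition en blocs assurant la compatibilit\'e avec les fl\`eches de transition), on obtiendra
$H^{1,+}_{\Qbar_p}/p^k \simeq \bigoplus_{\mathcal{B}} P_{\mathcal{B}}\otimes_{E_{\mathcal{B}}} {\mathbbm m}^{k}_{\mathcal{B}}$,
avec ${\mathbbm m}^{k}_{\mathcal{B}} := \Hom_G(P_{\mathcal{B}}, H^{1,+}_{\Qbar_p}/p^k)$, la somme portant cette fois sur un nombre potentiellement infini de blocs.

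Il restera \`a passer \`a la limite projective sur $k$. Le (a) du cor.\,\ref{QC2} donne $H^{1,+}_{\Qbar_p}=\varprojlim_k H^{1,+}_{\Qbar_p}/p^k$. Comme $P_{\mathcal{B}}$ est projectif dans la cat\'egorie des $\O_L[[G]]$-modules compacts (\'etant d\'efini comme le dual de Pontryagin d'une enveloppe injective de repr\'esentations lisses), le foncteur $\Hom_G(P_{\mathcal{B}},-)$ est exact, ce qui, appliqu\'e \`a la suite exacte courte
$0\to H^{1,+}_{\Qbar_p}\to H^{1,+}_{\Qbar_p}\to H^{1,+}_{\Qbar_p}/p^k\to 0$
(la premi\`ere fl\`eche \'etant la multiplication par $p^k$), fournira l'identification ${\mathbbm m}_{\mathcal{B}}(H^{1,+}_{\Qbar_p})/p^k \simeq {\mathbbm m}^{k}_{\mathcal{B}}$. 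Combin\'ee avec l'identification analogue au niveau du produit tensoriel, cela donnera
$H^{1,+}_{\Qbar_p} \simeq \varprojlim_k \bigoplus_{\mathcal{B}} (P_{\mathcal{B}}\otimes_{E_{\mathcal{B}}} {\mathbbm m}_{\mathcal{B}}(H^{1,+}_{\Qbar_p}))/p^k$,
qui est par d\'efinition le compl\'et\'e $p$-adique de $\bigoplus_{\mathcal{B}} P_{\mathcal{B}}\otimes_{E_{\mathcal{B}}} {\mathbbm m}_{\mathcal{B}}(H^{1,+}_{\Qbar_p})$.

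Le point d\'elicat sera cette derni\`ere \'etape : il faudra manier soigneusement les topologies sur les objets intervenant (le $\O_L[[G]]$-module profini \`a gauche, les $\Hom$ continus), et v\'erifier la compatibilit\'e du produit tensoriel $\otimes_{E_{\mathcal{B}}}$ avec la r\'eduction modulo $p^k$, ce qui repose sur de bonnes propri\'et\'es de platitude (ou d'exactitude de la composante bloc de la d\'ecomposition de Gabriel) dans le cadre des modules profinis.
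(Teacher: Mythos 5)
Vos deux premières étapes coïncident avec celles de l'article: décomposition en blocs des $(H^{1,+}_{\Qbar_p}/p^k)^{\mathcal{G}_K}$ via la prop.\,\ref{damntricky} et la théorie de Gabriel, puis limite inductive sur $K$ grâce au cor.\,\ref{QC2} (l'identification $\varinjlim_K{\rm Hom}_G(P_{\cal B},(H^{1,+}_{\Qbar_p}/p^k)^{\mathcal{G}_K})\simeq{\rm Hom}_G(P_{\cal B},H^{1,+}_{\Qbar_p}/p^k)$, que vous utilisez implicitement, demande la compacité de $P_{\cal B}$ et la profinitude des $(H^{1,+}_{\Qbar_p}/p^k)^{\mathcal{G}_K}$, mais c'est un point mineur). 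En revanche, votre dernière étape repose sur un argument qui ne tient pas: $P_{\cal B}$ n'est pas projectif dans la catégorie de tous les $\O_L[[G]]$-modules compacts, mais seulement relativement à la catégorie duale des représentations lisses localement admissibles du bloc, et ni $X:=H^{1,+}_{\Qbar_p}$ ni $X/p^k$ n'appartiennent à cette catégorie: $X/p^k$ est une réunion croissante des modules profinis $X_{k,K}:=(X/p^k)^{\mathcal{G}_K}$ (il n'est a priori ni compact ni le dual d'une représentation lisse), et $X$ est un module sans torsion, $p$-adiquement complet. L'« exactitude » de ${\rm Hom}_G(P_{\cal B},-)$ appliquée à $0\to X\to X\to X/p^k\to 0$, c'est-à-dire la surjectivité de ${\mathbbm m}_{\cal B}(X)\to{\mathbbm m}_{\cal B}(X/p^k)$, est précisément le point à démontrer; elle ne découle d'aucune propriété formelle de projectivité dans ce cadre. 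Votre diagnostic du « point délicat » (compatibilité de $\otimes_{E_{\cal B}}$ avec la réduction mod $p^k$, platitude, topologies) passe donc à côté du vrai enjeu.

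Voici comment l'article comble ce trou. L'injectivité de ${\mathbbm m}_{\cal B}(X)/p^k\to{\mathbbm m}_{\cal B}(X/p^k)$ vient de ce que $X$ est sans $p$-torsion; pour la surjectivité, la complétude $p$-adique de $X$ ramène le problème à la surjectivité des flèches de transition ${\mathbbm m}_{\cal B}(X/p^{j+1})\to{\mathbbm m}_{\cal B}(X/p^j)$. Celle-ci s'obtient par un argument de recouvrement: tout morphisme continu $u\colon P_{\cal B}\to X/p^j$ se factorise par un $X_{j,K}$, qui est de longueur finie; comme $X_{j,K}$ est la réunion croissante des $f(X_{j+1,L})\cap X_{j,K}$ (où $f\colon X/p^{j+1}\to X/p^j$ est la projection et $L$ parcourt les extensions finies de $\Q_p$), il existe $L$ tel que $f(X_{j+1,L})$ contienne $X_{j,K}$, et la projectivité de $P_{\cal B}$ n'est alors invoquée que pour une surjection entre objets de longueur finie (duaux de représentations lisses de longueur finie du bloc), cadre où elle est légitime. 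C'est un argument de ce type qu'il vous faut ajouter pour rendre votre preuve complète.
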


\begin{proof}
 Notons pour simplifier $X=H^{1,+}_{\Qbar_p}$ et $X_{k,K}=(X/p^k)^{\mathcal{G}_K}$. Par la prop.\,\ref{damntricky} 
  on peut d\'ecomposer $X_{k,K}$ suivant les blocs:
    $$X_{k,K}=\oplus_{\mathcal{B}} [{P}_{\mathcal{B}}\otimes_{{E}_{\mathcal{B}}} {\mathbbm m}_{\cal B}(X_{k,K})],\quad
{\text{o\`u ${\mathbbm m}_{\cal B}(X_{k,K})= 
    {\rm Hom}_G({P}_{\mathcal{B}}, X_{k,K})$}}$$
est un $E_{\cal B}$-module compact, de longueur finie comme $\O_L$-module. Le cor.\,\ref{QC2} permet de d\'eduire que 
$$X/p^k=\varinjlim_{K} X_{k,K}=\oplus_{\mathcal{B}} [{P}_{\mathcal{B}} \otimes_{{E}_{\mathcal{B}}} (\varinjlim_{K} {\mathbbm m}_{\cal B}(X_{k,K}))]. $$
Puisque ${P}_{\mathcal{B}}$ est compact et les $X_{k,K}$ sont profinis, on a 
$$\varinjlim_{K} {\mathbbm m}_{\cal B}(X_{k,K})= {\mathbbm m}_{\cal B}(X/p^k):={\rm Hom}_G({P}_{\mathcal{B}}, X/p^k).$$
Pour conclure, il nous reste \`a d\'emontrer que ${\mathbbm m}_{\cal B}(X/p^k)\simeq {\mathbbm m}_{\mathcal{B}}(X)/p^k$. Comme $X$ est sans $p$-torsion, on a bien une injection ${\mathbbm m}_{\mathcal{B}}(X)/p^k\to {\mathbbm m}_{\cal B}(X/p^k)$. Comme $X$ est $p$-adiquement complet, pour montrer que cette injection est surjective, il suffit de montrer que les fl\`eches 
${\mathbbm m}_{\mathcal{B}}(X/p^{j+1})\to {\mathbbm m}_{\mathcal{B}}(X/p^{j})$ sont surjectives pour 
tout~$j$, ou encore que $\varinjlim_{K} {\mathbbm m}_{\mathcal{B}} (X_{j+1,K})\to \varinjlim_{K} {\mathbbm m}_{\mathcal{B}} (X_{j,K})$ est surjective. 

Fixons $K$ et notons $f: X/p^{j+1}\to X/p^j$ la projection canonique.  Alors 
$\cup_{[L:\qp]<\infty} f(X_{j+1,L})=f(X/p^{j+1})=X/p^j$, donc $X_{j,K}$ est la r\'eunion des 
$f(X_{j+1,L})\cap X_{j,K}$. Comme $X_{j,K}$ est de longueur finie, on en d\'eduit qu'il existe 
$L$ tel que $f(X_{j+1,L})$ contienne $X_{j,K}$. Comme $X_{j+1,L}$ est de longueur finie, tout morphisme 
$G$-\'equivariant $u: {P}_{\mathcal{B}}\to X_{j,K}$ se rel\`eve en un morphisme $G$-\'equivariant 
${P}_{\mathcal{B}}\to X_{j+1,L}$, ce qui permet de conclure. 
\end{proof}

Posons simplement
$${\mathbbm m}_{n,\mathcal{B}}:={\mathbbm m}_{\mathcal{B}}(H^{1,+}_{\Qbar_p})={\mathbbm m}_{\mathcal{B}}(H^1_{\eet}({\cal M}^p_{n,\Qbar_p})^+),
\quad
\check {\mathbbm m}_{n,\mathcal{B}}:={\rm Hom}^{\rm cont}_{\O_L}({\mathbbm m}_{n,\mathcal{B}},\O_L)$$
Alors ${\mathbbm m}_{n,\mathcal{B}}$ est un $\O_L$-module sans torsion, s\'epar\'e et complet
pour la topologie $p$-adique, tandis que $\check {\mathbbm m}_{n,\mathcal{B}}$ est un $\O_L$-module compact, 
sans torsion.

\begin{theo}\label{factor3}
$\check{\mathbbm m}_{n,\cal B}$ est un $R_{\cal B}^{\rm ps}$-module de type fini.
\end{theo}
\begin{proof}
Soit ${\goth m}$ l'id\'eal maximal de $R_{\cal B}^{\rm ps}$. Par le lemme de Nakayama topologique il suffit de voir que 
$\check {\mathbbm m}_{n,{\cal B}}/{\goth m}=({\mathbbm m}_{n,{\cal B}}/\varpi)^{\vee}/{\goth m}=(({\mathbbm m}_{n,{\cal B}}/\varpi)[{\goth m}])^{\vee}$ 
est de dimension finie sur $k_L$, autrement dit que 
$\dim ({\mathbbm m}_{n,{\cal B}}/\varpi)[{\goth m}]<\infty$. 
On a une injection 
$${\mathbbm m}_{n,{\cal B}}/\varpi\to {\rm Hom}_G({P}_{\mathcal{B}}/\varpi, H^{1,+}_{\Qbar_p}/\varpi)\to 
{\rm Hom}_G({P}_{\mathcal{B}}/\varpi, H^1_{\eet}({\cal M}^p_{n,\C_p}, k_L(1)))$$
et donc une injection 
$$({\mathbbm m}_{n,{\cal B}}/\varpi)[{\goth m}]\to {\rm Hom}_G(k_L\otimes_{R_{\cal B}^{\rm ps}} {P}_{\mathcal{B}}, H^1_{\eet}({\cal M}^p_{n,\C_p}, k_L(1))).$$
Le cor.\,6.7 de \cite{PT} montre que $k_L\otimes_{R_{\cal B}^{\rm ps}} {P}_{\mathcal{B}}=\pi^{\vee}
$ pour une repr\'esentation lisse, admissible (en fait de longueur finie) $\pi$, et on conclut en utilisant le 
cor.\,\ref{lg13}.
\end{proof}

\subsubsection{Structure de ${\mathbbm m}_{\cal B}(H^1_{\eet}({\cal M}^p_{n,\Qbar_p}))$ comme
$R_{\cal B}^{\rm ps}$-module}\label{qc9}
Nous allons maintenant rendre $p$ inversible pour pouvoir d\'ecomposer notre module
en composantes isotypiques pour les repr\'esentations de dimension finie de $\check G$.
Si $X$ est un $G$-module topologique on pose ${\mathbbm m}_{\cal B}(X)={\rm Hom}_G({P}_{\mathcal{B}}, X)$. On a donc ${\mathbbm m}_{\cal B}(H^{1}_{\Qbar_p})=L\otimes_{\mathcal{O}_L} 
 {\mathbbm m}_{\cal B}(H^{1,+}_{\Qbar_p})$ par compacit\'e de ${P}_{\mathcal{B}}$.

\begin{rema}\label{factor4}
{\rm (i)} Puisque $H^{1}_{\Qbar_p}$ s'injecte dans $H^1_{\eet}({\cal M}^p_{n,\C_p})$, sur lequel l'action de $\check{G}$ se factorise par un quotient fini,
il n'y a qu'un nombre fini de $M$ tels que
$H^{1}_{\Qbar_p}[M]$ soit non nul, et on a
$$H^{1}_{\Qbar_p}=\oplus_M
\big(H^{1}_{\Qbar_p}[M]\otimes_L {\rm JL}(M)\big)$$
et donc on dispose d'une d\'ecomposition 
$${\mathbbm m}_{\cal B}(H^{1}_{\Qbar_p})=\oplus_M 
\big({\mathbbm m}_{{\cal B},M}\otimes_L {\rm JL}(M)\big)$$
avec 
$${\mathbbm m}_{{\cal B},M}={\mathbbm m}_{{\cal B}}(H^{1}_{\Qbar_p}[M])={\rm Hom}_{G\times \check{G}} ({P}_{\mathcal{B}}\otimes {\rm  JL}(M), H^1_{\eet}({\cal M}^p_{n,\Qbar_p})).$$
Le module ${\mathbbm m}_{{\cal B},M}$ ne d\'epend pas du choix de $n$ sup\'erieur ou
\'egal au niveau de $M$ car $H^1_{\eet}({\cal M}^p_{n,\Qbar_p})$ est le sous-espace des $\check{G}_n$-invariants de $\varinjlim_n H^1_{\eet}({\cal M}^p_{n,\Qbar_p})$
(comme on a invers\'e $p$, on peut fabriquer des projecteurs 
$\check{G}$-\'equivariants en composant restrictions et corestrictions).

{\rm(ii)} Sur $H^{1}_{\Qbar_p}[M]$ et donc aussi
sur ${\mathbbm m}_{\cal B}(H^{1}_{\Qbar_p}[M])$
le centre de $G$ agit par un caract\`ere $\delta_M$ puisque c'est le cas de celui de $\check G$
et que les actions des deux centres sont inverses l'une de l'autre sur $H^1_{\eet}({\cal M}^p_{n,\C_p})$, qui contient $H^{1}_{\Qbar_p}$.
\end{rema}
Le point (ii) de la remarque ci-dessus va nous permettre d'utiliser le th.\,\ref{pasku6}
pour analyser le module ${\mathbbm m}_{{\cal B},M}$.

\begin{theo}\label{factor5}
{\rm (i)}  $R_{\cal B}^{{\rm ps},\delta_M}$ agit sur
${\mathbbm m}_{{\cal B},M}$ \`a travers son quotient $R_{{\cal B},M}$.

{\rm (ii)} On a un isomorphisme de $E_{\cal B}^{\delta_M}[\G_{\Q_p}]$-modules
$${\mathbbm m}_{{\cal B},M}\simeq 
{\mathbbm m}_{\cal B}^{\delta_M}(\bPi^\dual(\rho_{{\cal B},M}))
\otimes \rho_{{\cal B},M}\otimes \check{R}_{{\cal B},M}$$
 o\`u
$\check{R}_{{\cal B},M}$ est le $L$-dual de $R_{{\cal B},M}$ et les produits tensoriels sont
au-dessus de $R_{{\cal B},M}$.
\end{theo}
\begin{proof}
Le cas $M$ sp\'ecial est imm\'ediat. Supposons donc $M$ supercuspidal dans ce qui suit.

{\rm (i)} On veut montrer que $I:=\ker(R_{\cal B}^{{\rm ps}, \delta_M}[1/p]\to R_{{\cal B},M})$ tue 
${\mathbbm m}_{{\cal B},M}$, et il suffit de voir que $I$ tue ${\rm Hom}_{G} ({P}_{\mathcal{B}}, H^1_{\eet}({\cal M}^p_{n,\C_p})[M])$. Puisque $p>2$, on a une identification 
$Z_{\mathcal{B}}^{\delta_M}\simeq R_{\cal B}^{{\rm ps}, \delta_M}/\mathcal{O}_L{\text{-{\rm tors}}}$,
 ce qui permet de voir $Z_{\mathcal{B}}^{\delta_M}$ comme un $\mathcal{O}_L$-r\'eseau 
de $R_{\cal B}^{{\rm ps}, \delta_M}[1/p]$. 
Soit $J=Z_{\mathcal{B}}^{\delta_M}\cap I$, donc $I=J[1/p]$.

 Comme les fonctions born\'ees sur ${\cal M}^p_{n,\C_p}$ sont constantes\footnote{Voir le cor.\,2.10 de \cite{CDN2} pour un \'enonc\'e plus g\'en\'eral; dans le cas particulier de ${\cal M}^p_{n,\C_p}$ on peut utiliser la finitude du morphisme vers le demi-plan de Drinfeld et le fait que le r\'esultat est bien connu pour ce dernier espace.}
et comme ${\cal M}^p_{n,\C_p}$ poss\`ede un ouvert affino\"{\i}de dont les translat\'es sous l'action de $G$ recouvrent ${\cal M}^p_{n,\C_p}$, on a $\mathcal{O}({\cal M}^p_{n,\C_p})[M]^b=0$. 
En passant aux vecteurs $G$-born\'es dans la suite exacte (th.\,5.11 de \cite{CDN1}) 
$$0 \to \mathcal{O}({\cal M}^p_{n, \C_p})[M]\to 
H^1_{\proet}({\cal M}^p_{n, \C_p})[M]\to
X_{\rm st}^+(M)\widehat{\otimes}_L{\rm LL}(M)^\dual\to 0$$
et en utilisant la prop.\,\ref{etproet} et le lemme \ref{b} on voit que 
$H^1_{\eet}({\cal M}^p_{n,\C_p})[M]$ s'injecte dans $X_{\rm st}^+(M)\widehat{\otimes}_L \widehat{{\rm LL}}(M)^\dual$. Il suffit donc de montrer que $I$ tue ${\rm Hom}_{G} ({P}_{\mathcal{B}}, \widehat{{\rm LL}}(M)^\dual)$.

  Soit ${\rm LL}(M)^+$ un $\mathcal{O}_L$-r\'eseau $G$-stable dans ${\rm LL}(M)$ (un tel r\'eseau existe car ${\rm LL}(M)$ est supercuspidale, \`a caract\`ere central unitaire) et soit 
   $Y$ son compl\'et\'e $p$-adique. On a donc 
    $\widehat{{\rm LL}}(M)^\dual=Y^d[1/p]$, avec $Y^d:={\rm Hom}_{\mathcal{O}_L}^{\rm cont}(Y, \mathcal{O}_L)\simeq \varprojlim_{n} (Y/p^nY)^{\vee}$ (un $\mathcal{O}_L$-module compact). Par compacit\'e de ${P}_{\mathcal{B}}$ on a ${\rm Hom}_{G} ({P}_{\mathcal{B}}, \widehat{{\rm LL}}(M)^\dual)={\rm Hom}_G({P}_{\mathcal{B}}, 
    Y^d)[1/p]$. Il suffit donc de montrer que $J=Z_{\mathcal{B}}^{\delta_M}\cap I$ tue 
   $${\rm Hom}_G({P}_{\mathcal{B}}, 
    Y^d)\simeq \varprojlim_{n} {\rm Hom}_G({P}_{\mathcal{B}}, (Y/p^nY)^{\vee}).$$ Fixons $n$ par la suite, et montrons que $J$ tue ${\rm Hom}_G({P}_{\mathcal{B}}, (Y/p^nY)^{\vee})$.

   Soit $S$ l'ensemble 
   des quotients du $\mathcal{O}_L[G]$-module ${\rm LL}(M)^+$ qui sont des 
  objets de ${\rm Rep}_{\mathcal{B}}^{\delta_M} G$, i.e. des 
    repr\'esentations lisses de longueur finie de $G$, dans la cat\'egorie d\'ecoup\'ee par le bloc $\mathcal{B}$. Alors $S$ est au plus d\'enombrable, et puisque tout objet de ${\rm Rep}_{\mathcal{B}}^{\delta_M} G$ est tu\'e par une puissance de $p$, le morphisme naturel ${\rm LL}(M)^+\to Y$ identifie $S$ \`a l'ensemble des quotients de $Y$ appartenant \`a ${\rm Rep}_{\mathcal{B}}^{\delta_M} G$. 
  Le  
   $\mathcal{O}_L[G]$-module prodiscret 
   $$Y_{\mathcal{B}}:=\varprojlim\nolimits_{\sigma\in S} \sigma$$
   poss\`ede un morphisme naturel d'image dense $\iota: Y\to Y_{\mathcal{B}}$. Il est montr\'e dans \cite[cor.\,3.12]{petit} que $Y_{\mathcal{B}}$ est sans $p$-torsion, $p$-adiquement complet et que 
   $\iota$ induit une surjection de $Y/p^n Y$ sur $Y_{\mathcal{B}}/p^n Y_{\mathcal{B}}$
    pour tout $n$. Nous allons montrer que l'injection 
   ${\rm Hom}_G({P}_{\mathcal{B}}, (Y_{\mathcal{B}}/p^nY_{\mathcal{B}})^{\vee})\to {\rm Hom}_G({P}_{\mathcal{B}}, (Y/p^nY)^{\vee})$ 
   induite par le plongement $(Y_{\mathcal{B}}/p^n Y_{\mathcal{B}})^{\vee}\to (Y/p^n Y)^{\vee}$ 
   est un isomorphisme, et que $J$ tue ${\rm Hom}_G({P}_{\mathcal{B}}, (Y_{\mathcal{B}}/p^nY_{\mathcal{B}})^{\vee})$, ce qui permettra de conclure.

    Soit donc $\varphi: {P}_{\mathcal{B}}\to (Y/p^nY)^{\vee}$ un morphisme continu $G$-\'equivariant et soit $\sigma:=\varphi({P}_{\mathcal{B}})^{\vee}$. Alors $\sigma$ est un quotient de 
    $Y/p^nY\simeq {\rm LL}(M)^+/p^n {\rm LL}(M)^+ $ et $\sigma$ s'injecte dans 
    ${P}_{\mathcal{B}}^{\vee}$, qui est une limite inductive de repr\'esentations dans 
   ${\rm Rep}_{\mathcal{B}}^{\delta_M} G$, donc $\sigma$ est la r\'eunion de ses sous-repr\'esentations appartenant \`a ${\rm Rep}_{\mathcal{B}}^{\delta_M} G$. Puisque $\sigma$ est un quotient de ${\rm LL}(M)^+/p^n {\rm LL}(M)^+$, qui est de type fini comme 
   $\mathcal{O}_L[G]$-module, on en d\'eduit que $\sigma\in {\rm Rep}_{\mathcal{B}}^{\delta_M} G$ et $\varphi$ se factorise par $\sigma^{\vee}$, qui se plonge dans $(Y_{\mathcal{B}}/p^nY_{\mathcal{B}})^{\vee}$. Donc l'injection ${\rm Hom}_G({P}_{\mathcal{B}}, (Y_{\mathcal{B}}/p^nY_{\mathcal{B}})^{\vee})\subset {\rm Hom}_G({P}_{\mathcal{B}}, (Y/p^nY)^{\vee})$ est aussi surjective. 
   
    Il reste \`a expliquer pourquoi $J$ tue ${\rm Hom}_G({P}_{\mathcal{B}}, (Y_{\mathcal{B}}/p^nY_{\mathcal{B}})^{\vee})$. Pour cela, notons que $Y_{\mathcal{B}}$
   poss\`ede une structure naturelle de $Z_{\mathcal{B}}^{\delta_M}$-module, puisque chaque 
  $\sigma\in S$ en a une.    
   Il est montr\'e dans 
   \cite{petit} que $J$ tue $Y_{\mathcal{B}}$. On dispose de deux structures de $Z_{\mathcal{B}}^{\delta_M}$-module sur ${\rm Hom}_G({P}_{\mathcal{B}}, (Y_{\mathcal{B}}/p^nY_{\mathcal{B}})^{\vee}))$: une \`a partir de la structure de 
    $Z_{\mathcal{B}}^{\delta_M}$-module de ${P}_{\mathcal{B}}$ et l'autre \`a partir de la structure de $Z_{\mathcal{B}}^{\delta_M}$-module de $Y_{\mathcal{B}}$. Puisque 
    $Z_{\mathcal{B}}^{\delta_M}$ est le centre de ${\rm Rep}_{\mathcal{B}}^{\delta_M} G$, ces deux structures sont les m\^emes et donc $J$ tue ${\rm Hom}_G({P}_{\mathcal{B}}, (Y_{\mathcal{B}}/p^nY_{\mathcal{B}})^{\vee})$, ce qui finit la preuve du point (i).

    {\rm (ii)} Posons $X:={\rm Spm}(R_{\cal B}^{{\rm ps},\delta_M}[\frac{1}{p}])$,
et $X_M:={\rm Spm}(R_{{\cal B},M})$ et \'ecrivons ${\goth m}_x$ pour l'id\'eal maximal correspondant \`a $x\in X$. 

 Si $x\in X$, alors $\check{\mathbbm m}_{{\cal B},M}/{\goth m}_x$ est le dual de
${\mathbbm m}_{{\cal B},M}[{\goth m}_x]$. Mais
\begin{align*}
{\mathbbm m}_{{\cal B},M}[{\goth m}_x] & \simeq 
{\rm Hom}_G({P}_{\cal B}/{\goth m}_x, H^1_{\eet}({\cal M}^p_{n,\Qbar_p})[M])\\
&\simeq  {\rm Hom}_G({\mathbbm m}(\Pi_x^\dual)\otimes_L\Pi_x^{\dual}, H^1_{\eet}({\cal M}^p_{n,\C_p})[M])\\
&\simeq  \begin{cases}0&{\text{si $x\notin X_M$,}}\\
{\mathbbm m}(\Pi_x^\dual)\otimes_L \rho_x &{\text{si $x\in X_M$.}}\end{cases}
\end{align*}
(On passe de la premi\`ere \`a la seconde ligne en utilisant la prop.\,\ref{pasku2} (et la rem.\,\ref{pasku1}),
et de la seconde \`a la troisi\`eme en utilisant le th.\,\ref{cdn1.2}.)

On en d\'eduit que les $L$-duaux des deux membres de (ii) ont la m\^eme sp\'ecialisation en tout point.
Comme ces $L$-duaux sont des $R_{{\cal B},M}$-modules localement libres, de type fini,
et que $R_{{\cal B},M}$ est un produit d'anneaux principaux,
le r\'esultat s'en d\'eduit en adaptant\footnote{
\label{inutile2}Les deux membres sont des repr\'esentations de rang fini
de $E_{{\cal B},M}\times\G_{\Q_p}$ dont les
sp\'ecialisations sont absolument irr\'eductibles en tout point. 
Si $F$ est le corps des fractions
d'un des facteurs de $R_{{\cal B},M}$, alors $F\otimes_{R_{{\cal B},M}}E_{{\cal B},M}$ 
est une alg\`ebre
semi-simple sur $F$ (ce point repose sur la description explicite donn\'ee par \paskunas~\cite{Paskext} qui demande d'exclure les blocs contenant un twist de la steinberg si $p=2,3$), 
et donc ses $F$-repr\'esentations irr\'eductibles
de dimension finie sont d\'etermin\'ees
par leurs traces; il en est donc de m\^eme des $F$-repr\'esentations irr\'eductibles de 
$(F\otimes_{R_{{\cal B},M}}E_{{\cal B},M})\times \G_{\Q_p}$. Ceci permet, comme
dans la preuve du lemme~\ref{abstrait}, de prouver que
les deux membres deviennent isomorphes apr\`es extension des scalaires \`a $F$ et le reste
de la preuve s'adapte alors verbatim.}
 le lemme~\ref{abstrait} 
pour inclure l'action de 
$E_{{\cal B},M}:=R_{{\cal B},M}\otimes_{R^{{\rm ps},\delta_M}_{\cal B}} E_{\cal B}^{\delta_M}$.
\end{proof}



\begin{thebibliography}{99}



 \bibitem{AWD}
 {\sc  K.\,Ardakov,  S.\,Wadsley,} 
Irreducibility of global sections of 
 Drinfeld line bundles, en pr\'eparation.
 

 \bibitem{BL0}
 {\sc  L.\,Barthel,  R.\,Livn\'e,} 
Irreducible modular representations of ${\rm GL}_2$ of a local field,
 Duke Math. J.~{\bf 75} (1994), 261--292.

 \bibitem{BL}
 {\sc L.\,Barthel, R.\,Livn\'e},  
Modular representations of ${\rm GL}_2$ of a local field: the ordinary, unramified case,
 J. Number Theory~{\bf 55} (1995), 1--27.
 
  \bibitem{Berger}
 {\sc L.\,Berger}, 
Central characters for smooth irreducible modular representations of ${\rm GL}_2(\Q_p)$, 
Rend. Semin. Mat. Univ. Padova~{\bf 128} (2012), 1--6.
  
   \bibitem{bergercompat}
 {\sc L.\,Berger},
Repr\'esentations modulaires de ${\rm GL}_2(\Q_p)$ et repr\'esentations galoisiennes de dimension $2$,
Ast\'erisque~{\bf 330} (2010), 263--279.

\bibitem{BC} 
{\sc L.\,Berger, P.\,Colmez}, 
Familles de repr\'esentations de de Rham et monodromie $p$-adique,
  Ast\'erisque~{\bf 319} (2008), 303--337.

 \bibitem{BV} 
{\sc V.\,Berkovich}, 
{Vanishing cycles for formal schemes},
 Invent. math.\,{\bf 115} (1994),  539--571. 

   \bibitem{BerL} 
{\sc V.\,Berkovich}, 
{Complex analytic vanishing cycles for formal schemes}, preprint.

\bibitem{BK}
 {\sc S.\,Bloch,  K.\,Kato}, 
{$p$-adic \'etale cohomology},
Publ. IHES~{\bf 63} (1986), 107--152.

\bibitem{BW} 
{\sc A.\,Borel, N.\,Wallach}, 
{\em Continuous cohomology, discrete subgroups, and representations of reductive groups},
 Annals of Math. Studies~{\bf 94}, Princeton University Press, 1980.

 \bibitem{Breuil} 
{\sc C.\,Breuil}, 
Sur quelques repr\'esentations modulaires et $p$-adiques de ${\rm GL}_2(\Q_p)$. I,
Compositio Math.~{\bf 138} (2003), 165--188.


 \bibitem{BP} 
{\sc C.\,Breuil, V.\,\paskunas}, 
Towards a modulo $p$ Langlands correspondence for ${\rm GL}_2$, 
Memoirs of Amer. Math. Soc.~{\bf 216}, 2012. 

\bibitem{6auteurs}
{\sc A.\,Caraiani}, {\sc M.\,Emerton}, {\sc T.\,Gee}, {\sc D.\,Geraghty}, {\sc V.\,\paskunas}, {\sc S.\,W.\,Shin}, 
Patching and the $p$-adic local Langlands correspondence, 
Cambridge J. Math.~{\bf 4} (2016), 197--287.

%

\bibitem{gl2}
{\sc P.\,Colmez},
Repr\'esentations de ${\bf GL}_2({\bf Q}_p)$ et $(\varphi,\Gamma)$-modules,
Ast\'erisque~{\bf 330} (2010), 281--509.

\bibitem{CD}
{\sc P.\,Colmez}, {\sc G.\,Dospinescu},
Compl\'et\'es universels de repr\'esentations de ${\rm GL}_2(\Q_p)$,
Algebra and Number Theory~{\bf 8} (2014), 1447--1519.

\bibitem{CDN1}
{\sc P.\,Colmez}, {\sc G.\,Dospinescu}, {\sc W.\,Nizio\l},    
Cohomologie $p$-adique de la tour de Drinfeld: le cas de la dimension $1$, 
J. AMS~{\bf 33} (2020), 311--362.

\bibitem{CDN3}
{\sc P.\,Colmez}, {\sc G.\,Dospinescu}, {\sc W.\,Nizio\l},
Cohomology of $p$-adic Stein spaces,  
Invent. math.~{\bf 219} (2020), 873--985.

\bibitem{CDN2}
{\sc P.\,Colmez}, {\sc G.\,Dospinescu}, {\sc W.\,Nizio\l},
Cohomologie des courbes analytiques $p$-adiques,  
Cambridge J. Math.~{\bf 10} (2022), 511--655.


\bibitem{petit}
{\sc P.\,Colmez}, {\sc G.\,Dospinescu}, {\sc W.\,Nizio\l}, 
Correspondance de Langlands locale $p$-adique et anneaux de Kisin,
Acta Arithmetica (\`a para\^{\i}tre).


\bibitem{CDP}
{\sc P.\,Colmez}, {\sc G.\,Dospinescu}, {\sc V.\,\paskunas},  
The $p$-adic local Langlands correspondence for ${\rm GL}_2(\Q_p)$,
Cambridge J. Math.~{\bf 2} (2014), 1--47. 

\bibitem{CF}
{\sc P.\,Colmez, J.-M.\,Fontaine},
Construction des repr\'esentations $p$-adiques semi-stables,
{Invent. math.}~{\bf 140} (2000), 1--43.

\bibitem{dJvdp} 
{\sc A.\,J.\,de\,Jong,\, M.\,van\,der\,Put}, 
\'Etale cohomology of rigid analytic spaces, 
Doc. Math.~{\bf 1} (1996), 1--56.


\bibitem{DL}
{\sc G.\,Dospinescu}, {\sc A.-C.\,Le Bras}, 
Rev\^etements du demi-plan de Drinfeld et correspondance de Langlands $p$-adique, 
Annals of Math.~{\bf 186} (2017), 321--411.

\bibitem{DS}
{\sc G.\,Dospinescu}, {\sc B. Schraen}, 
Endomorphism algebras of admissible $p$-adic representations of $p$-adic Lie groups,
Representation Theory~{\bf 17} (2013), 237--246.


\bibitem{DPS2}
{\sc G.\,Dospinescu}, {\sc B. Schraen}, {\sc V.\,\paskunas},
Gelfand-Kirillov dimension and the $p$-adic Jacquet-Langlands correspondence, 
J. reine angew. Math. (\`a para\^{\i}tre).


\bibitem{DEG}
{\sc A.\,Dotto}, {\sc M.\,Emerton}, {\sc T.\,Gee}, 
Localization of smooth $p$-power torsion representations of ${\rm GL}_2(\Q_p)$,
\url{arXiv:2207.04671 [math.NT]}.


 \bibitem{Drinfeld1} 
{\sc V.\,Drinfeld}, Coverings of $p$-adic symmetric regions, 
Funktsional. Anal. i Prilozhen., (1976), 29-40; Funct. Anal. Appl., (1976), 107-115.


\bibitem{eternalpreprint}
{\sc M.\,Emerton}, 
Local-global compatibility in the $p$-adic Langlands programme for ${\bf GL}_{2,\Q}$, 
preprint 2009!


\bibitem{Ordparts1}
{\sc M.\,Emerton}, 
Ordinary parts of admissible representations of $p$-adic reductive groups~I. 
Definition and first properties, 
Ast\'erisque {\bf 331} (2010), 335--381. 


\bibitem{Ordparts2}
{\sc M.\,Emerton}, 
Ordinary parts of admissible representations of $p$-adic reductive groups~II. 
The relation to parabolic induction, 
Ast\'erisque {\bf 331} (2010), 383--438. 



\bibitem{Faltings} 
{\sc G.\,Faltings},
 A relation between two moduli spaces studied by V.\,G.\,Drinfeld,
{\it Algebraic number theory and algebraic geometry}, 
Contemp. Math.~{\bf 300} (2002), 115--129. 

 \bibitem{FM} 
{\sc J.-M.\,Fontaine}, {\sc  W.\,Messing}, 
{$p$-adic periods and $p$-adic \'{e}tale cohomology}, 
{\em Current Trends in Arithmetical Algebraic Geometry},
Contemporary Math.~{\bf 67} (1987), 179--207.


\bibitem{FonAst}
{\sc J.-M.\,Fontaine},
  Repr\'esentations $\ell$-adiques potentiellement semi-stables, 
Ast\'erisque~{\bf 223} (1994), 321--347.


\bibitem{Fust} 
{\sc P.\,Fust}, 
Continuous Cohomology and Ext-Groups, 
M\"unster J. Math.~{\bf 15} (2022), 279--304.

\bibitem{gaby}
{\sc P.\,Gabriel},
Des cat\'egories ab\'eliennes, 
Bull. SMF~{\bf 90} (1962), 323--448.


\bibitem{GK}
{\sc T.\,Gee,\, M.\,Kisin}, 
The Breuil-M\'ezard conjecture for potentially Barsotti-Tate representations,
Forum of Mathematics Pi~{\bf 2} (2014), 56 pp.

\bibitem{Glo} 
{\sc H.\,Gl\"ockner}, 
Lectures on Lie groups over local fields,
{\it New directions in locally compact groups}, 37--72, 
London Math. Soc. Lecture Note Ser., {\bf 447}, Cambridge Univ. Press, 2018.

\bibitem{GH} 
{\sc B.\,Gross}, {\sc  M.\,Hopkins},
Equivariant vector bundles on the Lubin-Tate space, 
{\it Topology and representation theory (Evanston, IL, 1992)}, 
Contemp. Math.~{\bf 158} (1994), 23--88.

\bibitem{Hauseux} 
{\sc J.\,Hauseux}, 
Extensions entre s\'eries principales $p$-adiques et modulo $p$ de $G(F)$, 
J. Inst. Math. Jussieu~{\bf 15} (2016), 225--270.

\bibitem{YHu}
{\sc Y.\,Hu},
Diagrammes canoniques et repr\'esentations modulo $p$ de $\gl_2(F)$,  
J. Inst. Math. Jussieu~{\bf 11} (2012), 67--118. 

\bibitem{HuHa} 
{\sc Y.\,Hu, H.\,Wang}, 
On the mod $p$ cohomology for ${\rm GL}_2$: the non-semisimple case, 
Cambridge J. Math.~{\bf 10} (2022), 261--431.


\bibitem{HW} 
{\sc Y.\,Hu, H.\,Wang}, 
On some mod $p$ representations of quaternion algebra over $\qp$, 
\url{arXiv:2201.01464 [math.NT]}




\bibitem{Hu} 
{\sc R.\,Huber}, 
{\it \'Etale cohomology of rigid analytic varieties and adic spaces},
 Aspects of Mathematics~{\bf E30}, Vieweg {\&} Sohn, 1996.  


\bibitem{Hy} {\sc O.\,Hyodo}, 
{A note on $p$-adic \'etale cohomology in the semistable reduction case},
 Invent. math.~{\bf 91} (1988), 543--557.

\bibitem{Kedl} 
{\sc K.\,Kedlaya}, 
{Sheaves, stacks, and shtukas,} 
{\it Perfectoid spaces. Lectures from the 2017 Arizona Winter School, held in Tucson, AZ, March 11--17,}
Mathematical Surveys and Monographs {\bf 242}, American Mathematical Society, 2019.

\bibitem{Kis} 
{\sc M.\,Kisin}, 
Potentially semi-stable deformation rings,
J. AMS~{\bf 21} (2008), 513--546.

\bibitem{Kohldual} 
{\sc J.\,Kohlhaase}, 
 Smooth duality in natural characteristic, 
Adv. Math.~{\bf 317} (2017), 1--49. 

\bibitem{Le} 
{\sc D.\,Le}, 
On some nonadmissible smooth irreducible representations for ${\rm GL}_2$,
 Math. Res. Lett.~{\bf 26} (2019), 1747--1758.

\bibitem{Ludwig} 
{\sc J.\,Ludwig}, 
A quotient of the Lubin-Tate tower,
 Forum Math. Sigma~{\bf 5} (2017), e17. 

\bibitem{Moore} 
{\sc C.\,Moore}, 
Group extensions of $p$-adic and adelic linear groups, 
Publ. IHES~{\bf 35} (1968), 157--222. 

\bibitem{Naka}
{\sc S.\,Nakajima},
On Galois module structure of the cohomology groups of an algebraic variety,
Invent. math.~{\bf 75} (1984), 1--8.

\bibitem{Pas0} 
{\sc V.\,\paskunas}, 
{Extensions for supersingular representations of ${\rm GL}_2(\Q_p$)},
Ast\'erisque~{\bf 331} (2010), 317--353. 

\bibitem{Paskext} 
{\sc V.\,\paskunas}, 
The image of Colmez's Montreal functor,
 Publ.~IHES~{\bf 118} (2013), 1--191.

\bibitem{Pas1} 
{\sc V.\,\paskunas},  
{Blocks for mod $p$ representations of ${\rm GL}_2(\Q_p)$},
{\it  Automorphic forms and Galois representations. Vol. 2}, 231--247, 
London Math. Soc. Lecture Note Ser.~{\bf 415}, Cambridge Univ. Press, 2014.

\bibitem{Pask}
{\sc V.\,\paskunas},
On some consequences of a theorem of J. Ludwig,
J. Inst. Math. Jussieu~{\bf 21} (2022), 1067--1106.

\bibitem{PT}
{\sc V.\,\paskunas, S.-N.\,Tung}, 
Finiteness properties of the category of mod $p$ representations of ${\rm GL}_2(\Q_p)$,
Forum Math. Sigma~{\bf 9} (2021), e80, 39 pp. 

\bibitem{PSS} 
{\sc D.\,Patel, T.\,Schmidt, M.\,Strauch}, 
Locally analytic representations of ${\rm GL}_2(L)$ via semistable models of ${\bf P}^1$, 
J. Inst. Math. Jussieu~{\bf 18} (2019), 125--187.

\bibitem{Porter} 
{\sc T.\,Porter}, 
Essential properties of pro-objects in Grothendieck categories, 
Cah. Topol. G\'eom. Diff\'er. Cat\'eg.~{\bf 20} (1979), 3--57.

\bibitem{Sandra} 
{\sc S.\,Rozensztajn}, 
On the locus of $2$-dimensional crystalline representations with a given reduction modulo $p$, 
Algebra \& Number Theory~{\bf 14} (2020), 643--700.


\bibitem{RAV} 
{\sc P.\,Scholze}, 
{$p$-adic Hodge theory for rigid-analytic varieties},
 Forum Math. Pi~{\bf 1} (2013), 77 pp. 

\bibitem{SLT}
{\sc P.\,Scholze},
On the $p$-adic cohomology of the Lubin-Tate tower, 
Ann. ENS~{\bf 51} (2018), 811--863.

\bibitem{Sz-diamonds}
{\sc P.\,Scholze},
\'Etale cohomology of diamonds,
\url{arXiv:1709.07343 [math.AG]}.

\bibitem{SW} 
{\sc P.\,Scholze}, {\sc J.\,Weinstein},
 Moduli of $p$-divisible groups, Cambridge J. Math.~{\bf 1} (2013), 145--237.

\bibitem{livreSchneider} 
{\sc P.\, Schneider},
{\it Nonarchimedean Functional Analysis}, 
Springer Monographs in Mathematics, Springer-Verlag 2002.

\bibitem{STInv} 
{\sc P.\, Schneider, J.\,Teitelbaum},  
Algebras of $p$-adic distributions and admissible representations, 
Invent. math.~{\bf 153} (2003), 145--196.

\bibitem{schraen}
{\sc B. Schraen}, 
Sur la pr\'esentation des repr\'esentations supersinguli\`eres de ${\rm GL}_2(F)$, 
J. Reine Angew. Math.~{\bf 704} (2015), 187--208.

\bibitem{Sho} 
{\sc J.\,Shotton}, 
The category of finitely presented smooth mod~$p$ representations of~${\rm GL}_2(F)$,
 Doc. Math.~{\bf 25} (2020), 143--157.

\bibitem{Strauchgeocc}  
{\sc M.\,Strauch},
  Geometrically connected components of of Lubin-Tate deformation spaces with level structures,
 Pure and Applied Mathematics Q.~{\bf 4} (2008), 1215--1232

 \bibitem{Ts} 
{\sc T.\,Tsuji}, 
{$p$-adic \'etale cohomology and crystalline cohomology in the semi-stable reduction case},
 Invent. math.~{\bf 137} (1999), 233--411.

\bibitem{vigne}
{\sc M.-F.\,Vign\'eras}, 
Le foncteur de Colmez pour ${\rm GL}(2, F )$, 
{\it Arithmetic geometry and automorphic forms}, Adv. Lect. Math. (ALM), vol. 19, 
Int. Press, 2011, pp. 531--557.

\bibitem{Wu} 
{\sc Z.\,Wu}, 
A note on presentations of supersingular representations of ${\rm GL}_2(F)$, 
Manuscripta Math.~{\bf 165} (2021), 583--596. 
\end{thebibliography}
\end{document}